\newcommand*\centcell[1]{\omit\hfil$\displaystyle#1$\hfil\ignorespaces}
\definecolor{lavender}{rgb}{1,0,0}
\definecolor{Back2TheBlueture}{rgb}{0,0.5,1}
\definecolor{B2TFYellow}{rgb}{1,0.8,0}
\definecolor{WickedGreen}{rgb}{0,0.9,0}
\definecolor{Traj1}{rgb}{1,0.588,0.392}
\definecolor{Traj2}{rgb}{0,0.824,1}
\definecolor{Traj3}{rgb}{0,0,0.902}
\definecolor{Traj4}{rgb}{0,0.784,0}
\definecolor{Traj5}{rgb}{1,0,1}
\definecolor{darkred}{rgb}{0.7,0,0}
\definecolor{ArrowBlue}{rgb}{0,0.392,1}
\crefname{conjecture}{Conjecture}{Conjectures}
\newtheorem{theorem}{Theorem}[section]
\newtheorem{proposition}[theorem]{Proposition}
\newtheorem{corollary}[theorem]{Corollary}
\newtheorem{question}[theorem]{Question}
\newtheorem{lemma}[theorem]{Lemma}
\theoremstyle{definition}
\newtheorem{definition}[theorem]{Definition}
\newtheorem{remark}[theorem]{Remark}
\newtheorem{example}[theorem]{Example}
\definecolor{NormalGreen}{RGB}{0,220,0}
\definecolor{ChillBlue}{RGB}{0,182,255}
\definecolor{MyOrange}{RGB}{255,150,0}
\definecolor{Maroon}{RGB}{150,0,0}
\newcommand{\NN}{\mathbf{D}}
\newcommand{\TT}{\mathrm{T}}
\newcommand{\quot}{\pi}
\newcommand{\pp}{\mathfrak{p}}
\newcommand{\qq}{\mathfrak{q}}
\newcommand{\rr}{\mathfrak{r}}
\newcommand{\oo}{\mathfrak{o}}
\newcommand{\id}{\mathbbm{1}}
\newcommand{\elll}{d}
\newcommand{\ao}{\mathrm{ao}}
\newcommand{\BB}{\mathbb{B}}
\newcommand{\MM}{\mathrm{M}}
\newcommand{\HH}{\mathrm{H}}
\newcommand{\Heap}{\mathrm{Heap}}
\newcommand{\Inv}{\mathrm{Inv}}
\newcommand{\x}{\mathsf{x}}
\newcommand{\y}{\mathsf{y}}
\newcommand{\sort}{\mathsf{sort}}
\newcommand{\sfc}{\mathsf{c}}
\newcommand{\wo}{w_{\circ}}
\newcommand{\LL}{\mathscr{L}}
\newcommand{\RR}{\mathrm{R}}
\newcommand{\Cay}{\mathrm{Cay}}
\newcommand{\Gyr}{\mathrm{Gyr}}
\newcommand{\Ev}{\mathrm{Ev}}
\newcommand{\Pro}{\mathrm{Pro}}
\newcommand{\BK}{\mathrm{BK}}
\newcommand{\dom}{\mathrel{\mathsf{dom}}}
\newcommand{\lab}{\mathrm{lab}}
\newcommand{\Sep}{\mathrm{Sep}}
\newcommand{\Str}{\mathrm{Str}}
\DeclareRobustCommand{\fold}{\raisebox{0.2ex}{\resizebox{\width}{0.5\height}{\rotatebox{90}{\textsf{w}}}}}
\newcommand{\dfn}[1]{\textcolor{Back2TheBlueture}{\emph{#1}}}
\begin{document}

\title[]{Bender--Knuth Billiards in Coxeter Groups}
\subjclass[2010]{}

\author[]{Grant Barkley}
\address[]{Department of Mathematics, Harvard University, Cambridge, MA 02138, USA}
\email{gbarkley@math.harvard.edu}

\author[]{Colin Defant}
\address[]{Department of Mathematics, Harvard University, Cambridge, MA 02138, USA}
\email{colindefant@gmail.com}

\author[]{Eliot Hodges}
\address[]{Department of Mathematics, Harvard University, Cambridge, MA 02138, USA}
\email{eliothodges@college.harvard.edu}

\author[]{Noah Kravitz}
\address[]{Department of Mathematics, Princeton University, Princeton, NJ 08540, USA}
\email{nkravitz@princeton.edu}

\author[]{Mitchell Lee}
\address[]{Department of Mathematics, Harvard University, Cambridge, MA 02138, USA}
\email{mitchell@math.harvard.edu}

\maketitle

\begin{abstract}
Let $(W,S)$ be a Coxeter system, and write $S=\{s_i:i\in I\}$, where $I$ is a finite index set. Fix a nonempty convex subset $\mathscr{L}$ of $W$. If $W$ is of type $A$, then $\mathscr{L}$ is the set of linear extensions of a poset, and there are important \emph{Bender--Knuth involutions} $\mathrm{BK}_i\colon\mathscr{L}\to\mathscr{L}$ indexed by elements of $I$.  For arbitrary $W$ and for each $i\in I$, we introduce an operator $\tau_i\colon W\to W$ (depending on~$\mathscr{L}$) that we call a \emph{noninvertible Bender--Knuth toggle}; this operator restricts to an involution on $\mathscr{L}$ that coincides with $\mathrm{BK}_i$ in type~$A$. Given a Coxeter element $c=s_{i_n}\cdots s_{i_1}$, we consider the operator $\mathrm{Pro}_c=\tau_{i_n}\cdots\tau_{i_1}$. We say $W$ is \emph{futuristic} if for every nonempty finite convex set $\mathscr{L}$, every Coxeter element $c$, and every $u\in W$, there exists an integer $K\geq 0$ such that $\mathrm{Pro}_c^K(u)\in\mathscr{L}$. We prove that finite Coxeter groups, right-angled Coxeter groups, rank-3 Coxeter groups, affine Coxeter groups of types $\widetilde A$ and $\widetilde C$, and Coxeter groups whose Coxeter graphs are complete are all futuristic. When $W$ is finite, we actually prove that if $s_{i_N}\cdots s_{i_1}$ is a reduced expression for the long element of $W$, then $\tau_{i_N}\cdots\tau_{i_1}(W)=\mathscr{L}$; this allows us to determine the smallest integer $\mathrm{M}(c)$ such that $\mathrm{Pro}_c^{{\mathrm{M}}(c)}(W)=\mathscr{L}$ for all $\mathscr{L}$. We also exhibit infinitely many non-futuristic Coxeter groups, including all irreducible affine Coxeter groups that are not of type $\widetilde A$, $\widetilde C$, or $\widetilde G_2$. 
\end{abstract}

\begin{figure}[ht]
  \begin{center}{\includegraphics[height=8cm]{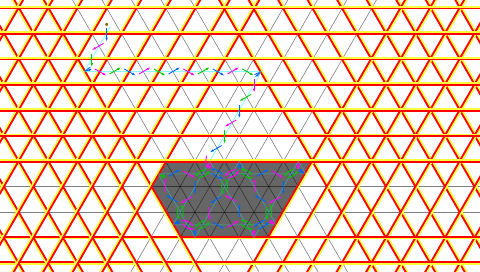}}
  \end{center}
\caption{The Coxeter arrangement of $\widetilde{A}_2$ forms a triangular grid whose unit triangles correspond to the elements of $\widetilde{A}_2$. The black convex set $\LL$ turns each hyperplane in the Coxeter arrangement into either a transparent window (indicated by a thin gray line) or a one-way mirror (indicated by a line colored yellow and red). Starting at the initial unit triangle marked with the brown dot, we apply the noninvertible Bender--Knuth toggles ${\color{ArrowBlue}\tau_0},{\color{Traj5}\tau_1},{\color{Traj4}\tau_2},{\color{ArrowBlue}\tau_0},{\color{Traj5}\tau_1},{\color{Traj4}\tau_2},\ldots$. This has the effect of following a thin cyan beam of light that eventually gets trapped in $\LL$.  }\label{fig:affineS3}
\end{figure}

\section{Introduction}\label{sec:intro}

\subsection{Linear Extensions and Noninvertible Bender--Knuth Toggles}\label{subsec:intro-posets}

Let ${P=([n+1],\leq_P)}$ be a poset whose underlying set is $[n+1]=\{1,\ldots,n+1\}$. Let $\mathfrak S_{n+1}$ denote the symmetric group whose elements are the permutations of $[n+1]$ (i.e., bijections from $[n+1]$ to itself). We can think of a permutation $w\in \mathfrak S_{n+1}$ as a labeling of $P$ in which the element $i$ receives the label $w(i)$. We say that the permutation $w$ is a \dfn{linear extension} of $P$ if $i\leq_P j$ implies $w(i)\leq w(j)$. Let $\mathcal L(P)$ denote the set of linear extensions of $P$.  

For each $i\in[n]$, the \dfn{Bender--Knuth involution} $\BK_i\colon\mathcal L(P)\to\mathcal L(P)$ is defined by \[\BK_i(u)=\begin{cases} u & \mbox{if } u^{-1}(i)\leq_Pu^{-1}(i+1); \\ (i\,\,i+1)u & \mbox{otherwise,} \end{cases}\]
where we write $(i\,\,j)$ for the transposition that swaps $i$ and $j$. These involutions are fundamental in algebraic combinatorics \cite{Ayyer, BenAndJanabel, DefantPolyurethane, DefantFriends, Edelman, Haiman, Malvenuto, Massow, Poznanovic, StanleyPromotion, StrikerWilliams}, largely due to the useful folklore fact that one can reach any linear extension of $P$ from any other linear extension of $P$ via a sequence of Bender--Knuth involutions. Haiman \cite{Haiman} and Malvenuto--Reutenauer \cite{Malvenuto} found that Sch\"utzenberger's famous \dfn{promotion} and \dfn{evacuation} operators $\Pro$ and $\Ev$ can be written as \[\Pro=\BK_{n}\cdots\BK_1\quad\text{and}\quad\Ev=(\BK_1)(\BK_2\BK_1)(\BK_3\BK_2\BK_1)\cdots(\BK_{n}\cdots\BK_2\BK_1).\]  

For $i\in[n]$, define the \dfn{noninvertible Bender--Knuth toggle} $\tau_i\colon \mathfrak S_{n+1}\to \mathfrak S_{n+1}$ by 
\begin{equation}\label{eq:toggle_def_posets}\tau_i(u)=\begin{cases} u & \mbox{if } u^{-1}(i)\leq_Pu^{-1}(i+1); \\ (i\,\,i+1)u & \mbox{otherwise.} \end{cases}
\end{equation}
Of course, $\tau_i$ is an extension of $\BK_i$ from the set of linear extensions of $P$ to the set of all permutations of $[n+1]$; it is noninvertible (unless $\mathcal L(P)=\mathfrak S_{n+1}$, i.e., $P$ is an antichain) because $\tau_i((i\,\,i+1)u)=\tau_i(u)=u$ whenever ${u^{-1}(i)\leq_P u^{-1}(i+1)}$. Defant and Kravitz introduced noninvertible Bender--Knuth toggles in \cite{DefantPromotionSorting} in order to define the \dfn{extended promotion} operator $\Pro\colon \mathfrak S_{n+1}\to \mathfrak S_{n+1}$, which is the composition \[\Pro=\tau_{n}\cdots\tau_1.\] Hodges further investigated extended promotion in \cite{Hodges}. Restricting extended promotion to $\mathcal L(P)$ recovers Sch\"utzenberger's (invertible) promotion operator; hence, we slightly abuse notation by using the same symbol $\Pro$ to denote both of these operators. In a similar vein, we define the \dfn{extended evacuation} operator $\Ev\colon \mathfrak S_{n+1}\to \mathfrak S_{n+1}$ by \[\Ev=(\tau_1)(\tau_2\tau_1)(\tau_3\tau_2\tau_1)\cdots(\tau_{n}\cdots\tau_2\tau_1).\] 

One of the main results of \cite{DefantPromotionSorting} is that 
\begin{equation}\label{eq:ProSort}
\Pro^{n}(\mathfrak S_{n+1})=\mathcal L(P).
\end{equation} In other words, if we start with a permutation in $\mathfrak S_{n+1}$ and iteratively apply extended promotion $n$ times, then we will ``sort'' the permutation to a linear extension (and every element of $\mathcal{L}(P)$ is in the image of $\Pro^n$ since $\Pro$ permutes the elements of $\mathcal{L}(P)$). In fact, the same argument shows that \begin{equation}\label{eq:EvSort}
\Ev(\mathfrak S_{n+1})=\mathcal L(P).
\end{equation}

It is natural to consider other compositions of noninvertible Bender--Knuth toggles. For example, \dfn{gyration} \cite{StrikerWilliams, Wieland} is the operator $\Gyr=\BK_{\text{even}}\BK_{\text{odd}}$, where $\BK_{\text{odd}}$ (respectively, $\BK_{\text{even}}$) denotes the product of the Bender--Knuth involutions with odd (respectively, even) indices. By replacing Bender--Knuth involutions with noninvertible Bender--Knuth toggles in the definition of gyration, we obtain the \dfn{extended gyration} operator $\Gyr\colon \mathfrak S_{n+1}\to \mathfrak S_{n+1}$. One could ask for an analogue of \eqref{eq:ProSort} for extended gyration. As a special case of \cref{thm:main2} below, we will find that 
\begin{equation}\label{eq:GyrSort}
\Gyr^{\left\lceil (n+1)/2\right\rceil}(\mathfrak S_{n+1})=\mathcal L(P)
\end{equation}
and that the quantity $\left\lceil (n+1)/2\right\rceil$ in this statement cannot be decreased. 
In fact, one of our results will generalize \eqref{eq:ProSort} and \eqref{eq:GyrSort} to operators obtained by composing $\tau_1,\ldots,\tau_{n}$ in an arbitrary order. We will also generalize \eqref{eq:EvSort} to any operator obtained by starting with a reduced word for the long element of $\mathfrak S_{n+1}$ and replacing each simple reflection by its corresponding noninvertible Bender--Knuth toggle. Moreover, we will state and prove these theorems in the more general setting where the symmetric group $\mathfrak S_{n+1}$ is replaced by any finite Coxeter group. Before stating these results, we need to discuss how to define noninvertible Bender--Knuth toggles for general Coxeter groups. 

\subsection{Noninvertible Bender--Knuth Toggles for Coxeter Groups}\label{subsec:intro_Coxeter} 

Let $(W,S)$ be a Coxeter system, and write $S=\{s_i:i\in I\}$, where $I$ is a finite index set. This means that $W$ is a group with a presentation of the form 
\begin{equation}\label{eq:presentation}
W=\langle S:(s_is_{i'})^{m(i,i')}=\mathbbm{1}\text{ for all }i,i'\in I\rangle,
\end{equation} where $m(i, i) = 1$ and $m(i,i')=m(i',i)\in\{2,3,\ldots\}\cup\{\infty\}$ for all distinct $i,i'\in I$. The \dfn{rank} of $(W,S)$ is $|S|=|I|$. We often refer to just the Coxeter group $W$, tacitly assuming that this refers to the Coxeter system $(W,S)$. 

The \dfn{Coxeter graph} of $W$ is the undirected edge-labeled graph $\Gamma_{W}$ with vertex set $I$ in which distinct vertices $i$ and $i'$ are joined by an edge labeled $m(i,i')$ whenever $m(i,i')\geq 3$. It is typical to omit the label ``$3$'' in drawings of Coxeter graphs. For $J\subseteq I$, the corresponding \dfn{(standard) parabolic subgroup} is the subgroup $W_J$ of $W$ generated by $\{s_i:i\in J\}$. The pair $(W_J,\{s_i:i\in J\})$ is a Coxeter system, and $\Gamma_{W_J}$ is the subgraph of $\Gamma_{W}$ induced by $J$.

The \dfn{root space} of $W$ is the vector space $V = \mathbb{R}^I$. Let $\{\alpha_i:i\in I\}$ be the standard basis of $V$; we call the elements of this basis the \dfn{simple roots}. For each pair $\{i,i'\}$ of indices in $I$ such that $m(i,i')=\infty$, choose a real number $\mu_{\{i,i'\}}\geq 1$. Define a symmetric bilinear form $B\colon V\times V\to\mathbb{R}$ by setting
\begin{equation}\label{eq:bilinear}
B(\alpha_i,\alpha_{i'})=\begin{cases} -\cos(\pi/m(i,i')) & \mbox{if $m(i,i')<\infty$;} \\ -\mu_{\{i,i'\}} & \mbox{if $m(i,i')=\infty$} \end{cases}
\end{equation} for all $i,i'\in I$ and extending bilinearly; we say $B$ is \dfn{induced} by $W$.\footnote{With this definition, $B$ is not uniquely determined by $W$ since we can freely choose the number $\mu_{\{i,i'\}}\geq 1$ whenever $m(i,i')=\infty$. However, all of our results and arguments outside of \cref{subsec:folding} are independent of these choices. Thus, unless explicitly stated otherwise, we will assume $\mu_{\{i,i'\}}=1$ whenever $m(i,i')=\infty$ so that we may refer to $B$ as ``the'' bilinear form induced by $W$.} The \dfn{standard geometric representation} of $W$ is the faithful representation $\rho\colon W\to\mathrm{GL}(V)$ defined by $\rho(s_i)\beta=\beta-2 B(\beta,\alpha_i)\alpha_i$. We often write $w \beta$ to mean $\rho(w) \beta$. The set $\Phi=\{w\alpha_i:w\in W,\,\, i\in I\}$ is called the \dfn{root system} of $W$; its elements are called \dfn{roots}. We say $\beta \in \Phi$ is \dfn{positive} if it is a nonnegative linear combination of the simple roots. We say $\beta$ is \dfn{negative} if $-\beta$ is positive. Let $\Phi^+$ and $\Phi^-$ denote the set of positive roots and the set of negative roots, respectively. We have $\Phi^- = -\Phi^+$ and $\Phi=\Phi^+\sqcup\Phi^-$. We say that a subset $R\subseteq\Phi$ is \dfn{closed} if whenever $\beta,\beta'\in R$ and $a,a'\geq 0$ satisfy $a\beta+a'\beta'\in\Phi$, we have $a\beta+a'\beta'\in R$. We say that $R \subseteq \Phi$ is \dfn{antisymmetric} if $R \cap (-R) = \varnothing$.

Let $V^*$ be the dual space of $V$. For $\beta\in\Phi$, consider the hyperplane 
\[\HH_\beta = \{f \in V^* : f(\beta) = 0\}\] in $V^*$. The \dfn{Coxeter arrangement} of $W$ is the collection $\mathcal H_W=\{\HH_\beta:\beta\in\Phi\}$. A \dfn{region} of $\mathcal H_W$ is the closure of a connected component of $V^* \setminus\bigcup_{\beta\in\Phi}\HH_\beta$. Consider the region \[{\BB}=\{f\in V^*:f(\alpha_i)\geq 0\text{ for all }i\in I\};\] the set of bounding walls of ${\BB}$ is $\{\HH_{\alpha_i}: i\in I\}$. There is a right action of $W$ on $V^*$ determined by the condition that $(fw)(\beta)=f(w\beta)$ for all $w\in W$, $\beta\in V$, and $f\in V^*$; this induces an action of $W$ on the set of regions of $\mathcal H_W$. The set ${\BB}W\subseteq V^*$ is called the \dfn{Tits cone}. The elements of $W$ are in bijection with the regions of $\mathcal H_W$ that are contained in the Tits cone via the bijection $u \mapsto \BB u$. The \dfn{positive projectivization} of the Tits cone is the quotient \[\mathbb P(\BB W)=(\BB W\setminus\{0\})/\mathbb R_{>0}.\] We may view each $\HH_\beta$ as a hyperplane in $\mathbb P(\BB W)$ and view $\mathcal H_W$ as a hyperplane arrangement in $\mathbb P({\BB W})$. Two distinct hyperplanes $\HH_\beta$ and $\HH_{\beta'}$ intersect in $\mathbb P(\BB W)$ if and only if $|B(\beta, \beta')| < 1$.  It is possible to equip $\mathbb P(\BB W)$ with a (not necessarily Riemannian) metric such that for all distinct $\beta, \beta' \in \Phi$ with $|B(\beta, \beta')| < 1$, the hyperplanes $\HH_\beta$ and $\HH_{\beta'}$ intersect at an angle $\theta$ that satisfies $\lvert\cos(\theta)\rvert = \lvert B(\beta, \beta')\rvert$.

For $\beta\in\Phi$, define the corresponding \dfn{half-spaces} \[H_\beta^+=\{w\in W:w\beta\in\Phi^+\}\quad\text{and}\quad H_\beta^-=\{w\in W:w\beta\in\Phi^-\}.\] Observe that these two sets are complements of one another in $W$ and that $H_\beta^- = H_{-\beta}^+$. Note that $\BB H_\beta^+$ is the part of the Tits cone that lies on one side of $\HH_\beta$, while $\BB H_\beta^-$ is the part of the Tits cone that lies on the other side.

For nonempty $\LL\subseteq W$, define \[\RR(\LL)=\{\beta\in\Phi:\LL\subseteq H_\beta^+\}.\] Note that $\RR(\LL)$ is closed and antisymmetric. The \dfn{convex hull} of $\LL$ is the intersection \[\bigcap_{\beta\in \RR(\LL)}H_\beta^+\] of all of the half-spaces that contain $\LL$ (if $\RR(\LL)=\varnothing$, this intersection is interpreted as all of $W$). We say $\LL$ is \dfn{convex} if it is equal to its own convex hull. In other words, a set is convex if it is an intersection of half-spaces.

\begin{remark}\label{rem:Cayley}
    Let $\Cay(W,S)$ be the left Cayley graph of $W$ generated by $S$. It is known (see \cite[Theorem~2.19]{Tits}) that a set $\LL\subseteq W$ is convex if and only if every minimum-length path in $\Cay(W,S)$ between two elements of $\LL$ has all of its vertices in $\LL$. This implies that the convex hull of a finite set is finite. 
\end{remark}

\begin{definition}\label{def:toggles}
Fix a nonempty convex subset $\LL$ of $W$. For each $i\in I$, define the \dfn{noninvertible Bender--Knuth toggle} $\tau_i\colon W\to W$ by \[\tau_i(u)=\begin{cases} u & \mbox{if $u^{-1}\alpha_i\in\RR(\LL)$;} \\ s_iu & \mbox{if $u^{-1}\alpha_i\not\in\RR(\LL)$.} \end{cases}
\]
\end{definition}
The preceding definition does not rely on the geometry of the Coxeter arrangement, but it has the following geometric interpretation. If we identify each element $w\in W$ with the region ${\BB w}$ of the Coxeter arrangement, then $\HH_{u^{-1}\alpha_i}$ is the unique hyperplane separating $u$ from $s_iu$. The toggle $\tau_i$ is defined so that $\tau_i(u)=u$ if and only if $\LL\cup\{u\}$ lies entirely on one side of $\HH_{u^{-1}\alpha_i}$. Equivalently, $\tau_i(u)=s_iu$ if and only if there is an element of $\LL$ on the same side of $\HH_{u^{-1}\alpha_i}$ as $s_iu$. Thus, moving from $u$ to $\tau_i(u)$ cannot take us ``strictly away from'' the convex set $\LL$.\footnote{One can also define the noninvertible Bender--Knuth toggles using the set of reflections rather than the root system (using \eqref{eq:inversions_roots} below). Although this perspective would make many of our proofs more complicated, it will be somewhat helpful in \cref{sec:ancient}.}  
 
\begin{example}\label{ex:partial-order}
Let $W=\mathfrak S_{n+1}$, and let $S = \{s_i : i \in [n]\}$, where $s_i=(i\,\,i+1)$. Then $(\mathfrak{S}_{n+1}, S)$ is the Coxeter system of type $A_n$. We can identify $V$ with the vector space \[\{(x_1,\ldots,x_{n+1})\in\mathbb R^{n+1}:x_1+\cdots+x_{n+1}=0\}\] via the isomorphism that sends the simple root $\alpha_i$ to $e_{i} - e_{i+1}$, where $e_j$ denotes the $j$-th standard basis vector of $\mathbb R^{n+1}$. We have \[\Phi^+=\{e_i-e_j:1\leq i<j\leq n+1\}\quad\text{and}\quad\Phi^-=\{e_j-e_i:1\leq i<j\leq n+1\}.\] The action of $W$ on $\Phi$ is given by $w(e_i - e_j) = e_{w(i)} - e_{w(j)}$. Hence, for any root $e_i - e_j \in \Phi$, we have \[H_{e_i-e_j}^+=\{w \in \mathfrak S_{n+1} : w(i) < w(j)\}.\]

Let $\LL \subseteq \mathfrak{S}_{n+1}$ be a nonempty set. We may define a relation $\leq_{\LL}$ on $[n+1]$ by declaring that $i \leq_{\LL} j$ if and only if either $i = j$ or $e_i - e_j \in \RR(\LL)$. Since $\RR(\LL)$ is closed and antisymmetric, the relation $\leq_{\LL}$ is a partial order. By the above discussion, the convex hull of $\LL$ is precisely the set of linear extensions of $([n+1],\leq_{\LL})$. In this way, nonempty convex subsets of $\mathfrak S_{n+1}$ correspond bijectively to partial orders on $[n+1]$, and the root-theoretic definition of $\tau_i$ in \Cref{def:toggles} agrees with the poset-theoretic definition in \eqref{eq:toggle_def_posets}. 
\end{example}

\subsection{Bender--Knuth Billiards}\label{subsec:billiards}
Fix a nonempty convex subset $\LL$ of a Coxeter group $W$, and use this convex set to define the noninvertible Bender--Knuth toggles $\tau_i$ for $i\in I$, as in \Cref{def:toggles}. 

Let $i_1,\ldots,i_n$ be an ordering of the elements of $I$. It is convenient to take $i_{k+n}=i_k$ for every positive integer $k$, thereby obtaining an infinite periodic sequence $i_1,i_2,i_3,\ldots$. If we start with an element $u_0\in W$ and apply the noninvertible Bender--Knuth toggles in the order $\tau_{i_1},\tau_{i_2},\tau_{i_3},\ldots$, we obtain a sequence $u_0,u_1,u_2,\ldots$, where $u_j=\tau_{i_j}(u_{j-1})$ for each $j\geq 1$. We call this sequence a \dfn{(Bender--Knuth) billiards trajectory}. We can draw the billiards trajectory in $\mathbb P({\BB W})$. In each of \Cref{fig:affineS3,fig:spherical,fig:affineS3_infinite,fig:hyperbolic}, we have drawn a billiards trajectory using a sequence of arrows, where each arrow points from $\BB u_{j-1}$ to $\BB u_j$ for some $j$.

We say a hyperplane $\HH_\beta$ is a \dfn{one-way mirror} if $\beta\in (\RR(\LL)\cup(-\RR(\LL))$; otherwise, we say that $\HH_\beta$ is a \dfn{window}. Somewhat informally, we imagine the billiards trajectory is the trajectory of a beam of light that starts at $u_0$. Each window allows the light to pass through it in either direction. By contrast, if $\beta\in\RR(\LL)$, then $\HH_\beta$ is a one-way mirror that allows the light to pass from $H_\beta^-$ to $H_\beta^+$ but reflects light that tries to pass in the other direction.  

\begin{example}
Let $W=\mathfrak S_4$, and consider the convex set \[\LL=\{\mathbbm{1},s_1,s_3,s_1s_3,s_2s_1s_3\}=\{(1,2,3,4),(2,1,3,4),(1,2,4,3),(2,1,4,3),(3,1,4,2)\},\] where in the last expression for $\LL$, we have represented each permutation $w\in \LL$ in its one-line notation $(w(1),w(2),w(3),w(4))$. Preserve the notation from \cref{ex:partial-order}. The Hasse diagram of $([4],\leq_{\LL})$ is \[\begin{array}{l}\includegraphics[height=1.533cm]{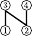}\end{array}.\] The Tits cone ${\BB W}$ is the entire $3$-dimensional space $V^*$, so ${\mathbb P({\BB W}) = ({\BB W} \setminus \{0\}) / \mathbb{R}_{> 0}}$ is a sphere. \cref{fig:spherical} depicts a stereographic projection of this sphere to a plane. Each of the $6$ hyperplanes in the Coxeter arrangement $\mathcal H_{\mathfrak S_4}=\{\HH_{e_i-e_j}:1\leq i<j\leq 4\}$ is drawn as a circle. Each of the $24$ regions of $\mathcal H_{\mathfrak S_4}$ is represented by a connected component of the space formed by removing the $6$ projected circles from the plane. Each region corresponds to a permutation $w \in \mathfrak S_4$, which we represent by the labeling
\[\begin{array}{l}\includegraphics[height=1.544cm]{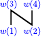}\end{array}\]
of the poset $([4],\leq_\LL)$ (for elements of $\LL$, the numbers are drawn in cyan). The regions labeled by elements of $\LL$ are shaded black. The windows---represented by thin gray circles---are $\HH_{e_1-e_2}$, $\HH_{e_3-e_4}$, and $\HH_{e_1-e_4}$, while the one-way mirrors---represented by circles colored yellow and red---are $\HH_{e_2-e_3}$, $\HH_{e_1-e_3}$, and $\HH_{e_2-e_4}$. The windows and one-way mirrors correspond (respectively) to incomparable and comparable pairs of elements of the poset $([4],\leq_\LL)$.  Fix the ordering $1,2,3$ of the index set $I$. The arrows in \cref{fig:spherical} indicate the billiards trajectory $u_0,u_1,u_2,\ldots$ that starts at the permutation ${u_0=2413}$ (marked with a brown dot). 
\end{example}

\begin{figure}
 \begin{center}{\includegraphics[width=\linewidth]{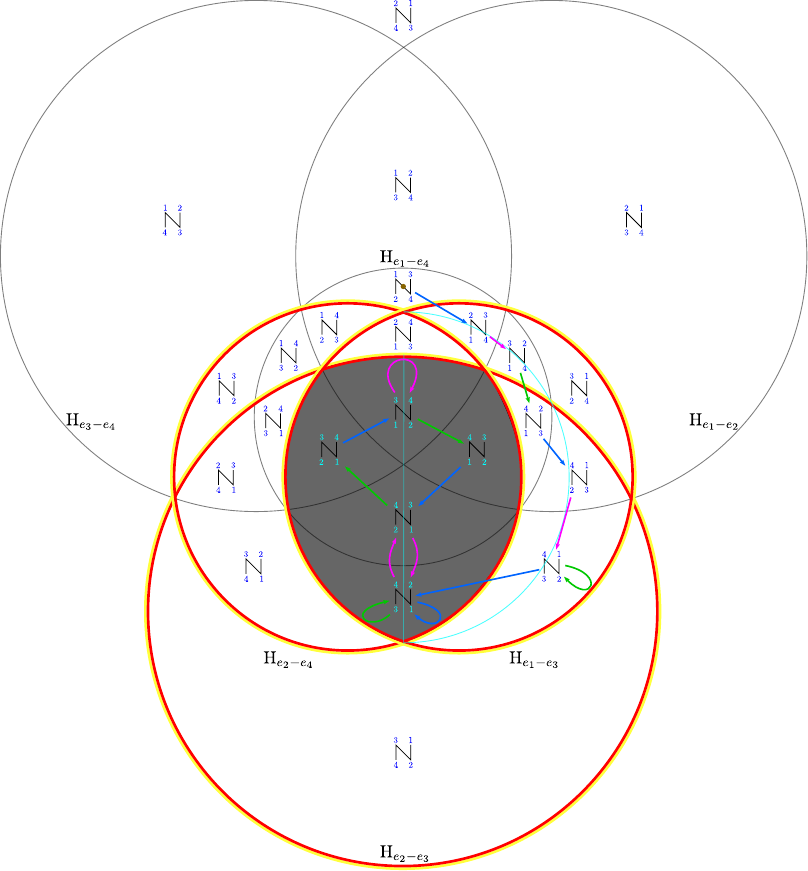}}
  \end{center}
\caption{
A stereographic projection of the Tits cone and Coxeter arrangement of $\mathfrak S_4$. The black convex set $\LL$ turns each hyperplane in the Coxeter arrangement into either a transparent window (indicated by a thin gray circle) or a one-way mirror (indicated by a circle colored yellow and red). Regions correspond to permutations in $\mathfrak S_4$, which are represented as labelings of a $4$-element ($\mathsf{N}$-shaped) poset. Arrows indicate the billiards trajectory determined by the starting permutation $u_0=2413$ (marked with a brown dot) and the ordering ${\color{ArrowBlue}1},{\color{Traj5}2},{\color{Traj4}3}$ of $I$. This billiards trajectory follows a (thin cyan) light beam. (The diagram is not to scale, so angles have been distorted.)
}\label{fig:spherical}
\end{figure}

\begin{figure}[ht]
  \begin{center}{\includegraphics[height=3.507cm]{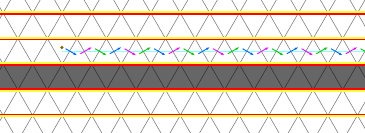}}
  \end{center}
\caption{If we choose our convex set $\LL$ to be an infinite strip (shown in black) in $\widetilde A_2$, then the billiards trajectory can ``escape to infinity'' without ever reflecting off of a mirror.}\label{fig:affineS3_infinite}
\end{figure}

\begin{figure}[ht]
 \begin{center}{\includegraphics[width=\linewidth]{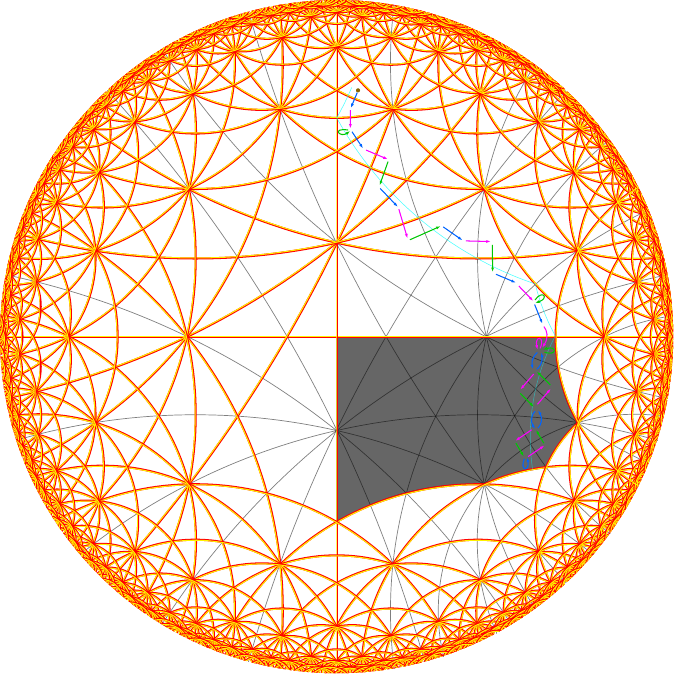}}
  \end{center}
\caption{
The Tits cone and Coxeter arrangement of the Coxeter group with Coxeter graph \!\!$\begin{array}{l} \includegraphics[height=0.373cm]{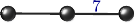}\end{array}$\!. We have passed to the positive projectivization $\mathbb{P}({\BB W})$, which is a hyperbolic plane, and then drawn the hyperbolic plane using the Poincar\'e disk model.
The black convex set $\LL$ turns each hyperplane in the Coxeter arrangement into either a transparent window (indicated by a thin gray line) or a one-way mirror (indicated by a line colored yellow and red). Arrows indicate a billiards trajectory that starts at the region marked with the brown dot. The billiards trajectory follows the (thin cyan) light beam. 
}\label{fig:hyperbolic}
\end{figure}

\begin{remark}
In each of \cref{fig:affineS3,fig:spherical,fig:affineS3_infinite,fig:hyperbolic}, we have drawn a thin, cyan, piecewise linear path that reflects off of the one-way mirrors according to the usual billiards rule (i.e., that the angle of incidence equals the angle of reflection), which has been studied vigorously and fruitfully in dynamics \cite{Hasselblatt, Kozlov, McMullen, Tabachnikov}. We call this cyan path a \dfn{light beam}. Our Bender--Knuth billiards trajectory appears to follow alongside the light beam, so it can be seen as a discretization of the light beam. It is natural to ask under what conditions a Bender--Knuth billiards trajectory will discretize a light beam (and how to even rigorously formalize this idea). We will discuss this in \cref{sec:linear}, where we will see that the Bender--Knuth billiards trajectory discretizes a light beam whenever $|I|\leq 3$ or the Coxeter element $c=s_{i_n}\cdots s_{i_1}$ is \emph{bipartite}.  
\end{remark}

Given a word $\mathsf{w}=i_M\cdots i_1$ over the alphabet $I$, it will be convenient to define the composition of toggles \[\tau_{\mathsf{w}}=\tau_{i_M}\cdots\tau_{i_1}.\] A \dfn{(standard) Coxeter element} of $W$ is an element obtained by multiplying the elements of $S$ together in some order so that each simple reflection appears exactly once in the product. Such an ordering $i_1,\ldots,i_n$ of $I$ gives rise to a reduced word $\sfc=i_n\cdots i_1$ representing a Coxeter element $c=s_{i_n}\cdots s_{i_1}$. Let \[\Pro_c=\tau_{\sfc}=\tau_{i_n}\cdots\tau_{i_1}.\] Because $\tau_i\tau_{i'}=\tau_{i'}\tau_i$ whenever $m(i,i')=2$, it follows from Matsumoto's theorem (\cref{thm:matsumoto} below) that $\Pro_c$ depends only on $c$ and not on the particular reduced word $\sfc$. Each operator $\tau_i$ restricts to a bijection from $\LL$ to itself; thus, $\Pro_c$ also restricts to a bijection from $\LL$ to itself. An element $u\in W$ is called a \dfn{periodic point} of $\Pro_c$ if there exists an integer $K\geq 1$ such that $\Pro_c^K(u)=u$.

Because noninvertible Bender--Knuth toggles never cause the billiards trajectory to move ``strictly away from'' $\LL$, one might expect (as we did for a while) that every periodic billiards trajectory must lie in $\LL$. As we will see, this is not always the case. Let us establish some terminology to describe the various scenarios that can occur.

\begin{definition}\label{def:futuristic}Let $\LL \subseteq W$ be a nonempty convex set, and define the noninvertible Bender--Knuth toggles with respect to $\LL$. Let $c$ be a Coxeter element of $W$.  
\begin{itemize}
\item We say $\LL$ is \dfn{heavy} with respect to $c$ if every periodic point of $\Pro_c$ lies in $\LL$. 
\item We say $\LL$ is \dfn{superheavy} with respect to $c$ if for every $u\in W$, there exists a nonnegative integer $K$ such that $\Pro_c^K(u)\in \LL$.\footnote{We imagine that $\LL$ is a black hole that always pulls light into it. It would be more consistent with astrophysical terminology to use the words ``massive'' and ``supermassive'' rather than ``heavy'' and ``superheavy''. However, doing so would prevent us from writing \cref{footnote:bttf}.}
\end{itemize}
\end{definition}
Note that if $\LL$ is superheavy with respect to $c$, then it is heavy with respect to $c$. The two definitions are equivalent if $W$ is finite.
\begin{remark}\label{rem:re-def}
One could rephrase \cref{def:futuristic} in the language of billiards trajectories instead of the operators $\Pro_c$. A convex set $\LL \subseteq W$ is heavy with respect to a Coxeter element $s_{i_n} \cdots s_{i_1}$ if and only if every periodic billiards trajectory (defined with respect to the convex set $\LL$ and the ordering $i_1, \ldots, i_n$ of $I$) lies in $\LL$. Similarly, $\LL$ is superheavy with respect to $s_{i_n} \cdots s_{i_1}$ if and only if every billiards trajectory eventually reaches $\LL$.
\end{remark}

As an illustration of the preceding definitions, \cref{thm:affineAC} (below) will tell us that every nonempty convex subset of $\widetilde A_2$ is heavy with respect to the Coxeter element $s_2s_1s_0$. This means that for every nonempty convex set $\LL \subseteq \widetilde A_2$ and every starting element $u_0 \in W \setminus \LL$, the billiards trajectory \[u_0, \tau_0(u_0), \tau_1\tau_0(u_0), \tau_2\tau_1\tau_0(u_0), \tau_0\tau_2\tau_1\tau_0(u_0), \ldots\] is not periodic. This does not necessarily imply that the billiards trajectory eventually reaches the set $\LL$. For example, \cref{fig:affineS3_infinite} depicts a billiards trajectory in $\widetilde A_2$ that does not reach $\LL$. Hence, the convex set depicted in \cref{fig:affineS3_infinite} is heavy but not superheavy with respect to $s_2s_1s_0$. 

\begin{restatable}{proposition}{propfinite}\label{prop:finite-to-infinite}
    Let $W$ be a Coxeter group, and let $c$ be a Coxeter element of $W$. The following are equivalent.
    \begin{enumerate}[(i)]
        \item\label{item:everything-heavy} Every nonempty convex set $\LL \subseteq W$ is heavy with respect to $c$. 
        \item\label{item:finite-heavy} Every nonempty finite convex set $\LL \subseteq W$ is heavy with respect to $c$.
        \item\label{item:sorting} Every nonempty finite convex set $\LL \subseteq W$ is superheavy with respect to $c$.
    \end{enumerate}
\end{restatable}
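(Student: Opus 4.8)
The plan is to prove the cyclic chain of implications (i)$\,\Rightarrow\,$(ii)$\,\Rightarrow\,$(iii)$\,\Rightarrow\,$(i). The implication (i)$\,\Rightarrow\,$(ii) is immediate, and the substance lies in the other two, both of which I would drive with a single \emph{monovariant}. For a nonempty convex set $\LL\subseteq W$ and an element $u\in W$, let $\Inv_\LL(u)=\{\beta\in\RR(\LL):u\in H_\beta^-\}$, the set of one-way mirrors that $u$ lies on the ``wrong'' side of, so that $u\in\LL$ if and only if $\Inv_\LL(u)=\varnothing$. This set is finite: fixing any $w_0\in\LL$, the map $\beta\mapsto w_0\beta$ injects $\Inv_\LL(u)$ into the finite set $\{\gamma\in\Phi^+:uw_0^{-1}\gamma\in\Phi^-\}$. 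The key claim is that $\Inv_\LL(\tau_i(u))\subseteq\Inv_\LL(u)$ for every $i\in I$ and every $u\in W$. This is clear when $\tau_i(u)=u$; when instead $\tau_i(u)=s_iu$, we have $u^{-1}\alpha_i\notin\RR(\LL)$, and for any $\gamma\in\RR(\LL)$ with $s_iu\in H_\gamma^-$ but $u\in H_\gamma^+$ the hyperplane $\HH_\gamma$ would separate $u$ from $s_iu$, forcing $\HH_\gamma=\HH_{u^{-1}\alpha_i}$ and hence $\gamma=\pm u^{-1}\alpha_i$; the choice $\gamma=u^{-1}\alpha_i$ contradicts $\gamma\in\RR(\LL)$, and the choice $\gamma=-u^{-1}\alpha_i$ contradicts $u\in H_\gamma^+$, so no such $\gamma$ exists. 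Consequently, along any billiards trajectory $u_0,u_1,u_2,\ldots$ the sets $\Inv_\LL(u_j)$ form a decreasing chain of finite sets.

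For (ii)$\,\Rightarrow\,$(iii), suppose $\LL$ is a nonempty finite convex set, fix $u_0\in W$, and assume for contradiction that $\Pro_c^K(u_0)\notin\LL$ for all $K\geq0$; since every toggle maps $\LL$ into itself, the full billiards trajectory $u_0,u_1,u_2,\ldots$ then avoids $\LL$. The decreasing chain $\Inv_\LL(u_0)\supseteq\Inv_\LL(u_1)\supseteq\cdots$ of finite sets stabilizes, so there is an index $j_0$ with $\Inv_\LL(u_j)=R_0$ for all $j\geq j_0$, where $R_0\neq\varnothing$ because $u_{j_0}\notin\LL$. For $j\geq j_0$ the root $\beta_j:=u_j^{-1}\alpha_{i_{j+1}}$ satisfies $u_j\beta_j=\alpha_{i_{j+1}}\in\Phi^+$, hence $u_j\in H_{\beta_j}^+$, so $\beta_j\notin\Inv_\LL(u_j)=R_0$. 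Now let $\LL^\sharp$ be the convex hull of $\LL\cup\{u_{j_0}\}$, which is finite by \cref{rem:Cayley}; from $\RR(\LL^\sharp)=\RR(\LL)\cap\RR(\{u_{j_0}\})$ and $\RR(\{u_{j_0}\})=u_{j_0}^{-1}\Phi^+$ one computes $\RR(\LL^\sharp)=\RR(\LL)\setminus R_0$. Since $\beta_j\notin R_0$, we get $\beta_j\in\RR(\LL^\sharp)$ if and only if $\beta_j\in\RR(\LL)$, so for each $j\geq j_0$ the toggle $\tau_{i_{j+1}}$ defined using $\LL^\sharp$ acts on $u_j$ exactly as the one defined using $\LL$; inducting from $u_{j_0}\in\LL^\sharp$ shows the tail $u_{j_0},u_{j_0+1},\ldots$ is precisely the billiards trajectory for $\LL^\sharp$ started at $u_{j_0}$, and this trajectory stays inside the finite set $\LL^\sharp$. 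Hence $\{\Pro_c^m(u_0):m\geq0\}$ is finite, so some $v=\Pro_c^a(u_0)$ with $an\geq j_0$ is a periodic point of $\Pro_c$; but $\Inv_\LL(v)=R_0\neq\varnothing$, so $v\notin\LL$, contradicting the heaviness of $\LL$ furnished by (ii).

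For (iii)$\,\Rightarrow\,$(i), suppose $\LL$ is a nonempty convex set admitting a periodic point $u_0\notin\LL$ of $\Pro_c$. Then the billiards trajectory from $u_0$ is periodic, so it visits only finitely many elements, and only finitely many roots occur among the $\beta_j:=u_j^{-1}\alpha_{i_{j+1}}$; call this finite set $\Psi$. Fix any $w_\ast\in\LL$, and for each $\beta\in\Psi\setminus\RR(\LL)$ pick $w_\beta\in\LL$ with $w_\beta\in H_\beta^-$ (possible because $\RR(\LL)=\bigcap_{w\in\LL}\RR(\{w\})$); let $\LL'$ be the convex hull of the finite set $\{w_\ast\}\cup\{w_\beta:\beta\in\Psi\setminus\RR(\LL)\}\subseteq\LL$, a nonempty finite convex subset of $\LL$ by \cref{rem:Cayley}. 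One checks $\RR(\LL')\cap\Psi=\RR(\LL)\cap\Psi$, so by induction the billiards trajectory for $\LL'$ started at $u_0$ coincides with the one for $\LL$; in particular $u_0$ is again a periodic point of $\Pro_c$ with respect to $\LL'$, while $u_0\notin\LL'$ because $\LL'\subseteq\LL$. But (iii) makes $\LL'$ superheavy, so $\Pro_c^K(u_0)\in\LL'$ for some $K\geq0$; since $\Pro_c$ maps $\LL'$ into itself and $u_0$ is periodic, this forces $u_0\in\LL'$, a contradiction. Hence every nonempty convex set is heavy, which is (i).

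I expect the crux to be the enlargement step $\LL\rightsquigarrow\LL^\sharp$ inside (ii)$\,\Rightarrow\,$(iii). It succeeds only because $\Inv_\LL(u_j)$ stabilizes to a fixed \emph{set} $R_0$ rather than merely to a fixed cardinality, and because this same $R_0$ turns out to be exactly the collection of mirrors that disappear upon adjoining $u_{j_0}$, i.e.\ $\RR(\LL^\sharp)=\RR(\LL)\setminus R_0$; it is this identity that makes each governing root $\beta_j$ transparent to the change of convex set, so that the tail of the $\LL$-trajectory is literally a trajectory for the finite set $\LL^\sharp$. By contrast, (iii)$\,\Rightarrow\,$(i) is the routine observation that a periodic trajectory interacts with only finitely many hyperplanes, and (i)$\,\Rightarrow\,$(ii) is trivial.
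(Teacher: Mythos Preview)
Your proof is correct. The organization and the bulk of the argument align with the paper: your monovariant $\Inv_\LL(u)$ is exactly the paper's $\Sep(u)$ (and your verification that it is weakly decreasing along the trajectory is \cref{lem:sep-decreasing}), and your ``finite witness'' construction in (iii)$\Rightarrow$(i)---choosing for each $\beta\in\Psi\setminus\RR(\LL)$ an element $w_\beta\in\LL\cap H_\beta^-$ and taking the convex hull---is precisely the paper's proof of (ii)$\Rightarrow$(i).

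The one genuine point of departure is your (ii)$\Rightarrow$(iii). The paper argues that once the separator set stabilizes to $R_0$, the tail of the trajectory lies in the stratum $\Str(R_0)$, and then invokes \cref{lem:referee} (which bounds $|\Str(R_0)|$ by counting windows) to conclude that the tail is finite and hence eventually periodic. You instead enlarge $\LL$ to $\LL^\sharp$, the convex hull of $\LL\cup\{u_{j_0}\}$, check that $\RR(\LL^\sharp)=\RR(\LL)\setminus R_0$, and observe that the tail is literally a trajectory for $\LL^\sharp$ and hence confined to the finite set $\LL^\sharp$. This is a neat repackaging: in fact $\Str(R_0)\subseteq\LL^\sharp$, so your argument implicitly reproves the finiteness of the relevant stratum without the explicit window count, relying instead on \cref{rem:Cayley}. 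Either route works; yours has the minor advantage of not needing \cref{lem:referee} as a separate lemma.
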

Note that by the paragraph preceding \cref{prop:finite-to-infinite}, the condition that every nonempty convex set $\LL \subseteq W$ is superheavy with respect to $c$ is strictly stronger than the conditions \eqref{item:everything-heavy}, \eqref{item:finite-heavy}, and \eqref{item:sorting} of \cref{prop:finite-to-infinite}.

The preceding proposition naturally motivates the following definitions, which are central to the present work.
\begin{definition}\ 
\begin{itemize}
\item We say a Coxeter element $c \in W$ is \dfn{futuristic} if every nonempty convex set $\LL \subseteq W$ is heavy with respect to $c$ (i.e., $c$ satisfies the three equivalent conditions of \cref{prop:finite-to-infinite}).
\item We say a Coxeter element $c \in W$ is \dfn{superfuturistic} if every nonempty convex set $\LL \subseteq W$ is superheavy with respect to $c$.
\item We say the Coxeter group $W$ is \dfn{futuristic} if every Coxeter element of $W$ is futuristic \linebreak(equivalently, if every nonempty convex subset of $W$ is heavy with respect to every Coxeter element of $W$).\footnote{\label{footnote:bttf}In the movie \emph{Back to the Future}, Doc asks Marty, ``Why are things so heavy in the future?''}
\item We say the Coxeter group $W$ is \dfn{superfuturistic} if every Coxeter element of $W$ is superfuturistic (equivalently, if every nonempty convex subset of $W$ is superheavy with respect to every Coxeter element of $W$).
\item We say the Coxeter group $W$ is \dfn{ancient} if no Coxeter element of $W$ is futuristic.  
\end{itemize}
\end{definition}

\begin{example}
    Let $W = \mathfrak{S}_{n+1} = A_n$, and let $c = s_n \cdots s_1$. By \cref{ex:partial-order}, each nonempty convex set $\LL \subseteq W$ has an associated partial order $P = ([n+1], \leq_{\LL})$ such that $\LL = \mathcal{L}(P)$. Additionally, the promotion operator $\Pro_c$ is equal to the extended promotion operator $\Pro$ defined in \cref{subsec:intro-posets}. By \eqref{eq:ProSort}, the convex set $\LL$ is superheavy with respect to $c$. As $\LL$ was arbitrary, $c$ is superfuturistic. Likewise, \eqref{eq:GyrSort} implies that the Coxeter element $(s_2s_4s_6\cdots)(s_1s_3s_5\cdots)$ is superfuturistic. We will vastly generalize these observations in \cref{thm:finite_futuristic}, which states that every Coxeter element of every finite Coxeter group is superfuturistic.
\end{example}

\begin{example}\label{ex:periodic-billiards-path}
The Coxeter group $\widetilde{D}_4$ has Coxeter graph 
\[\begin{array}{l}
\includegraphics[height=1.56cm]{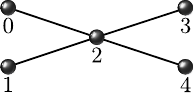}
\end{array}.\] 
Fix the ordering $0,1,2,3,4$ of $I$, and let $c=s_4s_3s_2s_1s_0$ be the corresponding Coxeter element. Let $\LL$ be the convex hull of $\{s_2,s_0s_2s_0,s_1s_2s_1,s_3s_2s_3,s_4s_2s_4\}$. It turns out that the billiards trajectory starting at $u_0=\id$ begins with 
\begin{alignat*}{6}
\mathbbm{1}&\xrightarrow{\tau_0}{}& \centcell{s_0} 
 &\xrightarrow{\tau_1}{}&\centcell{s_1s_0}&\xrightarrow{\tau_2}{}&\centcell{s_1s_0}&\xrightarrow{\tau_3}{}&\centcell{s_3s_1s_0}&\xrightarrow{\tau_4}{}&\centcell{s_4s_3s_1s_0}& \\ 
&\xrightarrow{\tau_0}{}&\centcell{s_4s_3s_1} &\xrightarrow{\tau_1}{}&\centcell{s_4s_3}&\xrightarrow{\tau_2}{}&\centcell{s_4s_3}&\xrightarrow{\tau_3}{}&\centcell{s_4}&\xrightarrow{\tau_4}{}&\centcell{\mathbbm{1}}&.
\end{alignat*} This shows that $\Pro_c^2(\id)=\id$, even though $\id\not\in\LL$. It follows that $\LL$ is not heavy with respect to $c$, so $c$ is not futuristic. In fact, we will see later (\cref{thm:ancient}) that $\widetilde D_4$ is ancient. 
\end{example}

\subsection{Main Results}
Our overarching goal is to understand which Coxeter groups are futuristic and which are ancient.
We begin with the following propositions, which state that the class of futuristic Coxeter groups and the class of non-ancient Coxeter groups are in some sense \emph{hereditary}. 

\begin{samepage}
\begin{proposition}\label{prop:hereditary} Let $W_J$ be a standard parabolic subgroup of a Coxeter group $W$. 
\begin{enumerate}[(i)]
\item If $W$ is futuristic, then $W_J$ is futuristic.  
\item If $W_J$ is ancient, then $W$ is ancient. 
\end{enumerate}
\end{proposition}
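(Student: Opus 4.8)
The plan is to deduce both parts of \cref{prop:hereditary} from a single structural lemma describing how the noninvertible Bender--Knuth toggles on $W$ behave on a standard parabolic subgroup. Write $I=J\sqcup K$, put $S_J=\{s_j:j\in J\}$, and let $\Phi_J\subseteq\Phi$ denote the root system of the Coxeter system $(W_J,S_J)$. Fix a nonempty set $\LL\subseteq W_J$ that is convex \emph{in $W_J$}, let $\tau_i$ ($i\in I$) be the toggles on $W$ defined from $\LL$ as in \cref{def:toggles}, and let $\tau_j^{W_J}$ ($j\in J$) be the toggles on $W_J$ defined from $\LL$ via the same definition applied to $(W_J,S_J)$. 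The lemma I would prove is: \textbf{(a)} $\LL$ is also convex in $W$; \textbf{(b)} each $\tau_i$ maps $W_J$ into itself, with $\tau_i|_{W_J}=\tau_i^{W_J}$ if $i\in J$ and $\tau_i|_{W_J}$ equal to the identity if $i\in K$.

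For \textbf{(a)} I would use the Cayley-graph characterization of convexity from \cref{rem:Cayley}: if $u,v\in\LL\subseteq W_J$, then a minimum-length path between them in $\Cay(W,S)$ corresponds to a reduced word for $vu^{-1}\in W_J$, which uses only letters of $J$ and therefore keeps every vertex of the path in $W_J$; such a path is also a minimum-length path in $\Cay(W_J,S_J)$, so all of its vertices lie in $\LL$ by convexity of $\LL$ in $W_J$. For \textbf{(b)} with $i=j\in J$: for $u\in W_J$ one has $u^{-1}\alpha_j\in\Phi_J$, and for $\beta\in\Phi_J$ and $w\in W_J$ the root $w\beta$ is positive in $W$ if and only if it is positive in $W_J$; together with $\LL\subseteq W_J$ this gives $u^{-1}\alpha_j\in\RR(\LL)$ if and only if $u^{-1}\alpha_j$ lies in the analogous set computed inside $W_J$, so $\tau_j$ and $\tau_j^{W_J}$ agree on $W_J$ (and both obviously preserve $W_J$). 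For \textbf{(b)} with $i\in K$: the essential input is the standard fact that $w\alpha_i\in\Phi^+$ for every $w\in W_J$ and every $i\notin J$ (an easy induction on the length of $w$, noting that the coefficient of $\alpha_i$ stays equal to $1$). Applying this to $w=vu^{-1}\in W_J$ for each $v\in\LL$ shows $\LL\subseteq H_{u^{-1}\alpha_i}^+$, i.e.\ $u^{-1}\alpha_i\in\RR(\LL)$, whence $\tau_i(u)=u$; thus $\tau_i$ acts as the identity on $W_J$.

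Granting the lemma, fix an ordering $i_1,\ldots,i_n$ of $I$, set $c=s_{i_n}\cdots s_{i_1}$ and $\sfc=i_n\cdots i_1$, and run $\Pro_c=\tau_{i_n}\cdots\tau_{i_1}$ starting from $u_0\in W_J$: by the lemma the trajectory never leaves $W_J$, the toggles indexed by $K$ do nothing, and the toggles indexed by $J$ act as their $W_J$-counterparts, so $\Pro_c|_{W_J}=\Pro_{c_J}$, where $c_J$ is the Coxeter element of $W_J$ spelled by the subword of $\sfc$ on the letters of $J$ (this subword is automatically a reduced word, since it uses each generator of $W_J$ exactly once). In particular, every periodic point of $\Pro_{c_J}$ in $W_J$ is a periodic point of $\Pro_c$ in $W$. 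For part (i): given any Coxeter element $c_J$ of $W_J$, pick an ordering of $I$ restricting on $J$ to the one defining $c_J$ (e.g.\ list $J$ in that order and then append the elements of $K$ in any order), yielding a Coxeter element $c$ of $W$; if $W$ is futuristic, then for any nonempty convex $\LL\subseteq W_J$ the set $\LL$ is convex in $W$ and hence heavy with respect to $c$, so every periodic point of $\Pro_{c_J}$ lies in $\LL$, i.e.\ $\LL$ is heavy with respect to $c_J$, and therefore $c_J$ is futuristic. For part (ii): if some Coxeter element $c$ of $W$ were futuristic, then for the induced $c_J$ and any nonempty convex $\LL\subseteq W_J$, heaviness of $\LL$ with respect to $c$ would force every periodic point of $\Pro_{c_J}$ into $\LL$, so $c_J$ would be futuristic, contradicting the assumption that $W_J$ is ancient.

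The only genuinely substantive step is part \textbf{(b)} of the lemma, and within it the case $i\in K$: it is precisely the positivity of $W_J$ acting on the simple roots outside $J$ that makes the corresponding toggles invisible on $W_J$ and lets $\Pro_c$, when launched from inside $W_J$, ``see'' only the $J$-letters of $\sfc$. Everything else is bookkeeping with half-spaces, the (standard) fact that parabolic subgroups are isometrically embedded and their elements have reduced words using only $S_J$ (used for part \textbf{(a)}), and a check that the conventions for which end of $\sfc$ is applied first are consistent so that the $J$-subword of $\sfc$ genuinely encodes a Coxeter element of $W_J$.
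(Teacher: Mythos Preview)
Your proof is correct and follows essentially the same approach as the paper's: the paper also reduces to the structural lemma that for a nonempty convex $\LL\subseteq W_J$, the set $\LL$ is convex in $W$, the toggles $\tau_i$ for $i\in I\setminus J$ act as the identity on $W_J$, and the toggles $\tau_j$ for $j\in J$ restrict to the $W_J$-toggles $\tau_j^J$, whence $\Pro_c|_{W_J}=\Pro_{c'}$ (the paper states this as \cref{prop:stronger_hereditary} and deduces \cref{prop:hereditary} from it). The only cosmetic difference is that the paper verifies convexity of $\LL$ in $W$ by computing $\RR(\LL)=\RR_J(\LL)\cup(\Phi^+\setminus\Phi_J)$ directly from half-spaces rather than via the Cayley-graph criterion you invoke.
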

\end{samepage}

Let us say a Coxeter group is \dfn{minimally non-futuristic} if it is not futuristic and all of its proper standard parabolic subgroups are futuristic. Likewise, say a Coxeter group is \dfn{minimally ancient} if it is ancient and none of its proper standard parabolic subgroups are ancient. \cref{prop:hereditary} reduces the problem of characterizing futuristic Coxeter groups and ancient Coxeter groups to the problem of characterizing minimally non-futuristic Coxeter groups and minimally ancient Coxeter groups.

Another tool that we can use to determine whether or not a Coxeter group is futuristic comes from the technique of \emph{folding}, defined in \cref{subsec:folding}. 

\begin{proposition}\label{prop:folding}
Let $W^{\fold}$ be a folding of a Coxeter group $W$. If $W$ is futuristic, then $W^{\fold}$ is futuristic.  
\end{proposition}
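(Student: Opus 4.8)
The plan is to use the description of folding from \cref{subsec:folding}: a folding $W^{\fold}$ of $W$ is the fixed subgroup $W^{\sigma}$ of a diagram automorphism $\sigma$ of $W$, its simple reflections are the elements $\bar s_{\bar i}=\prod_{j\in\bar i}s_j$ indexed by the $\sigma$-orbits $\bar i$ on $I$, and its simple roots are $\bar\alpha_{\bar i}=\sum_{j\in\bar i}\alpha_j$ (the members of each orbit are pairwise non-adjacent in $\Gamma_W$, so each $\bar s_{\bar i}$ is a well-defined involution and the $s_j$ with $j\in\bar i$ commute pairwise). Fix a nonempty convex set $\bar\LL\subseteq W^{\fold}$ and a Coxeter element $\bar c=\bar s_{\bar i_m}\cdots\bar s_{\bar i_1}$ of $W^{\fold}$, where $\bar i_1,\ldots,\bar i_m$ is an ordering of the $\sigma$-orbits; we must show $\bar\LL$ is heavy with respect to $\bar c$. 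Accordingly, let $\LL$ be the convex hull of $\bar\LL$ inside $W$, and let $c=c_m\cdots c_1$ with $c_\ell=\prod_{j\in\bar i_\ell}s_j$; since the orbits partition $I$, the element $c$ is a Coxeter element of $W$. The automorphism $\sigma$ induces an automorphism of $\Cay(W,S)$, hence preserves the word metric and therefore convexity (\cref{rem:Cayley}); since $\bar\LL$ is $\sigma$-invariant, so is $\LL$, and hence $\RR(\LL)=\RR(\bar\LL)$ is a $\sigma$-invariant subset of $\Phi$.

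The heart of the argument is the identity
\[\tau_{j_k}\circ\cdots\circ\tau_{j_1}\big|_{W^{\fold}}=\bar\tau_{\bar i}\qquad\text{for every orbit }\bar i=\{j_1,\ldots,j_k\},\]
where the left-hand toggles are those of $W$ defined from $\LL$ and $\bar\tau_{\bar i}$ is the toggle of $W^{\fold}$ defined from $\bar\LL$ (so $\bar\tau_{\bar i}(u)=u$ exactly when $u^{-1}\bar\alpha_{\bar i}\in\RR^{\fold}(\bar\LL):=\{\bar\beta\in\Phi_{W^{\fold}}:\bar\LL\subseteq H_{\bar\beta}^+\}$). To prove it, fix $u\in W^{\fold}=W^{\sigma}$. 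Because the $s_j$ with $j\in\bar i$ commute and fix one another's simple roots, the root governing whether the $t$-th toggle of the block acts is $u^{-1}\alpha_{j_t}$ regardless of what the earlier toggles of the block did; and since $\RR(\LL)$ is $\sigma$-invariant and $\sigma$ permutes $\{u^{-1}\alpha_j:j\in\bar i\}$ transitively, the conditions $u^{-1}\alpha_{j_t}\in\RR(\LL)$ either all hold or all fail. Thus the block fixes $u$ or sends $u\mapsto\bar s_{\bar i}u$, matching the two possibilities for $\bar\tau_{\bar i}(u)$, so it remains to show that $u^{-1}\alpha_{j_1}\in\RR(\LL)$ if and only if $u^{-1}\bar\alpha_{\bar i}\in\RR^{\fold}(\bar\LL)$. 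For any $w\in W^{\sigma}$ the roots $w\alpha_j$ ($j\in\bar i$) form a $\sigma$-orbit in $\Phi$, hence all share a sign, and $w\bar\alpha_{\bar i}=\sum_{j\in\bar i}w\alpha_j$ is a positive root of $W^{\fold}$ iff each $w\alpha_j$ is a positive root of $W$; unwinding this, $u^{-1}\bar\alpha_{\bar i}\in\RR^{\fold}(\bar\LL)$ is equivalent to $\bar\LL\subseteq\bigcap_{j\in\bar i}H_{u^{-1}\alpha_j}^+$, while $u^{-1}\alpha_{j_1}\in\RR(\LL)$ is equivalent (using $\sigma$-invariance of $\LL$) to $\LL\subseteq\bigcap_{j\in\bar i}H_{u^{-1}\alpha_j}^+$. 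These containments are equivalent because the set $\bigcap_{j\in\bar i}H_{u^{-1}\alpha_j}^+$ is an intersection of half-spaces, hence convex, and $\LL$ is the convex hull of $\bar\LL$. The same root dictionary shows $\LL\cap W^{\fold}=\bar\LL$: each $\bar\beta\in\RR^{\fold}(\bar\LL)$ has the form $\sum_{\gamma\in\mathcal O}\gamma$ for a $\sigma$-orbit $\mathcal O$ of roots of $W$ of common sign with each $\gamma\in\RR(\bar\LL)=\RR(\LL)$, and for $\bar u\in W^{\fold}$ we have $\bar u\in H_{\bar\beta}^+$ iff $\bar u\in H_{\gamma}^+$; as $\bar\LL$ is convex in $W^{\fold}$, intersecting these half-spaces over all $\bar\beta\in\RR^{\fold}(\bar\LL)$ recovers $\bar\LL$.

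Composing the block identity over $\ell=1,\ldots,m$ shows that $\Pro_c$ maps $W^{\fold}$ into itself and restricts there to $\Pro_{\bar c}$. Now assume $W$ is futuristic and let $\bar u$ be a periodic point of $\Pro_{\bar c}$ in $W^{\fold}$; then $\bar u$ is a periodic point of $\Pro_c$ lying in $W$, so heaviness of the nonempty convex set $\LL$ with respect to the Coxeter element $c$ forces $\bar u\in\LL$, whence $\bar u\in\LL\cap W^{\fold}=\bar\LL$. Therefore $\bar\LL$ is heavy with respect to $\bar c$; since $\bar\LL$ and $\bar c$ were arbitrary, $W^{\fold}$ is futuristic. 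I expect the second paragraph to be the main obstacle: extracting from the folding construction the precise dictionary between the root systems, half-spaces, and induced bilinear forms of $W$ and $W^{\fold}$---this is exactly where the freedom in the constants $\mu_{\{i,i'\}}$ enters, since the bilinear form induced by $W^{\fold}$ must be the restriction of $B$ to the fixed subspace---and then verifying the block identity with care at points $u\notin\LL$, where the genuine noninvertibility of the $\tau_i$ is in play.
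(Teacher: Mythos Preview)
Your proof is correct and takes essentially the same approach as the paper: the block-toggle identity is the paper's \cref{lem:iota_commutes}, the root dictionary is \cref{lem:iota_easy}, and showing $\LL\cap W^{\fold}=\bar\LL$ amounts to the paper's verification that $\iota(u)\notin\LL$ when $u\notin\LL^{\fold}$. One presentational difference worth noting: the paper keeps $W^{\fold}$ abstract with a homomorphism $\iota\colon W^{\fold}\to W$ and a separate root space $V^{\fold}$ linked to $V$ by a map $\omega$, rather than identifying $W^{\fold}$ with $W^{\sigma}$ and the simple roots with $\sum_{j\in\bar i}\alpha_j\in V$ as you do---so your worry about the bilinear form is handled not by restricting $B$ to $V^{\sigma}$ (which would give the $\bar\alpha_{\bar i}$ norm $|\bar i|$ rather than $1$), but by working in $V^{\fold}$ with the form $B^{\fold}$ of \cref{lem:form_induced}; since your argument uses only signs of roots, this normalization issue does not affect it.
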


Although we will not obtain a complete characterization of futuristic Coxeter groups, we will prove that several notable Coxeter groups are futuristic. 

\begin{theorem}\label{thm:finite_futuristic}
Every finite Coxeter group is superfuturistic.
\end{theorem}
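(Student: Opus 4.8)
The plan is to reduce the superfuturistic claim for a finite Coxeter group $W$ to a single strong statement about words for the longest element $\wo$, and then prove that statement by induction. Specifically, I would aim to show: if $i_N \cdots i_1$ is a reduced word for $\wo$, then $\tau_{i_N} \cdots \tau_{i_1}(W) = \LL$ for every nonempty convex $\LL \subseteq W$. This is exactly the ``extended evacuation sorts everything'' phenomenon generalized from type $A$ (equations~\eqref{eq:EvSort}), and since any billiards trajectory for a Coxeter element $c = s_{i_n}\cdots s_{i_1}$ contains, after sufficiently many periods, a string of toggles spelling out such a reduced word (one can always find a reduced word for $\wo$ that is a subword of $\sfc^m$ for $m$ large—indeed $c$ has finite order and a suitable power's reduced word can be extracted, or more carefully, a reduced word for $\wo$ can be obtained by greedily reading off ascents along $\sfc, \sfc, \sfc, \ldots$), this immediately gives that every $u \in W$ has some $\Pro_c^K(u) \in \LL$. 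Since $W$ is finite, heavy and superheavy coincide, so every Coxeter element is superfuturistic and hence $W$ is superfuturistic.

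The core lemma—$\tau_{i_N}\cdots\tau_{i_1}(W) = \LL$ for a reduced word of $\wo$—I would prove by induction on $N = \ell(\wo)$, peeling off the last letter. Write $w_\circ = s_j w'$ where $\ell(w') = N-1$ and $j = i_N$, so $i_{N-1}\cdots i_1$ is a reduced word for $w'$. The key structural fact is that $w'$, being obtained from $\wo$ by removing one left descent, is itself the longest element of a coset-type situation: more usefully, $\{w' , \wo\}$-type reasoning shows that after applying $\tau_{i_{N-1}}\cdots\tau_{i_1}$, every element lands in a set that is ``one toggle away'' from being inside $\LL$. The precise inductive hypothesis I would use is that $\tau_{i_{N-1}}\cdots\tau_{i_1}(W)$ is contained in the convex hull of $\LL \cup s_j\LL$ (or some similar ``$\LL$ thickened in direction $j$'' set), and then applying $\tau_j$ collapses this onto $\LL$ because $\tau_j$ sends each element either to itself (if already on the $\LL$-side of $\HH_{u^{-1}\alpha_j}$) or across. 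To make the induction close, I would reformulate everything in terms of the statistic ``number of one-way mirrors separating $u$ from $\LL$,'' i.e., $|\{\beta \in \RR(\LL) : u \in H_{-\beta}^+\}|$, and show that a reduced word for $\wo$ forces this statistic down to $0$; the reduced-word condition guarantees that the toggles are applied in an order compatible with the weak order, so no mirror that has been ``crossed correctly'' gets un-crossed.

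The main obstacle I anticipate is controlling the interaction between the noninvertibility of the toggles and the convexity of $\LL$: when $\tau_i$ fixes an element $u$ with $u^{-1}\alpha_i \in \RR(\LL)$, it ``merges'' the region $\BB u$ and $\BB s_i u$, and I need to be sure that this merging is consistent across the whole group so that the image really is all of $\LL$ and not some larger or smaller set. In type $A$ this is handled by the explicit combinatorics of the symmetric group (promotion/evacuation on linear extensions), but for general finite $W$ the cleanest route is probably to use the bijection between $W$ and regions of $\mathcal H_W$ together with the fact (Remark~\ref{rem:Cayley}) that convex sets are ``gallery-connected,'' and to track how a reduced word for $\wo$ sweeps a hyperplane across the whole arrangement exactly once. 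I would expect the final write-up to state the strong form ($\tau_{i_N}\cdots\tau_{i_1}(W)=\LL$) as the real theorem, deduce superfuturism as a corollary via the subword observation, and also extract from the proof the sharp bound $\MM(c)$ on the number of periods of $\Pro_c$ needed—namely the smallest $m$ such that $\sfc^m$ contains a reduced word for $\wo$ as a subword.
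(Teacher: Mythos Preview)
Your high-level strategy matches the paper exactly: reduce superfuturism to the strong claim that $\tau_{i_N}\cdots\tau_{i_1}(W)=\LL$ whenever $i_N\cdots i_1$ is a reduced word for $\wo$ (this is \cref{thm:main1}), and then deduce \cref{thm:main2} via the subword/commutation-equivalence argument for $\sfc^{\MM(c)}$. That part is fine.

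The genuine gap is in your proposed proof of the key lemma. Your inductive hypothesis --- that after applying the toggles for a reduced word of $w'=s_j\wo$ the image lies in the convex hull of $\LL\cup s_j\LL$ --- does not close under induction: $i_{N-1}\cdots i_1$ is a reduced word for an element of length $\ell(\wo)-1$, but the word $i_{N-2}\cdots i_1$ used at the next step is a reduced word for some element of length $\ell(\wo)-2$, and there is no natural ``$\LL$ thickened by one reflection'' description of its image. Likewise, your reformulation in terms of the scalar statistic $|\Sep(u)|$ runs into the difficulty that a single toggle need not decrease $|\Sep(u)|$ at all, so ``a reduced word of length $N$ forces the statistic to zero'' has no obvious inductive mechanism; the reduced-word condition constrains the order of the $s_{i_j}$'s, not the order of the mirrors actually separating $u$ from $\LL$.

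The paper's missing ingredient is \cref{lem:sep-containment}: for any reduced word $\mathsf w$ for $w$, one has
\[
\Sep(\tau_{\mathsf w}(u))\ \subseteq\ \Sep\bigl(w^{-1}\tau_{\mathsf w}(u)\bigr).
\]
This is the correct inductive statement (induction on $\ell(w)$, peeling one letter at a time), and it works for \emph{every} $w$, not just those near $\wo$. Once you have it, \cref{thm:main1} is one line: if $\beta\in\Sep(\tau_{\mathsf w_\circ}(u))$, then $\beta\in\Sep(\wo^{-1}\tau_{\mathsf w_\circ}(u))$ as well, so both $\tau_{\mathsf w_\circ}(u)\beta$ and $\wo\tau_{\mathsf w_\circ}(u)\beta$ lie in $\Phi^-$, contradicting $\wo\Phi^+=\Phi^-$. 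Your instinct to track separators was right; what you were missing is that the set $\Sep(\cdot)$, not its cardinality, carries the induction, and that the comparison should be with the separators of the \emph{unfolded} element $w^{-1}\tau_{\mathsf w}(u)$ rather than with some thickening of $\LL$.
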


\begin{theorem}\label{thm:affineAC}
Every affine Coxeter group of type $\widetilde A$, $\widetilde C$, or $\widetilde G_2$ is futuristic.
\end{theorem}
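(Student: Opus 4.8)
The plan is to reduce, via \cref{prop:finite-to-infinite}, to showing that for each group $W$ in the statement and each Coxeter element $c$ of $W$, every nonempty \emph{finite} convex set $\LL\subseteq W$ is heavy with respect to $c$. First I would dispense with the easy reductions. A product of futuristic Coxeter groups is futuristic: every convex subset of $W_1\times W_2$ is a product $\LL_1\times\LL_2$ of convex subsets (because the roots of a product partition into those of the two factors), every Coxeter element has the form $c_1c_2$, and $\Pro_{c_1c_2}=\Pro_{c_1}\times\Pro_{c_2}$ since the toggles coming from $W_1$ commute with those coming from $W_2$, so the periodic points of $\Pro_{c_1c_2}$ form a product of the two periodic-point sets. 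Hence it suffices to treat the irreducible groups $\widetilde A_n$, $\widetilde C_n$, and $\widetilde G_2$. Of these, $\widetilde G_2$, $\widetilde A_1=\widetilde C_1$, $\widetilde A_2$, and $\widetilde C_2$ have rank at most $3$ and hence are futuristic by the fact that rank-three Coxeter groups are futuristic. Moreover, for $n\ge 2$ the group $\widetilde C_n$ is the folding of $\widetilde A_{2n-1}$ along the reflection of the $2n$-cycle $\Gamma_{\widetilde A_{2n-1}}$ through a pair of antipodal nodes (one checks that no two nodes in a common orbit of this automorphism are adjacent, so this is a folding in the sense of \cref{subsec:folding} and the folded diagram is that of $\widetilde C_n$), so by \cref{prop:folding} the theorem follows once we show that $\widetilde A_n$ is futuristic for every $n$. (Alternatively, $\widetilde C_n$ can be treated directly by an argument parallel to the one for $\widetilde A_n$, using the model of $\widetilde C_n$ by affine signed permutations.)

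So fix $W=\widetilde A_n$, a finite nonempty convex set $\LL\subseteq W$, a Coxeter element $c=s_{i_n}\cdots s_{i_1}$, and the associated periodic sequence $(i_j)_{j\ge1}$. For $u\in W$ let $\operatorname{sep}(u)=\#\{\beta\in\RR(\LL):u\beta\in\Phi^-\}$ be the number of one-way mirrors separating $\BB u$ from $\LL$; this is finite, and $\operatorname{sep}(u)=0$ if and only if $u\in\bigcap_{\beta\in\RR(\LL)}H_\beta^+=\LL$. Unwinding \cref{def:toggles}, one checks that along any billiards trajectory $u_0,u_1,u_2,\dots$ we have $\operatorname{sep}(u_j)-\operatorname{sep}(u_{j-1})\in\{-1,0\}$, the value $-1$ occurring precisely when the step from $u_{j-1}$ to $u_j$ crosses a one-way mirror. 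Now suppose $u_0$ is a periodic point of $\Pro_c$, so that $(u_j)$ is periodic; then $\operatorname{sep}$ is eventually constant, hence constant, say equal to $d_0$. If $d_0=0$ then $u_0\in\LL$, as wanted. If $d_0>0$, then no step of the trajectory crosses a one-way mirror, so $(u_j)$ is confined to the single cell of the one-way-mirror arrangement containing $u_0$, which is not the cell $\bigcap_{\beta\in\RR(\LL)}H_\beta^+=\LL$; in particular $(u_j)$ is a periodic billiards trajectory all of whose executed steps cross windows. I claim such a trajectory must be constant. Granting this, every $\tau_i$ fixes $u_0$, so $u_0^{-1}\alpha_i\in\RR(\LL)$ for every $i\in I$, whence $\LL\subseteq\bigcap_{i\in I}H_{u_0^{-1}\alpha_i}^+=\{u_0\}$, using that $\mathbbm 1$ is the only element of $W$ sending every simple root to a positive root. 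Then $\LL=\{u_0\}$ and $u_0\in\LL$, a contradiction. Thus $d_0=0$ always, $\LL$ is heavy with respect to $c$, and $\widetilde A_n$ is futuristic.

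The remaining point is the Claim: in $\widetilde A_n$, for a finite convex set $\LL$, every periodic billiards trajectory all of whose executed steps cross windows is constant. This is the heart of the argument and the step I expect to be the main obstacle; it is genuinely false in general (for instance in $\widetilde D_4$, by \cref{ex:periodic-billiards-path}), so the proof must use the structure of type $\widetilde A$. My plan for it is to pass to the combinatorial model in which $\widetilde A_n$ is the group of $(n{+}1)$-periodic bijections $w\colon\mathbb Z\to\mathbb Z$ with $\sum_{k=1}^{n+1}(w(k)-k)=0$; then a finite convex set corresponds to an $(n{+}1)$-periodic partial order $P$ on $\mathbb Z$, the convex hull of $\LL$ is the set of $(n{+}1)$-periodic linear extensions of $P$, and $\tau_i$ is the affine Bender--Knuth toggle that swaps the values congruent to $i$ and $i+1$ modulo $n+1$ unless this is forbidden by a covering relation of $P$. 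In this language an executed step crosses a window exactly when it swaps two $P$-incomparable values, and the plan is to show that a nonconstant trajectory of this kind makes strictly positive net translational progress over each period of $(i_j)$ — morally because the light beam it discretizes, finding inside the relevant cell no $P$-comparabilities to reflect off except the walls of that cell, and living in a group of affine type, cannot close up — which contradicts periodicity. The genuine difficulty is to make this precise when the Coxeter element $c$ is not bipartite, which happens for even $n$ (the $(n{+}1)$-cycle then being an odd cycle), so that the billiards trajectory need not literally track a single light beam; when $|I|\le 3$ or $c$ is bipartite, the light-beam results of \cref{sec:linear} supply the needed control cleanly, so the even-$n$, non-bipartite case is where the main work lies.

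Finally, once $\widetilde A_n$ is known to be futuristic for all $n$, the group $\widetilde C_n$ ($n\ge 2$) is futuristic by the folding reduction of the first paragraph, the groups $\widetilde G_2$, $\widetilde A_1$, $\widetilde A_2$, $\widetilde C_2$ are futuristic by the rank-three result, and products are handled as above; this proves \cref{thm:affineAC}. (In fact the same monovariant argument, together with the observation that for finite $\LL$ every cell of the one-way-mirror arrangement is bounded, yields the stronger statement that these groups are superfuturistic, although only heaviness is needed here.)
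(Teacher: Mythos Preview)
Your high-level reductions are the same as the paper's: irreducibility, rank $\leq 3$ for $\widetilde G_2$ and the small cases, and folding $\widetilde A_{2n-1}\to\widetilde C_n$ via \cref{prop:folding}. The genuine gap is your Claim for $\widetilde A_n$. You explicitly acknowledge that you have not proved it, and the sketch (``net translational progress'' via the affine-permutation model, with the non-bipartite case left as the main obstacle) is not yet an argument. Note also that your Claim---that a periodic trajectory confined to a proper stratum must be \emph{constant}---is stronger than necessary, and framing it this way does not actually reduce the problem: once you feed the constant case into your $\LL=\{u_0\}$ contradiction, what remains to be shown is precisely ``no periodic trajectory lives in a proper stratum,'' i.e., futuristicity itself.

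The paper avoids this circularity by proving a weaker, more tractable intermediate statement. Given a periodic trajectory in a proper stratum with transmitting root $\beta$, it introduces a quantity $\gamma_j = c_j^K u_j\beta - u_j\beta$ lying in the one-dimensional invariant space $V^W=\mathbb R\delta$ and shows, via a short computation, that $\gamma_j-\gamma_{j-1}$ is a nonnegative multiple of $\delta$ whenever the trajectory hits a mirror (and zero otherwise). Periodicity forces all these increments to vanish, which yields $B(u_{j-1}\beta,\alpha_{i_j})=0$ at every mirror hit (\cref{thm:completely_orthogonal_lemma}); multiplying the reflections corresponding to the mirrors then gives $u_0^{-1}c^K u_0\in W^\beta$, hence $c^K\in W^\gamma$ for some root $\gamma$ (\cref{thm:power-completely-orthogonal}). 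The type-$\widetilde A$ input is then purely combinatorial: one checks with affine permutations that no power of any Coxeter element of $\widetilde A_{n-1}$ can fix a residue class modulo $n$, so $c^K\notin W^\gamma$. This is the precise version of your ``translational progress'' intuition---the quantity $\gamma_j\in V^W$ is exactly the component of the motion along the imaginary-root direction---but it is applied to the transmitting root rather than to the whole trajectory, and it yields an orthogonality constraint on the mirrors rather than constancy of $(u_j)$. This is what makes the argument go through uniformly in $n$, bipartite or not.
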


\begin{theorem}\label{thm:right-angled}
Every right-angled Coxeter group is superfuturistic. 
\end{theorem}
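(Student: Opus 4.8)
The plan is to find a nonnegative integer monovariant along billiards trajectories that vanishes exactly on $\LL$. For a nonempty convex $\LL\subseteq W$ set
\[
\phi(u)=\bigl|\{\beta\in\RR(\LL):u\notin H_\beta^+\}\bigr|,
\]
the number of one-way mirrors having $\LL$ entirely on their far side from $u$. For any $v\in\LL$, each such $\beta$ satisfies $u\beta\in\Phi^-$ and $v\beta\in\Phi^+$, so $\phi(u)\le\dist(u,v)$; hence $\phi$ is finite, and $\phi(u)=0$ iff $u\in\LL$. (When $W$ is right-angled, $\Cay(W,S)$ is a median graph, $\LL$ is gated, and in fact $\phi(u)=\dist(u,\LL)$, although only $\phi(u)\le\dist(u,v)$ is needed here.) A short computation using $s_i\Phi^-=(\Phi^-\setminus\{-\alpha_i\})\cup\{\alpha_i\}$ gives, for every $i\in I$: if $\tau_i(u)=u$ then $\phi(\tau_i(u))=\phi(u)$; if $\tau_i(u)=s_iu$ then $\phi(\tau_i(u))=\phi(u)$ when $\HH_{u^{-1}\alpha_i}$ is a window, and $\phi(\tau_i(u))=\phi(u)-1$ when $\HH_{u^{-1}\alpha_i}$ is a one-way mirror (which is then necessarily crossed from its negative half-space to its positive one). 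Thus $\phi$ is nonincreasing under every $\tau_i$, so along any billiards trajectory $u_0,u_1,u_2,\dots$ the values $\phi(u_j)$ stabilize at some $m\ge 0$, and the theorem is exactly the assertion that $m=0$.

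Suppose instead $\phi(u_j)=m>0$ for all $j$ (after shifting the start of the trajectory). Then no step crosses a one-way mirror from its negative to its positive side; and a one-way mirror $\HH_\gamma$ with $\gamma\in\RR(\LL)$ is never crossed the other way either, since $\tau_i$ crosses $\HH_{u^{-1}\alpha_i}=\HH_\gamma$ only if $u^{-1}\alpha_i=\pm\gamma$, and the choice $u^{-1}\alpha_i=\gamma\in\RR(\LL)$ makes $\tau_i$ fix $u$. So the trajectory never crosses any $\HH_\gamma$ with $\gamma\in\RR(\LL)$: it stays inside the convex set $R$ obtained by intersecting, over all $\gamma\in\RR(\LL)$, whichever of $H_\gamma^+,H_\gamma^-$ contains $u_0$; this $R$ is nonempty, convex, and disjoint from $\LL$. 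Moreover every step either fixes its input or crosses a window, and the trajectory moves infinitely often: were it eventually constant at $u$, then $\tau_i(u)=u$ for every $i\in I$ (the periodic index sequence runs through all of $I$), i.e. $u^{-1}\alpha_i\in\RR(\LL)$ for all $i$; since $\RR(\LL)$ is closed, this forces $u^{-1}\Phi^+\subseteq\RR(\LL)$ and hence $u\in\bigcap_{\beta\in\RR(\LL)}H_\beta^+=\LL$, a contradiction.

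Next I would track the gate $g_j\in\LL$ of $u_j$ (convex subsets of a right-angled $W$ are gated, since $\Cay(W,S)$ is a median graph). Writing $\Sep(a,b)$ for the set of hyperplanes of $\mathcal H_W$ separating $a$ from $b$ and using the cocycle identity $\Sep(a,c)=\Sep(a,b)\bigtriangleup\Sep(b,c)$, one checks that the gate walk $g_0,g_1,g_2,\dots$ mimics the trajectory step for step: a ``stay'' step fixes the gate, and a window crossing $u_{j-1}\mapsto u_j$ is matched by a crossing of the same hyperplane $g_{j-1}\mapsto g_j$ inside $\LL$. One further checks that $\Sep(u_j,g_j)$ is a fixed set of $m$ windows throughout, and that the trajectory is simultaneously a billiards trajectory for the convex set $R$.

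The main obstacle is the last step: ruling out such a trapped trajectory in the Davis complex of $W$, which for right-angled $W$ is a $\mathrm{CAT}(0)$ cube complex in which any two crossing walls are perpendicular. When $\LL$ is finite — or, more generally, when the trajectory is bounded — the trajectory lies in a finite set, hence is eventually periodic, and one must exclude a periodic billiards trajectory that crosses only windows and stays at distance $m>0$ from $\LL$; this case amounts to showing that right-angled Coxeter groups are futuristic and can be attacked with the help of \cref{prop:finite-to-infinite}. The genuinely hard case is an unbounded trapped trajectory: one must rule out the phenomenon — which does occur for $\widetilde A_2$ (see \cref{fig:affineS3_infinite}) but should not for right-angled groups — of the light beam running parallel to $\LL$ forever without ever reaching it. I expect the resolution to use perpendicularity of walls to organize the windows crossed by the trajectory into flat strips along which the gate walk proceeds, and then to exploit the fact that the periodic index sequence $i_1,i_2,\dots$ probes every generator in every period to force one of the $\tau_i$ eventually to cross one of the $m$ one-way mirrors bounding $R$, contradicting the trapped hypothesis. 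Pinning this down, most naturally via the median-graph (equivalently, heap) combinatorics of right-angled Coxeter groups, is where the real work lies.
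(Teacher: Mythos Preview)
Your proposal is not a proof: you explicitly leave the main step unfinished (``Pinning this down\ldots is where the real work lies''). The monovariant $\phi(u)=|\Sep(u)|$ and its monotonicity are exactly the paper's \cref{lem:sep-decreasing}, and your reduction to a trajectory trapped in a single proper stratum is correct. But the median-graph/gate machinery you begin to set up is neither completed nor needed, and it is unclear how it would resolve the unbounded case you yourself flag.

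The missing idea is the small-root theory of \cref{sec:small}. For a right-angled group, \cref{prop:small-roots-description} gives $\Sigma=\{\alpha_i:i\in I\}$: the small roots are precisely the simple roots. Now suppose the trajectory is trapped in a proper stratum $\Str(R)$ and let $\beta$ be a transmitting root (\cref{lem:transmitting-wall}). By \cref{lem:transmitting-small}, $-u_j\beta$ is small, hence simple, for every $j$; say $-u_{j-1}\beta=\alpha_{i'}$. If $u_j=s_{i_j}u_{j-1}$ then $-u_j\beta=s_{i_j}\alpha_{i'}$ must again be simple, which in the right-angled case forces $m(i_j,i')=2$ and $-u_j\beta=\alpha_{i'}$. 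Thus $-u_j\beta=\alpha_{i'}$ for all $j$. At the step where $i_j=i'$ (which occurs in every period), we cannot have $u_j=s_{i'}u_{j-1}$ (since $s_{i'}\alpha_{i'}=-\alpha_{i'}$ is not small), but \cref{lem:super-strong-acute-angle} (encoded as the solid edge $\alpha_{i'}\to\ominus$ in ${\bf G}_W$) also forbids $u_j=u_{j-1}$. This contradiction is the entire content of the paper's proof via \cref{lem:closed-billiards-walk-superfuturistic}, and it handles the bounded and unbounded cases uniformly with no appeal to CAT(0) geometry.
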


\begin{theorem}\label{thm:complete}
Every Coxeter group with a complete Coxeter graph is futuristic. 
\end{theorem}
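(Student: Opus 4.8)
The plan is to deduce \cref{thm:complete} from \cref{prop:finite-to-infinite} together with a structural description of periodic billiards trajectories. By \cref{prop:finite-to-infinite}, it suffices to fix a nonempty \emph{finite} convex set $\LL\subseteq W$ and a Coxeter element $c=s_{i_n}\cdots s_{i_1}$ and to prove that every periodic point $u$ of $\Pro_c$ lies in $\LL$. If $W$ is finite this is immediate from \cref{thm:finite_futuristic}, so I may assume $W$ is infinite; since $\Gamma_W$ is complete and thus connected, $W$ is irreducible, so $c$ has infinite order. Let $u_0=u,u_1,u_2,\dots$ be the associated billiards trajectory, with $u_{Kn}=u_0$. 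Two reductions, valid in any Coxeter group, do most of the soft work. First, since every $\tau_i$ maps $\LL$ into $\LL$, if some $u_j$ lay in $\LL$ then so would $u_0$; hence if $u\notin\LL$ the entire trajectory avoids $\LL$. Second, a one-way mirror $\HH_\gamma$ (with $\gamma\in\RR(\LL)$) can be crossed by the trajectory only from $H_\gamma^-$ to $H_\gamma^+$, so on a periodic trajectory, which crosses each hyperplane equally often in each direction, it is not crossed at all. Therefore the trajectory lies in a single cell $C$ of the arrangement of one-way mirrors. Since $\LL$ is itself exactly the cell of that arrangement on which all half-space choices are positive, it remains only to rule out the case $C\neq\LL$, i.e.\ the existence of $\gamma_0\in\RR(\LL)$ with $C\subseteq H_{\gamma_0}^-$.

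This last step is where completeness of $\Gamma_W$ (equivalently, the absence of any commuting pair in $S$) must enter, and I expect it to be the main obstacle. The basic local observation is that, inside $C$, when $\tau_i$ is applied to a trajectory element $w$, either it is inactive (so $w^{-1}\alpha_i\in\RR(\LL)$) or it crosses a window; it can never happen that $w^{-1}\alpha_i=-\gamma$ with $\gamma\in\RR(\LL)$, since then $s_iw$ would lie on the $H_\gamma^+$ side while $C\subseteq H_\gamma^-$, ejecting the trajectory from $C$. In particular, if no toggle along the trajectory were ever inactive, then one full round of $\Pro_c$ would act on the trajectory as left multiplication by $c$, forcing $u_{Kn}=c^Ku_0\neq u_0$, a contradiction; so inactive toggles must occur, meaning the trajectory repeatedly touches correctly-oriented walls of $C$. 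The plan for finishing is to track, along the trajectory, the root $u_j^{-1}\alpha_{i_{j+1}}$ together with a suitable ``wall-distance'' measuring how far $u_j$ sits from the incorrectly-oriented mirror $\HH_{\gamma_0}$, and to show that because consecutive-in-$c$ generators never commute, the positions at which inactive toggles occur cannot be permanently screened off from $\HH_{\gamma_0}$: over every round in which some toggle is inactive the wall-distance strictly drops, and it is unchanged otherwise. Periodicity then forces the wall-distance down to $0$, so the next toggle carries the trajectory across $\HH_{\gamma_0}$, out of $C$, which is the desired contradiction. The delicate point is choosing this wall-distance correctly: it cannot be the plain Cayley-graph distance to $\LL$, which need not be monotone along trajectories, and I anticipate one must instead count only the window-hyperplanes separating $u_j$ from $\LL$, or argue directly with the linear action of $c$ on $V$ in the neighborhood of $\gamma_0$.

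An alternative way to organize the final step is by induction on $|I|$: every proper standard parabolic of $W$ again has a complete Coxeter graph, so (with rank $\le 3$ supplied by the rank-$3$ result quoted in the introduction and rank $\le 2$ by \cref{thm:finite_futuristic,thm:affineAC}) all proper standard parabolic subgroups of $W$ are futuristic; one would then try to show that a periodic trajectory trapped in a cell $C\neq\LL$ forces a reduction to a proper parabolic coset --- for instance if some index of $I$ never occurs as an active toggle over a period --- leaving only an ``all indices eventually active'' case to be handled by the argument of the previous paragraph. In either organization the reductions in the first paragraph are routine and the entire difficulty is concentrated in the assertion that, when $\Gamma_W$ is complete, no periodic billiards trajectory can be confined to a cell other than $\LL$; using the absence of commuting generators to push such a trajectory across an incorrectly oriented one-way mirror is the crux.
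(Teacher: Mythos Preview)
Your soft reductions are correct and essentially reproduce the paper's ``strata'' setup: the periodic trajectory lies in a single proper stratum $\Str(R)$ (your cell $C$), and the problem is to derive a contradiction from $R\neq\varnothing$. But the decisive step is missing. You never define the ``wall-distance'', and the monotonicity claim (``over every round in which some toggle is inactive the wall-distance strictly drops, and it is unchanged otherwise'') is asserted without argument. In fact it is not even internally consistent: you only established that \emph{some} toggle over the full period is inactive, not that one is inactive in each round, so you cannot conclude the potential ever drops. And in a round where every toggle is active the trajectory moves by left multiplication by $c$, so any geometrically meaningful distance certainly changes; a potential that is both invariant under $c$-translation and strictly decreased by reflection off a mirror would have to be constructed rather delicately. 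Your only specific suggestion, counting window-hyperplanes separating $u_j$ from $\LL$, does not obviously work: crossing a window changes that count by $\pm 1$, and an inactive toggle does not change it at all, so there is no reason for it to be monotone. The alternative inductive route has a circularity issue as written, since the paper's rank-$3$ theorem (Case~8 of its proof) invokes \cref{thm:complete} rather than the other way around; you would have to supply the rank-$3$ complete case directly, and then still handle the ``all indices active somewhere'' case, which is exactly the hard part.

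The paper's argument is entirely different and avoids any potential-function idea. It uses the small-root machinery of \cref{sec:small}: by \cref{lem:transmitting-wall,lem:transmitting-small}, a transmitting root $\beta$ of the stratum yields a periodic sequence $\gamma_j=-u_j\beta$ of \emph{small} roots. The key structural fact, which is where completeness of $\Gamma_W$ enters, is \cref{lem:complete_small_root_graph}: when $\Gamma_W$ is complete, every small root lies in $\Sigma_{\le 2}$, so the small-root billiards graph $\mathbf G_W$ coincides with the explicitly described graph $\mathbf G_{W,\le 2}$ of \cref{lem:rank-2-small-root-graph}. The $\gamma_j$'s then give a billiards-plausible closed walk in $\mathbf G_W$ whose edge-label word represents $\mathbbm 1$; Matsumoto's theorem forces a nil move or a braid move on that word, and either possibility is incompatible with the structure of $\mathbf G_{W,\le 2}$ (a nil move forces a vertex with at most one solid outgoing edge, whereas every vertex in $\Sigma_{\le 2}$ has at least two; a braid move forces an alternating walk of length $m(i,i')$, whereas the relevant paths have length at most $m(i,i')-1$). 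That is the mechanism by which ``no commuting pairs'' is used: it bounds $B$-values and forces $\Sigma=\Sigma_{\le 2}$, not by any distance-shrinking dynamics.
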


\begin{theorem}\label{thm:rank_3}
Every Coxeter group of rank at most $3$ is futuristic. 
\end{theorem}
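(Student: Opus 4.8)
The plan is to derive the theorem from the results already established together with the two‑dimensional geometry of $\mathbb P(\BB W)$ that becomes available when $|I|\le 3$. By \cref{prop:finite-to-infinite}, it suffices to show: for every Coxeter element $c$ of $W$ and every nonempty \emph{finite} convex set $\LL\subseteq W$, every periodic point of $\Pro_c$ lies in $\LL$.

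\textbf{Reduction to the irreducible, infinite, non-affine rank-$3$ case.} Suppose $\Gamma_W$ is disconnected and write $W=W_1\times W_2$ accordingly. Then $\Cay(W,S)$ is the Cartesian product of $\Cay(W_1,S_1)$ and $\Cay(W_2,S_2)$, so by the description of convexity in \cref{rem:Cayley} every nonempty convex $\LL\subseteq W$ is a product $\LL_1\times\LL_2$ of nonempty convex sets. A routine check using \cref{def:toggles} shows that, for $i$ in the index set of $W_1$, the toggle $\tau_i$ changes only the first coordinate and there agrees with the toggle of $W_1$ defined by $\LL_1$; since the toggles attached to $W_1$ commute (as operators) with those attached to $W_2$, the operator $\Pro_c$ splits as a product $\Pro_{c_1}\times\Pro_{c_2}$ for the induced Coxeter elements $c_j\in W_j$. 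Hence the set of periodic points of $\Pro_c$ is the product of the periodic point sets of $\Pro_{c_1}$ and $\Pro_{c_2}$, and it suffices to treat $W_1$ and $W_2$ separately. Iterating, we may assume $W$ is irreducible. If $W$ is finite we are done by \cref{thm:finite_futuristic}; if $W$ has rank at most $2$ then $W$ is dihedral, hence finite or equal to $\widetilde A_1$, and we are done by \cref{thm:finite_futuristic} or \cref{thm:affineAC}. So assume $W$ is irreducible, infinite, of rank exactly $3$; by \cref{thm:affineAC} we may further assume $W$ is not affine, and then $W$ is of indefinite (Lorentzian) type, so $\mathbb P(\BB W)$ is naturally a convex subset of the hyperbolic plane $\mathbb H^2$ in which each hyperplane $\HH_\beta$ is a geodesic, the family $\{\HH_\beta\}$ is locally finite, and intersection angles are governed by $B$ as recalled in the introduction.

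\textbf{A monovariant and confinement.} For $u\in W$, let $\phi(u)$ be the number of $\beta\in\RR(\LL)$ with $u\in H_\beta^-$, i.e., the number of one-way mirrors separating $u$ from the convex hull of $\LL$ (which equals $\LL$, as $\LL$ is convex). A short computation from \cref{def:toggles} shows $\phi(\tau_i(u))\le\phi(u)$ for all $i$ and $u$, with equality unless $\tau_i$ moves $u$ across a one-way mirror—which can only happen in the direction of $\LL$—in which case $\phi$ decreases by exactly $1$; crossing a window, or staying fixed, leaves $\phi$ unchanged. Hence $\phi$ is constant along any \emph{periodic} billiards trajectory, so a periodic trajectory never crosses a one-way mirror and is therefore confined to a single region $C$ of the one-way-mirror subarrangement $\mathcal M(\LL)=\{\HH_\beta:\beta\in\RR(\LL)\cup(-\RR(\LL))\}$. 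The region of $\mathcal M(\LL)$ on which $\phi$ vanishes is precisely the convex hull of $\LL$; so a periodic point of $\Pro_c$ can fail to lie in $\LL$ only if some periodic billiards trajectory is confined to a region $C$ of $\mathcal M(\LL)$ \emph{other than} the convex hull of $\LL$. It remains to rule this out.

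\textbf{The geometric endgame (the main obstacle).} By the analysis of light beams in \cref{sec:linear}—legitimate because $|I|\le 3$—a periodic billiards trajectory discretizes a closed light beam $\ell$, which, being confined to $C$, is a closed broken geodesic in $C$: it reflects off some walls $\HH_{\beta_1},\dots,\HH_{\beta_k}$ of $C$ (necessarily with $C,\LL\subseteq H_{\beta_j}^+$) and never meets a wall $\HH_\beta$ of $C$ with $C\subseteq H_\beta^-$. Since $C$ is not the convex hull of $\LL$, it possesses at least one wall $\HH_\delta$ of the latter kind—a wall ``facing toward $\LL$''—and $\ell$ stays strictly inside $H_\delta^-$, never touching $\HH_\delta$. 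To obtain a contradiction I would pass to the convex hull $K$ of $\ell$ in $\mathbb H^2$: since $\ell$ is compact, $K$ is a compact convex polygon whose extreme points occur at reflection points of $\ell$ and hence lie on $\HH_{\beta_1}\cup\dots\cup\HH_{\beta_k}$, so $K\subseteq\bigcap_j H_{\beta_j}^+$ and $K\subseteq H_\delta^-$, whereas $\LL\subseteq\bigcap_j H_{\beta_j}^+\cap H_\delta^+$. The hard part will be to show that this configuration cannot occur: the idea is to exploit the one-way property of $\HH_\delta$ (it reflects any beam crossing from $H_\delta^+$ to $H_\delta^-$) together with the fact that $\HH_\delta$ is a wall of the $\mathcal M(\LL)$-region $C$ containing $K$—so $\HH_\delta$ must pass through $C$ ``in front of'' part of the polygon $K$—to force $\ell$ to meet $\HH_\delta$, a contradiction. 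I expect a Gauss--Bonnet / turning-angle bookkeeping for the closed geodesic polygon $\ell$ (or for $\partial K$) to be the natural organizing tool, with the finiteness of $\LL$ entering through the local finiteness of $\mathcal M(\LL)$ near the convex hull of $\LL$, which ensures $C$ has only finitely many walls facing that hull and that $K$ is genuinely a compact polygon. This step is precisely where the hypothesis $|I|\le 3$ is essential: such confinement \emph{does} occur in higher rank—it is why $\widetilde D_4$, for instance, is ancient (see \cref{ex:periodic-billiards-path} and \cref{thm:ancient})—so any correct argument must use two-dimensionality in an essential way.
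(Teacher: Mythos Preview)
Your proposal has genuine gaps at two places.

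First, invoking \cref{thm:affineAC} to dispose of the affine rank-$3$ groups is circular: in this paper the $\widetilde G_2$ case of \cref{thm:affineAC} is deduced \emph{from} \cref{thm:rank_3} (see the end of \cref{sec:affine}). Your reduction therefore leaves $\widetilde G_2$ unhandled, and your geometric endgame would have to cover it. This matters because $\widetilde G_2$ is exactly the borderline case where closed walks satisfying \cref{def:closed-billiards-walk}\eqref{item:closed-billiards-walk-solid-edge-use} \emph{do} exist (see \cref{fig:small-root-billiards-graph-example} and Case~3 of the paper's proof), and one must use the identity condition \eqref{item:closed-billiards-walk-identity-product} to rule them out.

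Second, and more seriously, the ``geometric endgame'' is a plan rather than a proof. Two specific problems:
\begin{itemize}
\item \cref{sec:linear} does not show that a periodic billiards trajectory yields a \emph{closed} light beam. The construction there starts from a chosen cordial ray $\rr$ and folds it into a curve $\pp$ whose sequence of visited regions matches the trajectory; but periodicity of the discrete sequence $u_0,u_1,\ldots$ does not force $\pp$ to close up---after one period the beam returns to the region $\BB u_0$ but in general at a different point, related to the old one by an isometry. Producing a genuinely closed beam would require a separate fixed-point argument, which you do not supply.
\item Even granting a closed beam $\ell$, the sentence ``The hard part will be to show that this configuration cannot occur'' is where essentially all of the content of the theorem lies. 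Your Gauss--Bonnet/turning-angle suggestion is not carried out, and any such argument must account for why it collapses in rank $4$ (where \cref{ex:periodic-billiards-path} shows confinement \emph{does} happen) yet survives in rank $3$. In the hyperbolic case you also need to handle the possibility that the stratum $C$ has walls that are pairwise ultraparallel, so that no angle-sum obstruction is available. As written, the proposal stops exactly where the real work begins.
\end{itemize}

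For comparison, the paper's proof avoids continuous geometry entirely. It computes the finite set of small roots and the small-root billiards graph ${\bf G}_W$ case by case in the parameters $(m(1,3),m(1,2),m(2,3))$, and then uses \cref{lem:closed-billiards-walk-futuristic}: either ${\bf G}_W$ has no closed walk satisfying \cref{def:closed-billiards-walk}\eqref{item:closed-billiards-walk-solid-edge-use}, or every such walk violates \eqref{item:closed-billiards-walk-identity-product} because the corresponding word is reduced. The cases $(2,3,r)$ with $r\ge 6$ and $(2,q,r)$ with $4\le q\le r$ are handled explicitly this way.
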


The next theorem exhibits a vast (infinite) collection of ancient Coxeter groups. 

\begin{theorem}\label{thm:ancient}
Fix integers $a,a',b,b'\geq 3$. If $W$ is a Coxeter group whose Coxeter graph has one of the following Coxeter graphs as an induced subgraph, then $W$ is ancient: 
\[\includegraphics[height=1.273cm]{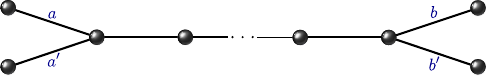}\,\,,\] 

\[\includegraphics[height=1.281cm]{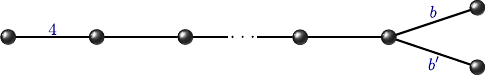}\,\,,\] 

\[\includegraphics[height=1.281cm]{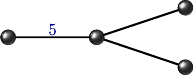}\,\, ,\]

\[\includegraphics[height=1.273cm]{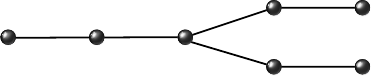}\,\, ,\]

\[\includegraphics[height=1.273cm]{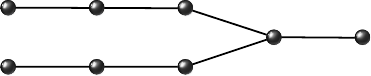}\,\, ,\]

\[\includegraphics[height=1.273cm]{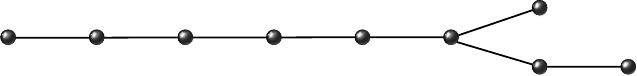}\,\,,\]

\[\includegraphics[height=0.373cm]{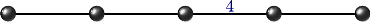}\,\, .\]
(The number of vertices in the first graph in this list can be any integer that is at least $5$, while the number of vertices in the second graph can be any integer that is at least $4$.)
\end{theorem}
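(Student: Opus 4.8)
The plan is to reduce the theorem to a finite collection of explicit computations. By \cref{prop:hereditary}(ii), ancient-ness is inherited by any Coxeter group containing an ancient standard parabolic subgroup, so it suffices to prove that, for each of the seven graphs $\Gamma$ in the list (with the parameters $a,a',b,b'\geq 3$ fixed), the Coxeter group $W=W(\Gamma)$ with Coxeter graph $\Gamma$ is itself ancient. So fix such a $W$ and a Coxeter element $c$ of $W$; the goal is to exhibit a nonempty convex set $\LL\subseteq W$ that is not heavy with respect to $c$. By \cref{prop:finite-to-infinite} it is enough to find a \emph{finite} such $\LL$, and by \cref{rem:Cayley} the convex hull of a finite set is finite, so $\LL$ may be specified as the convex hull of a short explicit list of elements.

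The construction generalizes \cref{ex:periodic-billiards-path}. For each $\Gamma$ I would single out a distinguished vertex or short segment of $\Gamma$ (the central node in the $\widetilde D_4$ picture) and take $\LL$ to be the convex hull of a small ``star'' of short elements and reflections clustered around it, such as $\{s_{v_0}\}\cup\{s_v s_{v_0} s_v : v\sim v_0\}$, adjusted for the labels $a,a',b,b'$ and for the shape of $\Gamma$ (for the path-like graphs $\LL$ will be centered on an edge or segment rather than a single vertex). One then checks that the billiards trajectory starting at $u_0=\mathbbm{1}$ (or a short conjugate thereof) is periodic: it travels ``outward'' from the fundamental region through a half-period and then returns, so that $\Pro_c(\mathbbm{1})=w$ and $\Pro_c(w)=\mathbbm{1}$, where $w$ is a product of pairwise-commuting simple reflections and hence an involution. (For the longer diagrams the period may be larger, but the orbit is still finite and explicit.) Finally one verifies that no element of this orbit lies in $\LL$; concretely, one exhibits a positive root $\gamma$ with $\LL\subseteq H_\gamma^-$ while the whole orbit lies in $H_\gamma^+$, so that $\HH_\gamma$ is a one-way mirror separating the periodic trajectory from $\LL$ (in \cref{ex:periodic-billiards-path} one may take $\gamma=\alpha_2$). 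This produces a convex set that is not heavy with respect to $c$, so $c$ is not futuristic; since $c$ was arbitrary, $W$ is ancient.

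To keep the number of Coxeter elements finite, I would reduce using symmetry. Diagram automorphisms of $\Gamma$ act on acyclic orientations and preserve the property of being futuristic (this is immediate from \cref{def:toggles}), so only one representative per automorphism class of orientations is needed. Moreover, when $i$ is a source of the orientation defining $c$, the identity $\Pro_{s_i c s_i}\circ\tau_i=\tau_i\circ\Pro_c$ (immediate from the definition of $\Pro_c$) pushes a periodic $\Pro_c$-orbit forward along $\tau_i$ to a periodic $\Pro_{s_i c s_i}$-orbit; since the graphs in the list are trees, rotations together with diagram automorphisms act transitively on Coxeter elements, so in principle one orbit computation propagates to all of them. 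The caveat is that $\tau_i$ is noninvertible, so one must check that the transported orbit avoids the convex hull of the transported set $\tau_i(\LL)$; I would arrange this either by choosing $\LL$ invariant under the relevant rotations, or by verifying that the separating-root condition survives transport.

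The main obstacle will be making the half-period computation and the disjointness $\text{orbit}\cap\LL=\varnothing$ \emph{uniform} across the two infinite families $\widetilde D_n$ ($n\geq 4$) and $\widetilde B_n$ ($n\geq 3$): their Coxeter graphs are not induced subgraphs of one another's, so no single finite computation dominates either family. I would resolve this by giving a rank-independent description of the trajectory, using that the toggles applied along the interior path of the diagram telescope in a predictable way, so that the half-period has a closed form in $n$ and the separating root $\gamma$ can be written down explicitly. The labeled-edge families ($a,a',b,b'\geq 3$) add a second layer of bookkeeping, since the roots arising along the trajectory acquire coefficients depending on these parameters; the point to verify is that the relevant values $B(\alpha_i,\alpha_j)$ (which satisfy $\lvert B(\alpha_i,\alpha_j)\rvert\geq\tfrac12$ on the labeled edges) never push an orbit element into $\LL$, which should follow by a monotonicity argument in $a,a',b,b'$. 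Once these uniform computations are in hand, the remaining exceptional graphs have fixed, small rank and can be checked directly, exactly as in \cref{ex:periodic-billiards-path}.
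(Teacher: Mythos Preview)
Your overall strategy matches the paper's: reduce via \cref{prop:hereditary}(ii) to the listed groups themselves, then for each one exhibit a finite convex set $\LL$ and a periodic billiards trajectory disjoint from it. The paper's execution differs from your plan in two places worth noting.

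First, your handling of ``all Coxeter elements'' is more fragile than necessary. You propose to transport an explicit periodic orbit from $c$ to $s_ics_i$ via $\tau_i$, and you correctly flag the danger that $\tau_i$ could push the orbit into $\LL$. The paper sidesteps this entirely: \cref{prop:conjugate_Coxeter} proves abstractly that conjugate Coxeter elements are simultaneously futuristic or not (the argument runs in the contrapositive direction, so the noninvertibility of $\tau_i$ never bites), and \cref{cor:tree} then says that for a tree one Coxeter element suffices. So you need only a single explicit computation per graph, with no transport and no caveats.

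Second, the actual convex sets are more delicate than the ``star'' you sketch, and the paper exploits structure you do not mention. For the $\widetilde B$-type family the paper builds $\LL$ as the convex hull of a set $\mathscr K\cup\mathscr K t^*$ symmetric under right multiplication by a fixed reflection $t^*=s_0s_1s_0$; this forces $\tau_i(ut^*)=\tau_i(u)t^*$ for all $i$, which halves the verification (one shows $\Pro_c^{n-1}(\mathbbm 1)=t^*$ and the symmetry gives $\Pro_c^{2n-2}(\mathbbm 1)=\mathbbm 1$). For the $\widetilde D$-type family the construction is similar but without the $t^*$-symmetry, and the period is $n-1$. For $\widetilde E_6$ and $\widetilde E_7$ the paper does not compute directly at all: it observes that both fold onto $\widetilde F_4$ and invokes \cref{prop:folding}, so only $\widetilde F_4$ and $\widetilde E_8$ need explicit (computer-checkable) verification. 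Your plan would work in principle, but these shortcuts are what make the argument tractable.
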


\cref{thm:affineAC,thm:ancient} allow us to completely describe which affine Coxeter groups are futuristic and which are ancient. By \cref{rem:irreducible_futuristic}, it suffices to consider the irreducible affine Coxeter groups. The irreducible affine Coxeter groups not listed in \cref{thm:affineAC} are those of types $\widetilde B$, $\widetilde D$, $\widetilde E_6$, $\widetilde E_7$, $\widetilde E_8$, and $\widetilde F_4$, and these all appear in the list in \cref{thm:ancient}. 

\begin{corollary}\label{cor:affineBD}
The irreducible affine Coxeter groups of types $\widetilde A$, $\widetilde C$, and $\widetilde G_2$ are futuristic; all other irreducible affine Coxeter groups are ancient.     
\end{corollary}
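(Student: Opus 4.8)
The plan is to read this off from Theorem~\ref{thm:affineAC} and Theorem~\ref{thm:ancient} together with the classification of irreducible affine Coxeter groups, so the corollary is essentially a bookkeeping argument. First, by \cref{rem:irreducible_futuristic} it suffices to treat the irreducible affine Coxeter groups, which are exactly those of types $\widetilde A_n$ $(n\geq 1)$, $\widetilde B_n$ $(n\geq 3)$, $\widetilde C_n$ $(n\geq 2)$, $\widetilde D_n$ $(n\geq 4)$, $\widetilde E_6$, $\widetilde E_7$, $\widetilde E_8$, $\widetilde F_4$, and $\widetilde G_2$ (with the usual low-rank identifications $\widetilde B_2=\widetilde C_2$ and $\widetilde A_1=\widetilde C_1$). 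The types $\widetilde A$, $\widetilde C$, and $\widetilde G_2$ are futuristic by \cref{thm:affineAC}, which gives the first assertion.

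For the second assertion I would show that each remaining type -- namely $\widetilde B_n$ $(n\geq 3)$, $\widetilde D_n$ $(n\geq 4)$, $\widetilde E_6$, $\widetilde E_7$, $\widetilde E_8$, and $\widetilde F_4$ -- has a Coxeter graph containing one of the seven graphs from \cref{thm:ancient} as an induced subgraph, so that \cref{thm:ancient} immediately yields that it is ancient. Concretely: the Coxeter graph of $\widetilde D_n$ with $n\geq 4$ is (on $n+1\geq 5$ vertices) already of the form of the first listed family, and deleting a single pendant vertex from it produces the $D_n$ diagram on $n\geq 4$ vertices, matching the second listed family; the graphs of $\widetilde E_6$, $\widetilde E_7$, $\widetilde E_8$ each contain a trivalent vertex whose three neighbors are pairwise nonadjacent, hence contain the four-vertex star $D_4$; the graph of $\widetilde B_n$ with $n\geq 4$ contains the $D_n$ diagram once one deletes the endpoint of its label-$4$ edge; and the two small edge-labeled exceptions $\widetilde B_3$ and $\widetilde F_4$ are matched directly against the explicitly drawn labeled graphs in the list (with the label parameters $a,a',b,b'$ specialized appropriately, e.g.\ to $4$). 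Since the types handled in the two paragraphs are disjoint and together exhaust all irreducible affine Coxeter groups, the dichotomy follows.

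All the genuine mathematical work sits inside \cref{thm:affineAC} and \cref{thm:ancient}, which are invoked here as black boxes; the corollary itself requires only diagram inspection. The one place that calls for a little care -- and the closest thing to an obstacle -- is making the induced-subgraph verification uniform across the infinite families $\widetilde B_n$ and $\widetilde D_n$ and simultaneously correct for the low-rank edge-labeled cases $\widetilde B_3$ and $\widetilde F_4$, where one cannot simply excise an unlabeled $D_m$ subdiagram and must instead appeal to the labeled graphs in \cref{thm:ancient}. It is also worth double-checking the boundary conventions for small $n$ so that no irreducible affine type is double-counted or omitted.
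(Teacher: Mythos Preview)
Your overall strategy matches the paper's: invoke \cref{thm:affineAC} for the futuristic types and \cref{thm:ancient} for the rest. The paper's argument, however, is simpler than yours---it observes that each of $\widetilde B_n$, $\widetilde D_n$, $\widetilde E_6$, $\widetilde E_7$, $\widetilde E_8$, $\widetilde F_4$ \emph{is itself} one of the seven graphs in the list of \cref{thm:ancient} (for suitable parameter values), rather than merely containing one as a proper induced subgraph.

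Your induced-subgraph verification contains a genuine error. You argue that $\widetilde E_6$, $\widetilde E_7$, $\widetilde E_8$ each contain the four-vertex star $D_4$, and that $\widetilde B_n$ (for $n\geq 4$) contains $D_n$, and you assert that these finite-type $D$ diagrams match the second family in \cref{thm:ancient}. But this cannot be right: if the simply-laced $D_m$ diagram appeared in that list, then \cref{thm:ancient} would force the finite Coxeter group $D_m$ to be ancient, contradicting \cref{thm:finite_futuristic}. The second family in \cref{thm:ancient} is not the finite $D_m$ graph; it is the $\widetilde B$-shaped graph (with a fixed non-$3$ label on one end and the branch labels $b,b'$ as parameters). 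Likewise, $\widetilde E_6$, $\widetilde E_7$, $\widetilde E_8$ are listed explicitly as the fourth, fifth, and sixth graphs---you should cite them directly rather than try to extract a $D_4$ subdiagram. Once you make that correction, the bookkeeping goes through exactly as the paper describes: every remaining irreducible affine type coincides with (not merely contains) one of the seven listed graphs.
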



A Coxeter graph is called \dfn{simply laced} if all of its edge labels are $3$. It is worth noting that \cref{prop:hereditary,thm:finite_futuristic,thm:ancient} allow us to determine the futuristicity/ancientness of all Coxeter groups whose Coxeter graphs are simply laced trees. 

\begin{corollary}
Let $W$ be a Coxeter group whose Coxeter graph is a simply laced tree. If $W$ is finite, then it is futuristic; otherwise, it is ancient.  
\end{corollary}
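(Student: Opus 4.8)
The plan is to deduce the corollary from the classical ADE classification of simply laced Coxeter diagrams together with \cref{prop:hereditary,thm:finite_futuristic,thm:ancient}. Recall that $I$ is finite, so $\Gamma_W$ is a finite connected tree all of whose edges carry the label $3$. If $W$ is finite, then \cref{thm:finite_futuristic} immediately shows that $W$ is superfuturistic, hence futuristic; so the real content of the statement is the other direction, and I would assume from now on that $W$ is infinite and aim to prove that $W$ is ancient.

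Since $\Gamma_W$ is connected, simply laced, and not of finite type, it is not isomorphic to any of $A_n$, $D_n$, $E_6$, $E_7$, $E_8$, and the goal is to produce a subset $J\subseteq I$ for which $\Gamma_{W_J}$ is one of the affine diagrams $\widetilde D_m$ $(m\ge 4)$, $\widetilde E_6$, $\widetilde E_7$, or $\widetilde E_8$. I would argue by cases on the maximum vertex degree of the tree $\Gamma_W$. If $\Gamma_W$ has a vertex of degree at least $4$, then that vertex together with four of its neighbors spans an induced $\widetilde D_4$. If $\Gamma_W$ has two distinct vertices $u$ and $v$ of degree at least $3$, then the path from $u$ to $v$, together with two neighbors of $u$ and two neighbors of $v$ that lie off this path, spans an induced subtree of type $\widetilde D_m$ for some $m\ge 5$; this subtree really is induced because every induced subgraph of a tree is a forest. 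In the only remaining case, $\Gamma_W$ is a spider with a single degree-$3$ branch vertex and with three legs of edge-lengths $p\ge q\ge r\ge 1$, and because it is not of type $D_{p+2}$, $E_6$, $E_7$, or $E_8$, truncating the legs exhibits an induced subtree with leg-lengths $(2,2,2)$, $(1,3,3)$, or $(1,2,5)$, i.e., of type $\widetilde E_6$, $\widetilde E_7$, or $\widetilde E_8$. (If every vertex of $\Gamma_W$ has degree at most $2$, then $\Gamma_W=A_n$ and $W$ is finite, contradicting our assumption, so this case does not arise.)

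By the discussion preceding \cref{cor:affineBD}, each of $\widetilde D_m$ $(m\ge 4)$, $\widetilde E_6$, $\widetilde E_7$, and $\widetilde E_8$ occurs in the list of \cref{thm:ancient}, so the standard parabolic subgroup $W_J$ produced above is ancient by \cref{thm:ancient}. By \cref{prop:hereditary}(ii), $W$ is therefore ancient, which completes the argument. The only actual work is the case analysis of the second paragraph, which is just the classification of finite simply laced Coxeter groups rephrased in terms of minimal forbidden induced subtrees; I do not expect any genuine difficulty, since the corollary is essentially an immediate consequence of \cref{prop:hereditary,thm:finite_futuristic,thm:ancient}.
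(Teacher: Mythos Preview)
Your argument is correct and follows exactly the route the paper indicates: the corollary is stated without proof, with the remark that \cref{prop:hereditary,thm:finite_futuristic,thm:ancient} suffice, and you have filled in precisely that deduction via the standard classification of simply laced trees of finite type. One tiny slip: the spider with edge-leg-lengths $(p,1,1)$ has $p+3$ vertices, so it is $D_{p+3}$ rather than $D_{p+2}$; this does not affect the argument.
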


\subsection{Finite Coxeter Groups}

Let us now assume that $W$ is a finite Coxeter group. \Cref{thm:finite_futuristic} tells us that $W$ is superfuturistic; this implies that for every Coxeter element $c$ of $W$, there exists a nonnegative integer $K$ satisfying $\Pro_c^K(W)=\LL$ for every nonempty convex subset $\LL$ of $W$.  Our proof of \Cref{thm:finite_futuristic} will actually determine the smallest such integer $K$ (in terms of $c$).

Because $W$ is finite, it has a unique element of maximum length; this element is called the \dfn{long element} and is denoted by $\wo$. The following result generalizes \eqref{eq:EvSort} and is new even in type~$A$. 

\begin{theorem}\label{thm:main1}
If $W$ is finite and $i_N\cdots i_1$ is a reduced word for $\wo$, then \[\tau_{i_N}\cdots\tau_{i_1}(W)=\LL.\] 
\end{theorem}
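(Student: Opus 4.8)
The plan is to prove $\tau_{i_N}\cdots\tau_{i_1}(W)=\LL$ by establishing the two inclusions separately. The inclusion $\LL\subseteq\tau_{i_N}\cdots\tau_{i_1}(W)$ is immediate: each $\tau_i$ restricts to a bijection of $\LL$, hence so does $\tau_{i_N}\cdots\tau_{i_1}$, giving $\LL=\tau_{i_N}\cdots\tau_{i_1}(\LL)\subseteq\tau_{i_N}\cdots\tau_{i_1}(W)$. All the content is in the reverse inclusion. Fix $u_0\in W$, put $u_k=\tau_{i_k}(u_{k-1})$, and consider the statistic $d(u)=\#\{\gamma\in\RR(\LL):u\gamma\in\Phi^-\}$, which is finite since $W$ is. Unwinding the definition of convexity, $d(u)=0$ exactly when $u\in\LL$, so it suffices to show $d(u_N)=0$. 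It is routine that $d(u_k)\leq d(u_{k-1})$: if $\tau_{i_k}$ moves $u_{k-1}$ across a wall $\HH_\beta$ then the only $\gamma\in\RR(\LL)$ that can change sides is $\gamma=-u_{k-1}^{-1}\alpha_{i_k}$, and it must pass from $H_\gamma^-$ to $H_\gamma^+$, since otherwise $\gamma\in\RR(\LL)$ would force $\tau_{i_k}$ to fix $u_{k-1}$; in particular no $\gamma\in\RR(\LL)$ ever passes from $H_\gamma^+$ to $H_\gamma^-$. Consequently it suffices to prove that for each $\gamma\in\RR(\LL)$ with $u_0\gamma\in\Phi^-$ the trajectory crosses $\HH_\gamma$.

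A crossing of $\HH_\gamma$ can occur only at a step $k$ with $u_{k-1}^{-1}\alpha_{i_k}=\pm\gamma$; since the trajectory stays in $H_\gamma^-$ until its first crossing, the first such step is automatically a crossing, so it is equivalent to produce a step with $u_{k-1}^{-1}\alpha_{i_k}=\pm\gamma$. Writing $u_k=s_{i_k}^{\eta_k}\cdots s_{i_1}^{\eta_1}u_0$ with $\eta_j\in\{0,1\}$ recording the moves, this is the requirement $s_{i_1}^{\eta_1}\cdots s_{i_{k-1}}^{\eta_{k-1}}\alpha_{i_k}=\pm u_0\gamma$ for some $k$, and when every $\eta_j=1$ these roots exhaust $\Phi^+$ (the standard enumeration attached to a reduced word for $\wo$): this is precisely where the hypothesis on $i_N\cdots i_1$ enters. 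I would first settle the case $\LL=\{v\}$, where $\RR(\LL)=v^{-1}\Phi^+$ and the assertion reduces to the statement that the greedy process which reads $i_1,i_2,\dots$ and left-multiplies the current element by $s_{i_k}$ exactly when this decreases length, started at $x_0=u_0v^{-1}$, reaches the identity. This follows cleanly from the lifting property of the Bruhat order: setting $w^{(k)}=s_{i_{k+1}}\cdots s_{i_N}$ (so $w^{(0)}=\wo$, $w^{(N)}=e$, and $s_{i_k}$ is a left descent of $w^{(k-1)}$ with $w^{(k)}=s_{i_k}w^{(k-1)}$), one checks inductively that $x_k\leq w^{(k)}$ in Bruhat order: if $i_k$ is a left descent of $x_{k-1}$ then $x_k=s_{i_k}x_{k-1}\leq s_{i_k}w^{(k-1)}=w^{(k)}$, and otherwise $x_k=x_{k-1}\leq s_{i_k}w^{(k-1)}=w^{(k)}$, both by lifting. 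Taking $k=N$ forces $x_N=e$.

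The remaining task — upgrading this to an arbitrary nonempty convex $\LL$ — is where I expect the main difficulty to lie. The natural approach is to carry a factorization $u_k=g_kv_k$ with $v_k\in\LL$ and $g_k\leq w^{(k)}$ in Bruhat order. Using that $\RR(\LL)\subseteq v_{k-1}^{-1}\Phi^+$ for $v_{k-1}\in\LL$, one checks that whenever $i_k$ is a left descent of $g_{k-1}$ the toggle $\tau_{i_k}$ must move, and that with the choice $v_k=v_{k-1}$ the lifting property propagates the invariant in every case but one: when $\tau_{i_k}$ performs a window move on $u_{k-1}$ while $i_k$ is not a left descent of $g_{k-1}$, in which case $g_k=s_{i_k}g_{k-1}$ lands only in $[e,w^{(k-1)}]$ rather than $[e,w^{(k)}]$. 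Resolving this case — by selecting a better representative $v_k\in\LL$ (for instance using the gate property of the convex set $\LL$), or by arguing that such window moves cannot accumulate, which should use the biconvexity of inversion sets together with convexity of $\LL$ — is the crux I would need to carry out; once it is in place, $g_N=e$ forces $u_N=v_N\in\LL$, as desired. An essentially equivalent formulation, perhaps more tractable, fixes $\gamma\in\RR(\LL)$ with $u_0\gamma\in\Phi^-$, follows the positive root $-u_k\gamma$ (which remains positive until the sought crossing occurs), and seeks to control it by $w^{(k)}$ in a suitable Bruhat-theoretic sense.
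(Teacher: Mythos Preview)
Your proposal is incomplete, and you say so yourself: the general convex $\LL$ case hinges on handling window moves that increase the length of the factor $g_{k-1}$, and you leave this unresolved. The singleton case via Bruhat lifting is correct and clean, but the factorization $u_k=g_kv_k$ with a floating $v_k\in\LL$ does not obviously extend, and neither the gate property nor a non-accumulation argument for window moves is supplied. So as written this is not a proof.

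The paper avoids this difficulty entirely by proving a different invariant. Rather than tracking a Bruhat bound on a cofactor, it proves (\cref{lem:sep-containment}) that for any reduced word $\mathsf w$ for $w$,
\[
\Sep(\tau_{\mathsf w}(u))\subseteq\Sep\bigl(w^{-1}\tau_{\mathsf w}(u)\bigr),
\]
by induction on $\ell(w)$. Taking $w=\wo$ kills the theorem instantly: any $\beta$ in the left side would have both $\tau_{\mathsf w_\circ}(u)\beta$ and $\wo\,\tau_{\mathsf w_\circ}(u)\beta$ negative, impossible since $\wo$ swaps $\Phi^+$ and $\Phi^-$. The inductive step has two cases. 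When the outermost toggle moves, the argument is trivial bookkeeping (your Case is easy too). When the outermost toggle $\tau_{i_M}$ \emph{fixes} $v=\tau_{\mathsf w'}(u)$, the paper shows that for each $\beta\in\Sep(v)$, the reflected root $v^{-1}s_{i_M}v\beta=\beta-2B(v\beta,\alpha_{i_M})\,v^{-1}\alpha_{i_M}$ is again in $\Sep(v)$: here one uses that $v^{-1}\alpha_{i_M}\in\RR(\LL)$ (since the toggle was blocked) together with the fact that $\RR(\LL)$ is \emph{closed} under positive root combinations. This closure property is exactly the structural feature of $\RR(\LL)$ that distinguishes the general case from the singleton case, and it is what your factorization approach is missing. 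If you want to salvage your line of attack, the place to use closure of $\RR(\LL)$ is in choosing the new $v_k$ after a length-increasing window move; but the paper's separator formulation packages this much more cleanly.
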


\Cref{thm:main1} is tight in the sense that there exists a choice of $\LL$ such that \[\tau_{i_{N-1}}\cdots\tau_{i_{1}}(W)\neq\LL.\] For example, this is the case if $\LL=\{\wo\}$, because then $\tau_{i_{N-1}}\cdots\tau_{i_1}(\id)=s_{i_{N-1}}\cdots s_{i_1}\neq\wo$ (in fact, the noninvertible Bender--Knuth toggles generate the 0-Hecke monoid of $W$ \cite{Hivert,Kenney} in this particular case). 

\begin{remark}\label{rem:commute}
If $i,i'\in I$ satisfy $m(i,i')=2$, then $\tau_i$ and $\tau_{i'}$ commute. What makes \Cref{thm:main1} nontrivial is the fact that the noninvertible Bender--Knuth toggles do not (in general) satisfy the other braid relations of $W$. For example, when $W=\mathfrak S_{n+1}$ and $i\in[n]$, the operators $\tau_i\tau_{i+1}\tau_i$ and $\tau_{i+1}\tau_i\tau_{i+1}$ are generally not equal. 
\end{remark}

Let $\sfc$ be a reduced word for a Coxeter element $c$ of $W$. For $j\geq 1$, let $\sfc^j$ denote the concatenation of $j$ copies of $\sfc$. Let $\MM(c)$ be the smallest positive integer $k$ such that $\sfc^k$ contains a reduced word for $\wo$ as a subword. (The number $\MM(c)$ depends only on $c$, not on the particular reduced word $\sfc$.) In \cref{sec:finite}, we will describe how to compute $\MM(c)$ explicitly using diagrams called \emph{combinatorial AR quivers}. 
The next result, which we obtain as a corollary of \cref{thm:main1}, generalizes \eqref{eq:ProSort}; as discussed in \Cref{exam:bipartite}, it also implies the new result \eqref{eq:GyrSort}. 

\begin{corollary}\label{thm:main2}
If $c$ is a Coxeter element of a finite Coxeter group $W$, then \[\Pro_c^{\MM(c)}(W)=\LL.\] 
\end{corollary}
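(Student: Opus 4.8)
The plan is to deduce \cref{thm:main2} from \cref{thm:main1}. Fix a reduced word $\sfc = i_n\cdots i_1$ for $c$; directly from the definition of $\tau_{(\cdot)}$ one has $\Pro_c^{k} = \tau_{\sfc^{k}}$ for every $k \ge 1$. By the definition of $\MM(c)$, the word $\sfc^{\MM(c)}$ contains a reduced word $\mathsf{w}_\circ = i_N\cdots i_1$ for $\wo$ as a subword. Thus \cref{thm:main2} follows immediately (taking $\mathsf{w} = \sfc^{\MM(c)}$) from the following lemma: \emph{if a word $\mathsf{w}$ over $I$ contains a reduced word for $\wo$ as a subword, then $\tau_{\mathsf{w}}(W) = \LL$.} In this lemma the inclusion $\LL \subseteq \tau_{\mathsf{w}}(W)$ is automatic and uses nothing about $\mathsf{w}$: each $\tau_i$ restricts to a bijection of $\LL$, hence so does $\tau_{\mathsf{w}}$, so $\LL = \tau_{\mathsf{w}}(\LL) \subseteq \tau_{\mathsf{w}}(W)$. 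The content is the reverse inclusion $\tau_{\mathsf{w}}(W) \subseteq \LL$---equivalently, that every billiards trajectory governed by $\mathsf{w}$ terminates in $\LL$.

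To prove this I would first record a monotonicity principle refining the observation (made after \cref{def:toggles}) that a toggle never moves ``strictly away from'' $\LL$: for every $\beta \in \RR(\LL)$ and every $i \in I$, $\tau_i(H_\beta^+) \subseteq H_\beta^+$. (Indeed, if $u\beta \in \Phi^+$ and $\tau_i(u) = s_i u$, then $u^{-1}\alpha_i \notin \RR(\LL)$, so $u^{-1}\alpha_i \ne \beta$ as $\beta \in \RR(\LL)$; hence $u\beta \ne \alpha_i$, and $s_i$ preserves the sign of $u\beta$.) Two consequences: since $\LL = \bigcap_{\beta\in\RR(\LL)}H_\beta^+$, a trajectory that ever enters $\LL$ stays there forever; and the ``defect'' $\Delta(u) := |\{\beta\in\RR(\LL): u\beta\in\Phi^-\}|$, which is finite (as $W$ is finite) and vanishes exactly on $\LL$, is nonincreasing along every trajectory. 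Hence it suffices to prove that a trajectory governed by $\mathsf{w}$ is already inside $\LL$ once the final letter of the embedded $\mathsf{w}_\circ$ has been applied; equivalently, that reading off $\mathsf{w}_\circ$---with the extra letters of $\mathsf{w}$ interspersed---drives $\Delta$ down to $0$.

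This last step is the heart of the matter and is where I expect the main difficulty to lie. It does not seem to follow from \cref{thm:main1} used as a black box: the interspersed toggles perturb the trajectory, so each step coming from $\mathsf{w}_\circ$ acts on a different group element than it would in the pure $\tau_{\mathsf{w}_\circ}$ trajectory, and there is no naive step-by-step coupling between the two trajectories (the toggles need not be monotone with respect to the natural ``sortedness'' ordering that compares elements by their sets of unsorted roots in $\RR(\LL)$). Instead I would aim to rework the proof of \cref{thm:main1} so as to isolate a local mechanism: for each one-way mirror $\HH_\beta$ with $\beta\in\RR(\LL)$, reading $\mathsf{w}_\circ$ must at some step carry the trajectory across $\HH_\beta$ to the side $H_\beta^+$ (after which, by the monotonicity above, it never crosses back), and this mechanism should be robust enough that inserting extra toggles---each of which likewise can only help and never undoes such a crossing---cannot prevent it. Making this precise is the real work; granted it, the deduction of \cref{thm:main2} is purely formal.
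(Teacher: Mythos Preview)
Your reduction leaves the essential step unproven, and the paper avoids that step entirely by using a sharper structural fact that you are missing.

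You use only the defining property of $\MM(c)$, namely that $\sfc^{\MM(c)}$ contains some reduced word $\mathsf{w}_\circ$ for $\wo$ as a \emph{subword}. From this you are forced to prove the general lemma ``if $\mathsf{w}$ contains a reduced word for $\wo$ as a subword then $\tau_{\mathsf{w}}(W)=\LL$,'' and you correctly diagnose that this does not follow from \cref{thm:main1} as a black box: the interspersed toggles change the state, the proof of \cref{thm:main1} (via \cref{lem:sep-containment}) genuinely uses that $\mathsf{w}_\circ$ is reduced, and you offer no mechanism to control the interleaving. As written, the proposal is a plan with its main lemma missing.

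The paper's argument is a one-liner because it invokes a stronger fact (Speyer, \cite[Corollary~4.1]{Speyer}): there exists a reduced word $\mathsf{w}_\circ$ for $\wo$ and a word $\y$ with
\[
\sfc^{\MM(c)} \;\equiv\; \y\,\mathsf{w}_\circ
\]
(commutation equivalence, not merely subword containment). Since commutation-equivalent words give the same toggle operator, this yields $\Pro_c^{\MM(c)}=\tau_{\sfc^{\MM(c)}}=\tau_{\y}\circ\tau_{\mathsf{w}_\circ}$. Now \cref{thm:main1} gives $\tau_{\mathsf{w}_\circ}(W)=\LL$, and since each $\tau_i$ restricts to a bijection of $\LL$, so does $\tau_\y$; hence $\Pro_c^{\MM(c)}(W)=\tau_\y(\LL)=\LL$. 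The point you are missing is precisely that the ``extra'' letters of $\sfc^{\MM(c)}$ can all be commuted to the \emph{left} of a suitable reduced word for $\wo$, so they act only after the trajectory is already in $\LL$ --- no analysis of interleaved toggles is needed at all.
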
 

\Cref{thm:main2} is tight in the sense that there exists a choice of $\LL$ such that $\Pro_c^{\MM(c)-1}(W)\neq\LL$. For example, this is the case if $\LL=\{\wo\}$.  

\begin{example}\label{exam:bipartite}
If $W$ is a (not necessarily finite) Coxeter group whose Coxeter graph is bipartite, then we can consider a bipartition $I=Q\sqcup Q'$ and write $c_Q=\prod_{i\in Q}s_i$ and $c_{Q'}=\prod_{i\in Q'}s_i$. Then $c_{Q'}c_Q$ is called a \dfn{bipartite Coxeter element} of $W$. 

Now, assume $W$ is finite and irreducible. The \dfn{Coxeter number} of $W$ is the quantity $h=|\Phi|/|S|$. It is known that $\Gamma_{W}$ must be bipartite, so it has a bipartite Coxeter element $c=c_{Q'}c_{Q}$. It is known \cite[$\S$V.6~Exercise~2]{Bourbaki} that $\MM(c_{Q'}c_Q)=\left\lceil h/2\right\rceil$, so \Cref{thm:main2} tells us that $\Pro_{c_{Q'}c_Q}^{\left\lceil h/2\right\rceil}(W)=\LL$. 

When $W=\mathfrak S_{n+1}$ and $S=\{s_1,\ldots,s_n\}$, where $s_i=(i\,\, i+1)$, the Coxeter graph $\Gamma_{W}$ is a path in which $i$ is adjacent to $i+1$ for each $i\in[n-1]$. In this case, we can take $Q$ (respectively, $Q'$) to be the set of odd-indexed (respectively, even-indexed) simple reflections. Then $\Pro_{c_{Q'}c_Q}=\Gyr$. The Coxeter number of $\mathfrak S_{n+1}$ is $n+1$, so we obtain the identity \eqref{eq:GyrSort}. 
\end{example}

\begin{remark}
The proofs of \eqref{eq:ProSort} and \eqref{eq:EvSort} from \cite{DefantPromotionSorting} are quite specific to the extended promotion and extended evacuation operators (they rely on the notion of a \emph{promotion chain}), and they do not generalize to other operators such as extended gyration. Thus, even in type~$A$, our proofs of \Cref{thm:main1,thm:main2} are new. Our proofs are also type-uniform. 
\end{remark}

\subsection{Futuristic Directions} 
In \cref{sec:future}, we will discuss several ideas for future research. Let us briefly highlight some of them here. 

\begin{itemize}
\item We will define a Coxeter group to be \dfn{contemporary} if it is neither futuristic nor ancient, and we will ask whether or not such Coxeter groups exist. 
\item We will pose the problem of determining the periodic points of $\Pro_c$ when $c$ is not futuristic. 
\item We will consider the algorithmic question of deciding whether a Coxeter element is futuristic. 
\item We will contemplate billiards trajectories arising from infinite sequences of elements of $I$ that might not be obtained by repeating some fixed ordering of $I$. 
\item We will suggest a more general definition of noninvertible Bender--Knuth toggles in which the set $\RR(\LL)$ is replaced by an arbitrary closed set of roots $\RR$. When $\RR=\Phi^-$, this yields the $0$-Hecke monoid of $W$. 
\end{itemize}

\subsection{Outline} 
\Cref{sec:preliminaries} provides additional necessary background on Coxeter groups and root systems. \Cref{sec:basics} establishes some general results about Bender--Knuth billiards, including \cref{prop:finite-to-infinite,prop:hereditary,prop:folding}. In \cref{sec:finite}, we focus on finite Coxeter groups and prove \cref{thm:main1,thm:main2}; note that \cref{thm:main2} implies \cref{thm:finite_futuristic}. \cref{sec:affine} establishes a general result about Bender--Knuth billiards in affine Coxeter groups, which we then apply in order to prove \cref{thm:affineAC}. Our proofs of \cref{thm:right-angled,thm:complete,thm:rank_3} use the theory of \emph{small roots} developed by Brink and Howlett \cite{BrinkHowlett}; in \cref{sec:small} we discuss small roots, introduce the \emph{small-root billiards graph}, and explain how this graph can be used to prove that certain Coxeter groups are futuristic. In \cref{sec:right-angled,sec:complete,sec:rank3}, we apply this method to prove \cref{thm:right-angled,thm:complete,thm:rank_3}. \cref{sec:ancient} is about ancient Coxeter groups; it is in this section that we establish \Cref{thm:ancient}. \cref{sec:linear} discusses how Bender--Knuth billiards relate to classical billiards from dynamics. In \cref{sec:future}, we come back to the future directions briefly mentioned above.  

\section{Preliminaries}\label{sec:preliminaries}
This section discusses relevant background information about Coxeter groups, largely drawing from the standard reference \cite{BjornerBrenti}. We assume that the reader is familiar with the classifications of finite Coxeter groups and affine Coxeter groups and the standard notation used to refer to such groups (\cite[Appendix~A1]{BjornerBrenti}). 

Let $(W,S)$ be a Coxeter system whose set of simple reflections $S = \{s_i : i \in I\}$ is indexed by a finite set $I$. A \dfn{word} is a sequence $\mathsf{w} = i_M\cdots i_1$ of elements of $I$; the integer $M$ is called the \dfn{length} of $\mathsf{w}$. We say the word $\mathsf{w} = i_M\cdots i_1$ \dfn{represents} the product $w = s_{i_M} \cdots s_{i_1}$ of the corresponding simple reflections. A \dfn{reduced word} for an element $w\in W$ is a word that represents $w$ and has minimum possible length. If $i_M\cdots i_1$ is a reduced word for $w$, then we call the expression $s_{i_M}\cdots s_{i_1}$ a \dfn{reduced expression} for $w$. The \dfn{length} of $w$, denoted $\ell(w)$, is defined to be the length of a reduced word for $w$. A \dfn{reflection} is an element of $W$ that is conjugate to a simple reflection. We write $T=\{wsw^{-1}:w\in W,\,\, s\in S\}$ for the set of reflections of $W$. For $w \in W$, the set of \dfn{right inversions} of $w$ is $\Inv(w)=\{t\in T:\ell(wt)<\ell(w)\}$. We have $\left\lvert\Inv(w)\right\rvert=\ell(w)$. 

Given a word $\mathsf w=i_M\cdots i_1$, we write $\tau_{\mathsf w}=\tau_{i_M}\cdots \tau_{i_1}$. 
One can apply a \dfn{commutation move} to $\mathsf w$ by swapping two consecutive letters $i$ and $i'$ that satisfy $m(i,i')=2$. We say that two words $\mathsf{x}$ and $\mathsf{y}$ are \dfn{commutation equivalent}, denoted $\mathsf{x}\equiv\mathsf{y}$, if $\mathsf{y}$ can be obtained from $\mathsf{x}$ via a sequence of commutation moves. Note that two commutation equivalent words represent the same element of $W$. Moreover, if $\x\equiv\y$, then $\tau_\x=\tau_\y$.  

Given symbols $\xi$ and $\xi'$ and a positive integer $d$, let us write 
\begin{equation}\label{eq:mid_notation}
[\xi\mid\xi']_d=\underbrace{\cdots\xi'\xi\xi'}_{d}
\end{equation} for the string of length $d$ that alternates between $\xi$ and $\xi'$ and ends with $\xi'$. For example, we have ${[\xi\mid\xi']_4=\xi\xi'\xi\xi'}$ and ${[\xi\mid\xi']_5=\xi'\xi\xi'\xi\xi'}$. 

For $i,i'\in I$, it follows from the defining presentation of $W$ in \eqref{eq:presentation} that the words \begin{equation}\label{eq:alternating_words}
[i\mid i']_{m(i,i')}\quad \text{and}\quad[i'\mid i]_{m(i,i')}
\end{equation} 
represent the same element of $W$. If $\mathsf{w}$ is a word that contains one of the two alternating words in \eqref{eq:alternating_words} as a consecutive factor, then we can perform a \dfn{braid move} on $\mathsf w$ by replacing that consecutive factor with the other alternating word in \eqref{eq:alternating_words} (if $m(i,i')=2$, then this braid move is a commutation move). If $\mathsf{w}$ contains two consecutive occurrences of the same letter, then we can perform a \dfn{nil move} on $\mathsf w$ by deleting those two consecutive letters. If a word $\mathsf w$ can be obtained from a word $\mathsf v$ via a sequence of braid moves and nil moves, then $\mathsf v$ and $\mathsf w$ represent the same element of $W$. 

\begin{theorem}[{Matsumoto's Theorem \cite[Theorem~3.3.1]{BjornerBrenti}}]\label{thm:matsumoto}
Let $\mathsf v$ and $\mathsf w$ be words that represent the same element of $W$. If $\mathsf w$ is reduced, then $\mathsf w$ can be obtained from $\mathsf v$ via a sequence of braid moves and nil moves. 
\end{theorem}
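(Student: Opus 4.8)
This is the classical word property for Coxeter groups (it is cited here from Björner--Brenti, so the paper needs no proof, but here is how one proves it). My plan is to split it into two parts. Part (a): any two reduced words for the same $w \in W$ are connected by a sequence of braid moves (Tits' theorem). Part (b): any word $\mathsf v$ can be transformed, using braid moves and nil moves, into \emph{some} reduced word for the element it represents. These two parts yield the theorem immediately: transform $\mathsf v$ into some reduced word $\mathsf u$ for $w$ using part (b), then connect $\mathsf u$ to the given reduced word $\mathsf w$ using part (a).

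For part (a), I would induct on $\ell(w)$. Let $\mathsf w = i_M \cdots i_1$ and $\mathsf w' = j_M \cdots j_1$ be reduced words for $w$, with $M = \ell(w)$. If $i_1 = j_1$, then $i_M \cdots i_2$ and $j_M \cdots j_2$ are reduced words for $w s_{i_1}$, of length $M-1$; the inductive hypothesis connects them by braid moves, and reattaching the common last letter connects $\mathsf w$ to $\mathsf w'$. If $i_1 \neq j_1$, put $m = m(i_1, j_1)$; since both words are reduced, $s_{i_1}, s_{j_1} \in \Inv(w)$, i.e.\ $\ell(w s_{i_1}) < \ell(w)$ and $\ell(w s_{j_1}) < \ell(w)$. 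The key lemma is that this forces $m < \infty$ and that $w$ admits a reduced word whose last $m$ letters alternate between $i_1$ and $j_1$. Since $[i_1 \mid j_1]_m$ and $[j_1 \mid i_1]_m$ are the two reduced words of the longest element of the dihedral subgroup $W_{\{i_1, j_1\}}$ and differ by one braid move, it follows that $w$ has a reduced word ending in the letter $i_1$ whose last $m$ letters spell $[j_1 \mid i_1]_m$, together with a companion reduced word ending in $j_1$ whose last $m$ letters spell $[i_1 \mid j_1]_m$, the two companions differing by a single braid move. By the $i_1 = j_1$ case, the first companion is connected to $\mathsf w$ by braid moves and the second to $\mathsf w'$; hence $\mathsf w$ and $\mathsf w'$ are connected by braid moves.

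For part (b), I would induct on the length of $\mathsf v$. If $\mathsf v$ is reduced there is nothing to do. Otherwise, the Deletion Condition (a standard consequence of the defining relations, via the Exchange Condition) produces positions $p < q$, which one may choose so that the intermediate factor $i_{q-1} \cdots i_{p+1}$ is reduced, such that deleting the letters in positions $p$ and $q$ yields a word representing the same element. Setting $u = s_{i_{q-1}} \cdots s_{i_{p+1}}$, the words $i_q\, i_{q-1} \cdots i_{p+1}$ and $i_{q-1} \cdots i_{p+1}\, i_p$ both represent $s_{i_q} u = u s_{i_p}$ and have equal length, so either both are reduced, in which case part (a) connects them by braid moves, or both are non-reduced and strictly shorter than $\mathsf v$, in which case the inductive hypothesis applied to each of them together with part (a) connects them by braid moves and nil moves. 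Applying this chain of moves to the sub-factor $i_q \cdots i_{p+1}$ of $\mathsf v$ creates two adjacent copies of $i_p$; a nil move deletes them, producing a word that represents the same element as $\mathsf v$ and is strictly shorter than $\mathsf v$, to which the inductive hypothesis applies.

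I expect the main obstacle to be the key lemma used in part (a): that $s_{i_1}, s_{j_1} \in \Inv(w)$ forces $m(i_1, j_1) < \infty$ and a reduced word for $w$ ending in the full alternating factor of length $m(i_1, j_1)$. I would deduce it from the Strong Exchange Condition and the structure of the dihedral parabolic subgroup $W_{\{i_1, j_1\}}$: writing $w = v \cdot x$ with $v$ the minimal-length representative of the coset $w W_{\{i_1, j_1\}}$ and $x \in W_{\{i_1, j_1\}}$, the hypothesis that both generators of $W_{\{i_1,j_1\}}$ lie in $\Inv(w)$ forces $x$ to be the longest element of $W_{\{i_1, j_1\}}$ (equivalently, $\Inv(w)$ contains every reflection of $W_{\{i_1, j_1\}}$), which is impossible when $m(i_1, j_1) = \infty$ since then $W_{\{i_1, j_1\}}$ has no longest element; the remaining details are routine bookkeeping with the length function. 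As an alternative, the whole of part (a) can be run geometrically --- reduced words as minimal galleries in the Coxeter complex, braid moves as elementary homotopies across rank-two residues --- but I would present the combinatorial argument above.
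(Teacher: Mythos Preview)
The paper does not prove this theorem; it is quoted as background with a citation to Bj\"orner--Brenti. Your two-part strategy (Tits' theorem for reduced words, then reducing an arbitrary word) is the standard one and is essentially correct.

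There is one soft spot in part (b). In your non-reduced case --- where both $i_q i_{q-1}\cdots i_{p+1}$ and $i_{q-1}\cdots i_{p+1} i_p$ fail to be reduced --- you say the inductive hypothesis together with part (a) ``connects them by braid moves and nil moves.'' But the inductive hypothesis only lets you transform each non-reduced word \emph{forward} to some reduced word; nil moves are not reversible, so you cannot pass from that reduced word back up to the other non-reduced word. The fix is easy. One option: observe that applying the inductive hypothesis to the sub-factor $i_q\cdots i_{p+1}$ alone already strictly shortens $\mathsf v$, so you never need to reach $i_{q-1}\cdots i_{p+1} i_p$ at all. A cleaner option: choose $q$ minimal with $i_q\cdots i_1$ non-reduced and obtain $p$ from the Exchange Condition applied to the reduced word $i_{q-1}\cdots i_1$; then $i_{q-1}\cdots i_{p+1} i_p$ is a consecutive subword of that reduced word and hence itself reduced (of length $q-p$, forcing $i_q i_{q-1}\cdots i_{p+1}$ to be reduced as well), so the non-reduced case never arises.
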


As in \Cref{subsec:intro_Coxeter}, let $V$ denote the root space of $W$. For $i\in I$, let $\alpha_i$ denote the simple root indexed by $i$. Then $V$ is a $W$-module under the standard geometric representation, and there is a $W$-invariant symmetric bilinear form $B\colon V\times V\to \mathbb R$ induced by $W$. Let $\Phi=\{w\alpha_i:w\in W,\,\, i\in I\}$ be the corresponding root system; let $\Phi^+$ and $\Phi^-$ denote the sets of positive roots and negative roots, respectively. We have $\mathbb{R}\beta \cap \Phi = \{\pm \beta\}$ for each $\beta \in \Phi$.

To each root $\beta = w\alpha_i$, we associate a reflection $r_\beta = w s_i w^{-1} \in T$; this is well defined in the sense that it depends only on $\beta$ and not on the particular choices of $w$ and $\alpha_i$. The map $\beta \mapsto r_\beta$ is a bijection from $\Phi^+$ to $T$. We have $r_{\beta} = r_{-\beta}$ for all $\beta \in \Phi$.

The action of $r_\beta$ on $V$ under the standard geometric representation is given by the formula
\begin{equation}\label{eq:reflection}
r_\beta \gamma=\gamma-2B(\gamma,\beta)\beta.
\end{equation} 
For each reflection $t\in T$, let $\beta_t$ denote the unique positive root such that $r_{\beta_t}=t$. Then, for every $w \in W$, we have
\begin{equation}\label{eq:inversions_roots}
\Inv(w)=\{t\in T:w\beta_t\in\Phi^-\}.
\end{equation}
Equivalently, the half-space $H^+_{\beta_t}$ is equal to the set of all $w \in W$ that do not have $t$ as a right inversion. If $i_M\cdots i_1$ is reduced word for $w$, then 
\begin{equation}\label{eq:inversions_roots_referee}
\{\alpha_{i_1},s_{i_2}\alpha_{i_1},s_{i_3}s_{i_2}\alpha_{i_1},\ldots,s_{i_M}\cdots s_{i_3}s_{i_2}\alpha_{i_1}\}=\{\beta_t:t\in\Inv(w)\}. 
\end{equation} 

If $W$ is a finite Coxeter group, then we denote the long element of $W$ (the unique element of maximum length) by $\wo$. It is known that $\wo$ is an involution and that $\wo \Phi^+ = \Phi^-$. 

Recall that for $J \subseteq I$, the \textit{(standard) parabolic subgroup} $W_J$ is the subgroup of $W$ generated by $\{s_i : i \in J\}$. Also recall that $(W_J, \{s_i : i \in J\})$ is a Coxeter system. Let $V_J = \mathbb{R}^J \subseteq V$ denote the root space of $W_J$, and let $\Phi_J = \{w\alpha_i:w\in W_J,\,\, i\in J\} \subseteq V_J$ denote the root system of $W_J$. By \cite[Theorem~3.3]{Dyer}, we have $\Phi_J = \Phi \cap V_J$. 

The \dfn{irreducible factors} of $W$ are the parabolic subgroups $W_{J_1},\ldots, W_{J_k}$, where $J_1,\ldots,J_k\subseteq I$ are the vertex sets of the connected components of the Coxeter graph $\Gamma_W$. We have the direct product decomposition $W\cong W_{J_1}\times\cdots\times W_{J_k}$. We say that $W$ is \dfn{irreducible} if $\Gamma_W$ is connected. 

\begin{remark}\label{rem:irreducible_futuristic}
It is straightforward to see that a Coxeter group is futuristic if and only if all of its irreducible factors are futuristic.  
\end{remark}

Let $\beta, \beta' \in \Phi$. We say $\beta$ \dfn{dominates} $\beta'$, written $\beta \dom \beta'$, if $H_\beta^+ \supseteq H_{\beta'}^+$. We say $\beta$ is \dfn{small} if $\beta \in \Phi^+$ and $\beta$ does not dominate any other positive root.

\begin{remark}
    Brink and Howlett \cite{BrinkHowlett} defined the dominance relation only for positive roots, but many of our results become easier to state if the relation is extended to all roots as above. The use of the word \emph{small} comes from Bj\"orner and Brenti \cite[Chapter~4.7]{BjornerBrenti}.
\end{remark}

Note that if $\beta \dom \beta'$, then $-\beta' \dom -\beta$ and $u\beta \dom u\beta'$ for all $u \in W$.

\begin{proposition}[{\cite[Theorem~4.7.6]{BjornerBrenti}}]\label{prop:small-roots-description}
    The set of small roots is the smallest set $\Sigma \subseteq \Phi^+$ that satisfies the following properties:
    \begin{enumerate}[(i)]
        \item All simple roots are in $\Sigma$.
        \item For $i \in I$ and $\gamma \in \Sigma \setminus \{\alpha_i\}$, if $0 > B(\gamma, \alpha_i) > -1$, then $s_i \gamma \in \Sigma$. 
    \end{enumerate}
    Moreover, the set of small roots is finite.
\end{proposition}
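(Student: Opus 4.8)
The plan is to split the statement into the two set inclusions plus the finiteness claim. Write $\mathrm{Sm}$ for the set of small roots, and let $\Sigma$ continue to denote the smallest subset of $\Phi^+$ satisfying (i)--(ii); this exists because $\Phi^+$ satisfies both conditions and an intersection of such sets is again one. The argument rests on the \emph{depth} statistic $\mathrm{dp}(\beta)=\min\{\ell(w):w\beta\in\Phi^-\}$ on $\Phi^+$, together with a handful of standard facts about dominance that I would either verify directly (short manipulations of inversion sets and rank-$2$ subsystems) or cite from \cite[Chapter~4.7]{BjornerBrenti}: dominance is $W$-equivariant and is reversed by $\beta\mapsto-\beta$ (noted in the text); if $\beta\dom\gamma$ with $\beta\neq\gamma$ in $\Phi^+$, then $\mathrm{dp}(\beta)>\mathrm{dp}(\gamma)$ and $B(\beta,\gamma)\geq 1$; conversely, distinct $\beta,\gamma\in\Phi^+$ with $B(\beta,\gamma)\geq 1$ satisfy $\beta\dom\gamma$ or $\gamma\dom\beta$; for $\beta\in\Phi^+\setminus\{\alpha_i\}$ one has $\mathrm{dp}(s_i\beta)=\mathrm{dp}(\beta)-1$ if $B(\beta,\alpha_i)>0$, $\mathrm{dp}(s_i\beta)=\mathrm{dp}(\beta)+1$ if $B(\beta,\alpha_i)<0$, and $s_i\beta=\beta$ if $B(\beta,\alpha_i)=0$; and every non-simple positive root is sent to a root of strictly smaller depth by some $s_i$.

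To get $\Sigma\subseteq\mathrm{Sm}$, I would check that $\mathrm{Sm}$ satisfies (i)--(ii). Each $\alpha_i$ has depth $1$, so it can dominate no positive root other than itself and hence is small. For (ii), suppose $\gamma\in\mathrm{Sm}$, $\gamma\neq\alpha_i$, and $-1<B(\gamma,\alpha_i)<0$; then $s_i\gamma\in\Phi^+\setminus\{\gamma\}$ (using $\gamma\neq\alpha_i$ and $B(\gamma,\alpha_i)\neq0$). If $s_i\gamma\dom\delta$ for some $\delta\in\Phi^+\setminus\{s_i\gamma\}$, then $W$-equivariance gives $\gamma=s_i(s_i\gamma)\dom s_i\delta$; if $\delta\neq\alpha_i$ this says $\gamma$ dominates $s_i\delta\in\Phi^+\setminus\{\gamma\}$, contradicting $\gamma\in\mathrm{Sm}$, so $\delta=\alpha_i$ and $s_i\gamma\dom\alpha_i$ (note $s_i\gamma\neq\alpha_i$, else $\gamma=-\alpha_i$). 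But then $1\leq B(s_i\gamma,\alpha_i)=-B(\gamma,\alpha_i)<1$, a contradiction. Hence $s_i\gamma\in\mathrm{Sm}$, and minimality gives $\Sigma\subseteq\mathrm{Sm}$.

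To get $\mathrm{Sm}\subseteq\Sigma$, I would induct on depth. A small root of depth $1$ is simple, hence in $\Sigma$ by (i). If $\beta\in\mathrm{Sm}$ is not simple, pick $i$ with $\mathrm{dp}(s_i\beta)=\mathrm{dp}(\beta)-1$; then $\gamma:=s_i\beta\in\Phi^+\setminus\{\alpha_i\}$ and $B(\beta,\alpha_i)>0$, so $B(\gamma,\alpha_i)=-B(\beta,\alpha_i)<0$. The previous paragraph's argument shows $\gamma\in\mathrm{Sm}$: if $\gamma\dom\delta$ with $\delta\neq\gamma$, then $\beta\dom s_i\delta$ forces $\delta=\alpha_i$ and $\gamma\dom\alpha_i$, whence $0>B(\gamma,\alpha_i)\geq 1$, absurd. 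Also $B(\gamma,\alpha_i)>-1$: otherwise $B(\beta,\alpha_i)\geq 1$, so (since $\mathrm{dp}(\beta)>1=\mathrm{dp}(\alpha_i)$) we would get $\beta\dom\alpha_i$, contradicting $\beta\in\mathrm{Sm}$. Thus $\gamma\in\mathrm{Sm}$ has strictly smaller depth, $\gamma\neq\alpha_i$, and $-1<B(\gamma,\alpha_i)<0$; by induction $\gamma\in\Sigma$, and (ii) gives $\beta=s_i\gamma\in\Sigma$.

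The remaining claim, finiteness of $\mathrm{Sm}$, is what I expect to be the main obstacle. Iterating the depth-reduction step above writes each $\beta\in\mathrm{Sm}$ as $\beta=s_{i_1}\cdots s_{i_{k-1}}\alpha_{i_k}$ with $k-1=\mathrm{dp}(\beta)$, each partial product $s_{i_j}\cdots s_{i_{k-1}}\alpha_{i_k}$ again small, and all the bilinear-form values encountered lying strictly between $-1$ and $0$. Since $I$ and the finite edge labels of $\Gamma_W$ are finite, there are only finitely many such ``small words'' of any given length, so it suffices to bound $k$ uniformly. Establishing this bound is the delicate combinatorial core of Brink--Howlett's finiteness theorem: one shows that in a shortest small word a given letter cannot recur too often without producing a pair of positive roots with $|B|\geq 1$ (impossible among small roots), which caps the word length in terms of the Coxeter graph. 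I would either carry out this analysis or, since \cref{prop:small-roots-description} is used only as a black box afterward, simply invoke \cite[Theorem~4.7.6]{BjornerBrenti} for the finiteness (and indeed for the whole statement).
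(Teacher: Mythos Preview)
The paper does not prove this proposition at all; it is stated with a citation to \cite[Theorem~4.7.6]{BjornerBrenti} and used as a black box thereafter. Your sketch is a correct outline of the standard depth/dominance argument one finds in that reference (and in Brink--Howlett), so there is nothing to compare: you have supplied more detail than the paper does, and your final remark that one may simply invoke the citation matches exactly what the paper chose to do.
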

\begin{corollary}\label{cor:small-roots-of-parabolic-subgroup}
    Let $\Sigma$ denote the set of small roots of $W$. For $J \subseteq I$, the set of small roots of the parabolic subgroup $W_J$ is $\Sigma \cap V_J$.
\end{corollary}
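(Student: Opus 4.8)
The plan is to derive both inclusions of the claimed equality directly from the generation-by-minimality description of small roots in \cref{prop:small-roots-description}. Throughout I write $\Sigma_J$ for the set of small roots of the parabolic subgroup $W_J$; the goal is to show $\Sigma_J=\Sigma\cap V_J$. First I would record the routine facts that will be used repeatedly: $\Phi_J=\Phi\cap V_J$ (by \cite{Dyer}, cited above), that $\beta\in\Phi_J$ is positive as a root of $W_J$ if and only if it is positive as a root of $W$ (expand $\beta$ in the basis of simple roots and note the coefficients indexed by $I\setminus J$ must vanish), that the bilinear form induced by $W_J$ is the restriction of $B$ to $V_J$, and that $s_j$ preserves $V_J$ for every $j\in J$. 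In particular $\Sigma\cap V_J\subseteq\Phi^+\cap V_J=\Phi_J^+$.

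For the inclusion $\Sigma_J\subseteq\Sigma\cap V_J$, I would simply verify that the set $\Sigma\cap V_J$ satisfies the two closure properties of \cref{prop:small-roots-description} relative to the Coxeter system $W_J$. Property~(i) is clear since each simple root $\alpha_j$ with $j\in J$ lies in $\Sigma$ and in $V_J$. For property~(ii), if $\gamma\in(\Sigma\cap V_J)\setminus\{\alpha_j\}$ with $j\in J$ and $0>B(\gamma,\alpha_j)>-1$, then $s_j\gamma\in\Sigma$ by closure of $\Sigma$ and $s_j\gamma\in V_J$ because $s_j$ preserves $V_J$. Minimality of $\Sigma_J$ then gives $\Sigma_J\subseteq\Sigma\cap V_J$, and in particular $\Sigma_J\subseteq\Sigma$.

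The reverse inclusion $\Sigma\cap V_J\subseteq\Sigma_J$ is the crux. Here I would show that $\Sigma'=\Sigma_J\cup(\Sigma\setminus V_J)$ satisfies the two closure properties of \cref{prop:small-roots-description} relative to $W$ itself; minimality of $\Sigma$ then forces $\Sigma\subseteq\Sigma'$, and since $\Sigma_J\subseteq V_J$ we get $\Sigma\cap V_J\subseteq\Sigma'\cap V_J=\Sigma_J$, as desired. Property~(i) holds because $\alpha_i\in\Sigma_J\subseteq\Sigma'$ when $i\in J$, and $\alpha_i\in\Sigma\setminus V_J\subseteq\Sigma'$ when $i\notin J$. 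For property~(ii), take $\gamma\in\Sigma'\setminus\{\alpha_i\}$ with $0>B(\gamma,\alpha_i)>-1$; since $\Sigma_J\subseteq\Sigma$ we have $\gamma\in\Sigma$, so $s_i\gamma\in\Sigma$, and if $s_i\gamma\notin V_J$ then $s_i\gamma\in\Sigma\setminus V_J\subseteq\Sigma'$ and we are done. The remaining case is $s_i\gamma\in V_J$. If $i\in J$, then $\gamma=s_i(s_i\gamma)\in V_J$, so $\gamma\in\Sigma'\cap V_J=\Sigma_J$, and closure of $\Sigma_J$ gives $s_i\gamma\in\Sigma_J\subseteq\Sigma'$. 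If $i\notin J$, I expect the key observation to be that $s_i\gamma\in V_J$ is impossible: writing $\gamma=\sum_{k\in I}c_k\alpha_k$ with all $c_k\geq0$ (as $\gamma\in\Phi^+$), the coefficient of $\alpha_i$ in $s_i\gamma=\gamma-2B(\gamma,\alpha_i)\alpha_i$ equals $c_i-2B(\gamma,\alpha_i)$, which is strictly positive because $B(\gamma,\alpha_i)<0$, so $s_i\gamma\notin V_J$. In other words, the generation process for $\Sigma$ can never produce a root of $V_J$ through a step indexed by some $i\notin J$, which is exactly what keeps $\Sigma\cap V_J$ from exceeding $\Sigma_J$.

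I anticipate the only real obstacle is recognizing that this last coefficient computation is precisely what is needed; everything else is bookkeeping with \cref{prop:small-roots-description}. An alternative route for the reverse inclusion would be to argue via the dominance order: one proves that if $\beta,\beta'\in\Phi_J$ and $\beta\dom\beta'$ within $W_J$, then $\beta\dom\beta'$ within $W$ — using the factorization $w=w^Jw_J$ of an arbitrary $w\in W$ through a minimal left-coset representative $w^J$, which satisfies $w^J(\Phi_J^+)\subseteq\Phi^+$ — and then a root of $\Sigma\cap V_J$ that failed to be small in $W_J$ would fail to be small in $W$. I would, however, present the minimality argument above, since it stays entirely within the framework already set up in the excerpt.
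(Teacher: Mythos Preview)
Your argument is correct. The paper states this result as a corollary of \cref{prop:small-roots-description} without supplying any proof, so there is nothing to compare against; your minimality argument is exactly the kind of verification one would expect to fill in the omitted details, and the key coefficient computation (that applying $s_i$ with $i\notin J$ and $B(\gamma,\alpha_i)<0$ strictly increases the $\alpha_i$-coefficient, hence cannot land in $V_J$) is the right observation. The alternative dominance-order route you sketch is also valid and is arguably closer to the original definition of small roots, but either approach suffices.
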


\section{Basics of Bender--Knuth Billiards}\label{sec:basics}

In this section, we establish more terminology and collect important facts concerning our general Bender--Knuth billiards systems. 

\subsection{Separators}\label{subsec:strata}

As before, $(W, S)$ is a Coxeter system, and $S=\{s_i : i \in I\}$, where $I$ is a finite index set. Fix a nonempty convex set $\LL \subseteq W$. 

In \Cref{subsec:intro_Coxeter}, we stated that a noninvertible Bender--Knuth toggle $\tau_i$ cannot move an element ``strictly away from'' the convex set $\LL$. It is now time to formalize that observation. We define a \dfn{separator} for an element $u\in W$ to be an element of the set
\[\Sep(u) = \{\beta \in \RR(\LL) : u \beta \in \Phi^-\} \subseteq \RR(\LL).\]  (Note that $\Sep(u)$ depends on the convex set $\LL$.) The separators for $u$ correspond to (oriented) hyperplanes of the Coxeter arrangement that separate $\BB u$ from $\BB\LL$. Notice that since $\LL$ is convex, we have $u \in \LL$ if and only if $\Sep(u) = \varnothing$.
\begin{lemma}\label{lem:sep-decreasing}
    Let $u \in W$ and $i \in I$. Then $\Sep(\tau_i(u)) \subseteq \Sep(u)$. In fact,
    \begin{align*}
        \tau_i(u) = \begin{cases} s_iu & \mbox{if $\Sep(s_i u) \subseteq \Sep(u)$;} \\ u & \mbox{otherwise.} \end{cases}
    \end{align*}
\end{lemma}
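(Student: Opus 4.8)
The statement has two parts: the ``in fact'' formula is the substantive one, and the first claim $\Sep(\tau_i(u)) \subseteq \Sep(u)$ follows from it immediately (if $\tau_i(u) = u$ there is nothing to prove, and if $\tau_i(u) = s_i u$ then by the formula $\Sep(s_i u) \subseteq \Sep(u)$). So the plan is to prove the case distinction in the displayed formula by relating the condition ``$u^{-1}\alpha_i \in \RR(\LL)$'' from \Cref{def:toggles} to the condition ``$\Sep(s_i u) \not\subseteq \Sep(u)$''.

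First I would record the basic relationship between $\Sep(u)$ and $\Sep(s_i u)$. The hyperplane $\HH_{u^{-1}\alpha_i}$ is the unique one separating $\BB u$ from $\BB s_i u$, so the reflection $r = r_{\alpha_i}$ acting appropriately shows that for $\beta \in \Phi$, the sign of $(s_i u)\beta = s_i(u\beta)$ differs from that of $u\beta$ exactly when $u\beta \in \{\pm\alpha_i\}$, i.e. when $\beta = \pm u^{-1}\alpha_i$. Consequently, for any $\beta \in \RR(\LL)$ with $\beta \neq \pm u^{-1}\alpha_i$, we have $\beta \in \Sep(u) \iff \beta \in \Sep(s_i u)$. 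Thus $\Sep(u)$ and $\Sep(s_i u)$ can differ only in whether they contain $u^{-1}\alpha_i$ or $-u^{-1}\alpha_i$ (and since $\RR(\LL)$ is antisymmetric, at most one of these two roots lies in $\RR(\LL)$).

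Now I would split into the two cases of \Cref{def:toggles}. Case 1: $u^{-1}\alpha_i \in \RR(\LL)$, so $\tau_i(u) = u$. Here I claim $\Sep(s_i u) \not\subseteq \Sep(u)$. Indeed, consider $\beta := -u^{-1}\alpha_i = (s_i u)^{-1}\alpha_i$. Wait---I should instead observe: since $u^{-1}\alpha_i \in \RR(\LL) \subseteq \Phi^+$ and $\RR(\LL)$ is antisymmetric, $-u^{-1}\alpha_i \notin \RR(\LL)$; but $u \in H^+_{u^{-1}\alpha_i}$ means $u(u^{-1}\alpha_i) = \alpha_i \in \Phi^+$, so $u^{-1}\alpha_i \notin \Sep(u)$, and $(s_i u)(u^{-1}\alpha_i) = s_i \alpha_i = -\alpha_i \in \Phi^-$, so $u^{-1}\alpha_i \in \Sep(s_i u)$. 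Hence $\Sep(s_i u) \not\subseteq \Sep(u)$, consistent with $\tau_i(u) = u$. Case 2: $u^{-1}\alpha_i \notin \RR(\LL)$, so $\tau_i(u) = s_i u$. Here I must show $\Sep(s_i u) \subseteq \Sep(u)$. By the second paragraph, the only root that could be in $\Sep(s_i u) \setminus \Sep(u)$ is one of $\pm u^{-1}\alpha_i$, and it would have to lie in $\RR(\LL)$. But $u^{-1}\alpha_i \notin \RR(\LL)$ by assumption; and $-u^{-1}\alpha_i$: since $(s_i u)(-u^{-1}\alpha_i) = -s_i\alpha_i = \alpha_i \in \Phi^+$, this root is not in $\Sep(s_i u)$ regardless. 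So $\Sep(s_i u) \subseteq \Sep(u)$.

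Combining the two cases: $\tau_i(u) = s_i u$ precisely when $u^{-1}\alpha_i \notin \RR(\LL)$, which by Cases 1 and 2 is precisely when $\Sep(s_i u) \subseteq \Sep(u)$; otherwise $\tau_i(u) = u$. This is exactly the displayed formula, and the inclusion $\Sep(\tau_i(u)) \subseteq \Sep(u)$ follows as noted. The only mildly delicate point---the ``main obstacle,'' though it is not a serious one---is keeping careful track of signs when comparing $u\beta$ with $s_i(u\beta)$ and making sure the antisymmetry of $\RR(\LL)$ is invoked correctly to rule out the spurious root; everything else is bookkeeping with the definitions of $H^+_\beta$, $\Sep$, and $\tau_i$.
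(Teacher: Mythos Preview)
Your proof is correct and follows essentially the same approach as the paper: both arguments identify $\Sep(s_iu)\setminus\Sep(u)$ by using the fact that $s_i$ negates only $\pm\alpha_i$, and then match the two cases of \Cref{def:toggles} to the two possibilities for this set-difference. One small slip: you write ``$u^{-1}\alpha_i \in \RR(\LL) \subseteq \Phi^+$'', but $\RR(\LL)$ need not be contained in $\Phi^+$; fortunately you only use antisymmetry of $\RR(\LL)$ (and in fact that observation is not even needed in Case~1), so the argument is unaffected.
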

\begin{proof}
A root $\gamma$ satisfies $\gamma \in \Phi^+$ and $s_i \gamma \in \Phi^-$ if and only if $\gamma=\beta_{s_i}=\alpha_i$.  Also, by the definition of separators,
\[\Sep(s_i u) \setminus \Sep(u)=\{\beta \in \RR(\LL): s_i u \beta \in \Phi^- \text{ and } u\beta \in \Phi^+\}.\] This set is
$\{u^{-1} \alpha_i\}$ if $u^{-1} \alpha_i \in \RR(\LL)$ and is $\varnothing$ otherwise.  The lemma now follows from the definition of $\tau_i$ (\cref{def:toggles}).
\end{proof}

One important consequence of \Cref{lem:sep-decreasing} is that $\Sep(u_0) = \Sep(u_1) = \Sep(u_2) = \cdots$ for any periodic billiards trajectory $u_0, u_1, u_2, \ldots$.

\begin{example}
    This example is a continuation of \Cref{ex:partial-order}. Let $\LL \subseteq \mathfrak S_{n+1}$ be a convex set. As before, there is an associated partial order $\leq_{\LL}$ on $[n+1]$. For each $u \in \mathfrak S_{n+1}$, we have $e_i - e_j \in \Sep(u)$ if and only if $i \leq_{\LL} j$ and $u(i) > u(j)$. In other words, if we think of $u$ as a labeling of the poset $([n+1], \leq_{\LL})$, then the separators for $u$ correspond to the pairs of comparable elements whose labels are ``in the wrong order''. \Cref{lem:sep-decreasing} says that applying a noninvertible Bender--Knuth toggle cannot create any new such pairs.
\end{example}

\begin{lemma}\label{lem:sep-finite}
    For every $u\in W$, the set $\Sep(u)$ is finite.
\end{lemma}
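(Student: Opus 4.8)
The plan is to bound $\Sep(u)$ by controlling its positive roots and its negative roots separately, in each case via the finiteness of an inversion set. Since $\LL$ is nonempty, fix an element $w_0 \in \LL$. By definition, every $\beta \in \Sep(u)$ lies in $\RR(\LL)$ and satisfies $u\beta \in \Phi^-$; moreover $\LL \subseteq H_\beta^+$, so $w_0 \beta \in \Phi^+$. Thus a separator for $u$ is a root that is ``positive at $w_0$'' but ``negative at $u$,'' and the cases $\beta \in \Phi^+$ and $\beta \in \Phi^-$ can be handled symmetrically.

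First I would treat the positive separators: if $\beta \in \Sep(u) \cap \Phi^+$, then $\beta = \beta_{r_\beta}$, so $u\beta_{r_\beta} \in \Phi^-$ gives $r_\beta \in \Inv(u)$ by \eqref{eq:inversions_roots}. Because $\beta \mapsto r_\beta$ is a bijection from $\Phi^+$ to $T$, distinct positive separators yield distinct reflections in $\Inv(u)$, whence $\lvert \Sep(u) \cap \Phi^+ \rvert \le \lvert \Inv(u) \rvert = \ell(u)$. For the negative separators, suppose $\beta \in \Sep(u) \cap \Phi^-$; then $-\beta \in \Phi^+$ and $\beta_{r_\beta} = -\beta$ (using $r_\beta = r_{-\beta}$), while $w_0 \beta \in \Phi^+$ forces $w_0 \beta_{r_\beta} = w_0(-\beta) \in \Phi^-$, i.e.\ $r_\beta \in \Inv(w_0)$. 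The same injectivity argument gives $\lvert \Sep(u) \cap \Phi^- \rvert \le \lvert \Inv(w_0) \rvert = \ell(w_0)$. Summing the two bounds yields $\lvert \Sep(u) \rvert \le \ell(u) + \ell(w_0) < \infty$.

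I do not expect a genuine obstacle here; the only delicate points are the bookkeeping between a root and its reflection (and the intervening sign change) and the realization that one cannot simply invoke finiteness of $\RR(\LL)$, since that set is frequently infinite---for instance $\RR(\{\mathbbm{1}\}) = \Phi^+$---so the argument must use the extra condition $u\beta \in \Phi^-$ and route through inversion sets. An equivalent geometric phrasing: every $\beta \in \Sep(u)$ has hyperplane $\HH_\beta$ separating the chambers $\BB w_0$ and $\BB u$, only finitely many hyperplanes separate two given chambers, and $\Sep(u) \subseteq \RR(\LL)$ is antisymmetric, so it contains at most one root per such hyperplane.
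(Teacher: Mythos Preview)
Your proof is correct and rests on the same idea as the paper's: a separator for $u$ corresponds to a hyperplane separating the chamber of $u$ from the chamber of a fixed $w_0\in\LL$, and such hyperplanes are counted by an inversion set. The only difference is packaging: you split into $\beta\in\Phi^+$ and $\beta\in\Phi^-$ and inject into $\Inv(u)$ and $\Inv(w_0)$ separately, obtaining $|\Sep(u)|\le \ell(u)+\ell(w_0)$, whereas the paper observes in one stroke that $\beta\mapsto r_{w_0\beta}$ injects $\Sep(u)$ into $\Inv(uw_0^{-1})$, giving the (sharper) bound $|\Sep(u)|\le \ell(uw_0^{-1})$. Since $\ell(uw_0^{-1})\le \ell(u)+\ell(w_0)$, the paper's bound refines yours, but for the purposes of the lemma either suffices.
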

\begin{proof}
Fix some $w \in \LL$. For every $\beta \in \Sep(u)$, we have
$w\beta \in \Phi^+$ and $u\beta \in \Phi^-$.
Writing $u\beta=(uw^{-1})(w\beta)=(uw^{-1})\beta_{r_{w\beta}}$, we see that $r_{w\beta}$ is a right inversion of $uw^{-1}$. It follows that ${|\Sep(u)|\leq|\Inv(uw^{-1})|=\ell(uw^{-1})<\infty}$. 
\end{proof}

Recall that if $\mathsf w = i_M\cdots i_1$ is a word, then $\tau_{\mathsf w}$ denotes the composition $\tau_{i_M} \cdots \tau_{i_1}$.

\begin{lemma}\label{lem:sep-containment}
    Let $w \in W$, and let $\mathsf{w}$ be a reduced word for $w$. Then
    \[\Sep(\tau_{\mathsf{w}}(u)) \subseteq \Sep(w^{-1} \tau_{\mathsf{w}}(u))\] for all $u \in W$.
\end{lemma}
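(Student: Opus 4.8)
The plan is to induct on the length $M$ of the reduced word $\mathsf{w} = i_M \cdots i_1$. The base case $M = 0$ is trivial: then $w = \mathbbm{1}$, $\tau_{\mathsf{w}}(u) = u$, and the claimed containment $\Sep(u) \subseteq \Sep(u)$ holds. For the inductive step, write $\mathsf{w} = i_M \mathsf{w}'$ where $\mathsf{w}' = i_{M-1} \cdots i_1$ is a reduced word for $w' := s_{i_{M-1}} \cdots s_{i_1}$, so that $w = s_{i_M} w'$ and $\ell(w) = \ell(w') + 1$. Set $v = \tau_{\mathsf{w}'}(u)$, so $\tau_{\mathsf{w}}(u) = \tau_{i_M}(v)$. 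By the inductive hypothesis applied to $\mathsf{w}'$, we have $\Sep(v) \subseteq \Sep((w')^{-1} v)$. The goal is to deduce $\Sep(\tau_{i_M}(v)) \subseteq \Sep(w^{-1}\tau_{i_M}(v)) = \Sep((w')^{-1} s_{i_M} \tau_{i_M}(v))$.

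First I would dispose of the easy case. If $\tau_{i_M}(v) = s_{i_M} v$, then $w^{-1}\tau_{i_M}(v) = (w')^{-1} s_{i_M} s_{i_M} v = (w')^{-1} v$, so the desired inequality reads $\Sep(s_{i_M} v) \subseteq \Sep((w')^{-1} v)$; combining $\Sep(s_{i_M}v) = \Sep(\tau_{i_M}(v)) \subseteq \Sep(v)$ (from \Cref{lem:sep-decreasing}) with the inductive hypothesis $\Sep(v) \subseteq \Sep((w')^{-1} v)$ finishes this case. So assume $\tau_{i_M}(v) = v$, i.e.\ $v^{-1}\alpha_{i_M} \notin \RR(\LL)$; then $\tau_{\mathsf w}(u) = v$, and I must show $\Sep(v) \subseteq \Sep((w')^{-1} s_{i_M} v)$. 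The inductive hypothesis gives $\Sep(v) \subseteq \Sep((w')^{-1} v)$, so it suffices to compare $\Sep((w')^{-1} v)$ with $\Sep((w')^{-1} s_{i_M} v)$: concretely, I want to show that every $\beta \in \Sep(v)$ actually lies in $\Sep((w')^{-1} s_{i_M} v)$. The two elements $(w')^{-1} v$ and $(w')^{-1} s_{i_M} v$ differ by left multiplication by $(w')^{-1} s_{i_M} w'$, which is the reflection $r_{\gamma}$ where $\gamma = (w')^{-1}\alpha_{i_M}$ (up to sign), and since $\ell(w) = \ell(w')+1$, the root $\gamma$ is positive. The key point will be: for $\beta \in \Sep(v) \subseteq \RR(\LL)$ we know $vw_{0}$-type reasoning — more precisely, $w \beta \in \Phi^+$ for every $w \in \LL$ — and we need $(w')^{-1} s_{i_M} v \beta \in \Phi^-$. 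We already have $(w')^{-1} v \beta \in \Phi^-$ from the inductive hypothesis. Applying $s_{i_M}$-conjugated-by-$(w')^{-1}$, i.e.\ the reflection $r_\gamma$, to a negative root $(w')^{-1} v\beta$ keeps it negative unless $(w')^{-1} v \beta = -\gamma$, i.e.\ unless $v\beta = -\alpha_{i_M}$, i.e.\ unless $\beta = -v^{-1}\alpha_{i_M}$. But $\beta \in \RR(\LL)$ is a positive root (since $\RR(\LL)$ is antisymmetric and every element of $\LL$ sends it to $\Phi^+$, hence it cannot be negative as then $\mathbbm 1$... — more carefully, $\beta \in \Sep(v)$ means $\beta \in \RR(\LL)$, and one checks $\RR(\LL) \subseteq \Phi^+$ or at least that $\beta \ne -v^{-1}\alpha_{i_M}$): if $\beta = -v^{-1}\alpha_{i_M}$ then $-\beta = v^{-1}\alpha_{i_M} \notin \RR(\LL)$ by our standing assumption in this case, but antisymmetry of $\RR(\LL)$ together with $\beta \in \RR(\LL)$ does not immediately forbid this, so I need instead: $v \beta \in \Phi^-$ (as $\beta \in \Sep(v)$), so $v\beta = -\alpha_{i_M}$ is consistent sign-wise; the real obstruction is that $\beta \in \RR(\LL)$ forces $-\beta \notin \RR(\LL)$, whereas $-\beta = v^{-1}\alpha_{i_M}$, and our assumption $v^{-1}\alpha_{i_M}\notin\RR(\LL)$ is exactly this — so there is no contradiction and this degenerate case genuinely needs to be handled.

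I expect the main obstacle to be precisely this degenerate case, i.e.\ ruling out (or absorbing) the scenario $v\beta = -\alpha_{i_M}$, equivalently $\beta = -v^{-1}\alpha_{i_M}$ with $\beta \in \RR(\LL)$. The resolution: if $\beta = -v^{-1}\alpha_{i_M} \in \RR(\LL)$, then $v \in H_\beta^-$ since $v\beta = -\alpha_{i_M} \in \Phi^-$; but $\beta \in \RR(\LL)$ means $\LL \subseteq H_\beta^+$. On the other hand, in this subcase $\tau_{i_M}(v) = v$ required $v^{-1}\alpha_{i_M} = -\beta \notin \RR(\LL)$, which says $\LL \not\subseteq H_{-\beta}^+ = H_\beta^-$, i.e.\ some element of $\LL$ lies in $H_\beta^+$ — consistent with $\beta \in \RR(\LL)$. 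So this subcase is not vacuous, and I will instead argue that in it the conclusion still holds because $w^{-1} \tau_{\mathsf w}(u) = (w')^{-1} s_{i_M} v$ satisfies $(w')^{-1} s_{i_M} v \,\beta = (w')^{-1} s_{i_M}(-\alpha_{i_M}) = (w')^{-1}\alpha_{i_M} = \gamma \in \Phi^+$, so $\beta \notin \Sep((w')^{-1}s_{i_M} v)$ — meaning I would need $\beta \notin \Sep(v)$ after all, which contradicts the assumption $\beta \in \Sep(v)$. The way out must therefore be that this degenerate configuration cannot arise when $\beta \in \Sep(v)$ and $\tau_{i_M}(v) = v$: indeed $\beta \in \Sep(v)$ gives $\beta \in \RR(\LL)$, and I claim $\RR(\LL) \subseteq \Phi^+$ is false in general, so the clean statement is that $-v^{-1}\alpha_{i_M} \in \RR(\LL)$ together with $v^{-1}\alpha_{i_M}\notin\RR(\LL)$ forces... nothing bad — so I will need to reorganize the induction, perhaps inducting on $\mathsf{w}$ from the *right* (peel off $i_1$ instead of $i_M$), which makes the reflection being conjugated a *simple* reflection $s_{i_M}$ acting on the right-hand argument and turns the degenerate case into the transparent statement "$\beta = \alpha_{i_1}$", which is easy to handle because then $\beta \in \RR(\LL)$ would mean $\tau_{i_1}$ acts trivially in a controlled way. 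I would write up the right-peeling version, where at each step the comparison is between $\Sep(\tau_{i_1}(u))$ and $\Sep(s_{i_1}\tau_{i_1}(u))$ modulo the already-established containment for the shorter word $i_M \cdots i_2$, and the lone reflection involved is $s_{i_M\cdots i_2} s_{i_1} s_{i_M\cdots i_2}^{-1} = r_{\beta_t}$ for $\beta_t$ the last root in the list \eqref{eq:inversions_roots_referee}; the degenerate root is then $\beta_t$ itself, and $\beta \in \Sep(\tau_{i_1}(u)) \cap \{\beta_t\}$ is excluded because $\beta_t \in \RR(\LL)$ would already be forbidden by convexity considerations analogous to \Cref{lem:sep-decreasing}. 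This bookkeeping is the crux; everything else is routine sign-chasing with \eqref{eq:inversions_roots} and \Cref{lem:sep-decreasing}.
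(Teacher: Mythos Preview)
Your proposal has two genuine gaps.

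First, you have the definition of $\tau_i$ reversed in the critical case. By \cref{def:toggles}, $\tau_{i_M}(v)=v$ means $v^{-1}\alpha_{i_M}\in\RR(\LL)$, not $v^{-1}\alpha_{i_M}\notin\RR(\LL)$. This sign error propagates through your entire analysis of the ``stationary'' case, and it is exactly the fact $v^{-1}\alpha_{i_M}\in\RR(\LL)$ that the paper exploits.

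Second, and more seriously, your assertion that the reflection $r_\gamma$ (with $\gamma=(w')^{-1}\alpha_{i_M}$) sends a negative root to a positive root only when that root equals $-\gamma$ is false: this property holds for \emph{simple} reflections only, and $\gamma$ is not simple once $\ell(w')>0$. So the reduction ``negative stays negative except in one degenerate case'' collapses. Your proposed fix of peeling from the right does not repair this: after right-peeling, the degenerate root becomes $-\tau_{\mathsf w''}(u')^{-1}w''\alpha_{i_1}$, which depends on the uncontrolled element $\tau_{\mathsf w''}(u')$, not simply on $w''\alpha_{i_1}$ as you claim.

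The paper's proof stays with left-peeling and, in the case $\tau_{i_M}(v)=v$, splits on the sign of $B(v\beta,\alpha_{i_M})$. When $B(v\beta,\alpha_{i_M})\ge 0$ the identity $w^{-1}v\beta=(w')^{-1}v\beta-2B(v\beta,\alpha_{i_M})(w')^{-1}\alpha_{i_M}$ exhibits $w^{-1}v\beta$ as a nonpositive combination (using $(w')^{-1}\alpha_{i_M}\in\Phi^+$), hence negative. When $B(v\beta,\alpha_{i_M})<0$ the key move is to use \emph{closedness} of $\RR(\LL)$: since both $\beta$ and $v^{-1}\alpha_{i_M}$ lie in $\RR(\LL)$, so does $v^{-1}s_{i_M}v\beta=\beta-2B(v\beta,\alpha_{i_M})\,v^{-1}\alpha_{i_M}$; one checks this new root also lies in $\Sep(v)$, and then the inductive hypothesis applied to \emph{it} gives $(w')^{-1}v(v^{-1}s_{i_M}v\beta)=w^{-1}v\beta\in\Phi^-$. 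The closedness argument is the idea your proposal is missing.
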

\begin{proof}
We proceed by induction on $M = \ell(w)$. The base case $M = 0$ is trivial, so assume that $M > 0$. Write $\mathsf{w} = i_M\cdots i_1$. Let $\mathsf{w}' = i_{M-1}\cdots i_{1}$ be the word obtained by deleting the leftmost letter of $\mathsf{w}$, and let $w' = s_{i_M} w \in W$ be the element of $W$ represented by the word $\mathsf{w}'$. Then $\tau_{\mathsf w}(u) = \tau_{i_M}(\tau_{\mathsf w'}(u))$.

First, suppose that $\tau_{\mathsf w}(u) = s_{i_M}\tau_{\mathsf w'}(u)$. Then $(w')^{-1} \tau_{\mathsf{w}'}(u) = w^{-1} \tau_{\mathsf{w}}(u)$. Hence, by \Cref{lem:sep-decreasing} and the inductive hypothesis, we have \[\Sep(\tau_{\mathsf{w}}(u)) \subseteq \Sep(\tau_{\mathsf{w}'}(u)) \subseteq \Sep((w')^{-1} \tau_{\mathsf{w}'}(u)) = \Sep(w^{-1} \tau_{\mathsf{w}}(u)),\] as desired.

Next, suppose that $\tau_{\mathsf w}(u) = \tau_{\mathsf w'}(u)$. Let $v = \tau_{\mathsf w}(u) = \tau_{\mathsf w'}(u)$. We wish to prove that \[{\Sep(v) \subseteq \Sep(w^{-1}v)}.\] Choose any $\beta \in \Sep(v)$; we will show that $\beta \in \Sep(w^{-1} v)$, i.e., that $w^{-1} v \beta \in \Phi^-$. Recall that
    \begin{equation}\label{eq:reflection-by-si1}
        s_{i_M} v \beta = v\beta - 2B(v \beta, \alpha_{i_M}) \alpha_{i_M}.
    \end{equation} We now consider two cases. 

\medskip 

   \noindent {\bf Case 1.} Assume $B(v\beta, \alpha_{i_M}) \geq 0$. Then applying $(w')^{-1}$ to each side of \eqref{eq:reflection-by-si1} yields \[w^{-1} v \beta = (w')^{-1} v \beta - 2B(v\beta, \alpha_{i_M}) (w')^{-1} \alpha_{i_M}.\] By the inductive hypothesis, we have $\beta \in \Sep((w')^{-1}v)$, so $(w')^{-1} v \beta \in \Phi^-$. Moreover, $s_{i_M}$ is not a right inversion of $(w')^{-1}$, so by \eqref{eq:inversions_roots}, we have $(w')^{-1} \alpha_{i_M} \in \Phi^+$. It follows that $w^{-1} v \beta \in \Phi^-$, as desired.

\medskip 

\noindent {\bf Case 2.} Assume $B(v\beta, \alpha_{i_M}) < 0$. We will prove that $v^{-1} s_{i_M} v \beta \in \Sep((w')^{-1} v)$; this will imply that \[w^{-1} v \beta = (w')^{-1} v (v^{-1} s_{i_M} v \beta) \in \Phi^-,\] as desired. Our inductive hypothesis tells us that $\Sep(v)\subseteq\Sep((w')^{-1} v)$, so it suffices to show that $v^{-1} s_{i_M} v \beta \in \Sep(v)$; that is, we want to show that $v^{-1} s_{i_M} v \beta\in\RR(\LL)$ and $v(v^{-1} s_{i_M} v) \beta \in \Phi^-$. 
   
Applying $v^{-1}$ to each side of \eqref{eq:reflection-by-si1} yields \[v^{-1} s_{i_M} v \beta = \beta - 2B(v\beta, \alpha_{i_M}) v^{-1} \alpha_{i_M}.\]
We know that $\beta \in \RR(\LL)$. Since $\tau_{\mathsf w}(u) = \tau_{\mathsf w'}(u)=v$, we also have $v^{-1} \alpha_{i_M} \in \RR(\LL)$. Because $\RR(\LL)$ is closed, this implies that $v^{-1}s_{i_M} v \beta \in \RR(\LL)$. Since $-\beta\not\in\RR(\LL)$, we also know that $-\beta\neq v^{-1}\alpha_{i_M}$, so $v\beta\neq-\alpha_{i_M}$. Since $v\beta\in\Phi^-\setminus\{-\alpha_{i_M}\}$, we also conclude that $v(v^{-1} s_{i_M} v) \beta = s_{i_M} v \beta \in \Phi^-$.
\end{proof}
\begin{remark}
\Cref{lem:sep-containment} is equivalent to the statement that the function $W \to W$ given by $u\mapsto  w^{-1} \tau_{\mathsf w}(u)$ is idempotent. It is unclear whether this formulation is useful.
\end{remark}

\subsection{Strata}

It will be useful to classify the elements of $W$ by their separators.  We define the \dfn{stratum} corresponding to a finite subset $R \subseteq \RR(\LL)$ to be the set \[\Str(R) = \{u \in W : \Sep(u) = R\}.\] Note that the nonempty strata form a partition of the set $W$. By \cref{lem:sep-decreasing}, any periodic billiards trajectory is contained in a single stratum. The stratum corresponding to the empty set is $\LL$ itself; we call the other nonempty strata the \dfn{proper strata}.

Strata have the following geometric interpretation. Recall that the hyperplanes $\HH_\beta$ for $\beta \in \RR(\LL)$ are called \emph{one-way mirrors} (see \cref{fig:affineS3,fig:spherical,fig:affineS3_infinite,fig:hyperbolic}, where one-way mirrors are drawn as yellow-and-red lines or curves). These hyperplanes form a subarrangement $\mathcal H^\LL_W$ of the Coxeter arrangement $\mathcal H_W$. The nonempty strata correspond to the intersections of the regions of $\mathcal H^\LL_W$ with the Tits cone. 

\begin{lemma}\label{lem:referee}
If $\LL$ is finite, then $|\Str(R)| \leq 2^{|\Phi \setminus (\RR(\LL) \cup (-\RR(\LL)))|}$ for all $R \subseteq \RR(\LL)$. 
\end{lemma}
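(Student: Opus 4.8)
The plan is to bound the number of elements in a stratum $\Str(R)$ by injecting it into the power set of $\Phi \setminus (\RR(\LL) \cup (-\RR(\LL)))$, i.e. the set of roots whose associated hyperplanes are \emph{windows}. Fix an element $w_0 \in \LL$; this exists since $\LL$ is nonempty, and since $\LL$ is finite, $\ell(w) \leq \max_{w \in \LL} \ell(w w_0^{-1}) < \infty$ is uniformly bounded on $\LL$ but that alone is not quite what we need. Instead, for $u \in \Str(R)$, consider the set of roots
\[
W(u) = \{\beta \in \Phi^+ \setminus (\RR(\LL) \cup (-\RR(\LL))) : u \beta \in \Phi^-\},
\]
the "window inversions" of $u$: positive roots that are separated from $\BB u$ by a window hyperplane, oriented positively. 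The claim will be that on a single stratum $\Str(R)$, the map $u \mapsto W(u)$ is injective, after which the bound $|\Str(R)| \leq 2^{|\Phi^+ \setminus (\RR(\LL) \cup (-\RR(\LL)))|}$ follows; note $|\Phi^+ \setminus (\RR(\LL) \cup (-\RR(\LL)))| = \tfrac12 |\Phi \setminus (\RR(\LL) \cup (-\RR(\LL)))|$ since $\RR(\LL)$ is antisymmetric, which actually gives a stronger bound than stated, so a cruder argument recording signs of \emph{all} window roots will match the stated exponent comfortably.

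Here is the key point making injectivity work. Suppose $u, u' \in \Str(R)$, so $\Sep(u) = \Sep(u') = R$. The positive roots $\beta$ with $u\beta \in \Phi^-$ are exactly $\{\beta_t : t \in \Inv(u)\}$ by \eqref{eq:inversions_roots}, and this set is partitioned according to whether $\beta \in \RR(\LL)$ or $\beta$ is a window root (it cannot lie in $-\RR(\LL)$ since $\beta \in \Phi^+$ and $\RR(\LL)$ is antisymmetric — wait, $-\RR(\LL) \cap \Phi^+$ can be nonempty). So more carefully: a positive root $\beta$ with $u\beta\in\Phi^-$ lies in the "mirror part" iff $\beta \in \RR(\LL)$ (giving a separator) and otherwise is a window root; the mirror part is exactly $\Sep(u) = R$, common to $u$ and $u'$. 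Thus $\Inv(u)$ and $\Inv(u')$ have the same "mirror reflections," and if additionally $W(u) = W(u')$ they have the same window reflections, so $\Inv(u) = \Inv(u')$, whence $u = u'$ because an element of a Coxeter group is determined by its inversion set. This gives the injection $\Str(R) \hookrightarrow$ (subsets of window roots), and finiteness of $\Phi \setminus (\RR(\LL)\cup(-\RR(\LL)))$ — which holds because $\RR(\LL)$ is cofinite in $\Phi$ when $\LL$ is finite, by \cref{rem:Cayley} (the convex hull of $\LL$ is finite, and... hmm, actually one needs that $\RR(\LL)$ is cofinite, which should follow since $\LL$ finite forces its convex hull finite, hence the complement of $\RR(\LL)$ corresponds to finitely many reflections) — yields the finite bound $2^{|\Phi \setminus(\RR(\LL)\cup(-\RR(\LL)))|}$.

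The main obstacle I anticipate is verifying that $\Phi \setminus (\RR(\LL) \cup (-\RR(\LL)))$ is finite when $\LL$ is finite, i.e. that $\RR(\LL)$ is cofinite in $\Phi$; this is the only place where finiteness of $\LL$ enters, and it is exactly what makes the exponent in the statement a finite number. I would deduce it from \cref{rem:Cayley}: the convex hull $\bigcap_{\beta \in \RR(\LL)} H_\beta^+$ of $\LL$ is a finite set $C$, and a root $\beta$ fails to be in $\RR(\LL) \cup (-\RR(\LL))$ precisely when both $H_\beta^+$ and $H_\beta^-$ meet $C$ — equivalently $r_\beta$ is an inversion of some element of $C$ (or of some quotient $uv^{-1}$ with $u,v \in C$), and there are only finitely many such reflections since $C$ is finite. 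Once that finiteness is in hand, the rest is the bookkeeping above. A secondary subtlety is being careful about the partition of positive roots into mirror roots and window roots versus the ambient symmetry $\beta \leftrightarrow -\beta$, so that the final exponent is stated in terms of $\Phi$ rather than $\Phi^+$; recording the sign $\operatorname{sign}(u\beta) \in \{+,-\}$ for every window root $\beta$ (positive or negative) gives a map $\Str(R) \to \{+,-\}^{\Phi \setminus (\RR(\LL)\cup(-\RR(\LL)))}$ that is injective by the same inversion-set argument, directly yielding the claimed bound.
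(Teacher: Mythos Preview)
Your approach is correct and is essentially the same idea as the paper's: elements of a fixed stratum agree on the side of every one-way mirror, so they are distinguished only by their positions relative to the finitely many windows. The paper phrases this a bit more cleanly by fixing a basepoint $v\in\Str(R)$ and observing that any $u\in\Str(R)$ is determined by the set of hyperplanes separating $\BB u$ from $\BB v$, each of which must be a window; this sidesteps the case analysis you got tangled in. Note that your writeup does leave one loose thread: a positive root $\beta$ can lie in $-\RR(\LL)$ (you noticed this but didn't quite close it), and you need the easy observation that for such $\beta$ the sign of $u\beta$ is still determined by $R$, since $-\beta\in\RR(\LL)$ and $u(-\beta)\in\Phi^-$ iff $-\beta\in R$.
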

\begin{proof}
Fix an arbitrary $v \in \Str(R)$. Each element $u\in\Str(R)$ is uniquely determined by the set of hyperplanes in the Coxeter arrangement $\mathcal H_W$ that separate $\BB v$ from $\BB u$. Each such hyperplane must be of the form $\HH_\beta$ for some root $\beta$ in the finite set $\Phi\setminus(\RR(\LL)\cup(-\RR(\LL)))$.
\end{proof}

Using strata, we may prove \cref{prop:finite-to-infinite}, which we now restate.
\propfinite*

\begin{proof}
It is clear that \eqref{item:everything-heavy} implies \eqref{item:finite-heavy} and that \eqref{item:sorting} implies \eqref{item:finite-heavy}. Therefore, it is enough to show that \eqref{item:finite-heavy} implies both \eqref{item:everything-heavy} and \eqref{item:sorting}. Write $c = s_{i_n} \cdots s_{i_1}$, where $i_1, \ldots, i_n$ is an ordering of $I$.

Assume \eqref{item:finite-heavy}; we will prove \eqref{item:everything-heavy}. Let $\LL \subseteq W$ be a (possibly infinite) convex set, and let $u_0, u_1, u_2, \ldots$ be a periodic billiards trajectory (defined with respect to the convex set $\LL$ and the ordering $i_1, \ldots, i_n$ of $I$). We wish to show that $u_0 \in \LL$. Our strategy will be to construct a finite convex subset $\LL' \subseteq \LL$ such that the billiards trajectory remains the same if the convex set $\LL$ is replaced by $\LL'$. It will then follow from \eqref{item:finite-heavy} that $u_0 \in \LL'$ and, consequently, that $u_0 \in \LL$.

Choose a positive integer $K$ such that the period of $u_0, u_1, u_2, \ldots$ divides $Kn$. For each $j \in [Kn]$ such that $u_j = s_{i_j}u_{j-1}$, we have $u_{j-1}^{-1}\alpha_{i_j} \not \in \RR(\LL)$ by the definition of the noninvertible Bender--Knuth toggles, so we may choose an element \[w_j \in \LL \setminus H^+_{u_{j-1}^{-1}\alpha_{i_j}}.\] Let $\LL'$ be the convex hull of $\{w_j : j \in [Kn],\, u_j = s_{i_j}u_{j-1}\}$. Then $\LL'$ is a finite convex subset of $\LL$ (see \cref{rem:Cayley}).

For $i \in I$, let $\tau_i'$ denote the noninvertible Bender--Knuth toggle defined with respect to the convex set $\LL'$. Let $u'_0 = u_0$, and define a billiards trajectory $u'_0, u'_1, u'_2, \ldots$ using the convex set $\LL'$ and the ordering $i_1, \ldots, i_n$ of $I$. (That is, define this sequence via the recurrence relation $u'_j = \tau'_{i_j}(u'_{j-1})$.) We claim that $u_j = u'_{j}$ for all nonnegative integers $j$. It suffices to prove that $u_j = u'_j$ for all $j \in [Kn]$. To do so, we use induction on $j$.

Suppose first that $u_j = u_{j-1}$. Then $u_{j-1}^{-1}\alpha_{i_j} \in \RR(\LL)$. Since $\LL' \subseteq \LL$, this implies that ${u_{j-1}^{-1}\alpha_{i_j} \in \RR(\LL')}$, so \[u'_j = \tau'_{i_j}(u'_{j-1}) = \tau'_{i_j}(u_{j-1}) = u_{j-1} = u_j.\] Next, suppose that $u_j = s_{i_j} u_{j-1}$. Then \[w_j \in \LL' \setminus H^+_{u_{j-1}^{-1}\alpha_{i_j}},\] so $u_{j-1}^{-1}\alpha_{i_j} \not \in \RR(\LL')$. Therefore, \[u'_j = \tau'_{i_j}(u'_{j-1}) = \tau'_{i_j}(u_{j-1}) = s_{i_j}u_{j-1} = u_{j}.\] This proves our claim that $u_j = u'_{j}$ for all nonnegative integers $j$. By \eqref{item:finite-heavy}, we have $u_0 \in \LL'\subseteq \LL$, as desired.

Now, still assuming \eqref{item:finite-heavy}, we will prove \eqref{item:sorting}. Let $\LL \subseteq W$ be a finite convex set, and let $u_0, u_1, u_2, \ldots \in W$ be a billiards trajectory (defined with respect to the convex set $\LL$ and the ordering $i_1, \ldots, i_n$ of $I$). We wish to show that the billiards trajectory eventually reaches $\LL$.


By \cref{lem:sep-decreasing}, we have 
\[\Sep(u_0) \supseteq \Sep(u_1) \supseteq \Sep(u_2) \supseteq \cdots.\] By \cref{lem:sep-finite}, the sequence $\Sep(u_0), \Sep(u_1), \Sep(u_2), \ldots$ is eventually constant; equivalently, there exists a single stratum $\Str(R)$ that contains $u_k$ for all sufficiently large positive integers $k$. The stratum $\Str(R)$ is finite by \cref{lem:referee}, so the billiards trajectory $u_0, u_1, u_2, \ldots$ is eventually periodic. According to \eqref{item:finite-heavy}, we have $u_k \in \LL$ for all sufficiently large positive integers $k$. This proves \eqref{item:sorting}.
\end{proof}

Let $R \subseteq \RR(\LL)$. A \dfn{transmitting root} of the stratum $\Str(R)$ is a root $\beta \in \RR(\LL)$ that can be written in the form $-w^{-1}\alpha_i$ for some $w \in \Str(R)$ and $i \in I$. Any such root belongs to the set $\Sep(w) = R$. The transmitting roots of $R$ correspond to the one-way mirrors that allow light to leave $R$.

\begin{lemma}\label{lem:transmitting-wall}
    Every proper stratum has at least one transmitting root. 
\end{lemma}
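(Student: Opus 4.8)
The plan is to take a proper stratum $\Str(R)$, pick any element $w \in \Str(R)$, and produce a transmitting root by following a shortest path in the Cayley graph from $w$ toward $\LL$. Since $\Str(R)$ is proper, we have $R = \Sep(w) \neq \varnothing$, so $w \notin \LL$. Fix any $v \in \LL$ and take a reduced word $i_M \cdots i_1$ for $w^{-1}v$, so that $v = s_{i_M}\cdots s_{i_1} w$. Because $v \in \LL$ and $w \notin \LL$, this path from $w$ to $v$ must leave the stratum $\Str(R)$ at some point; more precisely, along the sequence $w, s_{i_1}w, s_{i_2}s_{i_1}w, \ldots, v$, the separator sets cannot all equal $R$ (the last one is $\varnothing$). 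Let $j$ be the first index at which applying the generator $s_{i_j}$ to $w_{j-1} := s_{i_{j-1}}\cdots s_{i_1}w$ changes the separator set, and set $\beta = -w_{j-1}^{-1}\alpha_{i_j}$.

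The key step is to check that this $\beta$ is a transmitting root of $\Str(R)$, i.e., that $\beta \in \RR(\LL)$ and that $w_{j-1} \in \Str(R)$. For the latter: by minimality of $j$, the elements $w, s_{i_1}w, \ldots, w_{j-1}$ all have separator set $R$, so $w_{j-1} \in \Str(R)$. For the former, I want to argue that the unique hyperplane $\HH_{w_{j-1}^{-1}\alpha_{i_j}}$ separating $w_{j-1}$ from $s_{i_j}w_{j-1}$ is a one-way mirror. The reflection $r = r_{\beta_{w_{j-1}^{-1}\alpha_{i_j}}}$ is a right inversion of $v^{-1}w$ but not of $w_{j-1}^{-1}w$ — this is precisely the statement that the reduced word $i_M\cdots i_1$ crosses this hyperplane at step $j$, which follows from \eqref{eq:inversions_roots_referee}. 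Since $w_{j-1}$ and $s_{i_j}w_{j-1}$ have different separator sets but differ only across this one hyperplane, and separators live in $\RR(\LL)$, the root $\pm w_{j-1}^{-1}\alpha_{i_j}$ must lie in $\RR(\LL)\cup(-\RR(\LL))$; choosing the sign so that the root is negated by $w_{j-1}$ (which is $\beta = -w_{j-1}^{-1}\alpha_{i_j}$, since $s_{i_j}w_{j-1}$ is farther from $\id$ along a reduced word, so $w_{j-1}^{-1}\alpha_{i_j} \in \Phi^+$) and checking that this is the sign landing in $\RR(\LL)$ rather than $-\RR(\LL)$ is exactly what we need. To see $\beta \in \RR(\LL)$ rather than $-\beta \in \RR(\LL)$: we have $w_{j-1}\beta = -\alpha_{i_j} \in \Phi^-$, and passing from $w_{j-1}$ to $w_j = s_{i_j}w_{j-1}$ moves strictly closer to $v\in\LL$ (it is a step along a reduced word for $w_{j-1}^{-1}v$), so the hyperplane we cross has $\LL$ on the far side; combined with $w_{j-1}\beta \in \Phi^-$ this forces $\beta \in \Sep(w_{j-1}) = R \subseteq \RR(\LL)$.

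Actually, the cleanest way to close this is: since $\Sep(w_j) \neq \Sep(w_{j-1})$ and $\Sep(w_j) \subseteq \Sep(w_{j-1})$ fails or the reverse — wait, here $w_j = s_{i_j}w_{j-1}$ is obtained by an honest left multiplication, not by a toggle, so we do not directly get containment. Instead, note that $\Sep(w_{j-1}) \setminus \Sep(w_j)$ and $\Sep(w_j)\setminus\Sep(w_{j-1})$ each have size at most $1$ and at least one is nonempty; any root in the symmetric difference must equal $\pm w_{j-1}^{-1}\alpha_{i_j}$, and since separators are positive on $\LL$ and $w_{j-1}$, the element of the symmetric difference that is a genuine separator for $w_{j-1}$ (we will see this is the nonempty side, because moving toward $v\in\LL$ removes a separator rather than adding one — this uses that $w_j$ is strictly closer to $v$, so $\Inv(w_j^{-1}v) \subsetneq \Inv(w_{j-1}^{-1}v)$, hence $\Sep(w_j) \subseteq \Sep(w_{j-1})$ after all) is $\beta = -w_{j-1}^{-1}\alpha_{i_j} \in R \subseteq \RR(\LL)$. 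Then $\beta = -w_{j-1}^{-1}\alpha_{i_j}$ with $w_{j-1} \in \Str(R)$ and $i_j \in I$ exhibits $\beta$ as a transmitting root.

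The main obstacle I anticipate is the bookkeeping in the previous paragraph: carefully justifying that the step toward $v \in \LL$ removes (rather than adds) a separator, so that the symmetric difference of $\Sep(w_{j-1})$ and $\Sep(w_j)$ consists of a single root lying in $R$. This should follow from the fact that $w \mapsto v$ via a reduced word strictly decreases $\ell(\cdot^{-1}v)$ at each step, hence monotonically shrinks the inversion set $\Inv(\cdot^{-1}v)$, and separators for $u$ are exactly the roots $\beta \in \RR(\LL)$ with $r_{\beta_{u\beta}}$... — one has to be slightly careful translating between $\Sep(u)$ and inversions of $u v^{-1}$ as in the proof of \cref{lem:sep-finite}, but once that translation is set up the claim is immediate. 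Everything else is routine manipulation with \eqref{eq:inversions_roots} and \eqref{eq:inversions_roots_referee}.
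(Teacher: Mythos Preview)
Your approach is correct and is essentially the same as the paper's: both arguments find an element of $\Str(R)$ adjacent to an element outside $\Str(R)$ along a geodesic toward some fixed $v \in \LL$, then show the crossed hyperplane is a one-way mirror by checking $v(w_{j-1}^{-1}\alpha_{i_j}) \in \Phi^-$. The paper's formulation is slightly cleaner---it directly picks $w \in \Str(R)$ minimizing $\ell(vw^{-1})$ and any $i$ with $\ell(vw^{-1}s_i) < \ell(vw^{-1})$, rather than walking along a specific reduced path---but the key computation is identical. Two minor cleanups: your ``reduced word for $w^{-1}v$'' should be ``for $vw^{-1}$'' to match $v = s_{i_M}\cdots s_{i_1}w$; and the aside about $w_{j-1}^{-1}\alpha_{i_j} \in \Phi^+$ is neither needed nor necessarily true (you are walking toward $v$, not toward $\mathbbm{1}$), so drop it and go straight to the sign argument via $v(w_{j-1}^{-1}\alpha_{i_j}) \in \Phi^-$.
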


\begin{proof}
Let $v \in \LL$ be arbitrary. Choose some $w \in \Str(R)$ that minimizes the length $\ell(vw^{-1})$. Since $w\neq v$, there exists $i \in I$ such that
$\ell(v w^{-1} s_i) = \ell(vw^{-1}) - 1$. We claim that $-w^{-1}\alpha_i$ is a transmitting root. To prove this, it is enough to show that $-w^{-1}\alpha_i \in \RR(\LL)$.

By the minimality of $\ell(vw^{-1})$, we have that $s_iw\not\in\Str(R)$, so $\Sep(s_i w) \neq \Sep(w)$. That is, there exists a root $\beta \in \RR(\LL)$ such that $s_i w \beta$ and $w \beta$ have different signs. It follows that $\beta = \pm w^{-1} \alpha_i$. 
Since $\ell(vw^{-1}s_i)<\ell(vw^{-1})$, we see that $s_i$ is a right inversion of $vw^{-1}$, so $vw^{-1}\alpha_i \in \Phi^-$. Since $v\beta \in \Phi^+$, we cannot have $\beta = w^{-1} \alpha_i$, so $\beta=-w^{-1}\alpha_i$. Therefore, $-w^{-1}\alpha_i \in \RR(\LL)$, as desired.
\end{proof} 

Let $u \in W$ and $i \in I$, and let $\beta$ be a transmitting root of $\Str(R)$, where $R = \Sep(u)$. Under certain conditions on $u$, $i$, and $\beta$, it is possible to prove that $\tau_i(u) = s_i u$. We will prove our strongest lemma of this form (\Cref{lem:super-strong-acute-angle}) first and then deduce from it two weaker lemmas whose hypotheses are easier to check (\Cref{lem:no-root-in-positive-span,lem:acute-angle}). We remind the reader that in this setting, we have $\beta \in R = \Sep(u)$, so $u\beta \in \Phi^-$.
\begin{lemma}\label{lem:super-strong-acute-angle}
   Let $u \in W$ and $i \in I$, and let $\beta$ be a transmitting root of $\Str(R)$, where $R = \Sep(u)$. Suppose that there exists a positive root $\gamma$ such that $\gamma \neq -u \beta$ and \[H_\gamma^+ \cap H^+_{u\beta} \cap H_{\alpha_i}^+ = \varnothing.\] Then $\tau_i(u) = s_iu$.
\end{lemma}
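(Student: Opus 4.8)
The plan is to reason by contradiction: suppose $\tau_i(u) = u$, which by \Cref{def:toggles} means $u^{-1}\alpha_i \in \RR(\LL)$. The idea is to produce a positive root lying in $H_\gamma^+ \cap H^+_{u\beta} \cap H_{\alpha_i}^+$, contradicting the hypothesis. The natural candidate to examine is $u^{-1}\alpha_i$ itself, or rather its interaction with $\beta$; the point is that since $\beta$ is a transmitting root of $\Str(R)$, there is some $w \in \Str(R)$ and some $j \in I$ with $\beta = -w^{-1}\alpha_j$, and since $R = \Sep(u)$ we know $\beta \in R$, so $u\beta \in \Phi^-$, i.e. $-u\beta \in \Phi^+$. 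First I would record the elementary facts: $u^{-1}\alpha_i \in \RR(\LL) \subseteq \Phi^+$ (so $\id \in H^+_{u^{-1}\alpha_i}$, equivalently $\alpha_i \in \Phi^+$ gives $u \in H^+_{u^{-1}\alpha_i}$ trivially), and that $\beta, u^{-1}\alpha_i \in \RR(\LL)$ together with closedness of $\RR(\LL)$ controls nonnegative combinations of $\beta$ and $u^{-1}\alpha_i$ that happen to be roots.

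The key step is to locate the contradiction by applying $u$ to a suitable root and testing membership in the three half-spaces. Concretely, I expect the right move is this: since $u^{-1}\alpha_i \in \RR(\LL)$ but $-\beta \notin \RR(\LL)$ (as $\RR(\LL)$ is antisymmetric and $\beta \in \RR(\LL)$), we have $u^{-1}\alpha_i \neq -\beta$, hence $\alpha_i \neq -u\beta$, so $-u\beta$ is a positive root distinct from $\alpha_i$. Now I would consider the reflection $s_i(-u\beta) = -u\beta - 2B(-u\beta,\alpha_i)\alpha_i$ and split on the sign of $B(u\beta, \alpha_i)$, mirroring the case analysis in the proof of \Cref{lem:sep-containment}. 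In the "acute" case the reflected root is a nonnegative combination of $-u\beta$ and $\alpha_i$, and pulling back by $u^{-1}$ exhibits it (or rather $u^{-1}$ of it) as a nonnegative combination of $\beta$ and $u^{-1}\alpha_i$, both in the closed set $\RR(\LL)$; this forces $s_i(-u\beta) = -u(u^{-1}s_iu)\beta$ to be positive and to lie in the forbidden intersection — the root $\gamma' = -u(v^{-1}s_iu\beta)$-type element should land in $H_\gamma^+$ (here using $\gamma \neq -u\beta$ and that the only positive root dominated appropriately is... ) and simultaneously in $H^+_{u\beta}$ and $H^+_{\alpha_i}$, contradicting $H_\gamma^+ \cap H^+_{u\beta} \cap H_{\alpha_i}^+ = \varnothing$. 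In the other case one argues directly that $\alpha_i$ itself, or $-u\beta$ itself, already witnesses a nonempty intersection once one checks it avoids being equal to the excluded root $-u\beta$.

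The main obstacle I anticipate is pinning down exactly which positive root to plug in and verifying it lies in $H_\gamma^+$: the hypotheses give us control of $H^+_{u\beta}$ and $H^+_{\alpha_i}$ fairly directly (these are the conditions $\tau_i(u) = u$ and $\beta \in \Sep(u)$ translate to), but membership in $H_\gamma^+$ for the auxiliary positive root $\gamma$ is where the hypothesis $\gamma \neq -u\beta$ must be used, presumably via the fact that $r_{-u\beta}$ fixes $H^+_{u\beta}$ setwise and that a root $\delta$ with $\delta \neq \pm u\beta$ cannot be sent out of $H_\gamma^+$ by the single reflection $r_{u\beta}$ unless $\gamma = \pm u\beta$. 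So the delicate bookkeeping is: choose $\delta \in H_\gamma^+ \cap H^+_{u\beta} \cap H^+_{\alpha_i}$ — wait, that set is empty — rather, produce an element of $W$ (not a root) lying in all three half-spaces. I would phrase the conclusion in terms of group elements: construct $x \in W$ with $x \in H_\gamma^+$, $x \in H^+_{u\beta}$, $x \in H^+_{\alpha_i}$, the natural candidate being $x = s_i u$ or $x = r_{-u\beta} u$ or a short product thereof, and then the contradiction with $H_\gamma^+ \cap H^+_{u\beta} \cap H_{\alpha_i}^+ = \varnothing$ is immediate. Verifying $x \in H^+_{\alpha_i}$ and $x \in H^+_{u\beta}$ will be formal manipulations with \eqref{eq:inversions_roots}; verifying $x \in H_\gamma^+$ is the step that genuinely needs the hypothesis $\gamma \neq -u\beta$, and getting that right is the crux of the argument.
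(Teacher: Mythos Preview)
Your overall strategy---assume $\tau_i(u)=u$ and exhibit an element of $H_\gamma^+ \cap H_{u\beta}^+ \cap H_{\alpha_i}^+$---is exactly the paper's. But the proposal never identifies a working element, and the candidates you name ($s_iu$, $r_{-u\beta}u$) do not obviously land in all three half-spaces; the case split on $B(u\beta,\alpha_i)$ and the appeal to closedness of $\RR(\LL)$ are red herrings here.

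The missing idea is that you must actually \emph{use} the witness $w\in\Str(R)$ with $-w\beta=\alpha_{i'}$, not just note its existence. After translating the hypothesis to $H_{u^{-1}\gamma}^+\cap H_\beta^+\cap H_{u^{-1}\alpha_i}^+=\varnothing$, the element to test is $s_{i'}w$. Checking $s_{i'}w\in H_\beta^+$ is immediate. The other two memberships are where the real content lies, and both hinge on the equality $\Sep(w)=\Sep(u)=R$, which your proposal never invokes: from $\LL\subseteq H_\beta^+\cap H_{u^{-1}\alpha_i}^+\subseteq H_{-u^{-1}\gamma}^+$ one gets $-u^{-1}\gamma\in\RR(\LL)$ and then $-u^{-1}\gamma\in\Sep(u)=\Sep(w)$, so $wu^{-1}\gamma\in\Phi^+$; the hypothesis $\gamma\neq -u\beta$ is used precisely to rule out $wu^{-1}\gamma=\alpha_{i'}$, so that applying $s_{i'}$ keeps it positive. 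Similarly $u^{-1}\alpha_i\in\RR(\LL)$ but $u^{-1}\alpha_i\notin\Sep(u)=\Sep(w)$ gives $wu^{-1}\alpha_i\in\Phi^+$, and $wu^{-1}\alpha_i\neq\alpha_{i'}$ because $w^{-1}\alpha_{i'}=-\beta\notin\RR(\LL)$. Without the transfer of separator information from $u$ to $w$ via $\Sep(w)=\Sep(u)$, there is no way to control where $\gamma$ and $\alpha_i$ go, and your proposed elements built only from $u$, $s_i$, and $r_{u\beta}$ cannot be shown to lie in $H_\gamma^+$.
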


\begin{proof}
    Let us rephrase the hypotheses of the lemma as follows. We have that $u \in H^+_{u^{-1}\gamma}$, that $\beta \neq -u^{-1} \gamma$, and that \[H_{u^{-1}\gamma}^+ \cap H^+_{\beta} \cap H_{u^{-1}\alpha_i}^+ = \varnothing.\] See \cref{fig:G2_shading} for an illustration of this scenario.
    
    Assume for the sake of contradiction that $\tau_i(u) = u$. By the definition of a transmitting root, there exist $w \in \Str(R)$ and $i' \in I$ such that $-w \beta = \alpha_{i'}$. We will show that \[s_{i'}w \in H_{u^{-1}\gamma}^+ \cap H^+_{\beta} \cap H_{u^{-1}\alpha_i}^+,\] which will contradict the fact that $H_{u^{-1}\gamma}^+ \cap H^+_{\beta} \cap H_{u^{-1}\alpha_i}^+ = \varnothing$.
    
    First, we will show that $s_{i'} w \in H_{u^{-1}\gamma}^+$. Since $\tau_i(u) = u$, we have $\beta, u^{-1} \alpha_i \in \RR(\LL)$, so \[\LL \subseteq H^+_\beta \cap H_{u^{-1}\alpha_i}^+ \subseteq H_{-u^{-1}\gamma}^+.\] Thus, ${-u^{-1}\gamma \in \RR(\LL)}$. We also have $u(-u^{-1}\gamma) = -\gamma \in \Phi^-$, so $-u^{-1}\gamma \in \Sep(u) = R = \Sep(w)$. Therefore, $w(-u^{-1}\gamma) \in \Phi^-$, or equivalently, $wu^{-1}\gamma \in \Phi^+$. Since $\gamma \neq -u\beta$, we have $wu^{-1}\gamma \neq \alpha_{i'}$. It follows that $s_{i'} w u^{-1} \gamma \in \Phi^+$ as well. Hence, $s_{i'}w \in H_{u^{-1}\gamma}^+$.
    
    Second, we have $s_{i'} w \in H_{\beta}^{+}$ since $s_{i'} w \beta = s_{i'}(-\alpha_{i'}) = \alpha_{i'} \in \Phi^+$.
    
    Finally, we will show that $s_{i'} w \in H_{u^{-1}\alpha_i}^+$. We have $u^{-1}\alpha_i \in \RR(\LL)$ (since $\tau_i(u)=u$) and $u(u^{-1}\alpha_i) = \alpha_i \in \Phi^+$, so $u^{-1}\alpha_i \not \in \Sep(u) = R = \Sep(w)$. Therefore, $wu^{-1}\alpha_i \in \Phi^+$. Since $w^{-1} \alpha_{i'} = -\beta \not \in \RR(\LL)$, we have $w^{-1} \alpha_{i'} \neq u^{-1}\alpha_i$, so $wu^{-1}\alpha_i \neq \alpha_{i'}$. It follows that $s_{i'}wu^{-1}\alpha_i \in \Phi^+$, so $s_{i'}w \in H_{u^{-1}\alpha_i}^+$.
\end{proof}

\begin{figure}[ht]
 \begin{center}{\includegraphics[height=7.195cm]{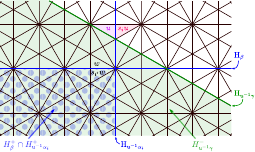}}
  \end{center}
\caption{An illustration of the proof of \cref{lem:super-strong-acute-angle}, drawn using the positive projectivization of the Tits cone and Coxeter arrangement of $\widetilde G_2$, whose Coxeter graph is $\begin{array}{l}\includegraphics[height=0.373cm]{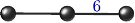}\end{array}.$ 
\hspace{.2cm}The assumption that $\tau_i(u)=u$ implies that ${\LL\subseteq H^+_{\beta}\cap H_{u^{-1}\alpha_i}^+\subseteq H_{u^{-1}\gamma}^-}$, which ends up contradicting the hypothesis that $\beta$ is a transmitting root of the stratum containing $u$.  
}\label{fig:G2_shading}
\end{figure} 

\begin{lemma}\label{lem:no-root-in-positive-span}
     Let $u \in W$ and $i \in I$, and let $\beta$ be a transmitting root of $\Str(R)$, where $R = \Sep(u)$. Suppose that there exist real numbers $a, a' > 0$ such that $a u \beta + a' \alpha_{i} \in \Phi$. Then $\tau_i(u) = s_i u$.
\end{lemma}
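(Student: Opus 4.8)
The plan is to derive \cref{lem:no-root-in-positive-span} directly from \cref{lem:super-strong-acute-angle} by producing, from the hypothesis $a\,u\beta + a'\alpha_i \in \Phi$ with $a,a'>0$, a positive root $\gamma$ with $\gamma \neq -u\beta$ and $H_\gamma^+ \cap H_{u\beta}^+ \cap H_{\alpha_i}^+ = \varnothing$. First I would set $\delta = a\,u\beta + a'\alpha_i \in \Phi$ and let $\gamma = -\delta$; I want to check that $\gamma$ is a root (immediate, since $\Phi = -\Phi$), that $\gamma \in \Phi^+$, and that $\gamma \neq -u\beta$. For the sign: recall $\beta \in \RR(\LL) = \Sep$-candidate with $u\beta \in \Phi^-$ (as $\beta \in R = \Sep(u)$), so $u\beta$ is negative, hence $a\,u\beta$ is a nonpositive combination of simple roots; if $\delta$ were positive this would force $a'\alpha_i$ to dominate a negative vector coordinatewise in a way that is impossible unless the $\alpha_i$-coefficient structure works out — more cleanly, a root is either entirely a nonnegative or entirely a nonpositive combination of simple roots, and since $a\,u\beta$ contributes only nonpositive coordinates while $a'\alpha_i$ contributes a single nonnegative coordinate, $\delta$ can be positive only if $a\,u\beta + a'\alpha_i$ has all coordinates nonnegative, i.e. $u\beta = -c\alpha_i$ for some $c>0$, i.e. $u\beta = -\alpha_i$. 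That degenerate case needs separate handling (see below), so assume $u\beta \neq -\alpha_i$; then $\delta \in \Phi^-$ and $\gamma = -\delta \in \Phi^+$. Also $\gamma = -\delta \neq u\beta$ would require... actually I need $\gamma \neq -u\beta$, i.e. $-\delta \neq -u\beta$, i.e. $a\,u\beta + a'\alpha_i \neq u\beta$, i.e. $(a-1)u\beta \neq -a'\alpha_i$; since $a'>0$ and $\alpha_i \neq 0$, this fails only if $a\neq 1$ and $u\beta$ is a negative multiple of $\alpha_i$, again the degenerate case $u\beta = -\alpha_i$; so under our assumption $\gamma \neq -u\beta$.

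The main step is the emptiness of the triple intersection. Suppose $w \in H_\gamma^+ \cap H_{u\beta}^+ \cap H_{\alpha_i}^+$. Then $w\gamma \in \Phi^+$, $w(u\beta) \in \Phi^+$, and $w\alpha_i \in \Phi^+$. But $\gamma = -a\,u\beta - a'\alpha_i$, so $w\gamma = -a\,w(u\beta) - a'\,w\alpha_i$ is a strictly negative linear combination of the two positive roots $w(u\beta)$ and $w\alpha_i$, hence $w\gamma \in \Phi^-$ (a nonzero nonpositive combination of positive roots, being a root, must be negative) — contradiction. This gives $H_\gamma^+ \cap H_{u\beta}^+ \cap H_{\alpha_i}^+ = \varnothing$, and \cref{lem:super-strong-acute-angle} applies to conclude $\tau_i(u) = s_iu$.

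Finally I would dispose of the degenerate case $u\beta = -\alpha_i$. If $u\beta = -\alpha_i$, then $u^{-1}\alpha_i = -\beta$; since $\beta \in \RR(\LL)$ and $\RR(\LL)$ is antisymmetric, $-\beta \notin \RR(\LL)$, so $u^{-1}\alpha_i \notin \RR(\LL)$, and by \cref{def:toggles} we get $\tau_i(u) = s_iu$ immediately. So the lemma holds in this case as well, completing the proof.

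The step I expect to be the main obstacle is verifying cleanly that $\gamma = -(a\,u\beta + a'\alpha_i)$ is a \emph{positive} root: this rests on the standard fact that every root is either a nonnegative or a nonpositive combination of simple roots, combined with a careful coordinate-wise argument to rule out the mixed case except for $u\beta = -\alpha_i$. Everything else — the nondegeneracy $\gamma \neq -u\beta$, the triple-intersection emptiness, and the reduction to \cref{lem:super-strong-acute-angle} — is short linear-algebra bookkeeping.
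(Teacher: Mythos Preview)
Your proof is correct and follows essentially the same approach as the paper: set $\gamma = -(a\,u\beta + a'\alpha_i)$, verify it is a positive root distinct from $-u\beta$, check that $H_\gamma^+ \cap H_{u\beta}^+ \cap H_{\alpha_i}^+ = \varnothing$, and apply \cref{lem:super-strong-acute-angle}, with the degenerate case $u\beta = -\alpha_i$ handled separately via antisymmetry of $\RR(\LL)$. The paper phrases the case split as ``$u\beta$ and $\alpha_i$ linearly dependent versus independent'' and verifies positivity of $\gamma$ by noting $\gamma \neq -\alpha_i$, but the substance is the same.
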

\begin{proof}
    First, suppose that $u\beta$ and $\alpha_i$ are linearly dependent. Since $\beta \in \Sep(u)$, this implies that $-u\beta = \alpha_i$. Since $\beta \in \RR(\LL)$, we have $u^{-1}\alpha_i = -\beta \not \in \RR(\LL)$, so $\tau_i(u) = s_i u$ by the definition of $\tau_i$.

    Next, suppose that $u\beta$ and $\alpha_i$ are linearly independent. Let $\gamma = - (au \beta + a' \alpha_i) \in \Phi$. We have $- u \beta \in \Phi^+$, so when $\gamma$ is written in the basis of simple roots, all the coefficients are nonnegative except possibly the coefficient of $\alpha_i$. But $\gamma \neq -\alpha_i$, so $\gamma \in \Phi^+$. Moreover, $\gamma \neq -u \beta$, and \[H_\gamma^+ \cap H^+_{u\beta} \cap H_{\alpha_i}^+ = \{w \in W : \{-awu\beta - a'w\alpha_i,\, wu\beta,\, w\alpha_i\} \subseteq \Phi^+\} = \varnothing.\] By \cref{lem:super-strong-acute-angle}, we have $\tau_i(u) = s_iu$.
\end{proof}

Informally, the following lemma says that a billiards trajectory cannot reflect off of any one-way mirror that forms an obtuse angle with the one-way mirror corresponding to a transmitting root.  (Recall that, since the bilinear form $B$ is $W$-equivariant, we have $B(u\beta, \alpha_i)=B(\beta, u^{-1}\alpha_i)$.)
\begin{lemma}\label{lem:acute-angle}
    Let $u \in W$ and $i \in I$, and let $\beta$ be a transmitting root of $\Str(R)$, where $R = \Sep(u)$. If $B(u\beta, \alpha_i) < 0$, then $\tau_i(u) = s_i u$.
\end{lemma}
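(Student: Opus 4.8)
The plan is to obtain \cref{lem:acute-angle} as an immediate corollary of \cref{lem:no-root-in-positive-span}. The bridge between the two statements is the elementary fact that a strictly negative value of the bilinear form on a pair of roots forces the existence of a root lying in the open cone positively spanned by them.

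First I would unwind the hypotheses. Since $\beta$ is a transmitting root of $\Str(R)$ with $R = \Sep(u)$, we have $\beta \in \Sep(u) \subseteq \RR(\LL)$, and in particular $u\beta \in \Phi^-$; thus $u\beta$ and $\alpha_i$ are both genuine roots, and we are in a position to apply \cref{lem:no-root-in-positive-span} once we produce suitable coefficients $a, a' > 0$ with $a\,u\beta + a'\,\alpha_i \in \Phi$. To exhibit such a root, I would apply the reflection $s_i = r_{\alpha_i}$ to $u\beta$ and invoke the reflection formula \eqref{eq:reflection}:
\[
s_i(u\beta) = u\beta - 2B(u\beta,\alpha_i)\,\alpha_i \in \Phi,
\]
where membership in $\Phi$ holds because $\Phi = \{w\alpha_j : w \in W,\ j \in I\}$ is stable under the action of $W$. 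Writing the right-hand side as $a\,(u\beta) + a'\,\alpha_i$ with $a = 1$ and $a' = -2B(u\beta,\alpha_i)$, the assumption $B(u\beta,\alpha_i) < 0$ gives $a > 0$ and $a' > 0$. (The strictness of the inequality is precisely what is needed here, to get $a' > 0$ rather than merely $a' \geq 0$.) Feeding $a$ and $a'$ into \cref{lem:no-root-in-positive-span} then yields $\tau_i(u) = s_i u$, which is the claim.

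I do not anticipate any genuine obstacle; the only points meriting a moment's care are that $\Phi$ is closed under $s_i$ and that $a'$ is strictly positive, both of which are recorded above. If one preferred a proof not routed through \cref{lem:no-root-in-positive-span}, one could instead apply \cref{lem:super-strong-acute-angle} directly with $\gamma = -s_i(u\beta) = -u\beta + 2B(u\beta,\alpha_i)\,\alpha_i$: after disposing of the degenerate case $u\beta = -\alpha_i$ by hand (in which $u^{-1}\alpha_i = -\beta \notin \RR(\LL)$, so $\tau_i(u) = s_iu$ directly from \cref{def:toggles}), one checks that $\gamma \in \Phi^+$ and $\gamma \neq -u\beta$, and that any $w \in H_\gamma^+ \cap H^+_{u\beta} \cap H_{\alpha_i}^+$ would force $w\gamma$ to have all simple-root coordinates nonpositive and yet lie in $\Phi^+$, a contradiction. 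However, the reduction to \cref{lem:no-root-in-positive-span} is shorter and reuses the degenerate-case analysis already carried out there, so that is the route I would write up.
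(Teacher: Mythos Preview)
Your proposal is correct and takes essentially the same approach as the paper: both compute $s_i(u\beta) = u\beta - 2B(u\beta,\alpha_i)\alpha_i \in \Phi$, set $a=1$ and $a' = -2B(u\beta,\alpha_i) > 0$, and invoke \cref{lem:no-root-in-positive-span}.
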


\begin{proof}
    We have $s_i u \beta = u \beta - 2B(u\beta, \alpha_i) \alpha_i \in \Phi$. Now take $a = 1$ and $a' = -2B(u\beta, \alpha_i)$ in \Cref{lem:no-root-in-positive-span}.
\end{proof}

The following lemma will be useful in the proofs of \Cref{thm:rank_3,thm:complete,thm:right-angled}, where we will compute the set of small roots and use it to determine the structure of the strata.

\begin{lemma}\label{lem:transmitting-small}
    Let $\Str(R)$ be a stratum, and let $\beta$ be a transmitting root of $\Str(R)$. For every $u\in\Str(R)$, the root $-u\beta$ is small.
\end{lemma}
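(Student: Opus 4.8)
The plan is to show that $-u\beta$ dominates no positive root other than those it is forced to dominate by virtue of being in $\RR(\LL)$, and to exploit the transmitting-root structure. Recall that $\beta$ is a transmitting root of $\Str(R)$, so by definition there exist $w\in\Str(R)$ and $i'\in I$ with $-w\beta=\alpha_{i'}$; this is the key handle. Since $u,w\in\Str(R)$, they have the same separators, namely $\Sep(u)=\Sep(w)=R$, and $\beta\in R$, so $u\beta\in\Phi^-$ and $w\beta\in\Phi^-$; in fact $-w\beta=\alpha_{i'}$ is a \emph{simple} root.

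The core observation I would use is: a positive root $\gamma$ is small if and only if it dominates no other positive root, and $\gamma'\dom\gamma''$ iff $H^+_{\gamma'}\supseteq H^+_{\gamma''}$. Suppose for contradiction that $-u\beta$ is positive (which it is, since $u\beta\in\Phi^-$) but not small; then there is a positive root $\delta\neq -u\beta$ with $-u\beta\dom\delta$, i.e. $H^+_{-u\beta}\supseteq H^+_\delta$, equivalently $H^-_{u\beta}\supseteq H^+_\delta$. Applying $u^{-1}$ (which preserves the dominance relation), we get $u^{-1}(-u\beta)=-\beta\dom u^{-1}\delta$. Now I want to transfer this to $w$. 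Since $-\beta\in(-\RR(\LL))$... actually $\beta\in\RR(\LL)$, so $-\beta\dom u^{-1}\delta$ combined with $H^+$-containment and $-\beta\not\in\RR(\LL)$ needs care. The cleaner route: translate the dominance $-u\beta\dom\delta$ by $wu^{-1}$ to obtain $wu^{-1}(-u\beta)=-w\beta=\alpha_{i'}\dom wu^{-1}\delta$. Thus $\alpha_{i'}$ dominates the positive root $wu^{-1}\delta$. But a simple root dominates no positive root other than itself (this is standard: $H^+_{\alpha_{i'}}$ is a maximal half-space in the sense that if $H^+_{\alpha_{i'}}\supseteq H^+_\gamma$ for a positive root $\gamma$, then $\gamma=\alpha_{i'}$). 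Hence $wu^{-1}\delta=\alpha_{i'}=-w\beta$, which forces $\delta=-u\beta$, contradicting $\delta\neq -u\beta$.

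The main obstacle is verifying the two ingredients I leaned on: first, that $wu^{-1}\delta$ is still a \emph{positive} root (not just a root), so that ``simple roots dominate nothing else'' applies; and second, the claim that a simple root dominates no other positive root. For the latter, I would invoke \cref{prop:small-roots-description}: simple roots are small, and small roots by definition dominate no positive root but themselves. For the former, I need that $\delta\in H^+_{-u\beta}$-geometry transfers correctly; since $-u\beta\dom\delta$ means every $v\in W$ with $v(-u\beta)\in\Phi^+$ also has $v\delta\in\Phi^+$, taking $v=wu^{-1}$ and noting $wu^{-1}(-u\beta)=-w\beta=\alpha_{i'}\in\Phi^+$ gives $wu^{-1}\delta\in\Phi^+$ directly, so this is automatic. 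Therefore the argument closes: $-u\beta$ dominates no positive root other than itself, i.e. $-u\beta$ is small.
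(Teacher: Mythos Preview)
Your overall strategy matches the paper's: translate the hypothetical dominance $-u\beta \dom \delta$ by $wu^{-1}$ to land on the simple root $\alpha_{i'}=-w\beta$, and then use that simple roots are small. However, there is a genuine gap in your verification that $wu^{-1}\delta$ is a \emph{positive} root, and your stated reason is incorrect.

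You write that ``$-u\beta\dom\delta$ means every $v\in W$ with $v(-u\beta)\in\Phi^+$ also has $v\delta\in\Phi^+$.'' This has the implication reversed. By definition, $-u\beta\dom\delta$ means $H^+_{-u\beta}\supseteq H^+_\delta$, i.e., $v\delta\in\Phi^+$ implies $v(-u\beta)\in\Phi^+$, not the other way around. So from $wu^{-1}(-u\beta)=\alpha_{i'}\in\Phi^+$ you cannot conclude that $wu^{-1}\delta\in\Phi^+$. Without positivity of $wu^{-1}\delta$, the smallness of $\alpha_{i'}$ does not force $wu^{-1}\delta=\alpha_{i'}$.

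The paper fills exactly this gap using the separator structure. From $-u\beta\dom\delta$ one gets $-\delta\dom u\beta$ and hence $-u^{-1}\delta\dom\beta$; since $\LL\subseteq H^+_\beta\subseteq H^+_{-u^{-1}\delta}$, it follows that $-u^{-1}\delta\in\RR(\LL)$. Combined with $u(-u^{-1}\delta)=-\delta\in\Phi^-$, this gives $-u^{-1}\delta\in\Sep(u)=R=\Sep(w)$, so $w(-u^{-1}\delta)\in\Phi^-$, i.e., $wu^{-1}\delta\in\Phi^+$. With this established, your final step (smallness of $\alpha_{i'}$) goes through. So the missing idea is precisely to route the positivity check through membership in the common separator set $R$; once you add that, your argument coincides with the paper's.
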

\begin{proof}
    We have $-u \beta \in \Phi^+$ because $\beta \in R = \Sep(u)$. Now, let $\gamma \in \Phi^+$, and suppose that 
    \begin{equation}\label{eq:dominance}-u \beta \dom \gamma.\end{equation} We must show that $\gamma = -u\beta$.
    
    We know by \eqref{eq:dominance} that $-\gamma \dom u \beta$. Applying $u^{-1}$ to each side yields $-u^{-1} \gamma \dom \beta$. Then ${\LL \subseteq H^+_{\beta} \subseteq H^+_{-u^{-1}\gamma}}$, so $-u^{-1}\gamma \in \RR(\LL)$. Moreover, $u (-u^{-1}\gamma) = -\gamma \in \Phi^-$, so ${-u^{-1}\gamma \in \Sep(u) = R}$.

     By the definition of a transmitting root, there exists $w \in \Str(R)$ such that $-w\beta$ is a simple root. Since $-u^{-1} \gamma \in R = \Sep(w)$, we have that $wu^{-1}\gamma \in \Phi^+$. Applying $wu^{-1}$ to each side of \eqref{eq:dominance} yields $-w\beta \dom wu^{-1}\gamma$. Since $-w\beta$ is simple, it is small, so $wu^{-1}\gamma = -w\beta$. Therefore $\gamma = -u \beta$, as desired.
\end{proof}

\subsection{Conjugate Coxeter Elements}\label{subsec:conjugate} 
Consider the Coxeter graph $\Gamma_W$ of $W$. An \dfn{acyclic orientation} of $\Gamma_W$ is a directed graph with no directed cycles that is obtained by orienting each edge of $\Gamma_W$. A \dfn{sink} (respectively, \dfn{source}) of an acyclic orientation is a vertex with out-degree (respectively, in-degree) $0$. If $i$ is a source or a sink of $\Gamma_W$, then we can \dfn{flip} $i$ by reversing the orientations of all edges incident to $i$; the result is a new acyclic orientation of $\Gamma_W$. We say two acyclic orientations of $\Gamma_W$ are \dfn{flip equivalent} if one can be obtained from the other via a sequence of flips. 

Let $c$ be a Coxeter element of $W$. Let us direct the edge between two adjacent vertices $i$ and $i'$ in $\Gamma_W$ from $i$ to $i'$ if $i$ appears to the right of $i'$ in some (equivalently, every) reduced word for $c$. Doing this for all edges in $\Gamma_W$ yields an acyclic orientation $\ao(c)$ of $\Gamma_W$. 

The map $\ao$ is a bijection from the set of Coxeter elements of $W$ to the set of acyclic orientations of $\Gamma_W$. Moreover, it is known \cite[Theorem~1.15]{Develin} that two Coxeter elements $c$ and $c'$ are conjugate in $W$ if and only if $\ao(c)$ and $\ao(c')$ are flip equivalent. 

\begin{proposition}\label{prop:conjugate_Coxeter}
Let $c$ and $c'$ be Coxeter elements of $W$ that are conjugate to each other. Then $c$ is futuristic if and only if $c'$ is futuristic. 
\end{proposition}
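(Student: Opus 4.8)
The plan is to reduce to the case of a single flip of an acyclic orientation and then compare the two billiards systems directly. By \cite[Theorem~1.15]{Develin} (quoted just above), $c$ and $c'$ are conjugate if and only if $\ao(c)$ and $\ao(c')$ are flip equivalent, so it suffices to treat the case in which $\ao(c')$ is obtained from $\ao(c)$ by flipping a single vertex $i$ that is a source or a sink of $\ao(c)$; the general statement then follows by induction on the number of flips. Since flipping turns a source into a sink and vice versa, and since the statement to be proved is symmetric in $c$ and $c'$, we may assume $i$ is a source of $\ao(c)$. Then, because $i$ appears to the right of all of its neighbors in every reduced word for $c$ and commutes with its non-neighbors, $c$ has a reduced word of the form $\sfc = i_n\cdots i_2\,i$ with $i$ in the rightmost position. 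A short computation gives $c' = s_i c s_i = s_i s_{i_n}\cdots s_{i_2}$, and I claim $\sfc' = i\, i_n\cdots i_2$ is a reduced word for $c'$: indeed $s_{i_n}\cdots s_{i_2}$ is a Coxeter element of the parabolic subgroup $W_{I\setminus\{i\}}$ and hence has length $n-1$, while left-multiplying an element of $W_{I\setminus\{i\}}$ by $s_i$ strictly increases length because $W_{I\setminus\{i\}}$ permutes $\Phi^+\setminus\Phi_{I\setminus\{i\}}$, a set that contains $\alpha_i$. In particular $c'$ is again a Coxeter element and $\ao(c')$ is the flip of $\ao(c)$ at $i$, as claimed.

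Now fix a nonempty convex set $\LL\subseteq W$ and define the toggles $\tau_j$ with respect to $\LL$; these do not depend on any Coxeter element. Reading off the orderings of $I$ from the reduced words above, the billiards trajectories for $\Pro_c$ apply the toggles in the periodic order $\tau_i,\tau_{i_2},\ldots,\tau_{i_n},\tau_i,\tau_{i_2},\ldots$, while the billiards trajectories for $\Pro_{c'}$ apply them in the periodic order $\tau_{i_2},\ldots,\tau_{i_n},\tau_i,\tau_{i_2},\ldots$; these two infinite sequences of toggles differ only by deletion of the leading term. Hence if $u_0,u_1,u_2,\ldots$ is a billiards trajectory for $\Pro_c$ with respect to $\LL$, then $u_1,u_2,u_3,\ldots$ is a billiards trajectory for $\Pro_{c'}$ with respect to $\LL$; conversely, given a billiards trajectory $v_0,v_1,\ldots$ for $\Pro_{c'}$ and any index $m$ with $v_m=\tau_i(v_{m-1})$, the sequence $v_{m-1},v_m,v_{m+1},\ldots$ is a billiards trajectory for $\Pro_c$. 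In particular, if $u_0,u_1,\ldots$ is a \emph{periodic} billiards trajectory for $\Pro_c$, then the associated trajectory $u_1,u_2,\ldots$ for $\Pro_{c'}$ is also periodic, and because of periodicity its set of visited elements $\{u_j:j\ge 1\}$ equals $\{u_j:j\ge 0\}$; so the two trajectories lie in $\LL$ simultaneously, and the same holds in the reverse direction. By \cref{rem:re-def}, $\LL$ is heavy with respect to $c$ if and only if every periodic billiards trajectory for $\Pro_c$ lies in $\LL$, and likewise for $c'$; combining this with the previous sentence shows that $\LL$ is heavy with respect to $c$ if and only if it is heavy with respect to $c'$. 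Since $\LL$ was an arbitrary nonempty convex subset of $W$, it follows that $c$ is futuristic if and only if $c'$ is futuristic, which completes the induction and the proof.

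The one step that needs care is the bookkeeping in the second paragraph: one must verify that the two periodic toggle sequences really are cyclic shifts of one another, and that, for \emph{periodic} trajectories, deleting or prepending a bounded initial segment does not change the set of visited group elements (this uses that a periodic sequence revisits its initial terms). The facts that $\sfc'$ is reduced, that $c'$ is again a Coxeter element, and that a single flip corresponds to conjugation by the flipped simple reflection are routine properties of Coxeter groups, but they are worth spelling out so that the reduction in the first paragraph is legitimate.
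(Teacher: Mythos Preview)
Your proof is correct and follows essentially the same approach as the paper's: both reduce to a single flip via \cite{Develin} and then exploit that the two toggle sequences are cyclic shifts of one another. The paper packages this as the operator identity $\Pro_c\circ\tau_{i_n}=\tau_{i_n}\circ\Pro_{c'}$ and uses the fact that toggles preserve $\LL$, whereas you phrase the same observation in terms of shifting billiards trajectories by one step and invoke \cref{rem:re-def}; these are equivalent.
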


 \begin{proof}
According to \cite[Theorem~1.15]{Develin}, the acyclic orientations $\ao(c)$ and $\ao(c')$ are flip equivalent. Hence, we may assume that $\ao(c')$ is obtained from $\ao(c)$ via a single flip from a source to a sink. That is, there is a reduced word $i_ni_{n-1}\cdots i_1$ for $c$ such that $i_{n-1}\cdots i_1i_{n}$ is a reduced word for~$c'$. 

Fix an arbitrary nonempty convex set $\LL\subseteq W$. Suppose $c$ is futuristic. Let $u\in W$ be a periodic point of $\Pro_{c'}$. We have that $\Pro_c \circ \tau_{i_n} = \tau_{i_n} \circ \Pro_{c'}$, so $\tau_{i_n}(u)$ must be a periodic point of $\Pro_c$. Since $c$ is futuristic, this implies that $\tau_{i_n}(u)\in\LL$. Because $u$ is a periodic point of $\Pro_{c'}$, it can be obtained by applying a sequence of noninvertible Bender--Knuth toggles to $\tau_{i_n}(u)$. Hence, $u\in\LL$. This proves that $c'$ is futuristic. 

We have shown that if $c$ is futuristic, then $c'$ is futuristic. A similar argument proves the other direction.
 \end{proof}

It is well known that all acyclic orientations of a forest are flip equivalent, so we immediately obtain the following corollary. 

\begin{corollary}\label{cor:tree}
If the Coxeter graph of a Coxeter group $W$ is a forest, then $W$ is either futuristic or ancient.  
\end{corollary}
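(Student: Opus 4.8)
The plan is to combine \cref{prop:conjugate_Coxeter} with the classical fact that, when the Coxeter graph is a forest, all Coxeter elements of $W$ are conjugate to one another. First I would reduce to the irreducible case: by \cref{rem:irreducible_futuristic}, $W$ is futuristic if and only if each of its irreducible factors is futuristic, while by \cref{prop:hereditary}(ii), if some irreducible factor of $W$ is ancient, then $W$ is ancient. Each irreducible factor of $W$ has a tree (a connected component of a forest) as its Coxeter graph, so once we know each factor is either futuristic or ancient, the same dichotomy follows for $W$. Thus it suffices to treat the case in which $\Gamma_W$ is a tree.

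Next, assume $\Gamma_W$ is a tree. Recall from \cref{subsec:conjugate} that $\ao$ is a bijection from the Coxeter elements of $W$ to the acyclic orientations of $\Gamma_W$, and that two Coxeter elements are conjugate in $W$ if and only if their acyclic orientations are flip equivalent \cite[Theorem~1.15]{Develin}. So the heart of the matter is to show that any two acyclic orientations of a tree $T$ are flip equivalent. I would prove this by induction on the number of vertices of $T$: pick a leaf $\ell$ with unique neighbor $w$, set $T' = T \setminus \{\ell\}$, and observe that given acyclic orientations $\mathcal{O}$ and $\mathcal{O}'$ of $T$, their restrictions to $T'$ are flip equivalent by the inductive hypothesis. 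The key point is that this sequence of flips can be carried out inside $T$: a flip at a vertex $v \neq w$ is unaffected by the edge $\{w,\ell\}$, while a flip at $w$ is obstructed only when that edge points the ``wrong'' way relative to $w$ being a source or a sink, in which case one first flips $\ell$ (always legal, since $\ell$ has degree $1$) to remove the obstruction. After these flips the two orientations agree on $T'$, and a final flip at $\ell$, if needed, makes the orientation of the single edge $\{w,\ell\}$ agree as well.

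Finally, with the flip-equivalence fact in hand, all Coxeter elements of $W$ lie in a single conjugacy class. By \cref{prop:conjugate_Coxeter}, being futuristic is then either true for every Coxeter element of $W$---so that $W$ is futuristic---or false for every Coxeter element of $W$---so that $W$ is ancient. Together with the reduction in the first step, this gives the corollary. I expect the main obstacle to be the inductive argument for flip-equivalence of acyclic orientations of a tree (specifically, handling the obstruction at the vertex $w$); every other ingredient is a direct appeal to results already established in the paper, and one could alternatively cite the classical conjugacy statement for Coxeter elements attached to trees in place of this argument.
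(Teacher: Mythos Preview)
Your proof is correct and follows the same approach as the paper, which simply invokes \cref{prop:conjugate_Coxeter} together with the (well-known) fact that all acyclic orientations of a forest are flip equivalent. Your reduction to the irreducible case is unnecessary, since the flip-equivalence argument works directly for forests component by component, but it is harmless.
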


\subsection{Futuristicity is Hereditary}

We now prove \cref{prop:hereditary}, which states that the class of futuristic Coxeter groups and the class of non-ancient Coxeter groups are \emph{hereditary} in the sense that they are closed under taking standard parabolic subgroups. In fact, these results are immediate corollaries of the following stronger proposition. 

\begin{proposition}\label{prop:stronger_hereditary}
Let $J\subseteq I$. Let $c$ be a Coxeter element of $W$, and let $c'$ be a Coxeter element of the standard parabolic subgroup $W_J$ such that there exists a reduced word for $c$ that contains a reduced word for $c'$ as a subword. If $c$ is futuristic, then $c'$ is also futuristic. 
\end{proposition}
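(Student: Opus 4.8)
The plan is to reduce the futuristicity of $c'$ in $W_J$ to the futuristicity of $c$ in $W$ by ``lifting'' a convex set and a periodic billiards trajectory from $W_J$ up to $W$. Fix a reduced word $\sfc = i_n\cdots i_1$ for $c$ that contains a reduced word $\sfc'$ for $c'$ as a subword; since $c'$ involves only the letters in $J$, the letters of $\sfc$ not in $\sfc'$ are exactly the letters $i_k$ with $i_k\notin J$. First I would take an arbitrary nonempty convex set $\LL_J\subseteq W_J$ and an arbitrary periodic point $u$ of $\Pro_{c'}$ acting on $W_J$ (with the noninvertible Bender--Knuth toggles $\tau_i^J$ of $W_J$ defined using $\LL_J$), and show $u\in\LL_J$. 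By \cref{prop:finite-to-infinite} it suffices to treat finite $\LL_J$, so we may assume $\LL_J$ is finite; by \cref{rem:Cayley} its convex hull in $W$ is still finite.

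The key step is to pick the right convex set $\LL$ in $W$. I would let $\LL$ be the convex hull in $W$ of $\LL_J$ (viewed inside $W$ via $W_J\subseteq W$). The crucial comparison is between $\RR(\LL_J)\subseteq\Phi_J$ and $\RR(\LL)\subseteq\Phi$: since $\Phi_J=\Phi\cap V_J$ (cited from Dyer in the preliminaries) and the half-spaces of $W_J$ are the restrictions of the half-spaces of $W$, one checks that $\RR(\LL)\cap V_J=\RR(\LL_J)$, i.e., a root of $W_J$ separates $\LL_J$ in $W_J$ exactly when it does so in $W$. Consequently, for $i\in J$ and $w\in W_J$, we have $w^{-1}\alpha_i\in\Phi_J$, so $\tau_i(w)=\tau_i^J(w)$; in other words the toggles $\tau_i$ for $i\in J$ restrict on $W_J$ to the toggles $\tau_i^J$. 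The remaining toggles $\tau_{i_k}$ with $i_k\notin J$ need to be handled: I would show that if $w\in W_J$ and $i\notin J$ then $w^{-1}\alpha_i\in\Phi^+$ (because $w$ permutes $\Phi^+_J$ among themselves and fixes the ``complementary'' positive roots to stay positive — more precisely, $w\in W_J$ has all its inversions in $\Phi_J$, so $w^{-1}\alpha_i\in\Phi^+$ for $i\notin J$), hence $\tau_i(w)=w$ whenever $w^{-1}\alpha_i\notin\RR(\LL)$; but here I want the stronger statement that $\tau_i$ \emph{fixes} all of $W_J$ setwise — actually what I really need is that inserting the extra toggles $\tau_{i_k}$ ($i_k\notin J$) into a trajectory that stays in $W_J$ does not move it out of $W_J$. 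Since $w^{-1}\alpha_i$ is a positive root not in $V_J$, it cannot lie in $\RR(\LL)$ only if $\RR(\LL)$ happens to contain it — and here is the subtlety: $\RR(\LL)$ could contain positive roots outside $V_J$. To avoid this I instead argue directly via the operator identity, mirroring the proof of \cref{prop:conjugate_Coxeter}.

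Concretely: write $\Pro_c = \tau_{i_n}\cdots\tau_{i_1}$, and let $\sigma$ denote the composition of the toggles $\tau_{i_k}$ with $i_k\notin J$ together with the toggles from $\sfc'$, arranged so that $\Pro_c = A\circ \Pro_{c'}\circ B$ where $B$ (resp.\ $A$) is the product of the $\tau_{i_k}$, $i_k\notin J$, appearing to the right of (resp.\ to the left of, or interleaved within) $\sfc'$ — after a commutation argument one can assume all letters of $J$ in $\sfc$ are consecutive, giving $\Pro_c = A\circ \Pro_{c'}^{W}\circ B$ where $\Pro_{c'}^W = \tau_{\sfc'}$ uses the $W$-toggles, which restrict to $\Pro_{c'}$ on $W_J$. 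Starting from $u\in W_J$, the word $B$ consists of toggles $\tau_i$ with $i\notin J$; I claim $B(u)$ equals $u$ composed with a product of $s_i$'s in $W\setminus$(parabolic), but more usefully, one shows by the argument of \cref{prop:conjugate_Coxeter} that $u$ periodic under $\Pro_{c'}$ (in $W_J$, equivalently under $\Pro_{c'}^W$ restricted to $W_J$) forces $B(u)$ to be periodic under $\Pro_c$; since $c$ is futuristic, $B(u)\in\LL$, and then applying toggles back (all trajectory steps are toggles, which can only move toward $\LL$, and $u$ is reachable from $B(u)$ by a sequence of $W_J$-toggles once we return to $W_J$) yields $u\in\LL\cap W_J = \LL_J$. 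The main obstacle I anticipate is precisely this bookkeeping of the ``extra'' toggles $\tau_{i_k}$, $i_k\notin J$: verifying that the operator identity $\Pro_c\circ B^{-1}\text{-style} = (\text{stuff})\circ\Pro_{c'}$ holds after commuting the $J$-letters of $\sfc$ together, and that the conjugating word can be undone using only toggles, so that $B(u)\in\LL \Rightarrow u\in\LL$. This is essentially the same two-line argument as in the proof of \cref{prop:conjugate_Coxeter}, applied with the conjugating element being the product of the non-$J$ simple reflections rather than a single $s_{i_n}$, so I expect it to go through cleanly once $\LL$ is chosen as the convex hull of $\LL_J$ and the identity $\RR(\LL)\cap V_J = \RR(\LL_J)$ is established.
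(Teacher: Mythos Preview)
Your overall strategy---lift $\LL_J$ to a convex set in $W$ and compare the two families of toggles---is exactly the paper's, and you correctly isolate the one nontrivial point: what do the toggles $\tau_i$ with $i\notin J$ do on $W_J$? But you fumble this point twice.

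First, you have the defining condition backwards: by \cref{def:toggles}, $\tau_i(w)=w$ when $w^{-1}\alpha_i\in\RR(\LL)$, not when $w^{-1}\alpha_i\notin\RR(\LL)$. So to show $\tau_i$ fixes $W_J$ pointwise you want $w^{-1}\alpha_i\in\RR(\LL)$ for every $w\in W_J$ and $i\notin J$. This is immediate once you observe (as the paper does) that $\Phi^+\setminus\Phi_J\subseteq\RR(\LL)$: every element of $\LL_J\subseteq W_J$ has all its right inversions in $W_J$, hence sends every $\beta\in\Phi^+\setminus\Phi_J$ to a positive root. Since $w\in W_J$ and $i\notin J$ give $w^{-1}\alpha_i\in\Phi^+\setminus\Phi_J$, you get $\tau_i(w)=w$ directly. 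In fact $\LL_J$ is already convex in $W$ (no hull needed), and $\RR(\LL)=\RR_J(\LL_J)\cup(\Phi^+\setminus\Phi_J)$; with this in hand the restriction of $\Pro_c$ to $W_J$ is literally $\Pro_{c'}$, and the result follows in one line.

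Second, your fallback---commute the $J$-letters of $\sfc$ into a consecutive block to write $\Pro_c=A\circ\Pro_{c'}^W\circ B$---does not work. A letter $i\in J$ and a letter $k\in I\setminus J$ need not commute (take $I=\{1,2,3\}$ a path, $J=\{1,3\}$, $\sfc=123$), so in general there is no commutation-equivalent word for $c$ with the $J$-letters consecutive. The paper avoids this entirely because once $\tau_i|_{W_J}=\mathrm{id}$ for $i\notin J$, the non-$J$ toggles can sit anywhere in the word without affecting the trajectory inside $W_J$.
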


\begin{proof}
    Assume that $c$ is futuristic; we will show that $c'$ is futuristic. Let $\LL \subseteq W_J$ be a nonempty convex set; we will show that every periodic point of $\Pro_{c'}$ lies in $\LL$.

    Note that $W_J$ is precisely the set of elements of $W$ with no right inversions in $W \setminus W_J$, so $W_J$ is a convex subset of $W$, with $\RR(W_J)=\Phi^+ \setminus \Phi_J$.  Now, write $\RR_J(\LL)=\{\beta\in\Phi_J:\LL\subseteq H_\beta^+\}$. We have
    \[\RR(\LL) \cap \Phi_J=\RR_J(\LL),\]
    and the inclusion $\LL \subseteq W_J$ gives $\Phi^+ \setminus \Phi_J \subseteq \RR(\LL)$. At the same time, we have $w\beta\in\Phi^-$ for all $w\in\LL$ and $\beta\in\Phi^-\setminus\Phi_J$, so $\RR(\LL)$ is disjoint from $\Phi^-\setminus\Phi_J$. Putting this all together yields \begin{equation}\label{eq:RR'}
    \RR(\LL)=\RR_J(\LL)\cup(\Phi^+\setminus\Phi_J).
    \end{equation} Moreover, $\LL$ is the intersection of the half-spaces $H_\beta^+$ for $\beta \in \RR_J(\LL) \cup (\Phi^+ \setminus \Phi_J)$, so $\LL$ is convex as a subset of $W$.

    For $i\in I$, let $\tau_i\colon W\to W$ denote the noninvertible Bender--Knuth toggle on $W$ that is defined with respect to $\LL$. For $i\in J$, let $\tau^J_{i}\colon W_J\to W_J$ denote the noninvertible Bender--Knuth toggle on $W_J$ that is defined with respect to $\LL$.
    
    We claim that for all $i \in J$, the restriction of $\tau_i$ to $W_J$ is $\tau_i^J$. Moreover, we claim that for all $i \in I \setminus J$, the restriction of $\tau_i$ to $W_J$ is the identity function on $W_J$. Let $i \in I$ and $u \in W_J$. Then $\tau_i(u) = u$ if and only if $u^{-1} \alpha_{i} \in\RR(\LL)$. By \eqref{eq:RR'}, this holds if and only if $u^{-1}\alpha_i \in \RR_J(\LL)$ or $i \not \in J$; this proves both claims.

    It follows that the restriction of $\Pro_c$ to $W_J$ is $\Pro_{c'}$. Since $c$ is futuristic, every periodic point of $\Pro_{c}$ lies in $\LL$. Hence, every periodic point of $\Pro_{c'}$ lies in $\LL$, as desired.
\end{proof}

\subsection{Folding}\label{subsec:folding}

There is a common technique known as \emph{folding} that can be used to obtain new Coxeter groups from old ones \cite{Stembridge}. In this subsection, we discuss this technique and its relation to Bender--Knuth billiards. 
We were not able to find a discussion of folding root systems in the literature at the level of generality that we present here, so we will prove all of the relevant statements. However, on the level of Coxeter groups, these foldings are discussed in \cite{Dyer2, Muhlherr}.

Let $(W,S)$ be a Coxeter system whose simple reflections are indexed by a finite index set $I$, and let $\sigma\colon I\to I$ be an automorphism of the Coxeter graph $\Gamma_W$ such that each orbit (i.e., cycle) of $\sigma$ is an independent set (i.e., a set of pairwise nonadjacent vertices). Let $I^{\fold}$ be the set of orbits of $\sigma$.\footnote{The symbol \raisebox{0.2ex}{\resizebox{1.3\width}{0.65\height}{\rotatebox{90}{\textsf{w}}}} is pronounced ``fold''.} Consider a symbol $s_{\mathfrak o}$ for each orbit $\mathfrak o\in I^{\fold}$. Let $S^{\fold}=\{s_{\mathfrak o}:\mathfrak o\in I^{\fold}\}$, and let $(W^{\fold},S^{\fold})$ be the Coxeter system such that for all $\mathfrak o,\mathfrak o'\in I^{\fold}$, the order $m(\mathfrak o,\mathfrak o')$ of $s_{\mathfrak o}s_{\mathfrak o'}$ in $W^{\fold}$ is equal to the order of $\prod_{i\in \mathfrak o}s_i\prod_{i'\in \mathfrak o'}s_{i'}$ in $W$. Observe that there is a group homomorphism $\iota\colon W^{\fold}\to W$ determined by $\iota(s_{\oo})=\prod_{i\in \oo}s_i$. (The map $\iota$ is actually injective \cite{Muhlherr}, but we will not need this.) We call $W^{\fold}$ a \dfn{folding} of $W$. We also say $W^{\fold}$ is obtained from $W$ by \dfn{folding along} $\sigma$. 

\begin{example}\label{exam:folding}
The Coxeter graph of the affine Coxeter group $\widetilde E_6$ is 
\[\begin{array}{l}
\includegraphics[height=1.41cm]{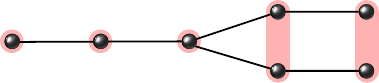}
\end{array}\,\,.\]
We have shaded in red the orbits of a Coxeter graph automorphism of order 2. By folding along this automorphism, we obtain the affine Coxeter group $\widetilde F_4$, whose Coxeter graph is 
\[\includegraphics[height=0.373cm]{BKTogglesPIC22}\,\,.\] 

The Coxeter graph of the affine Coxeter group $\widetilde E_7$ is \[\begin{array}{l}
\includegraphics[height=1.41cm]{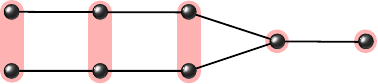}
\end{array}\,\,.\] As in the case of $\widetilde E_6$, we have shaded in red the orbits of a Coxeter graph automorphism of order 2. By folding along this automorphism, we again obtain $\widetilde F_4$. 
\end{example}

For each orbit $\oo\in I^{\fold}$, let us fix a particular index $i_{\oo}\in\oo$. For $\oo,\oo'\in I^{\fold}$, let $\deg_{\oo'}(\oo)$ denote the number of edges in $\Gamma_W$ of the form $\{i_{\oo},i'\}$ for $i'\in \oo'$. Let $\lab_{\oo'}(\oo)$ be the multiset of labels of the form $m(i_{\oo},i')$ with $i'\in\oo'$ and $m(i_{\oo},i')\geq 3$, where the multiplicity of a label $a$ in $\lab_{\oo'}(\oo)$ is equal to ${|\{i'\in\oo':m(i_{\oo},i')=a\}|}$. (Because $\sigma$ is a Coxeter graph automorphism, neither $\deg_{\oo'}(\oo)$ nor $\lab_{\oo'}(\oo)$ depends on the choice of $i_{\oo}$.) Thus, $\deg_{\oo'}(\oo)$ is the cardinality of $\lab_{\oo'}(\oo)$, counted with multiplicity. In particular, if $\deg_{\oo'}(\oo)=0$ (i.e., $\oo\cup\oo'$ is an independent set in $\Gamma_W$), then $\lab_{\oo'}(\oo)$ is empty. The following lemma allows us to determine the label $m(\oo,\oo')$ of the edge between $\oo$ and $\oo'$ in $\Gamma_{W^{\fold}}$ from the tuples $\lab_{\oo'}(\oo)$ and $\lab_{\oo}(\oo')$. 

\begin{lemma}\label{lem:folded_orders}
Let $\oo,\oo'$ be distinct orbits in $I^{\fold}$, and assume that $\deg_{\oo'}(\oo)\leq\deg_{\oo}(\oo')$. 
\begin{itemize}
\item If $\deg_{\oo'}(\oo)=0$, then $m(\oo,\oo')=2$. 
\item If $\deg_{\oo'}(\oo)=\deg_{\oo}(\oo')=1$, then $\lab_{\oo'}(\oo)=\lab_{\oo}(\oo')=\{m(\oo,\oo')\}$. 
\item If $\deg_{\oo'}(\oo)=1$ and $\lab_{\oo}(\oo')=\{3,3\}$, then $m(\oo,\oo')=4$. 
\item If $\deg_{\oo'}(\oo)=1$ and $\lab_{\oo}(\oo')=\{3,3,3\}$, then $m(\oo,\oo')=6$. 
\item If $\deg_{\oo'}(\oo)=1<\deg_{\oo}(\oo')$ and $\lab_{\oo}(\oo')$ is not $\{3,3\}$ or $\{3,3,3\}$, then $m(\oo,\oo')=\infty$. 
\item If $\deg_{\oo'}(\oo)\geq 2$, then $m(\oo,\oo')=\infty$. 
\end{itemize}
\end{lemma}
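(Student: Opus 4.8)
The idea is to reduce $m(\oo,\oo')$ to the classification of finite Coxeter groups. Since $\oo$ and $\oo'$ are independent sets in $\Gamma_W$, the products $c_\oo = \prod_{i\in\oo}s_i$ and $c_{\oo'} = \prod_{i'\in\oo'}s_{i'}$ are well-defined involutions, and $c_\oo c_{\oo'}$ is the bipartite Coxeter element of the parabolic subgroup $W_{\oo\cup\oo'}$ relative to the bipartition $\{\oo,\oo'\}$ of its Coxeter graph $G := \Gamma_{W_{\oo\cup\oo'}}$ (the subgraph of $\Gamma_W$ induced on $\oo\cup\oo'$). By definition $m(\oo,\oo')$ is the order of $c_\oo c_{\oo'}$, so the first step is to invoke the well-known fact (see, e.g., \cite{Bourbaki}) that a Coxeter element of a Coxeter group has finite order if and only if the group is finite, in which case the order equals the Coxeter number; applied to the irreducible factors of $W_{\oo\cup\oo'}$, this shows that $m(\oo,\oo')$ is finite exactly when $W_{\oo\cup\oo'}$ is finite, and then equals the least common multiple of the Coxeter numbers of those factors. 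So everything comes down to identifying the graph $G$.

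The graph $G$ is bipartite with parts $\oo,\oo'$ and is biregular: every vertex of $\oo$ has degree $d := \deg_{\oo'}(\oo)$ and every vertex of $\oo'$ has degree $d' := \deg_\oo(\oo')$, so $|\oo|d = |\oo'|d'$; we assume $d \le d'$. The second ingredient is that $\sigma$ restricts to a label-preserving automorphism of $G$ acting as a single $|\oo|$-cycle on $\oo$ and a single $|\oo'|$-cycle on $\oo'$. When $d \ge 1$, every vertex of $G$ has positive degree, so each connected component of $G$ meets both parts; transitivity of $\langle\sigma\rangle$ on $\oo$ then makes $\langle\sigma\rangle$ transitive on the components of $G$, so all components of $G$ are isomorphic as edge-labeled graphs. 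Writing $C_0$ for one of them and $W_{C_0}$ for the Coxeter group whose Coxeter diagram is $C_0$, we get $W_{\oo\cup\oo'} \cong W_{C_0}^{\times r}$, so $m(\oo,\oo')$ is just the order of the bipartite Coxeter element of $W_{C_0}$, i.e.\ the Coxeter number of $W_{C_0}$ if that group is finite and $\infty$ otherwise.

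Now I would go through the cases. If $d = 0$, then $G$ has no edges, $W_{\oo\cup\oo'}$ is elementary abelian of rank $\ge 2$, and $c_\oo c_{\oo'}$ is a nontrivial involution, so $m(\oo,\oo') = 2$. If $d \ge 2$ (hence $d' \ge 2$), every vertex of $C_0$ has degree $\ge 2$, so $C_0$ has at least as many edges as vertices and is not a tree; since finite Coxeter diagrams are forests, $W_{C_0}$ is infinite and $m(\oo,\oo') = \infty$. This leaves $d = 1$. If also $d' = 1$, then $G$ is a perfect matching; the label of the edge at $u \in \oo$ is a $\sigma$-invariant function of $u$, hence constant, say $a$, so $\lab_{\oo'}(\oo) = \lab_\oo(\oo') = \{a\}$, $W_{C_0} = I_2(a)$, and $m(\oo,\oo') = a$. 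If $d' \ge 2$, then $C_0$ is the star $K_{1,d'}$ centered at a vertex of $\oo'$, whose edge-label multiset is $\lab_\oo(\oo')$. For $d' \ge 4$ this star has a vertex of degree $> 3$, impossible for a finite Coxeter diagram, so $m(\oo,\oo') = \infty$ (and $\lab_\oo(\oo')$ then has more than three elements). For $d' = 3$, the unique finite-type star on four vertices is $D_4$ (all labels $3$), so $m(\oo,\oo') = 6$ if $\lab_\oo(\oo') = \{3,3,3\}$ and $m(\oo,\oo') = \infty$ otherwise. For $d' = 2$, the two edges at the center $i_{\oo'}$ must carry the same label: as $\sigma$ permutes $\oo$ in a single cycle of length $|\oo| = 2|\oo'|$, the power $\sigma^{|\oo'|}$ fixes $i_{\oo'}$ and acts on $\oo$ as a fixed-point-free involution, so it interchanges the two neighbors of $i_{\oo'}$ and forces their labels to agree; hence $\lab_\oo(\oo') = \{b,b\}$, $C_0$ is a length-two path with both labels $b$, which is of finite type (namely $A_3$) iff $b = 3$, giving $m(\oo,\oo') = 4$ if $\lab_\oo(\oo') = \{3,3\}$ and $m(\oo,\oo') = \infty$ otherwise. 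Comparing these conclusions with the six bullets (using $h(I_2(a)) = a$, $h(A_3) = 4$, $h(D_4) = 6$) finishes the proof.

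\textbf{Main obstacle.} The only genuinely delicate point is the symmetry argument in the subcase $d = 1$, $d' = 2$: deducing from the fact that $\sigma$ is a graph automorphism that the two edge labels at $i_{\oo'}$ coincide. This is exactly what rules out the finite-type diagrams $B_3$ and $H_3$ and makes the hypothesis ``$\lab_\oo(\oo')$ is not $\{3,3\}$'' in the fifth bullet the correct one; without it the lemma would fail. Everything else is bookkeeping together with the two imported facts (the order of a Coxeter element, and the list of finite Coxeter diagrams).
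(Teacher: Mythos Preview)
Your proof is correct and follows the same overall architecture as the paper's: pass to the parabolic subgroup $W_{\oo\cup\oo'}$, observe that its irreducible factors are mutually isomorphic (via the $\langle\sigma\rangle$-action on components), and compute the order of the bipartite Coxeter element in a single component. The difference lies only in the toolkit used at the end. For $d\ge 2$ the paper avoids the classification entirely: it argues directly via Matsumoto's theorem that no braid or nil move can ever be applied to the word $(i_1\cdots i_k i_1'\cdots i_{k'}')^b$, so every power of the bipartite element is reduced. For $d=1$ the paper simply asserts that each component is a star with constant edge label and then ``checks directly'' the order in the four subcases. Your route instead invokes the theorem that a Coxeter element has finite order iff the group is finite (with order equal to the Coxeter number), and then reads off the answer from the list of finite Coxeter diagrams; this is more conceptual and uniform, at the cost of importing a heavier external fact. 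You also supply, via the $\sigma^{|\oo'|}$ argument, an explicit justification for the constancy of the edge labels in the star, which the paper leaves implicit.
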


\begin{proof}
By passing to the parabolic subgroup $W_{\oo \cup \oo'}$, we may assume that $\oo \cup \oo' = I$.

It is immediate from the definitions that $m(\oo,\oo')=2$ if $\deg_{\oo'}(\oo)=0$. 

Suppose that $\deg_{\oo'}(\oo)=1$. The irreducible factors of $W$ are pairwise isomorphic Coxeter groups, and the Coxeter graph of each irreducible factor is a star graph whose edges all have the same label. To compute the order of $\prod_{i\in \mathfrak o}s_i\prod_{i'\in \mathfrak o'}s_{i'}$ in $W$, we compute the order after projecting to each irreducible factor, and we then take the least common multiple of the orders obtained this way (which will all be equal because the irreducible factors are isomorphic). We can check directly that this order is $a$ if $\lab_{\oo}(\oo')=\{a\}$, is $4$ if $\lab_{\oo}(\oo')=\{3,3\}$, is $6$ if $\lab_{\oo}(\oo')=\{3,3,3\}$, and is $\infty$ otherwise. 

Finally, suppose that $\deg_{\oo'}(\oo)\geq 2$. Let $\oo=\{i_1,\ldots,i_k\}$ and $\oo'=\{i_1',\ldots,i_{k'}'\}$, and let $b\geq 1$ be an integer. Upon inspection, we find that it is impossible to apply a sequence of commutation moves to the word $(i_1\cdots i_ki_1'\cdots i_{k'}')^b$ to obtain a word to which we can apply a nil move or a braid move that is not a commutation move. By Matsumoto's theorem (\cref{thm:matsumoto}), this implies that $(i_1\cdots i_ki_1'\cdots i_{k'}')^b$ is a reduced word. Hence, $\prod_{i\in\oo}s_i\prod_{i'\in\oo'}s_{i'}$ does not have order $b$. As $b$ was arbitrary, we conclude that $m(\oo,\oo')=\infty$. 
\end{proof}

Let us now discuss how root systems interact with folding. Recall that when defining the bilinear form in \eqref{eq:bilinear}, we had to choose a number $\mu_{\{i,i'\}}\geq 1$ whenever $m(i,i')=\infty$; we now stress that, in this subsection, we will sometimes need to choose a value for $\mu_{\{i,i'\}}$ that is strictly greater than $1$.

Let $V$ be the root space of $W$, and let $B$ be a bilinear form on $V$ induced by $W$. Let $V^{\fold}$ be the root space of $W^{\fold}$. Define a bilinear form $B^{\fold}\colon V^{\fold}\times V^{\fold}\to\mathbb R$ by setting
\begin{equation}\label{eq:folded_form_1}
B^{\fold}(\alpha_{\oo},\alpha_{\oo'})=-\sqrt{\left(\sum_{i'\in\oo'}B(\alpha_{i_\oo},\alpha_{i'})\right)\left(\sum_{i\in\oo}B(\alpha_{i_{\oo'}},\alpha_{i})\right)}.
\end{equation} 
Because $\sigma$ is a Coxeter graph automorphism, we can equivalently write
\begin{equation}\label{eq:folded_form_equivalent}B^{\fold}(\alpha_{\oo},\alpha_{\oo'})=\sqrt{\frac{\deg_{\oo}(\oo')}{\deg_{\oo'}(\oo)}}\sum_{i'\in\oo'}B(\alpha_{i_\oo},\alpha_{i'})=\sqrt{\frac{\deg_{\oo'}(\oo)}{\deg_{\oo}(\oo')}}\sum_{i\in\oo}B(\alpha_{i_{\oo'}},\alpha_{i}).
\end{equation} 
It will be useful to keep in mind that 
\begin{equation}\label{eq:degoo}
\sqrt{\frac{\deg_{\oo}(\oo')}{\deg_{\oo'}(\oo)}}=\sqrt{\frac{|\oo|}{|\oo'|}}
\end{equation}
(this is also a consequence of the fact that $\sigma$ is a Coxeter graph automorphism). 

\begin{lemma}\label{lem:form_induced}
The bilinear form $B^{\fold}$ is induced by $W^{\fold}$. 
\end{lemma}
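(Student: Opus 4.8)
The plan is to verify that $B^{\fold}$ satisfies the defining condition \eqref{eq:bilinear} for a bilinear form induced by the Coxeter system $(W^{\fold}, S^{\fold})$; that is, I need to show that for distinct orbits $\oo, \oo' \in I^{\fold}$,
\[
B^{\fold}(\alpha_\oo, \alpha_{\oo'}) = \begin{cases} -\cos(\pi/m(\oo,\oo')) & \text{if } m(\oo,\oo') < \infty, \\ -\mu_{\{\oo,\oo'\}} & \text{if } m(\oo,\oo') = \infty, \end{cases}
\]
for some choice of $\mu_{\{\oo,\oo'\}} \geq 1$ (and of course $B^{\fold}(\alpha_\oo,\alpha_\oo) = 1$, which is immediate since each orbit is an independent set, so $\sum_{i' \in \oo} B(\alpha_{i_\oo}, \alpha_{i'})$ over a single orbit gives... actually I should check the diagonal via the convention that the sum is over $i' \in \oo$ with the standard normalization $B(\alpha_i,\alpha_i)=1$; here one takes $B^{\fold}(\alpha_\oo,\alpha_\oo)=1$ by the definition of an induced form, and \eqref{eq:folded_form_1} is only specified for distinct orbits). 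Since $m(\oo,\oo')$ has been \emph{defined} (in the construction of $W^{\fold}$) to be the order of $\prod_{i \in \oo} s_i \prod_{i' \in \oo'} s_{i'}$ in $W$, the content is a case analysis matching \Cref{lem:folded_orders} against the value of $B^{\fold}(\alpha_\oo,\alpha_{\oo'})$ computed from \eqref{eq:folded_form_1}.

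First I would handle the easy case $\deg_{\oo'}(\oo) = 0$: then $\oo \cup \oo'$ is independent in $\Gamma_W$, so every term in \eqref{eq:folded_form_1} is $B(\alpha_{i_\oo}, \alpha_{i'}) = 0$, giving $B^{\fold}(\alpha_\oo,\alpha_{\oo'}) = 0 = -\cos(\pi/2)$, consistent with $m(\oo,\oo') = 2$ from \Cref{lem:folded_orders}. Next, assume WLOG $\deg_{\oo'}(\oo) \leq \deg_{\oo}(\oo')$ and split into the remaining bullets of \Cref{lem:folded_orders}. In each finite case I compute the relevant sums: if $\lab_{\oo'}(\oo) = \{a\}$ and $\lab_\oo(\oo') = \{a\}$ (so $\deg_{\oo'}(\oo) = \deg_\oo(\oo') = 1$), then $B^{\fold}(\alpha_\oo,\alpha_{\oo'}) = -\sqrt{\cos^2(\pi/a)} = -\cos(\pi/a)$ since $a \geq 2$ forces $\cos(\pi/a) \geq 0$; and $m(\oo,\oo') = a$, so this matches. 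If $\deg_{\oo'}(\oo) = 1$ and $\lab_\oo(\oo') = \{3,3\}$, then $\sum_{i \in \oo} B(\alpha_{i_{\oo'}}, \alpha_i) = -2\cos(\pi/3) = -1$ and $\sum_{i' \in \oo'} B(\alpha_{i_\oo}, \alpha_{i'}) = -\cos(\pi/3) = -1/2$, so $B^{\fold} = -\sqrt{1/2} = -\cos(\pi/4)$, matching $m(\oo,\oo') = 4$. Similarly $\lab_\oo(\oo') = \{3,3,3\}$ gives $-\sqrt{(3/2)(1/2)} = -\sqrt{3}/2 = -\cos(\pi/6)$, matching $m = 6$. (I should double-check the edge case where $\lab_\oo(\oo')$ might include labels other than $3$ but $\deg_\oo(\oo') = 1$, i.e., the second finite bullet, which is already covered above.)

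For the infinite cases — $\deg_{\oo'}(\oo) = 1 < \deg_\oo(\oo')$ with $\lab_\oo(\oo')$ not $\{3,3\}$ or $\{3,3,3\}$, or $\deg_{\oo'}(\oo) \geq 2$ — I need to verify $|B^{\fold}(\alpha_\oo,\alpha_{\oo'})| \geq 1$ so that setting $\mu_{\{\oo,\oo'\}} = |B^{\fold}(\alpha_\oo,\alpha_{\oo'})|$ is legitimate. Here is where I expect the one genuine subtlety: I must bound the products of sums appearing in \eqref{eq:folded_form_1} from below. For $\deg_{\oo'}(\oo) \geq 2$, each of the at least two summands $B(\alpha_{i_{\oo'}}, \alpha_i)$ is $\leq -\cos(\pi/3) = -1/2$ in absolute value when the corresponding edge label is $\geq 3$ — wait, $-\cos(\pi/m)$ for $m \geq 3$ has absolute value $\geq 1/2$, and for $m = \infty$ it has absolute value $\geq 1$ — so $\big|\sum_{i \in \oo} B(\alpha_{i_{\oo'}},\alpha_i)\big| \geq \deg_\oo(\oo')/2 \geq 1$ and similarly for the other factor $\big|\sum_{i' \in \oo'} B(\alpha_{i_\oo}, \alpha_{i'})\big| \geq \deg_{\oo'}(\oo)/2 \geq 1$, so the product is $\geq 1$ and $|B^{\fold}| \geq 1$. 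For the mixed case $\deg_{\oo'}(\oo) = 1 < \deg_\oo(\oo')$: the factor $\big|\sum_{i \in \oo} B(\alpha_{i_{\oo'}},\alpha_i)\big|$ is a sum of $\deg_\oo(\oo') \geq 2$ terms each of absolute value $\geq 1/2$ (and $\geq 1$ if any label is $\infty$), while the other factor is a single term $|B(\alpha_{i_\oo}, \alpha_{i'})| = \cos(\pi/m)$ for the unique edge; one checks that the product is $\geq 1$ precisely when $\lab_\oo(\oo')$ is not $\{3,3\}$ or $\{3,3,3\}$ (these being exactly the configurations where it drops below $1$, yielding the finite labels $4$ and $6$). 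I would organize this last verification as a short lemma-internal computation, noting that $\prod$ of the two sums equals $\frac{\deg_\oo(\oo')}{\deg_{\oo'}(\oo)} \big(\sum_{i' \in \oo'} B(\alpha_{i_\oo},\alpha_{i'})\big)^2$ by \eqref{eq:degoo}, which makes the monotonicity in $\deg_\oo(\oo')$ transparent. The main obstacle, then, is not conceptual but bookkeeping: keeping the two directions $\oo \to \oo'$ and $\oo' \to \oo$ straight and confirming that the cutoffs in \Cref{lem:folded_orders} align exactly with where $|B^{\fold}|$ crosses $1$.
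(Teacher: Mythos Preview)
Your proposal is correct and follows essentially the same approach as the paper: a case analysis matching the bullets of \Cref{lem:folded_orders} against the value of $B^{\fold}(\alpha_\oo,\alpha_{\oo'})$ computed from the defining formula. The only cosmetic difference is that the paper carries out the computations using the equivalent expression \eqref{eq:folded_form_equivalent} (with the factor $\sqrt{\deg_{\oo}(\oo')/\deg_{\oo'}(\oo)}$) rather than the product form \eqref{eq:folded_form_1}, which slightly streamlines the arithmetic in each case; your final remark about \eqref{eq:degoo} amounts to the same simplification.
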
 
\begin{proof}
Fix $\oo,\oo'\in I^{\fold}$, and assume without loss of generality that $\deg_{\oo'}(\oo)\leq\deg_{\oo}(\oo')$. The proof follows from \cref{lem:folded_orders} and \eqref{eq:folded_form_equivalent}. For example, if $\deg_{\oo'}(\oo)=1$ and $\lab_{\oo}(\oo')=\{3,3,3\}$, then 
\[B^{\fold}(\alpha_{\oo},\alpha_{\oo'})=\sqrt{\frac{\deg_{\oo}(\oo')}{\deg_{\oo'}(\oo)}}\sum_{i'\in\oo'}B(\alpha_{i_{\oo}},\alpha_{i'})=\sqrt{\frac{3}{1}}(-\cos(\pi/3))=-\cos(\pi/6),\] and $m(\oo,\oo')=6$ by \cref{lem:folded_orders}. If $\deg_{\oo'}(\oo)\geq 2$, then \[B^{\fold}(\alpha_{\oo},\alpha_{\oo'})=\sqrt{\frac{\deg_{\oo}(\oo')}{\deg_{\oo'}(\oo)}}\sum_{i'\in\oo'}B(\alpha_{i_{\oo}},\alpha_{i'}) \leq \sum_{i'\in\oo'}B(\alpha_{i_{\oo}},\alpha_{i'})\leq 2(-\cos(\pi/3))=-1,\] and $m(\oo,\oo')=\infty$ by \cref{lem:folded_orders}. Similar arguments handle the other cases. 
\end{proof}

\Cref{lem:form_induced} allows us to consider the standard geometric representation of $W^{\fold}$ defined with respect to $B^{\fold}$. Define the linear transformation $\omega\colon V\to V^{\fold}$ by taking $\omega(\alpha_i)=\frac{1}{\sqrt{|\oo|}}\alpha_{\oo}$ for all $\oo\in I^{\fold}$ and all $i\in\oo$. We claim that 
\begin{equation}\label{eq:equivariance}
\omega(\iota(v)\beta)=v\omega(\beta)
\end{equation}
for all $v\in W^{\fold}$ and $\beta\in V$. It suffices to prove this when $\beta= \alpha_{i}$ is a simple root and $v = s_{\oo'}$ is a simple reflection. Let $\oo\in I^{\fold}$ be the orbit containing $i$. Then 
\begin{align*}
\omega(\iota(s_{\oo'})\alpha_{i})&=\omega\left(\alpha_{i}-\sum_{i'\in\oo'}B(\alpha_{i},\alpha_{i'})\alpha_{i'}\right) \\ 
&=\frac{1}{\sqrt{|\oo|}}\alpha_{\oo}-\sum_{i'\in\oo'}B(\alpha_i,\alpha_{i'})\frac{1}{\sqrt{|\oo'|}}\alpha_{\oo'} \\ 
&=\frac{1}{\sqrt{|\oo|}}\alpha_{\oo}-\frac{1}{\sqrt{|\oo|}}B^{\fold}(\alpha_{\oo},\alpha_{\oo'})\alpha_{\oo'},
\end{align*}
where the last equality follows from \eqref{eq:folded_form_equivalent} and \eqref{eq:degoo}. Because
\[
s_{\oo'}\omega(\alpha_i)=\frac{1}{\sqrt{|\oo|}}s_{\oo'}\alpha_{\oo}=\frac{1}{\sqrt{|\oo|}}\alpha_{\oo}-\frac{1}{\sqrt{|\oo|}}B^{\fold}(\alpha_{\oo},\alpha_{\oo'})\alpha_{\oo'},\]
this completes the proof of \eqref{eq:equivariance}. 

In what follows, we will write $\Phi$ and $\Phi^{\fold}$ for the root system of $W$ (defined with respect to $B$) and the root system of $W^{\fold}$ (defined with respect to $B^{\fold}$), respectively.  A version of this lemma (without root systems) also appears as \cite[Theorem 6.1(c)]{Dyer2}.

\begin{lemma}\label{lem:iota_easy}
Let $\oo\in I^{\fold}$, and let $i\in\oo$. For $v\in W^{\fold}$, we have $v\alpha_{\oo}\in(\Phi^{\fold})^+$ if and only if $\iota(v)\alpha_i\in\Phi^+$. 
\end{lemma}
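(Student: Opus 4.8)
The plan is to deduce the statement from the equivariance property \eqref{eq:equivariance} together with the observation that the positivity of a root can be detected by pairing with a linear functional that is strictly positive on all simple roots. First I would set up the functional: since $\omega$ sends the simple root $\alpha_i$ of $V$ to $\frac{1}{\sqrt{|\oo|}}\alpha_{\oo}$, where $\oo$ is the orbit of $i$, the image $\omega(V)$ is exactly the span of the $\alpha_{\oo}$, i.e.\ all of $V^{\fold}$, and $\omega$ maps simple roots of $W$ to positive scalar multiples of simple roots of $W^{\fold}$. Consequently, if $\beta\in\Phi^+$ is a nonnegative linear combination of the $\alpha_i$, then $\omega(\beta)$ is a nonnegative linear combination of the $\alpha_{\oo}$, and likewise $\omega$ carries negative combinations to negative combinations.

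Next I would fix $v\in W^{\fold}$ and $i\in\oo$ and write $\gamma=\iota(v)\alpha_i\in\Phi$. By \eqref{eq:equivariance} we have $\omega(\gamma)=\omega(\iota(v)\alpha_i)=v\,\omega(\alpha_i)=\frac{1}{\sqrt{|\oo|}}\,v\alpha_{\oo}$. Now $\gamma$ is a root of $W$, so it is either positive or negative; in the first case $\omega(\gamma)$ is a nonnegative combination of simple roots $\alpha_{\oo'}$, hence so is $v\alpha_{\oo}=\sqrt{|\oo|}\,\omega(\gamma)$, and in the second case $v\alpha_{\oo}$ is a nonpositive combination. Since $v\alpha_{\oo}$ is a nonzero element of $\Phi^{\fold}$, it is genuinely positive in the first case and genuinely negative in the second. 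This gives the implication ``$\iota(v)\alpha_i\in\Phi^+ \Rightarrow v\alpha_{\oo}\in(\Phi^{\fold})^+$'' and its contrapositive, i.e.\ both directions at once.

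The one point that requires a little care is that the coefficient argument goes both ways: I need that a root of $W^{\fold}$ expressed in the basis $\{\alpha_{\oo}\}$ has all coefficients of one sign, and that $\omega$ does not mix signs. The first is just the definition of positive/negative roots in $\Phi^{\fold}$, and the second follows because $\omega(\alpha_i)$ is a \emph{positive} multiple of the single simple root $\alpha_{\oo}$ — distinct simple roots of $W$ in different orbits map to distinct simple roots of $W^{\fold}$, and those in the same orbit map to (positive multiples of) the same simple root, so no cancellation can occur. There is no real obstacle here; the only thing to state cleanly is that ``$\iota(v)\alpha_i$ is a root of $W$'' (immediate, since $\alpha_i$ is a simple root and $\iota(v)\in W$) so that the dichotomy positive-or-negative applies.

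Alternatively — and perhaps more cleanly — I would pick a linear functional $f\in V^*$ with $f(\alpha_i)>0$ for all $i\in I$, push it forward appropriately, and argue that the sign of $f(\iota(v)\alpha_i)$ equals the sign of the analogous pairing in $V^{\fold}$ against $v\alpha_{\oo}$; but the direct coefficient computation via $\omega$ above already suffices and avoids introducing dual-space bookkeeping.
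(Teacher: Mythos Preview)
Your proof is correct and follows essentially the same route as the paper: both use the equivariance formula $v\alpha_{\oo}=\sqrt{|\oo|}\,\omega(\iota(v)\alpha_i)$ from \eqref{eq:equivariance} and then observe that $\omega$ sends each simple root of $W$ to a positive scalar multiple of a simple root of $W^{\fold}$, so it preserves sign. The paper's version is just a one-line compression of your argument.
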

\begin{proof}
According to \eqref{eq:equivariance}, we have $v\alpha_{\oo}=\sqrt{|\oo|}\omega(\iota(v)\alpha_i)$, so the proof follows from the fact that $\omega$ maps each simple root of $W$ to a positive scalar multiple of a simple root of $W^{\fold}$. 
\end{proof}

Let $\LL^{\fold}$ be a nonempty convex subset of $W^{\fold}$, and let $\LL$ be the convex hull of $\iota(\LL^{\fold})$ in $W$. 
For $i\in I$, let $\tau_i$ be the noninvertible Bender--Knuth toggle on $W$ defined with respect to $\LL$. For $\oo\in I^{\fold}$, let $\tau_{\oo}$ be the noninvertible Bender--Knuth toggle on $W^{\fold}$ defined with respect to $\LL^{\fold}$, and let $\overline{\tau}_{\oo}=\prod_{i\in\oo}\tau_i$, where $\prod$ denotes composition. The order of the composition does not matter because $\oo$ is an independent set of $\Gamma_W$. 

\begin{lemma}\label{lem:iota_commutes}
For every $\oo\in I^{\fold}$, we have 
$\iota\circ\tau_{\oo}=\overline{\tau}_{\oo}\circ\iota$.
\end{lemma}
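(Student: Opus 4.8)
The plan is to prove the identity pointwise: fix $v\in W^{\fold}$, set $u=\iota(v)$, and show that both $\iota(\tau_{\oo}(v))$ and $\overline{\tau}_{\oo}(u)$ equal $u$ or $\iota(s_{\oo})u$ according to the same criterion. The first step is an elementary observation that lets me handle the composition $\overline{\tau}_{\oo}=\prod_{i\in\oo}\tau_i$: since $\oo$ is an independent set of $\Gamma_W$, we have $B(\alpha_i,\alpha_j)=0$ and hence $s_i\alpha_j=\alpha_j$ for distinct $i,j\in\oo$. Consequently, for any $x\in W$ and distinct $i,j\in\oo$, the element $\tau_i(x)$ (which is $x$ or $s_ix$) satisfies $\tau_i(x)^{-1}\alpha_j=x^{-1}\alpha_j$. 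A short induction on $|\oo|$ then shows that each intermediate element obtained while applying $\overline{\tau}_{\oo}$ to $u$ differs from $u$ only by a product of the pairwise-commuting reflections $\{s_i:i\in\oo\}$, each of which fixes $\alpha_j$ for every other $j\in\oo$; therefore the decision made by each individual $\tau_j$ depends only on whether $u^{-1}\alpha_j\in\RR(\LL)$, and $\overline{\tau}_{\oo}(u)=\left(\prod_{i\in\oo,\ u^{-1}\alpha_i\notin\RR(\LL)}s_i\right)u$, the product taken in any order.

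The second step is to show that the condition ``$u^{-1}\alpha_i\notin\RR(\LL)$'' is independent of the choice of $i\in\oo$ and coincides with ``$v^{-1}\alpha_{\oo}\notin\RR(\LL^{\fold})$''. Fix $i\in\oo$. For $x\in\LL^{\fold}$, apply \Cref{lem:iota_easy} to the element $xv^{-1}\in W^{\fold}$ and use that $\iota$ is a group homomorphism: this gives $x(v^{-1}\alpha_{\oo})\in(\Phi^{\fold})^+$ if and only if $\iota(x)u^{-1}\alpha_i\in\Phi^+$, i.e. $\iota(x)\in H^+_{u^{-1}\alpha_i}$. Hence $v^{-1}\alpha_{\oo}\in\RR(\LL^{\fold})$ if and only if $\iota(\LL^{\fold})\subseteq H^+_{u^{-1}\alpha_i}$. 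Since $\LL$ is the convex hull of $\iota(\LL^{\fold})$ and convex hulls are intersections of the half-spaces containing the set, for any root $\beta$ we have $\iota(\LL^{\fold})\subseteq H^+_{\beta}$ if and only if $\LL\subseteq H^+_{\beta}$. Taking $\beta=u^{-1}\alpha_i$ yields $v^{-1}\alpha_{\oo}\in\RR(\LL^{\fold})\iff u^{-1}\alpha_i\in\RR(\LL)$ for every $i\in\oo$, as needed.

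Finally I would combine the two steps. If $v^{-1}\alpha_{\oo}\in\RR(\LL^{\fold})$, then $\tau_{\oo}(v)=v$ and, by Step 2, $u^{-1}\alpha_i\in\RR(\LL)$ for all $i\in\oo$; then every factor is absent from the product in Step 1, so $\overline{\tau}_{\oo}(u)=u=\iota(\tau_{\oo}(v))$. If $v^{-1}\alpha_{\oo}\notin\RR(\LL^{\fold})$, then $\tau_{\oo}(v)=s_{\oo}v$ and $u^{-1}\alpha_i\notin\RR(\LL)$ for all $i\in\oo$, so Step 1 gives $\overline{\tau}_{\oo}(u)=\left(\prod_{i\in\oo}s_i\right)u=\iota(s_{\oo})\iota(v)=\iota(\tau_{\oo}(v))$. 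In either case $\overline{\tau}_{\oo}(\iota(v))=\iota(\tau_{\oo}(v))$. I expect the only mildly delicate point to be the bookkeeping in the induction of Step 1---keeping track that each partial composition moves $u$ only by a product of commuting simple reflections from $\oo$, so that the next toggle sees the unchanged root $u^{-1}\alpha_j$---but this is routine given $s_i\alpha_j=\alpha_j$; everything else is a direct unwinding of the definitions of $\tau_i$, $\RR(-)$, and convex hull together with \Cref{lem:iota_easy}.
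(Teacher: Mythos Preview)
Your proof is correct and follows essentially the same route as the paper's. Both arguments reduce to showing, via \cref{lem:iota_easy} and the convex-hull characterization of $\LL$, that $\tau_{\oo}(v)=v$ if and only if $\tau_i(\iota(v))=\iota(v)$ for each $i\in\oo$; the only difference is that you spell out explicitly why this biconditional suffices (your Step~1 observation that $s_i\alpha_j=\alpha_j$ for distinct $i,j\in\oo$, so each factor of $\overline{\tau}_{\oo}$ sees the same root $u^{-1}\alpha_j$), whereas the paper leaves this implicit in its ``it suffices to show'' sentence.
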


\begin{proof}
Fix $\oo\in I^{\fold}$ and $x\in W^{\fold}$. We wish to prove that $\iota(\tau_{\oo}(x))=\overline{\tau}_{\oo}(\iota(x))$. To do so, it suffices to show that for each $i\in \oo$, we have $\tau_{\oo}(x)=x$ if and only if $\tau_i(\iota(x))=\iota(x)$. 

Fix $i\in\oo$. We have $\tau_{\oo}(x)=x$ if and only if $wx^{-1}\alpha_{\oo}\in(\Phi^{\fold})^+$ for all $w\in\LL^{\fold}$. According to \cref{lem:iota_easy}, this holds if and only if $\iota(w)(\iota(x)^{-1}\alpha_i)\in\Phi^+$ for all $w\in \LL^{\fold}$. Because $\LL$ is the convex hull of $\iota(\LL^{\fold})$, it follows that $\tau_{\oo}(x)=x$ if and only if $z(\iota(x)^{-1}\alpha_i)\in\Phi^+$ for all $z\in \LL$. This, in turn, holds if and only if $\tau_i(\iota(x))=\iota(x)$. 
\end{proof}

We are now prepared to prove \cref{prop:folding}, which states that a folding of a futuristic Coxeter group is futuristic. This result is immediate from the following stronger proposition. Note that if $c$ is a Coxeter element of $W^{\fold}$, then $\iota(c)$ is a Coxeter element of $W$. 

\begin{proposition}
Let $W^{\fold}$ be a folding of $W$, and let $c$ be a Coxeter element of $W^{\fold}$. If $\iota(c)$ is futuristic, then $c$ is futuristic.  
\end{proposition}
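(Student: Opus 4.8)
The plan is to use the map $\iota$ to transport periodic points from $W^{\fold}$ to $W$, apply futuristicity of $\iota(c)$ there, and pull the conclusion back. First I would fix an ordering $\oo_1,\ldots,\oo_n$ of $I^{\fold}$ with $c=s_{\oo_n}\cdots s_{\oo_1}$. Since the orbits partition $I$ and each is an independent set, $\iota(c)=\bigl(\prod_{i\in\oo_n}s_i\bigr)\cdots\bigl(\prod_{i\in\oo_1}s_i\bigr)$ is a Coxeter element of $W$, and concatenating reduced words for the commuting blocks $\prod_{i\in\oo_k}s_i$ gives a reduced word for $\iota(c)$ whose associated operator is $\Pro_{\iota(c)}=\overline{\tau}_{\oo_n}\cdots\overline{\tau}_{\oo_1}$. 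Iterating \cref{lem:iota_commutes} then yields $\iota\circ\Pro_c=\Pro_{\iota(c)}\circ\iota$ and hence $\iota\circ\Pro_c^{K}=\Pro_{\iota(c)}^{K}\circ\iota$ for every $K\geq 0$.

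Now fix a nonempty convex set $\LL^{\fold}\subseteq W^{\fold}$ and let $\LL$ be the convex hull of $\iota(\LL^{\fold})$ in $W$, so that the toggles $\tau_i$ and $\tau_\oo=\overline\tau_\oo$ are the ones appearing in \cref{lem:iota_commutes}. The crucial step is to prove the set-theoretic identity $\iota^{-1}(\LL)=\LL^{\fold}$. The inclusion $\LL^{\fold}\subseteq\iota^{-1}(\LL)$ is immediate. For the converse, I would use that $\LL^{\fold}=\bigcap_{\gamma\in\RR(\LL^{\fold})}H_\gamma^+$ since $\LL^{\fold}$ is convex. Write an arbitrary $\gamma\in\RR(\LL^{\fold})$ as $v\alpha_\oo$ with $v\in W^{\fold}$ and $\oo\in I^{\fold}$, and fix $i\in\oo$. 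By \cref{lem:iota_easy}, the condition $w\gamma\in(\Phi^{\fold})^+$ for all $w\in\LL^{\fold}$ is equivalent to $\iota(w)(\iota(v)\alpha_i)\in\Phi^+$ for all $w\in\LL^{\fold}$, i.e.\ to $\iota(v)\alpha_i\in\RR(\iota(\LL^{\fold}))=\RR(\LL)$, where the last equality holds because passing to the convex hull does not change $\RR$. Consequently, if $x\in W^{\fold}$ satisfies $\iota(x)\in\LL$, then $\iota(x)(\iota(v)\alpha_i)\in\Phi^+$, so $\iota(xv)\alpha_i\in\Phi^+$, and \cref{lem:iota_easy} again gives $x\gamma=xv\alpha_\oo\in(\Phi^{\fold})^+$. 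As $\gamma\in\RR(\LL^{\fold})$ was arbitrary, $x\in\LL^{\fold}$, proving $\iota^{-1}(\LL)\subseteq\LL^{\fold}$.

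To finish, assume $\iota(c)$ is futuristic and let $x$ be a periodic point of $\Pro_c$, say $\Pro_c^{K}(x)=x$ with $K\geq 1$. Then $\Pro_{\iota(c)}^{K}(\iota(x))=\iota(\Pro_c^{K}(x))=\iota(x)$, so $\iota(x)$ is a periodic point of $\Pro_{\iota(c)}$ for the nonempty convex set $\LL$. Since $\iota(c)$ is futuristic, $\LL$ is heavy with respect to $\iota(c)$, so $\iota(x)\in\LL$, and therefore $x\in\iota^{-1}(\LL)=\LL^{\fold}$. Thus $\LL^{\fold}$ is heavy with respect to $c$; as $\LL^{\fold}$ was arbitrary, $c$ is futuristic.

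I expect the only real obstacle to be the identity $\iota^{-1}(\LL)=\LL^{\fold}$: one cannot simply write $\LL=\iota(\LL^{\fold})$ because $\LL$ is the convex hull and is typically strictly larger, so the argument genuinely needs \cref{lem:iota_easy} together with the invariance of $\RR$ under taking convex hulls. The reduced-word bookkeeping for $\iota(c)$ and the iteration of \cref{lem:iota_commutes} should be routine.
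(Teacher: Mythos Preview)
Your proposal is correct and follows essentially the same approach as the paper: both arguments use \cref{lem:iota_commutes} to intertwine $\Pro_c$ with $\Pro_{\iota(c)}$ and then use \cref{lem:iota_easy} to show that $u\notin\LL^{\fold}$ forces $\iota(u)\notin\LL$. The paper phrases this as a contrapositive while you argue directly and package the key step as the identity $\iota^{-1}(\LL)=\LL^{\fold}$, but the content is the same.
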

\begin{proof}
Preserve the notation from above. We prove the contrapositive. Suppose that $c$ is not futuristic. Then there exist a nonempty convex set $\LL^{\fold} \subseteq W^{\fold}$ and an element $u\in W^{\fold}\setminus\LL^{\fold}$ such that $\Pro_c^K(u)=u$ for some integer $K\geq 1$. Let $\LL$ be the convex hull in $W$ of $\iota(\LL^{\fold})$. It follows from \cref{lem:iota_commutes} that $\iota\circ\Pro_c=\Pro_{\iota(c)}\circ\iota$. Hence, $\Pro_{\iota(c)}^K(\iota(u))=\iota(\Pro_c^K(u))=\iota(u)$. 

We need to show that $\iota(u)\not\in\LL$. Because $u\not\in \LL^{\fold}$, there exists $\beta\in\Phi^{\fold}$ such that $u\beta\in(\Phi^{\fold})^-$ and $w\beta\in(\Phi^{\fold})^+$ for all $w\in\LL^{\fold}$. We can write $\beta=y\alpha_{\oo}$ for some $y\in W^{\fold}$ and $\oo\in I^{\fold}$. Choose $i\in \oo$. For each $v\in W^{\fold}$, we can use \cref{lem:iota_easy} to see that $v\beta\in(\Phi^{\fold})^+$ if and only if $\iota(v)(\iota(y)\alpha_i)\in\Phi^+$. It follows that $\iota(u)(\iota(y)\alpha_i)\in\Phi^-$ and that $\iota(w)(\iota(y)\alpha_i)\in\Phi^+$ for all $w\in \LL^{\fold}$. Since $\LL$ is the convex hull of $\iota(\LL^{\fold})$, we deduce that $\iota(u)\not\in\LL$. 
\end{proof}

\section{Finite Coxeter Groups}\label{sec:finite}  

Throughout this section, we assume that the Coxeter group $W$ is finite.

The machinery developed in \cref{subsec:strata} allows us to painlessly finish the proofs of \cref{thm:main1,thm:main2}. 

\begin{proof}[Proof of \Cref{thm:main1}]
    Let $\mathsf{w}_\circ$ be a reduced word for $w_\circ$. Let $u \in W$, and assume for the sake of contradiction that $\tau_{\mathsf{w}_\circ}(u) \not \in \LL$. Then there is some $\beta \in \Sep(\tau_{\mathsf{w}_\circ}(u))$. According to \Cref{lem:sep-containment}, we have $\beta \in \Sep(\wo^{-1}\tau_{\mathsf{w}_\circ}(u))$. But then the roots $\tau_{\mathsf{w}_\circ}(u) \beta$ and $\wo^{-1}\tau_{\mathsf{w}_\circ}(u)\beta$ are both negative, which contradicts the fact that $\wo \Phi^+ = \Phi^-$ (recall that $\wo^{-1}=\wo$). 
\end{proof}

In the following proof, recall the commutation equivalence relation $\equiv$ defined in \cref{sec:preliminaries}.

\begin{proof}[Proof of \cref{thm:main2}]
Let $\sfc$ be a reduced word for the Coxeter element $c$. It follows from \cite[Corollary~4.1]{Speyer} that there is a reduced word $\mathsf{w}_\circ$ for $\wo$ such that $\sfc^{\MM(c)}\equiv \y\mathsf{w}_\circ$ for some word $\y$. Appealing to \cref{thm:main1}, we find that \[\Pro_c^{\MM(c)}(W)=\tau_\y(\tau_{\mathsf{w}_\circ}(W))=\tau_\y(\LL)=\LL. \qedhere\]
\end{proof}

In the remainder of this section, we provide a (slightly informal) discussion of how one can compute $\MM(c)$ explicitly. 

Let $\x$ be a word. Let $i\in I$, and let $p_i(\x)$ be the number of occurrences of $i$ in $\x$. For $1\leq k\leq p_i(\x)$, let $i^{(k)}$ be the $k$-th occurrence of $i$ in $\x$ (counted from the right). It will be convenient to view $i^{(1)},\ldots,i^{(p_i(\x))}$ as distinct entities. The \dfn{heap} of $\x$ is a certain poset whose elements are the letters in $\x$ (which are seen as distinct from one another, even if they are equal as elements of $I$). The order relation is defined so that if $i_1^{(k_1)}$ and $i_2^{(k_2)}$ are two letters, then $i_1^{(k_1)}<i_2^{(k_2)}$ if and only if $i_1^{(k_1)}$ appears to the right of $i_2^{(k_2)}$ in every word that is commutation equivalent to $\x$. Two words are commutation equivalent if and only if they have the same heap. The Hasse diagram of the heap of $\x$, which we denote by $\Heap(\x)$, is called a \dfn{combinatorial AR quiver} \cite{WhyTheFuss}. 
We will draw $\Heap(\x)$ sideways so that larger letters appear to the left of smaller letters.\footnote{It is typical to draw $\Heap(\x)$ with larger letters to the right of smaller letters, but we have adopted the opposite convention in order to match our equally unorthodox convention for writing words.} 

There is an involution $\psi\colon I\to I$ given by $s_{\psi(i)}=\wo s_i\wo$. We can extend $\psi$ to an involution on the set of words over $I$ by letting $\psi(i_M\cdots i_1)=\psi(i_M)\cdots \psi(i_1)$. 

Recall that the \emph{Coxeter number} of $W$ is the quantity $h=|\Phi|/|S|$. Let $\sfc$ be a reduced word for a Coxeter element $c$. It is known that $\MM(c)\leq h$. In fact, it follows from \cite[Lemma~2.6.5]{WhyTheFuss} that \[\sfc^{h}\equiv\psi(\sort_{\sfc}(\wo))\sort_{\sfc}(\wo),\] where $\sort_{\sfc}(\wo)$ is a special reduced word called the \emph{$\sfc$-sorting word} for $\wo$ (see \cite{ReadingClusters}). One can compute $\sort_{\sfc}(\wo)$ by drawing the combinatorial AR quiver $\Heap(\sfc^h)$ and then ``cutting'' the diagram in half so that the left side can be obtained by applying $\psi$ to the right side. Then $\MM(c)$ is the smallest positive integer $k$ such that the right side fits inside of $\Heap(\sfc^k)$.

\begin{example}\label{exam:quiver}
Suppose that $W=\mathfrak S_7$, and let $s_i=(i\,\,i+1)$. The Coxeter number of $\mathfrak S_7$ is $h=7$. Consider the word $\sfc=213456$, and let $c=s_2s_1s_3s_4s_5s_6$ be the Coxeter element represented by~$\sfc$. \Cref{fig:ARQuiver} shows the combinatorial AR quiver of $\sfc^7$; it is formed by placing $7$ copies of $\Heap(\sfc)$ (indicated by bands shaded in yellow) in a row and adding (black) edges as appropriate. 

The involution $\psi\colon I\to I$ is given by $\psi(i)=7-i$. The thick black-and-blue piecewise linear curve in \cref{fig:ARQuiver} cuts the diagram into two pieces such that the piece on the left side of the cut can be obtained by applying $\psi$ to the piece on the right side of the cut. Thus, the piece on the right side of the cut is $\Heap(\sort_{\sfc}(\wo))$. The piece on the right of the cut uses letters from $5$ different yellow bands, so $\MM(c)=5$. 
\end{example}

\begin{figure}[ht]
  \begin{center}{\includegraphics[width=\linewidth]{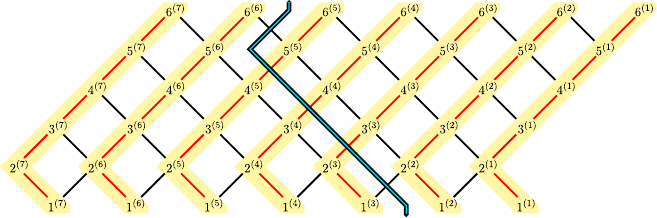}}
  \end{center}
  \caption{An illustration of \Cref{exam:quiver}. The black-and-blue piecewise linear curve cuts $\Heap(\sfc^7)$ into two pieces; the right piece is $\Heap(\sort_{\sfc}(\wo))$, while the left piece is $\Heap(\psi(\sort_{\sfc}(\wo)))$.}\label{fig:ARQuiver}
\end{figure}

\section{Affine Coxeter Groups}\label{sec:affine}
This section discusses general results about Bender--Knuth billiards in affine Coxeter groups. We will use these results to prove \cref{thm:affineAC}, which states that the affine Coxeter groups of types $\widetilde A$, $\widetilde C$, and $\widetilde G_2$ are futuristic. 

\subsection{Affine Coxeter Groups} 
Preserve the notation from \Cref{sec:preliminaries}, and assume in addition that the Coxeter system $(W, S)$ is affine. This means that the associated bilinear form $B \colon V \times V \to \mathbb{R}$ is positive semidefinite but not positive definite. We will also assume that $W$ is irreducible. Let \[V^W = \{\gamma \in V : \text{$u \gamma = \gamma$ for all $u \in W$}\}\] be the space of $W$-invariant vectors in $V$. Equivalently, $V^W$ is the kernel of $B$. The space $V^W$ is $1$-dimensional, and there exists $\delta \in V^W$ whose coefficients in the basis of simple roots are all positive. We call $\delta$ the \dfn{imaginary root}, though the reader should keep in mind that it is not a root of $W$.

Let $\overline{V} = V / V^W$, and let $\quot \colon V \to \overline{V}$ be the quotient map. Then $\overline{V}$ is a $W$-module, and the form $B$ descends to a $W$-invariant real inner product $\overline{B} \colon \overline{V} \times \overline{V} \to \mathbb{R}$.  Unlike $V$, the $W$-module $\overline{V}$ is not faithful; that is, the homomorphism $\rho_{\overline{V}} \colon W \to \mathrm{GL}(\overline{V})$ is not injective. Let $\overline{W}$ be the quotient group $W / \ker(\rho_{\overline{V}})$, and let $\quot \colon W \to \overline{W}$ be the quotient map. (It is a slight abuse of notation to use the letter $\quot$ to refer to two different quotient maps, but it will always be clear from context which is meant.) It is well known that $\overline{W}$ is a finite Coxeter group.

We will make use of the following result due to Speyer. (As noted in Speyer's article, this result was already proved for affine Coxeter groups in \cite{FominZelevinsky, Kuniba}.)
\begin{theorem}[{\cite[Theorem~1]{Speyer}}]\label{thm:power-of-coxeter-element-irreducible}
    Let $W$ be an infinite irreducible Coxeter group, and let $i_1,\ldots,i_n$ be an ordering of the index set $I$. For every $K\geq 1$, the word $(i_n\cdots i_1)^K$ is reduced.
\end{theorem}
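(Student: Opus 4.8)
The plan is to convert the statement into an identity about word lengths, reduce that identity to a single structural property of how the Coxeter element $c$ permutes the root system, and then establish that property from the spectral behavior of $c$ acting on the geometric representation — which is exactly where infiniteness and irreducibility get used. For the reduction: since $s_{i_n}\cdots s_{i_1}$ is a reduced word we have $\ell(c)=n$, and the subword property of Coxeter groups gives $\ell(c^K)\le Kn$ for all $K\ge 1$. The word $(i_n\cdots i_1)^K$ is reduced precisely when it has length $Kn$, i.e.\ when $\ell(c^K)=Kn$, i.e.\ when $\ell(c^{K+1})=\ell(c^K)+n$ for every $K\ge 0$. By the standard criterion for length-additivity (a reduced word for $c$ can be appended to a reduced word for $c^K$ without cancellation iff $c^K$ does not ``undo'' any inversion of $c$), this is equivalent to the following: writing $D=\{\beta\in\Phi^+:c\beta\in\Phi^-\}$, a set of size $\ell(c)=n$, we have $c^m\beta\in\Phi^-$ for every $\beta\in D$ and every $m\ge 1$.

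I would deduce this from a fact about the $\langle c\rangle$-orbits on $\Phi$:
\begin{quote}
\emph{If $W$ is infinite and irreducible and $c$ is a Coxeter element, then for every root $\beta$ the set $\{m\in\mathbb Z:c^m\beta\in\Phi^+\}$ is empty, all of $\mathbb Z$, or a half-line $\{m\le m_0\}$ or $\{m\ge m_0\}$; equivalently, no $c$-orbit of roots changes sign more than once.}
\end{quote}
Granting this, take $\beta\in D$: the set $\{m:c^m\beta\in\Phi^+\}$ contains $0$ (as $\beta\in\Phi^+$) but not $1$ (as $c\beta\in\Phi^-$), so it must be $\{m\le 0\}$, whence $c^m\beta\in\Phi^-$ for all $m\ge 1$, as required. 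The same claim in fact recovers the sharper picture that the $n$ elements of $D$ lie in $n$ distinct orbits, each contributing exactly $K$ roots to $\{\beta\in\Phi^+:c^K\beta\in\Phi^-\}$, hence $\ell(c^K)=Kn$ directly; but for the theorem only the half-line statement is needed.

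Proving the key claim is the heart of the matter, and it genuinely requires both hypotheses: for finite $W$ the $c$-orbits of roots are periodic and so change sign infinitely often, and for a reducible $W$ with a finite irreducible factor the same failure occurs inside that factor. I would prove it using the spectral theory of the Coxeter transformation $c\in\mathrm{GL}(V)$. A Perron–Frobenius argument applied to the nonnegative matrix with entries $-B(\alpha_i,\alpha_{i'})$ for $i\ne i'$ (this is where irreducibility of $\Gamma_W$ enters) shows that $c$ has a real dominant eigenvalue $\lambda\ge 1$, with $\lambda>1$ exactly in the indefinite case and $\lambda=1$ sitting in a size-$2$ Jordan block exactly in the affine case; since $W$ is infinite one of these holds. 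In the indefinite case, for any root $\beta$ the projectivized iterates $[c^m\beta]$ converge to the $\lambda$-eigenline as $m\to+\infty$ and to the $\lambda^{-1}$-eigenline as $m\to-\infty$; fixing a linear functional that is positive on the dominant eigenvector (whose coordinates in the simple-root basis may be taken nonnegative, again by Perron–Frobenius) and tracking its sign along the orbit upgrades this asymptotic dichotomy to the half-line statement for all $m$. In the affine case, one works modulo $V^W=\mathbb R\delta$, where the image $\overline c$ has finite order, together with the fact that a suitable power of $c$ translates by a fixed positive multiple of the imaginary root $\delta$: writing a root as $\beta_{\mathrm{fin}}+k(\beta)\,\delta$, the integer $k(c^m\beta)$ should be weakly monotone in $m$ (even if it skips values), which forces the half-line structure. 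I expect the affine case near the transition point to be the delicate step — verifying that $k(c^m\beta)$ is genuinely monotone, i.e.\ ruling out short-lived sign flickers — since it requires understanding precisely how $\sfc$ acts on affine roots, whereas the indefinite case is controlled softly by the spectral gap. (A more hands-on alternative, closer to the original proof, is to argue directly with the root sequence of the periodic word $(i_n\cdots i_1)^\infty$, extracting from a hypothetical non-reduced power a nonnegative linear relation that would force $W$ to be finite or reducible.)
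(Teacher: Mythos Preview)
The paper does not prove this theorem; it is quoted as a black box from Speyer~\cite{Speyer} and used in the proof of \cref{thm:completely_orthogonal_lemma}. So there is no ``paper's own proof'' to compare against.

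That said, your outline is essentially the strategy of Speyer's original argument: reduce reducedness of $(i_n\cdots i_1)^K$ to the assertion that each $c$-orbit on $\Phi$ changes sign at most once, and then establish that orbit dichotomy via the spectral theory of the Coxeter transformation on the geometric representation (Perron--Frobenius on the nonnegative off-diagonal of the Coxeter matrix, yielding a dominant real eigenvalue $\lambda\ge 1$ with $\lambda>1$ indefinite and $\lambda=1$ with a nontrivial Jordan block affine). Your reduction in the first paragraph is correct. The part you flag as delicate---monotonicity of the $\delta$-coefficient along a $c$-orbit in the affine case---is real, and Speyer handles the indefinite and affine cases uniformly rather than splitting them: he works with a single eigenvector for the dominant eigenvalue (or generalized eigenvector when $\lambda=1$) and shows directly that the sequence of its pairings with $c^m\beta$ is monotone in $m$, which forces the half-line structure without a separate affine computation. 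Your parenthetical ``hands-on alternative'' (extracting a nonnegative linear relation from a hypothetical non-reduced power) is not how Speyer proceeds; his proof is genuinely spectral.
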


To prove that the affine Coxeter groups of type~$\widetilde A$ are futuristic, we will use the following theorem, whose proof is type-uniform. 

\begin{theorem}\label{thm:completely_orthogonal_lemma}
Let $W$ be an irreducible affine Coxeter group. Let $\LL\subseteq W$ be a nonempty convex set, let $i_1,\ldots, i_n$ be an ordering of the index set $I$, and let $u_0,u_1,u_2,\ldots$ be a corresponding periodic billiards trajectory that is contained within a single proper stratum $\Str(R)$. Let $\beta$ be a transmitting root of $\Str(R)$. If $j$ is a positive integer such that $u_j=u_{j-1}$, then $B(u_{j-1}\beta, \alpha_{i_j}) = 0$.  
\end{theorem}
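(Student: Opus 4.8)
The plan is to use the rigidity of periodic billiards trajectories together with the near-orthogonality criterion from \Cref{lem:acute-angle} and its strengthening \Cref{lem:super-strong-acute-angle}. Since $u_0, u_1, u_2, \ldots$ is periodic and contained in $\Str(R)$, we know by the consequence of \Cref{lem:sep-decreasing} that $\Sep(u_k) = R$ for all $k$. Fix $j$ with $u_j = u_{j-1}$, so $\tau_{i_j}(u_{j-1}) = u_{j-1}$; in particular $u_{j-1}^{-1}\alpha_{i_j} \in \RR(\LL)$. By \Cref{lem:acute-angle} (applied with $u = u_{j-1}$, $i = i_j$, and the transmitting root $\beta$), we cannot have $B(u_{j-1}\beta, \alpha_{i_j}) < 0$; otherwise $\tau_{i_j}(u_{j-1}) = s_{i_j} u_{j-1} \neq u_{j-1}$. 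So it remains to rule out $B(u_{j-1}\beta, \alpha_{i_j}) > 0$.

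To rule this out, I would exploit the fact that the trajectory is \emph{periodic}, which means that it is not merely a one-time event that $\tau_{i_j}$ fixes $u_{j-1}$: the beam of light must return. The key geometric idea is that in an irreducible affine Coxeter group, the positive semidefinite form $B$ makes $\overline V = V/V^W$ a Euclidean space, and a periodic trajectory cannot have its ``light beam'' component (the image in $\overline V$) gain length in the transmitting direction without eventually escaping the stratum. Concretely, I would track the quantity $\overline B(\quot(u_k \beta), \quot(u_k \beta))$ or, better, monitor how the hyperplane $\HH_{u_k^{-1}\alpha_{i_{k+1}}}$ sits relative to $\HH_\beta$ as $k$ runs over a full period. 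Using \Cref{thm:power-of-coxeter-element-irreducible}, the word $(i_n \cdots i_1)^K$ is reduced for all $K$, which controls how the roots $u_k^{-1}\alpha_{i_{k+1}}$ behave: each genuine toggle step (where $u_{k} = s_{i_k} u_{k-1}$) strictly changes the element, and over a period the net effect on $\Sep$ is trivial. I expect the cleanest route is to argue by contradiction: if $B(u_{j-1}\beta, \alpha_{i_j}) > 0$ for some fixed-point step $j$, then — because $\overline B$ is positive definite and $\beta$ projects to a nonzero vector (as $\beta \in \Phi$ and $V^W \cap \Phi = \varnothing$) — one can produce a root $\gamma$ with $\gamma \ne -u_{j-1}\beta$ and $H_\gamma^+ \cap H_{u_{j-1}\beta}^+ \cap H_{\alpha_{i_j}}^+ = \varnothing$, contradicting $\tau_{i_j}(u_{j-1}) = u_{j-1}$ via \Cref{lem:super-strong-acute-angle}; the candidate for $\gamma$ is (a root proportional to) $2B(u_{j-1}\beta, \alpha_{i_j})\alpha_{i_j} - u_{j-1}\beta$ shifted by a suitable multiple of the imaginary root $\delta$, using that in affine type one can always adjust by $\delta$ to land in $\Phi^+$.

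The main obstacle, I anticipate, is precisely producing the auxiliary root $\gamma$ and verifying it lies in $\Phi$: subtracting off a $\delta$-multiple to correct signs is the standard affine trick, but one must check that the resulting vector is genuinely in the (infinite) root system $\Phi$ and not merely in the lattice, and that the emptiness of the triple half-space intersection survives. An alternative — and perhaps more robust — strategy that avoids constructing $\gamma$ by hand: use periodicity directly. Let $p$ be the period. Summing the vectors $u_{k-1}\beta$ (or their projections) over one period and using that the net motion returns $u_0$ to itself, one can show that the ``reflections'' across the one-way mirrors over a period compose to the identity on the relevant subspace; an orthogonality-vs-obtuseness dichotomy then forces every fixed-point step to be exactly orthogonal, since a strictly positive inner product at some step would have to be compensated by a strictly negative one elsewhere, and \Cref{lem:acute-angle} forbids negative inner products at fixed-point steps. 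I would try both approaches and present whichever yields the shorter argument, but I lean toward the \Cref{lem:super-strong-acute-angle} route since the infrastructure is already in place and only the affine $\delta$-correction is new.
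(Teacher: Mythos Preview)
Your first route---producing a root $\gamma$ at the single step $j$ and invoking \Cref{lem:super-strong-acute-angle}---will not close, and you should drop it. The obstruction to $B(u_{j-1}\beta,\alpha_{i_j})>0$ is global (it uses periodicity), not local: nothing about a single $u_{j-1}\in\Str(R)$ with transmitting root $\beta$ rules out $\tau_{i_j}(u_{j-1})=u_{j-1}$ together with a strictly positive inner product. Your candidate simplifies to $\gamma=-s_{i_j}u_{j-1}\beta$, and the required emptiness $H_\gamma^+\cap H_{u_{j-1}\beta}^+\cap H_{\alpha_{i_j}}^+=\varnothing$ fails in general (if $wu_{j-1}\beta,w\alpha_{i_j}\in\Phi^+$, there is no reason $ws_{i_j}u_{j-1}\beta\in\Phi^+$ when the inner product is positive). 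The $\delta$-shift does not rescue this: in the Coxeter root system of an affine group every root has $B(\beta,\beta)=1$, so adding a multiple of $\delta$ to a root does not in general yield a root.

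Your second route is the correct one and is what the paper does, but your sketch is missing the device that makes it work. Rather than summing $u_{k-1}\beta$, the paper tracks $\gamma_j:=c_j^K u_j\beta-u_j\beta$, where $c_j=s_{i_{j+n}}\cdots s_{i_{j+1}}$ and $K$ is chosen so that the trajectory has period dividing $Kn$ and $\quot(c)^K=\mathbbm{1}$. Since $\quot(c_j)^K=\mathbbm{1}$, one gets $\gamma_j\in V^W=\mathbb{R}\delta$. A short computation gives $\gamma_j-\gamma_{j-1}=\lambda_j\,(c_j^K\alpha_{i_j}-\alpha_{i_j})$, where $\lambda_j=2B(u_{j-1}\beta,\alpha_{i_j})$ at fixed-point steps and $\lambda_j=0$ at genuine toggle steps; \Cref{lem:acute-angle} gives $\lambda_j\geq 0$. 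The essential input you identified---\Cref{thm:power-of-coxeter-element-irreducible}---is used here to show that $c_j^K\alpha_{i_j}-\alpha_{i_j}$ is a \emph{strictly positive} multiple of $\delta$ (reducedness of $(i_{j+n}\cdots i_{j+1})^K i_j$ forces $c_j^K\alpha_{i_j}\in\Phi^+\setminus\{\alpha_{i_j}\}$, and membership in $V^W$ pins down the sign). Thus each increment $\gamma_j-\gamma_{j-1}$ is a nonnegative multiple of $\delta$; periodicity of $(\gamma_j)$ forces all increments to vanish, hence all $\lambda_j=0$. That is exactly the ``positive would have to be compensated by negative, but negative is forbidden'' heuristic you described---the missing piece was the monotone potential $\gamma_j\in\mathbb{R}\delta$.
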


\begin{proof}
It may help to keep the following geometric intuition in mind. As in \Cref{fig:affineS3}, we can think of the Bender--Knuth billiards process as taking place within the Tits cone ${\BB W}$, or, equivalently, in the positive projectivization $\mathbb P({\BB W})=({\BB W}\setminus\{0\})/\mathbb{R}_{>0}$, which has the structure of an affine Euclidean space. By \Cref{lem:acute-angle}, the billiards trajectory can never reflect off of a one-way mirror that forms an obtuse angle with the hyperplane $\HH_\beta$. Thus, if the billiards trajectory is ever moving toward the hyperplane $\HH_\beta$, it can never be redirected to move away from $\HH_\beta$. Since the billiards trajectory is periodic, the only possibility is that it always moves parallel to $\HH_\beta$ and hits only one-way mirrors that are orthogonal to $\HH_\beta$. In other words, if $u_j = u_{j-1}$, then $B(u_{j-1}\beta, \alpha_{i_j}) = 0$.
    
Of course, the billiards trajectory $u_0, u_1, u_2, \ldots$ is really a sequence of elements of $W$, rather than an actual piecewise linear path inside $\mathbb P({\BB W})$. (But see \cref{sec:linear,quest:luminous}.) So, in order to turn our geometric intuition into a rigorous proof, we must formalize the notions of ``moving toward'' and ``moving away from'' $\HH_\beta$. We can do so as follows.

    Consider the Coxeter element $c=s_{i_n}\cdots s_{i_1}$. There is an integer $K\geq 1$ such that ${\Pro_c^K(u_0)=u_0}$ and $\quot(c)^K=\mathbbm{1}$. For $j \geq 1$, define
    \[
    \lambda_j = \begin{cases}
        2B(u_{j-1}\beta, \alpha_{i_j}) & \mbox{if $u_j = u_{j-1}$;} \\
        0 & \mbox{if $u_j = s_{i_j} u_{j-1}$.}
    \end{cases}
    \]
    By \Cref{lem:acute-angle}, we have $\lambda_j \geq 0$ for all $j$, and we wish to prove that $\lambda_j = 0$ for all $j$. Observe that we have defined $\lambda_j$ in such a way that
    \begin{equation}\label{eq:u_j-versus-u_j-1}u_j \beta = s_{i_j} u_{j-1}\beta + \lambda_j \alpha_{i_j}.\end{equation}

    For $j \geq 0$, define \[c_j = s_{i_{j+n}} \cdots s_{i_{j+2}} s_{i_{j+1}}\in W\quad\text{and}\quad \gamma_j = c_j^K u_j \beta - u_j \beta \in V.\] One may think of $c_j^K u_j = s_{i_{j+Kn}} \cdots s_{i_{j+2}}s_{i_{j+1}} u_j$ as the location that the billiards trajectory would reach if it were to start at $u_j$ and then progress $Kn$ steps, in the absence of any one-way mirrors. Roughly, $\gamma_j$ measures the angle between the hyperplane $\HH_\beta$ and the ``limiting direction'' of the billiards trajectory.

    Note that for all integers $j \geq 0$ and all vectors $\epsilon \in V$, we have \[\quot(c_j^K \epsilon - \epsilon) = \quot(c_j^K) \quot(\epsilon) - \quot(\epsilon).\] Since $c_j$ is conjugate to $c$ and we have $\quot(c)^K = \mathbbm{1}$, we must also have $\quot(c_j^K) = \mathbbm{1}$. It follows that $\quot(c_j^K \epsilon - \epsilon) = 0$, so $c_j^K \epsilon - \epsilon \in V^W$. Taking $\epsilon = u_{j} \beta$, we find that $\gamma_j \in V^W$ for all $j$. Taking $\epsilon = \alpha_{i_j}$ yields that $c_j^K \alpha_{i_j} - \alpha_{i_j} \in V^W$ for all $j$.
    
    Next, we will use \eqref{eq:u_j-versus-u_j-1} to relate $\gamma_j$ and $\gamma_{j-1}$. We have
    \begin{align}
        \gamma_j &= c_j^K (s_{i_j} u_{j-1}\beta + \lambda_j \alpha_{i_j}) - (s_{i_j} u_{j-1}\beta + \lambda_j \alpha_{i_j})\nonumber\\
        &= s_{i_j} (c_{j-1}^K u_{j-1} \beta - u_{j-1} \beta) + \lambda_j(c_j^K \alpha_{i_j} - \alpha_{i_j})\nonumber\\
        &= s_{i_j} \gamma_{j-1} + \lambda_j(c_j^K \alpha_{i_j} - \alpha_{i_j})\nonumber\\
        &= \gamma_{j-1} + \lambda_j(c_j^K \alpha_{i_j} - \alpha_{i_j});\label{eq:gamma-j-versus-gamma-j-1}
    \end{align}
    in the second equality, we used the fact that $c_js_{i_j} = s_{i_j} c_{j-1}$, and in the fourth equality, we used the fact that $\gamma_{j-1} \in V^W$.

    By \Cref{thm:power-of-coxeter-element-irreducible}, the word $(i_{j+n} \cdots i_{j+1})^K i_j$ is reduced. Hence, $c_j^K \alpha_{i_j} \in \Phi^+$, and $c_{j}^K \alpha_{i_j} \neq \alpha_{i_j}$ (by \eqref{eq:inversions_roots_referee}). It follows that when $c_{j}^K \alpha_{i_j} - \alpha_{i_j}$ is written in the basis of simple roots, at least one of the coefficients is positive. Since $c_j^K \alpha_{i_j} - \alpha_{i_j} \in V^W$, we find that $c_j^K \alpha_{i_j} - \alpha_{i_j}$ is a positive real multiple of the imaginary root $\delta$. 

    Using \eqref{eq:gamma-j-versus-gamma-j-1} together with the fact that $\lambda_j \geq 0$, we find that $\gamma_j - \gamma_{j-1}$ is a nonnegative real multiple of $\delta$ for all $j$. Since the sequence $\gamma_0, \gamma_1, \gamma_2, \ldots$ is periodic, it must actually be constant. This implies that $\lambda_j = 0$ for all $j$, as desired.
\end{proof}

For $\beta \in \Phi$, let $W^\beta$ be the subgroup of $W$ generated by the set \[\{r_{\beta'} : \text{$\beta' \in \Phi$ and $B(\beta, \beta')=0$}\}.\]

\begin{corollary}\label{thm:power-completely-orthogonal}
    Let $W$ be an irreducible affine Coxeter group, and let $c$ be a Coxeter element of $W$. If $c$ is not futuristic, then there exist an integer $K\geq 1$ and a root $\gamma \in \Phi$ such that $c^K\in W^\gamma$. 
\end{corollary}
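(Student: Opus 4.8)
The plan is to turn non-futuristicity into a concrete periodic billiards trajectory and then read off the identity $c^K\in W^\gamma$ directly from how that trajectory moves, \emph{without} appealing to any Steinberg-type theorem on stabilizers in Coxeter groups. Suppose $c$ is not futuristic. By \cref{prop:finite-to-infinite}, there is a nonempty \emph{finite} convex set $\LL\subseteq W$ that is not heavy with respect to $c$; fix an ordering $i_1,\dots,i_n$ of $I$ with $c=s_{i_n}\cdots s_{i_1}$, and use $\LL$ to define the toggles $\tau_i$. Picking a periodic point $u_0\notin\LL$ of $\Pro_c$ and iterating $\tau_{i_1},\tau_{i_2},\dots$ yields a periodic billiards trajectory $u_0,u_1,u_2,\dots$. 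By \cref{lem:sep-decreasing} and periodicity, the sets $\Sep(u_j)$ are all equal to a common set $R$, and $R\neq\varnothing$ because $u_0\notin\LL$ (using convexity of $\LL$). Hence the trajectory lies in a proper stratum $\Str(R)$, which by \cref{lem:transmitting-wall} has a transmitting root $\beta$ (so $\beta\in\RR(\LL)\subseteq\Phi$).

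Next I would invoke \cref{thm:completely_orthogonal_lemma}: for every $j\geq 1$ with $u_j=u_{j-1}$ (a ``passive'' step), $B(u_{j-1}\beta,\alpha_{i_j})=0$. Fix $K\geq 1$ with $\Pro_c^K(u_0)=u_0$, so $u_{Kn}=u_0$. Alongside the true trajectory, run the ``unobstructed'' one $v_0=u_0$, $v_j=s_{i_j}v_{j-1}$; since $i_1,\dots,i_{Kn}$ is $K$ repetitions of $i_1,\dots,i_n$, we get $v_{Kn}=c^K u_0$. The crux of the argument is to track the correction term $g_j:=u_j^{-1}v_j$, so $g_0=\id$ and $g_{Kn}=u_0^{-1}c^K u_0$. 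At an ``active'' step ($u_j=s_{i_j}u_{j-1}$) one checks $g_j=g_{j-1}$, while at a passive step $g_j=(u_{j-1}^{-1}s_{i_j}u_{j-1})\,g_{j-1}=r_{u_{j-1}^{-1}\alpha_{i_j}}\,g_{j-1}$, and $B(\beta,u_{j-1}^{-1}\alpha_{i_j})=B(u_{j-1}\beta,\alpha_{i_j})=0$ by $W$-invariance of $B$ together with \cref{thm:completely_orthogonal_lemma}. Thus $u_0^{-1}c^K u_0=g_{Kn}$ is a product of reflections $r_{\beta'}$, one for each passive step in a period, each satisfying $B(\beta,\beta')=0$; that is, $u_0^{-1}c^K u_0\in W^\beta$. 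Conjugating by $u_0$ and using $u_0 W^\beta u_0^{-1}=W^{u_0\beta}$ (which holds since $r_{u_0\beta'}=u_0 r_{\beta'}u_0^{-1}$ and $B$ is $W$-invariant) gives $c^K\in W^\gamma$ for the root $\gamma=u_0\beta\in\Phi$.

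I expect the only genuinely delicate point to be precisely this final passage: knowing that $c^K$ stabilizes the root $\gamma$ is \emph{not} a priori enough to conclude $c^K\in W^\gamma$ in an infinite Coxeter group, and the device that resolves it is the bookkeeping with $g_j=u_j^{-1}v_j$, which exhibits $c^K$ (up to conjugacy) explicitly as a product of reflections perpendicular to $\beta$. Everything else is routine once \cref{prop:finite-to-infinite,lem:sep-decreasing,lem:transmitting-wall,thm:completely_orthogonal_lemma} are available; the one extra line of care is checking $R\neq\varnothing$, so that $\Str(R)$ really is a proper stratum and \cref{lem:transmitting-wall,thm:completely_orthogonal_lemma} apply.
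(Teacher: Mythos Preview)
Your proof is correct and essentially identical to the paper's. The paper defines $t_j=u_j^{-1}s_{i_j}u_{j-1}$ (which is $\mathbbm{1}$ at an active step and $r_{u_{j-1}^{-1}\alpha_{i_j}}$ at a passive step) and telescopes $t_{Kn}\cdots t_1=u_0^{-1}c^K u_0$; your bookkeeping with $g_j=u_j^{-1}v_j$ is just another way of writing the same telescoping identity, and the appeal to \cref{thm:completely_orthogonal_lemma} to see each passive-step reflection lies in $W^\beta$ is exactly what the paper does. (Your detour through \cref{prop:finite-to-infinite} to get $\LL$ finite is harmless but unnecessary, and the paper additionally arranges $\quot(c)^K=\mathbbm{1}$, which is not actually used in this corollary's proof.)
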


\begin{proof}
    Let us write $c = s_{i_n} \cdots s_{i_1}$, where $i_1,\ldots,i_n$ is a fixed ordering of $I$. Suppose that $c$ is not futuristic. That is, there exist a nonempty convex set $\LL \subseteq W$ and an element $u_0 \in W \setminus \LL$ such that the corresponding billiards trajectory $u_0,u_1,u_2,\ldots$ is periodic. By \cref{lem:sep-decreasing}, the billiards trajectory is contained in a single proper stratum $\Str(R)$. 
    
    Choose a positive integer $K$ such that $\Pro_c^K(u_0)=u_0$ and $\quot(c)^K=\mathbbm{1}$. Let $\beta$ be an arbitrary transmitting root of $\Str(R)$, which exists by \Cref{lem:transmitting-wall}. We will prove that
    $c^K\in W^{u_0\beta}$. Clearly, $W^{u_0\beta} = u_0 W^\beta u_0^{-1}$, so it suffices to prove that
    \begin{equation}\label{eq:power-in-subgroup}u_0^{-1}c^K u_0 \in W^{\beta}.\end{equation}

    For each $j \geq 1$, define 
    \[t_j = u_{j}^{-1}s_{i_j} u_{j-1} = \begin{cases}
            r_{u_{j-1}^{-1}\alpha_{i_j}} & \mbox{if $u_j = u_{j-1}$;} \\
            \mathbbm{1} & \mbox{if $u_j = s_{i_j} u_{j-1}$.}
    \end{cases}\]
    In other words, if the billiards trajectory $u_0, u_1, u_2, \ldots$ hits a one-way mirror at step $j$, then $t_j$ is the reflection corresponding to that one-way mirror; otherwise, $t_j$ is the identity. It follows from \cref{thm:completely_orthogonal_lemma} that $t_j\in W^\beta$ for all $j\geq 1$. We have 
    \begin{align*}
        t_{Kn} t_{Kn-1} \cdots t_2 t_1 &= (u_{Kn}^{-1} s_{i_{Kn}} u_{Kn-1})(u_{Kn-1}^{-1} s_{i_{Kn-1}} u_{Kn-2}) \cdots (u_{2}^{-1} s_{i_{2}} u_{1})(u_{1}^{-1} s_{i_{1}} u_{0}) \\
        &= u_{Kn}^{-1} (s_{i_{Kn}} \cdots s_{i_1}) u_0 \\
        &= u_0^{-1} c^K u_0,
    \end{align*} which implies \eqref{eq:power-in-subgroup}.
\end{proof}

In the remainder of this section, we address the affine Coxeter groups of types $\widetilde A$, $\widetilde C$, and $\widetilde G_2$.

\subsection{Type~\texorpdfstring{$\widetilde A$}{\tilde{A}}}\label{subsec:affine_A}

Fix an integer $n \geq 3$. An \dfn{affine permutation} of size $n$ is a bijection $u \colon \mathbb Z \to \mathbb Z$ such that $u(j + n) = u(j) + n$ for all $j \in \mathbb Z$ and $\sum_{j=1}^{n} u(j) = n(n+1)/2$. Let $\widetilde A_{n-1}$ be the group of affine permutations of size $n$ under composition. For $a,b\in\mathbb Z$ with $a\not\equiv b\pmod{n}$, define 
\begin{equation}\label{eq:t}
t_{a, b} = \prod_{r \in \mathbb Z} (a + rn\,\,\,\, b + rn) \in \widetilde A_{n-1}.
\end{equation} Note that $t_{a, b} = t_{b, a} = t_{a + kn, b + kn}$ for all $k \in \mathbb{Z}$. Let $s_i=t_{i, i+1}$, and let $S=\{s_0,\ldots,s_{n-1}\}$. Then $(\widetilde A_{n-1},S)$ is a Coxeter system \cite[Section~8.3]{BjornerBrenti}. The reflections of $\widetilde A_{n-1}$ are precisely the elements $t_{a,b}$ defined in \eqref{eq:t}. The Coxeter graph of $\widetilde A_{n-1}$ is 
\begin{equation}\label{eq:affine_A_graph}	
\begin{array}{l}
\includegraphics[height=1.561cm]{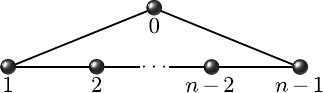}
 \end{array}. 
\end{equation}  

Let $U$ be the real vector space freely generated by $e_1,\ldots,e_{n},\delta$. We can define $e_j$ for all $j\in\mathbb Z$ by taking $e_{i+n}=e_i-\delta$. The root space of $\widetilde A_{n-1}$ is isomorphic to the subspace of $U$ spanned by $\{e_i - e_{i+1} : i \in \mathbb Z\}$. Under this isomorphism, the root system is \[\Phi=\{e_a-e_b:a,b\in\mathbb Z, \,\, a\not\equiv b\pmod{n}\},\] and the reflection corresponding to the root $e_a-e_b$ is $r_{e_a-e_b}=t_{a,b}$. We easily obtain the following lemma.

\begin{lemma}\label{lem:affine_A_completely}
Let $e_a-e_b$ and $e_{a'}-e_{b'}$ be two roots in the root system of type~$\widetilde A_{n-1}$. We have ${B(e_a - e_b, e_{a'} - e_{b'}) = 0}$ if and only if $a,b,a',b'$ belong to distinct residue classes modulo~$n$.
\end{lemma}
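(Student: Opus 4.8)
The plan is to make the bilinear form $B$ completely explicit in the coordinates $e_1,\dots,e_n,\delta$ on $U$, and then reduce the statement to an elementary computation with indicator functions on $\mathbb{Z}/n\mathbb{Z}$. Concretely, $B$ is the restriction to the root space of the symmetric bilinear form on $U$ determined by $B(e_a,e_b)=c\cdot\mathbbm{1}[a\equiv b\pmod n]$ for a fixed constant $c>0$, together with the requirement that $\delta$ span the radical of $B$. This is consistent with $e_{i+n}=e_i-\delta$ (since $\delta=e_0-e_n$), and it is the induced form up to the harmless global scalar $c$: evaluating on the simple roots $\alpha_i=e_i-e_{i+1}$ gives $B(\alpha_i,\alpha_i)=2c$, $B(\alpha_i,\alpha_{i+1})=-c$ (indices mod $n$), and $B(\alpha_i,\alpha_j)=0$ when $i,j$ are nonadjacent mod $n$, which are the correct ratios. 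Since the assertion $B(\cdot,\cdot)=0$ is insensitive to the value of $c$, I would simply work with $c=1$.

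Expanding bilinearly, for any roots $e_a-e_b$ and $e_{a'}-e_{b'}$ (so $a\not\equiv b$ and $a'\not\equiv b'$ mod $n$),
\[
B(e_a-e_b,\,e_{a'}-e_{b'})=\mathbbm{1}[a\equiv a']-\mathbbm{1}[a\equiv b']-\mathbbm{1}[b\equiv a']+\mathbbm{1}[b\equiv b'],
\]
all congruences modulo $n$. Writing $\bar a$ for the residue class of $a$, this is exactly the standard inner product $\langle \mathbbm{1}_{\bar a}-\mathbbm{1}_{\bar b},\ \mathbbm{1}_{\bar a'}-\mathbbm{1}_{\bar b'}\rangle$ of two degree-zero ``divisors'' supported on $\mathbb{Z}/n\mathbb{Z}$, where $\{\bar a,\bar b\}$ and $\{\bar a',\bar b'\}$ are each a genuine two-element set by the root condition. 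For the forward-easy direction: if $a,b,a',b'$ lie in four distinct residue classes, all four indicators vanish, so $B(e_a-e_b,e_{a'}-e_{b'})=0$. For the converse, suppose the four residue classes are not all distinct; since $\bar a\neq\bar b$ and $\bar a'\neq\bar b'$, the sets $\{\bar a,\bar b\}$ and $\{\bar a',\bar b'\}$ must overlap. If they share exactly one element, then precisely one of the four indicators equals $1$ and the rest vanish (e.g.\ $\bar a=\bar a'$ forces $\bar a\neq\bar b'$, $\bar b\neq\bar a'$, $\bar b\neq\bar b'$), so the value is $\pm 1\neq 0$. If they share two elements, i.e.\ $\{\bar a,\bar b\}=\{\bar a',\bar b'\}$, then either $\bar a=\bar a',\bar b=\bar b'$, giving value $2$, or $\bar a=\bar b',\bar b=\bar a'$, giving value $-2$; in both cases $B\neq 0$. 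Hence $B(e_a-e_b,e_{a'}-e_{b'})=0$ forces the four residue classes to be distinct.

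There is essentially no obstacle here: the only point requiring a little care is identifying the correct bilinear form on $U$ and observing that its overall normalization is irrelevant to the vanishing condition; after that, the proof is a short finite case check, which is why the excerpt states the lemma is easily obtained.
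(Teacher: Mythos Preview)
Your proof is correct. The paper gives no proof of this lemma (it just says ``We easily obtain the following lemma''), and your explicit computation---extending $B$ to $U$ so that $B(e_a,e_b)$ depends only on whether $a\equiv b\pmod n$ and $\delta$ lies in the radical, then doing the four-term case analysis---is precisely the routine verification the authors had in mind.
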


\begin{proposition}\label{prop:affine_A}
For $n\geq 3$, the Coxeter group $\widetilde A_{n-1}$ is futuristic. 
\end{proposition}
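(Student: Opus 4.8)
The plan is to prove the contrapositive using \cref{thm:power-completely-orthogonal}: assuming $\widetilde A_{n-1}$ has a non-futuristic Coxeter element $c$, I would derive a contradiction by showing that no power of $c$ can lie in a subgroup $W^\gamma$ of the form considered there. By \cref{thm:power-completely-orthogonal}, there would exist an integer $K \geq 1$ and a root $\gamma = e_a - e_b \in \Phi$ such that $c^K \in W^{\gamma}$. Recall that $W^{\gamma}$ is generated by the reflections $r_{\beta'}$ with $B(\gamma, \beta') = 0$; by \cref{lem:affine_A_completely}, these are exactly the reflections $t_{a',b'}$ where $a', b'$, together with $a, b$, lie in four distinct residue classes modulo $n$. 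In particular, every such reflection fixes the residues $a \bmod n$ and $b \bmod n$ setwise in the natural action of $\widetilde A_{n-1}$ on $\mathbb Z / n \mathbb Z$, so the whole group $W^{\gamma}$ does too.

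The key step is then to analyze the induced action of $\widetilde A_{n-1}$ on the set of residues $\mathbb Z/n\mathbb Z$. Every $u \in \widetilde A_{n-1}$ satisfies $u(j+n) = u(j)+n$, so it descends to a permutation $\overline u$ of $\mathbb Z/n\mathbb Z$; this gives a homomorphism $\widetilde A_{n-1} \to \mathfrak S_n$. Under this homomorphism, $s_i = t_{i,i+1}$ maps to the transposition $(\overline i \,\, \overline{i+1})$ (indices mod $n$), so a Coxeter element $c = s_{i_n}\cdots s_{i_1}$ maps to a product of all $n$ transpositions $(\overline 0\,\overline 1), (\overline 1\,\overline 2), \ldots, (\overline{n-1}\,\overline 0)$ in some order, each used once. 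Such a product is a single $n$-cycle in $\mathfrak S_n$: one can see this because the transpositions correspond to the edges of the $n$-cycle graph on $\mathbb Z/n\mathbb Z$, and a product of the edge-transpositions of a connected graph with $n$ vertices and $n$ edges is always an $n$-cycle (one extra edge beyond a spanning tree produces exactly one cycle; alternatively, this is a standard fact about Coxeter elements of $\widetilde A_{n-1}$ projecting to Coxeter elements of $\mathfrak S_n$... but here there is one extra transposition, and it is easy to check directly that the product of all $n$ cyclically-adjacent transpositions is an $n$-cycle). Hence $\overline{c}$ is an $n$-cycle on $\mathbb Z/n\mathbb Z$, and consequently $\overline{c^K} = \overline{c}^K$ has order $n / \gcd(n, K)$.

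Now I combine the two observations. On one hand, $c^K \in W^{\gamma}$ implies $\overline{c^K}$ fixes the two-element set $\{\overline a, \overline b\}$; since $a \not\equiv b \pmod n$, this is a genuine two-element subset of the $n\geq 3$ element set $\mathbb Z/n\mathbb Z$. On the other hand, $\overline{c^K}$ is a power of an $n$-cycle. A power of an $n$-cycle either is the identity or has all its nontrivial cycles of the same length $d = n/\gcd(n,K) > 1$; such a permutation fixes a two-element set $\{\overline a, \overline b\}$ only if $d \mid 2$ with the two points forming one $d$-cycle ($d=2$) or two fixed points ($d=1$). If $d = 1$ then $\overline{c}^K = \mathrm{id}$, meaning $n \mid K$. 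If $d = 2$ then $\overline{c}^K$ is a product of $n/2$ transpositions; for it to fix $\{\overline a, \overline b\}$ we need $(\overline a\,\overline b)$ to be one of those transpositions — but then it must fix the remaining $n-2$ residues setwise while acting as a product of $n/2 - 1 \geq 1$ further transpositions on them (when $n \geq 4$), which is impossible since such a transposition moves points; and when $n = 3$, $d=2$ cannot divide $3$. So the only surviving case is $\overline{c}^K = \mathrm{id}$, i.e.\ $n \mid K$.

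Finally, suppose $n \mid K$, say $K = nm$. I claim $c^K \notin W^{\gamma}$, which completes the contradiction. The cleanest way: by \cref{thm:power-of-coxeter-element-irreducible}, every power $(i_n \cdots i_1)^{K}$ is a reduced word, so $\ell(c^K) = Kn = n^2 m > 0$; in particular $c^K \neq \mathbbm 1$. Meanwhile $W^{\gamma}$, being generated by reflections $t_{a',b'}$ whose roots all lie in the finite-dimensional subspace $\{ \beta' \in \Phi : B(\gamma, \beta') = 0\}$, is itself a reflection subgroup; one checks it is a \emph{finite} reflection subgroup of $\widetilde A_{n-1}$, because all its roots are orthogonal to $\gamma$ and to $\delta$ and hence span a space on which $B$ is positive definite (indeed, by \cref{lem:affine_A_completely} such roots only involve $n-2$ residue classes, so $W^{\gamma}$ sits inside a parabolic subgroup of type $A_{n-3}\times A_1$ or smaller, which is finite). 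Thus $W^{\gamma}$ is finite, so every element of $W^{\gamma}$ has bounded length — but $c^K$ has length $n^2 m$, which exceeds the diameter of the finite group $W^{\gamma}$ once $m$ is large. Taking $K$ to be a large multiple of $n$ (we are free to replace $K$ by any multiple of itself, since $c^K \in W^{\gamma}$ forces $c^{Kj}\in W^{\gamma}$ for all $j$) yields the contradiction. I expect the main obstacle to be pinning down the structure of $W^{\gamma}$ precisely enough to conclude it is finite (equivalently, a proper parabolic-type reflection subgroup); the residue-class bookkeeping via \cref{lem:affine_A_completely} is the tool for this, and the $n$-cycle argument for the projection $\overline{c}$ is the other essential ingredient.
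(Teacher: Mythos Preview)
Your argument has two genuine errors that break the proof.

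\textbf{First error: the projection $\overline{c}$ is not an $n$-cycle.} A Coxeter element of $\widetilde A_{n-1}$ is a product of $n$ simple reflections, so its image in $\mathfrak S_n$ is a product of $n$ transpositions and hence has sign $(-1)^n$. An $n$-cycle has sign $(-1)^{n-1}$. These never agree, so $\overline{c}$ is \emph{never} an $n$-cycle. Concretely, for $n=4$ and $c=s_0s_1s_2s_3$, one computes $\overline{c}=(0\,1)(1\,2)(2\,3)(3\,0)=(1\,2\,3)$, a $3$-cycle fixing $0$. So $\overline{c}^3=\mathbbm 1$ already, and nothing forces $n\mid K$.

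\textbf{Second error: $W^\gamma$ is not finite.} For $n\geq 4$, fix residues $\bar{a'},\bar{b'}$ distinct from $\bar a,\bar b$. Then both $t_{a',b'}$ and $t_{a'+n,b'}$ lie in $W^\gamma$ (their roots are orthogonal to $\gamma$ by \cref{lem:affine_A_completely}), and their product is a nontrivial translation. So $W^\gamma$ is infinite, and the length-bound argument at the end collapses. Your claim that $W^\gamma$ sits in a finite parabolic of type $A_{n-3}$ confuses the finite and affine situations: the reflections avoiding two residue classes generate something like an affine $\widetilde A_{n-3}$, not a finite $A_{n-3}$.

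The paper's proof avoids both pitfalls by working with the action on $\mathbb Z$ rather than on $\mathbb Z/n\mathbb Z$. The key observation is that every generator $t_{a',b'}$ of $W^\gamma$ fixes the \emph{integer} $a$ (not merely its residue), so it suffices to show $c^K(a)\neq a$ for all $K\geq 1$. The paper reduces via \cref{prop:conjugate_Coxeter} to one representative $c_{(d)}=s_d s_{d+1}\cdots s_{n-1}s_{d-1}\cdots s_0$ per conjugacy class, writes down $c_{(d)}(j)$ explicitly, and observes that $c_{(d)}^K(j)<j$ for $j\equiv 1,\ldots,d\pmod n$ and $c_{(d)}^K(j)>j$ for $j\equiv d+1,\ldots,n\pmod n$; in particular $c_{(d)}^K$ has no integer fixed point.
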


\begin{proof}
Let us consider acyclic orientations of the $n$-vertex cycle graph $\Gamma_{\widetilde A_{n-1}}$, as in \cref{subsec:conjugate}. For $1\leq d\leq n-1$, let $c_{(d)}$ be the Coxeter element $s_ds_{d+1}\cdots s_{n-1}s_{d-1}s_{d-2}\cdots s_0$. If we draw $\Gamma_{\widetilde A_{n-1}}$ in the plane as in \eqref{eq:affine_A_graph}, then the acyclic orientation $\ao(c_{(d)})$ has exactly $d$ counterclockwise edges. It is a simple exercise to show that two acyclic orientations of $\Gamma_{\widetilde{A}_{n-1}}$ are flip equivalent if and only if they have the same number of edges oriented counterclockwise. It follows that each Coxeter element of $\widetilde A_{n-1}$ is conjugate to exactly one of $c_{(1)},c_{(2)},\ldots,c_{(n-1)}$; hence, according to \cref{prop:conjugate_Coxeter}, we just need to show that each of the $n-1$ Coxeter elements in this list is futuristic. Fix $1\leq d\leq n-1$, and view $c_{(d)}$ as an affine permutation of size $n$. By \Cref{thm:power-completely-orthogonal}, it suffices to prove that $c_{(d)}^K \not \in W^\gamma$ for every integer $K \geq 1$ and every root $\gamma \in \Phi$. Fix such $K$ and $\gamma$, and write $\gamma=e_a-e_b$ for some integers $a$ and $b$ with $a\not\equiv b\pmod n$. According to \cref{lem:affine_A_completely}, the subgroup $W^\gamma$ is generated by reflections $t$ that satisfy $t(a) = a$ and $t(b) = b$. Consequently, we just need to show that $c_{(d)}^K(a)\neq a$. 

For $j\in\mathbb Z$, it is straightforward to compute that \[c_{(d)}(j)=\begin{cases} j-n+d-1 & \mbox{if } j\equiv 1\pmod n; \\ j-1 & \mbox{if } j\equiv 2,3,\ldots,d\pmod n; \\ j+1 & \mbox{if } j\equiv d+1,d+2,\ldots,n-1\pmod n; \\ n+d+1 & \mbox{if } j\equiv n\pmod n. \end{cases}\]
From this, one can readily deduce that $c_{(d)}^K(j)<j$ if $j\equiv 1,2,\ldots,d\pmod n$ and that $c_{(d)}^K(j)>j$ if $j\equiv d+1,d+2,\ldots,n\pmod n$. In particular, $c_{(d)}^K(a)\neq a$.  
\end{proof} 

\begin{remark}\label{rem:affine_A1}
The only affine Coxeter group of type~$\widetilde A$ that \cref{prop:affine_A} fails to address is $\widetilde A_1$, which has the Coxeter graph \[\begin{array}{l}
\includegraphics[height=0.343cm]{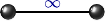}
\end{array}.\]  However, because $\widetilde A_1$ is right-angled, its futuristicity follows from \cref{thm:right-angled}. Alternatively, we could use \cref{prop:folding} and the fact that $\widetilde A_1$ is a folding of $\widetilde C_2$, which we will prove is futuristic in the next subsection. 
\end{remark}

\subsection{Type~\texorpdfstring{$\widetilde C$}{\tilde{C}}}\label{subsec:affine_C}

For $n\geq 3$, the Coxeter graph of the affine Coxeter group $\widetilde C_{n-1}$ is 
\[\begin{array}{l}
\includegraphics[height=0.373cm]{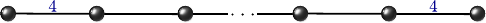}
\end{array},\]
where there are $n$ vertices in total. We can draw the Coxeter graph of $\widetilde A_{2n-3}$ as 
\[\begin{array}{l}
\includegraphics[height=1.4cm]{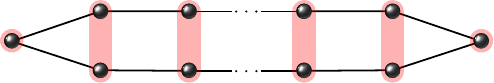}
\end{array},\]
where we have shaded in red the orbits of a Coxeter graph automorphism $\sigma$ of order $2$. Since $\widetilde C_{n-1}$ is obtained from $\widetilde A_{2n-1}$ by folding along $\sigma$, it follows from \cref{prop:folding} that $\widetilde C_{n-1}$ is futuristic.

\subsection{Type~\texorpdfstring{$\widetilde G_2$}{\tilde{G}}}

The affine Coxeter group $\widetilde G_2$ has Coxeter graph 
\[\begin{array}{l}
\includegraphics[height=0.668cm]{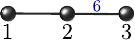}
\end{array}.\] 
The fact that $\widetilde G_2$ is futuristic is a special case of \cref{thm:rank_3}, which will be proved in \cref{sec:rank3}. Combining this fact with \cref{prop:affine_A}, \cref{rem:affine_A1}, and the folding argument in \cref{subsec:affine_C} proves \cref{thm:affineAC}.

\section{The Small-Root Billiards Graph}\label{sec:small}

We now proceed to the proofs of \Cref{thm:complete,thm:rank_3,thm:right-angled}. In each proof, our strategy will be as follows. Let $W$ be a Coxeter group that we wish to prove is futuristic. Let $\LL \subseteq W$ be a nonempty convex set, let $i_1, \ldots, i_n$ be an ordering of $I$, and let $u_0, u_1, u_2, \ldots$ be a corresponding periodic billiards trajectory. Assume for the sake of contradiction that the billiards trajectory is not contained in $\LL$. Then by \Cref{lem:sep-decreasing}, it is contained within a single proper stratum $\Str(R)$. By \Cref{lem:transmitting-wall}, there exists a transmitting root $\beta$ of $\Str(R)$. By \Cref{lem:transmitting-small}, each root $-u_j \beta$ is small. Let $\gamma_j = -u_j \beta$. The sequence $\gamma_0, \gamma_1, \gamma_2, \ldots$ of small roots is highly constrained. First, for each $j \geq 1$, we have $u_j \in \{u_{j-1}, s_{i_j} u_{j-1}\}$, so $\gamma_j \in \{\gamma_{j-1}, s_{i_j} \gamma_{j-1}\}$. Moreover, if there exists a positive root $\gamma' \neq \gamma_{j-1}$ such that $H^+_{\gamma'} \cap H^-_{\gamma_{j-1}} \cap H^+_{\alpha_{i_j}} = \varnothing$, then \cref{lem:super-strong-acute-angle} implies that $u_j = s_{i_j} u_{j-1}$ and (consequently) $\gamma_j = s_{i_j} \gamma_{j-1}$.

These considerations show that the distinct $\gamma_j$'s form a walk on a particular directed graph, which we now define. This graph is similar to the \emph{minimal root reflection table} of Casselman \cite{Casselman}, but it contains some additional information.
\begin{definition}\label{def:small-root-billiards-graph}
    The \dfn{small-root billiards graph} of $W$, denoted ${\bf G}_W$, is the following directed graph in which each edge is labeled by an element of $I$ and is either \dfn{solid} or \dfn{dotted}. The vertex set of the small-root billiards graph is $\Sigma \cup \{\ominus\}$, where $\Sigma$ is the set of small roots and $\ominus$ is a special symbol that represents negative roots. For each $\gamma \in \Sigma$ and $i \in I$ such that $s_i \gamma \in \Sigma$, there is an edge labeled $i$ from $\gamma$ to $s_i \gamma$ (this edge is a loop if $s_i\gamma=\gamma$). The edge from $\gamma$ to $s_i\gamma$ is solid if there exists a positive root $\gamma' \neq \gamma$ such that $H_{\gamma'}^+ \cap H_{\gamma}^- \cap H^+_{\alpha_i} = \varnothing$; otherwise, it is dotted. Additionally, for each $i \in I$, there is a solid edge labeled $i$ from $\alpha_i$ to $\ominus$. Let $\lambda(e)\in I$ denote the label of an edge $e$ of~${\bf G}_W$. 
\end{definition}

\begin{figure}[ht]
 \begin{center}\includegraphics[height=14.651cm]{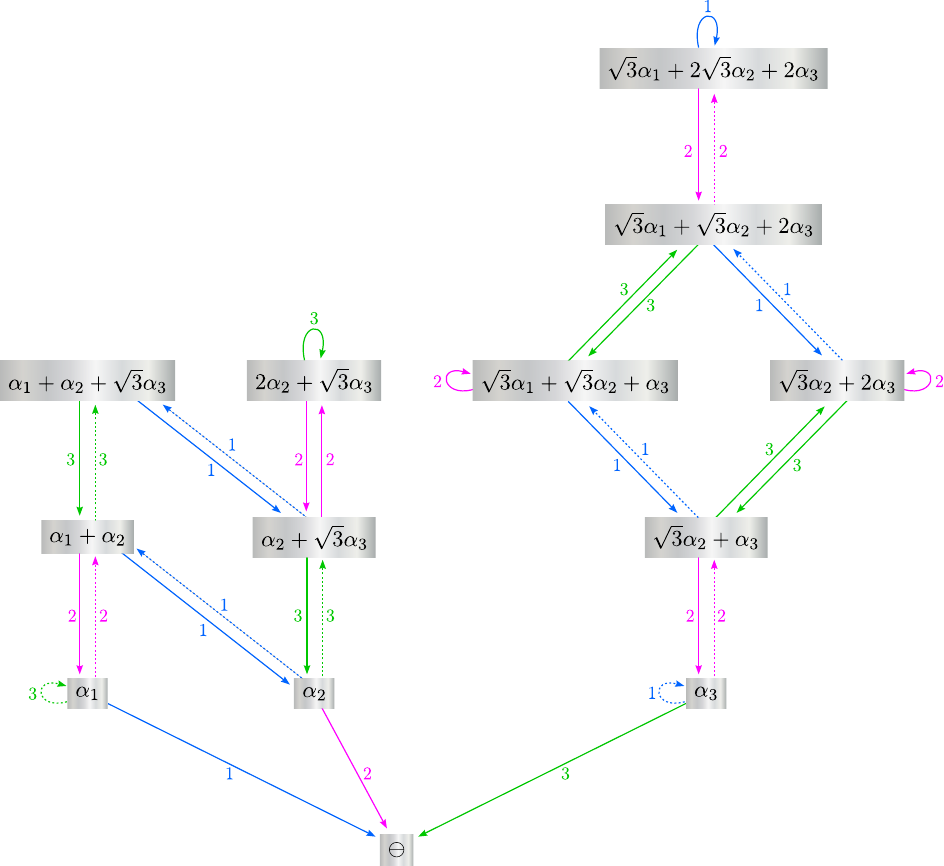}
  \end{center}
\caption{The small-root billiards graph of $\widetilde{G}_2$, whose Coxeter graph is \newline $\begin{array}{l}
\includegraphics[height=0.668cm]{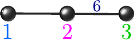}
\end{array}$.}\label{fig:small-root-billiards-graph-example}
\end{figure}

\Cref{fig:small-root-billiards-graph-example} shows the small-root billiards graph of $\widetilde G_2$.

We will always draw the small-root billiards graph with the higher roots nearer the top of the diagram. That is, if $i \in I$ and $\gamma \in \Sigma$ satisfy $s_i \gamma = \gamma - 2B(\gamma, \alpha_i) \alpha_i \in \Sigma$, then we will draw the vertex $s_i \gamma$ above (respectively, below) the vertex $\gamma$ if $B(\gamma, \alpha_i) < 0$ (respectively, $B(\gamma,\alpha_i)>0$). If $\gamma$ is drawn above  $s_i\gamma$, then the edge from $\gamma$ to $s_{i} \gamma$ is solid because we may take $\gamma'=s_i\gamma$. Thus, every edge in ${\bf G}_W$ that points downward is solid. However, it is also possible for a solid edge to point upward.

The following lemma summarizes the discussion thus far. The fact that the sequence $\gamma_0, \gamma_1, \gamma_2, \ldots$ is eventually periodic follows from the fact that $\Sigma$ is finite.

\begin{lemma}\label{lem:small-root-sequence}
    Let $i_1, \ldots, i_n$ be an ordering of $I$, and let $u_0, u_1, u_2, \ldots$ be a corresponding billiards trajectory that is contained in a single proper stratum. Then there is an eventually periodic sequence $\gamma_0, \gamma_1, \gamma_2, \ldots$ of small roots such that the following hold for all $j \geq 1$.
    \begin{enumerate}[(i)]
        \item \label{item:small-root-stay}If $u_j = u_{j-1}$, then $\gamma_j = \gamma_{j-1}$, and there is no solid edge of ${\bf G}_W$ labeled $i_j$ with source $\gamma_{j-1}$.
        \item \label{item:small-root-move}If $u_{j} = s_{i_j} u_{j-1}$, then there is an edge of ${\bf G}_W$ labeled $i_j$ from $\gamma_{j-1}$ to $\gamma_j$. (This edge may be either solid or dotted.)
    \end{enumerate}
    Moreover, if the sequence $u_0, u_1, u_2, \ldots$ is periodic, then so is $\gamma_0, \gamma_1, \gamma_2, \ldots$, and the period of $\gamma_0, \gamma_1, \gamma_2, \ldots$ divides the period of $u_0, u_1, u_2, \ldots$.
\end{lemma}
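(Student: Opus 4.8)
The plan is to set $\gamma_j = -u_j\beta$, where $\beta$ is a transmitting root of the proper stratum $\Str(R)$ containing the trajectory; such a $\beta$ exists by \cref{lem:transmitting-wall}. Because the whole trajectory lies in $\Str(R)$, we have $\Sep(u_j) = R$ for every $j$, so $\beta$ is a transmitting root of $\Str(R) = \Str(\Sep(u_j))$ for each $j$, and hence \cref{lem:transmitting-small} shows that every $\gamma_j$ is a small root. In particular the sequence $\gamma_0, \gamma_1, \gamma_2, \ldots$ takes values in the finite set $\Sigma$, which is what is needed for its eventual periodicity. If in addition $u_0, u_1, u_2, \ldots$ is periodic with period $p$, then $\gamma_{j+p} = -u_{j+p}\beta = -u_j\beta = \gamma_j$ for all $j$, so $\gamma_0, \gamma_1, \gamma_2, \ldots$ is periodic with period dividing $p$. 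It remains to verify \eqref{item:small-root-move} and \eqref{item:small-root-stay}.

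Property \eqref{item:small-root-move} is essentially immediate: if $u_j = s_{i_j}u_{j-1}$, then $\gamma_j = -u_j\beta = s_{i_j}(-u_{j-1}\beta) = s_{i_j}\gamma_{j-1}$, and since both $\gamma_{j-1}$ and $\gamma_j = s_{i_j}\gamma_{j-1}$ are small roots, \cref{def:small-root-billiards-graph} provides an edge of ${\bf G}_W$ labeled $i_j$ from $\gamma_{j-1}$ to $\gamma_j$.

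For \eqref{item:small-root-stay}, suppose $u_j = u_{j-1}$. Then $\gamma_j = \gamma_{j-1}$ trivially, so the content is that there is no solid edge of ${\bf G}_W$ labeled $i_j$ with source $\gamma_{j-1}$. Assume for contradiction that there is one; by \cref{def:small-root-billiards-graph} it is of one of two kinds. If it is the distinguished solid edge from $\alpha_{i_j}$ to $\ominus$, then $\gamma_{j-1} = \alpha_{i_j}$, so $u_{j-1}^{-1}\alpha_{i_j} = -\beta$; since $\RR(\LL)$ is antisymmetric and $\beta \in \RR(\LL)$, this forces $u_{j-1}^{-1}\alpha_{i_j} \notin \RR(\LL)$, whence $\tau_{i_j}(u_{j-1}) = s_{i_j}u_{j-1} \neq u_{j-1}$ by \cref{def:toggles}, contradicting $u_j = u_{j-1}$. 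Otherwise the edge witnesses a positive root $\gamma' \neq \gamma_{j-1}$ with $H_{\gamma'}^+ \cap H_{\gamma_{j-1}}^- \cap H_{\alpha_{i_j}}^+ = \varnothing$. Using $u_{j-1}\beta = -\gamma_{j-1}$ together with the identity $H_{-\gamma_{j-1}}^+ = H_{\gamma_{j-1}}^-$, this says precisely that $\gamma'$ is a positive root with $\gamma' \neq -u_{j-1}\beta$ and $H_{\gamma'}^+ \cap H_{u_{j-1}\beta}^+ \cap H_{\alpha_{i_j}}^+ = \varnothing$, so \cref{lem:super-strong-acute-angle}, applied with $u = u_{j-1}$ and $i = i_j$, gives $\tau_{i_j}(u_{j-1}) = s_{i_j}u_{j-1} \neq u_{j-1}$, a contradiction. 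This proves \eqref{item:small-root-stay}.

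The step I expect to require the most care is this last one: the hypotheses of \cref{lem:super-strong-acute-angle} are phrased using the half-space $H_{u\beta}^+$, whereas the solid/dotted distinction in \cref{def:small-root-billiards-graph} is phrased using $H_{\gamma_{j-1}}^-$, and one must carefully reconcile the two via $\gamma_{j-1} = -u_{j-1}\beta$ and $H_\beta^- = H_{-\beta}^+$. (In all of our applications of \cref{lem:small-root-sequence} --- the proofs of \cref{thm:right-angled,thm:complete,thm:rank_3} --- the trajectory $u_0, u_1, u_2, \ldots$ is periodic, so it is really the ``moreover'' clause that gets used, and the eventual periodicity in the general statement is the remark that all $\gamma_j$ lie in the finite set $\Sigma$.)
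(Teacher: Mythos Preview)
Your proposal is correct and essentially identical to the paper's own argument: the paper defines $\gamma_j = -u_j\beta$ for a transmitting root $\beta$ (existing by \cref{lem:transmitting-wall}), invokes \cref{lem:transmitting-small} for smallness, uses \cref{lem:super-strong-acute-angle} for part \eqref{item:small-root-stay} exactly as you do (including the separate treatment of the edge to $\ominus$), and appeals to the finiteness of $\Sigma$ for eventual periodicity. Your careful matching of $H_{\gamma_{j-1}}^- = H_{u_{j-1}\beta}^+$ to reconcile \cref{def:small-root-billiards-graph} with the hypotheses of \cref{lem:super-strong-acute-angle} is precisely the point the paper leaves implicit.
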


Our next goal is to reformulate \cref{lem:small-root-sequence} in a way that is easier to visualize. The idea of the reformulation is as follows. If $\gamma_0, \gamma_1, \gamma_2, \ldots$ is a sequence of roots satisfying the conditions of \cref{lem:small-root-sequence}, then we may remove the consecutive duplicates from the sequence $\gamma_0, \gamma_1, \gamma_2, \ldots$ to obtain a walk in the graph ${\bf G}_W$. This walk satisfies certain conditions, which we will describe in \cref{def:closed-billiards-walk} below. First, we must introduce some additional terminology.

Fix an ordering $i_1, \ldots, i_n$ of $I$. As usual, we may define an infinite periodic sequence $i_1, i_2, i_3, \ldots$ by setting $i_{j+n} = i_{j}$ for all $j\geq 1$. For any indices $i, i', i'' \in I$, let us say that $i'$ is \dfn{betwixt} $i$ and $i''$ if for all positive integers $j < j''$ with $i_j = i$ and $i_{j''} = i''$, there exists an integer $j'$ with $j < j' < j''$ and $i_{j'} = i'$. Note that this definition is not symmetric in $i$ and $i''$. For $j, j', j'' \in [n]$, we have that $i_{j'}$ is betwixt $i_{j}$ and $i_{j''}$ if and only if $j < j' < j''$, $j' < j'' \leq j$, or $j'' \leq j < j'$. 

A \dfn{closed walk} in a directed graph ${G}$ is a pair $\mathcal{C} = ((v_1, \ldots, v_\elll), (e_1, \ldots, e_\elll))$, where $\elll > 0$ is an integer, $v_1, \ldots, v_\elll$ are vertices of ${G}$, and for each $j\in[\elll]$, $e_j$ is an edge of ${G}$ from $v_j$ to $v_{j+1}$; here we use the convention $v_{\elll + 1} = v_1$. 

\begin{definition}\label{def:closed-billiards-walk}
    Let $\mathcal{C} = ((v_1, \ldots, v_\elll), (e_1, \ldots, e_\elll))$ be a closed walk in the small-root billiards graph ${\bf G}_W$. We say that $\mathcal{C}$ is \dfn{billiards-plausible} (with respect to the ordering $i_1, \ldots, i_n$ of $I$) if it satisfies the following two properties.
    \begin{enumerate}[(i)]
        \item\label{item:closed-billiards-walk-solid-edge-use} For each $j \in [\elll]$, there is no solid edge with source $v_j$ whose label is betwixt $\lambda(e_{j-1})$ and $\lambda(e_j)$. Here, we use the convention $e_0 = e_\elll$.
        \item\label{item:closed-billiards-walk-identity-product} We have $s_{\lambda(e_1)} \cdots s_{\lambda(e_\elll)} = \mathbbm{1}$.
    \end{enumerate}
\end{definition}

\begin{lemma}\label{lem:closed-billiards-walk-futuristic}
    If there does not exist a closed walk in ${\bf G}_W$ that is billiards-plausible with respect to the ordering $i_1,\ldots,i_n$, then the Coxeter element $c = s_{i_n} \cdots s_{i_1}$ is futuristic. 
\end{lemma}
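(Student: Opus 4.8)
The plan is to prove the contrapositive: assuming that $c=s_{i_n}\cdots s_{i_1}$ is not futuristic, I will exhibit a closed walk in ${\bf G}_W$ that is billiards-plausible with respect to $i_1,\ldots,i_n$. So suppose there is a nonempty convex set $\LL\subseteq W$ and a periodic point $u_0\in W\setminus\LL$ of $\Pro_c$, say with $\Pro_c^K(u_0)=u_0$ for some $K\ge 1$. Let $u_0,u_1,u_2,\ldots$ be the billiards trajectory determined by $\LL$ and the ordering $i_1,\ldots,i_n$; then $u_{Kn}=\Pro_c^K(u_0)=u_0$. By \cref{lem:sep-decreasing} this trajectory is contained in a single stratum $\Str(R)$, which is proper because $u_0\notin\LL$. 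Fix a transmitting root $\beta$ of $\Str(R)$ (\cref{lem:transmitting-wall}) and set $\gamma_j=-u_j\beta$. By \cref{lem:transmitting-small} each $\gamma_j$ is a small root; the sequence $(\gamma_j)$ has period dividing $Kn$; at every step $j$ with $u_j=s_{i_j}u_{j-1}$ we have $\gamma_j=s_{i_j}\gamma_{j-1}$, so $(\gamma_{j-1},\gamma_j)$ is an edge of ${\bf G}_W$ labeled $i_j$ (a loop when $\gamma_j=\gamma_{j-1}$); and at every step $j$ with $u_j=u_{j-1}$ we have $\gamma_j=\gamma_{j-1}$. (This is exactly the content of \cref{lem:small-root-sequence}, whose relevant sequence is this $(\gamma_j)$.)

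Next I would list the steps $j\in[Kn]$ at which $u_j=s_{i_j}u_{j-1}$ as $j_1<\cdots<j_m$ and set $v_k=\gamma_{j_k-1}$. First I would check $m\ge 1$: otherwise $u_j=u_{j-1}$ for all $j\in[Kn]$, so $\tau_i(u_0)=u_0$ for every $i\in I$, forcing $u_0^{-1}\alpha_i\in\RR(\LL)$ for all $i$; then $\LL\subseteq\bigcap_{i\in I}H_{u_0^{-1}\alpha_i}^+$, and an element $w\in W$ lies in this intersection iff $wu_0^{-1}\alpha_i\in\Phi^+$ for all $i$, i.e.\ iff $wu_0^{-1}=\mathbbm{1}$, so $\LL\subseteq\{u_0\}$, contradicting $u_0\notin\LL$. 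Since $(\gamma_j)$ is constant on each interval between consecutive move steps and has period $Kn$, one gets $\gamma_{j_k}=\gamma_{j_{k+1}-1}=v_{k+1}$ (indices cyclic, with $j_{m+1}:=j_1+Kn$), so letting $e_k$ be the edge of ${\bf G}_W$ labeled $i_{j_k}$ from $v_k$ to $\gamma_{j_k}$ produces a genuine closed walk $\mathcal{C}=((v_1,\ldots,v_m),(e_1,\ldots,e_m))$.

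It then remains to check the two billiards-plausibility conditions for $\mathcal{C}$. Condition~(ii) is immediate: composing the moves gives $u_{Kn}=s_{i_{j_m}}\cdots s_{i_{j_1}}u_0=u_0$, hence $s_{\lambda(e_1)}\cdots s_{\lambda(e_m)}=(s_{i_{j_m}}\cdots s_{i_{j_1}})^{-1}=\mathbbm{1}$. For condition~(i), fix $k\in[m]$ and suppose $i'\in I$ is betwixt $\lambda(e_{k-1})=i_{j_{k-1}}$ and $\lambda(e_k)=i_{j_k}$ (with $e_0:=e_m$). Applying the definition of ``betwixt'' to the pair of positions $j_{k-1}<j_k$ — or, when $k=1$, to $j_m<j_1+Kn$, which is legitimate since the index sequence has period $n\mid Kn$ — yields a position $j'$ lying strictly between two consecutive move steps, with $i_{j'}=i'$. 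At step $j'$ we have $u_{j'}=u_{j'-1}$ and $\gamma_{j'-1}=v_k$, so by \cref{lem:small-root-sequence}\,(i) there is no solid edge of ${\bf G}_W$ labeled $i'$ with source $v_k$. This verifies condition~(i). Thus $\mathcal{C}$ is a billiards-plausible closed walk, contradicting the hypothesis; therefore $c$ is futuristic.

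I do not expect any step to be a real obstacle: the substantive input is already packaged in the structural lemmas of \cref{subsec:strata} — in particular \cref{lem:super-strong-acute-angle}, which underlies both \cref{lem:small-root-sequence}\,(i) and the very definition of ``solid'' edge in ${\bf G}_W$ — and the proof is essentially a dictionary turning periodic billiards trajectories into closed walks on ${\bf G}_W$. The only mildly delicate points are the bookkeeping for the cyclic wraparound in condition~(i) (the $k=1$ case, and shifting positions by multiples of $Kn$), together with the observation that consecutive move steps of a periodic trajectory automatically give a closed walk whose product of simple reflections is the identity; both are routine.
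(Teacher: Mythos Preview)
Your proof is correct and follows essentially the same route as the paper: prove the contrapositive, extract the move steps of a periodic trajectory, and assemble them into a billiards-plausible closed walk using \cref{lem:small-root-sequence}. Your treatment is in fact slightly more careful than the paper's, since you explicitly verify $m\ge 1$ (needed for the definition of a closed walk) and spell out the cyclic wraparound in the $k=1$ case of condition~(i).
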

\begin{proof}
    We prove the contrapositive. Assume that $c$ is not futuristic. Then there exist a nonempty convex subset $\LL\subseteq W$ and a periodic billiards trajectory $u_0, u_1, u_2, \ldots$ that is not contained in $\LL$. Let $K > 0$ be an integer such that $u_0 = u_{Kn}$.

    Let $j_1, \ldots, j_\elll$ be the elements of the set $\{j \in [Kn] : u_j = s_{i_j} u_{j-1}\}$ in increasing order. We have 
    \begin{equation}\label{eq:above_centered}\mathbbm{1} = u_0 u_{Kn}^{-1} = (u_0 u_1^{-1}) (u_1 u_2^{-1}) \cdots (u_{Kn - 2} u_{Kn-1}^{-1})(u_{Kn - 1} u_{Kn}^{-1}).
    \end{equation}
    For each $j \in [Kn]$, the factor $u_{j-1} u_{j}^{-1}$ equals $\mathbbm{1}$ if $u_j = u_{j-1}$ and equals $s_{i_j}$ if $u_j = s_{i_j} u_{j-1}$. Removing the identity terms from the product in \eqref{eq:above_centered} thus yields \begin{equation}\label{eq:simple-reflections-identity-product}s_{i_{j_1}} \cdots s_{i_{j_\elll}} = \mathbbm{1}.\end{equation}

    Now, let $\gamma_0, \gamma_1, \gamma_2, \ldots$ be the sequence from \Cref{lem:small-root-sequence}. For each $k \in [\elll]$, let $e_k$ be the edge of ${\bf G}_W$ labeled $i_{j_k}$ from $\gamma_{j_k - 1}$ to $\gamma_{j_k}$. By \Cref{lem:small-root-sequence}\eqref{item:small-root-stay}, the edges $e_1, \ldots, e_\elll$ form a closed walk $\mathcal{C}$ satisfying \cref{def:closed-billiards-walk}\eqref{item:closed-billiards-walk-solid-edge-use}. It follows from \eqref{eq:simple-reflections-identity-product} that $\mathcal{C}$ also satisfies \cref{def:closed-billiards-walk}\eqref{item:closed-billiards-walk-identity-product}, so $\mathcal{C}$ is billiards-plausible.
\end{proof}

For an example of how \cref{lem:closed-billiards-walk-futuristic} can be used to prove that several Coxeter groups are futuristic, see the proof of \cref{thm:rank_3} in \cref{sec:rank3}.

The proof of \cref{lem:closed-billiards-walk-futuristic} gives a procedure for constructing a billiards-plausible closed walk $\mathcal{C}$ from a periodic billiards trajectory $u_0, u_1, u_2, \ldots$ in a proper stratum. We say that the trajectory $u_0, u_1, u_2, \ldots$ is a \dfn{lift} of $\mathcal{C}$.

It is natural to ask if the converse of \cref{lem:closed-billiards-walk-futuristic} holds. 

\begin{question}\label{conj:closed_walks}
    Suppose ${\bf G}_W$ has a closed walk that is billiards-plausible with respect to the ordering $i_1,\ldots,i_n$. Is the Coxeter element $c = s_{i_n} \cdots s_{i_1}$ necessarily not futuristic?
\end{question}

\begin{lemma}\label{lem:closed-billiards-walk-superfuturistic}
    If there does not exist a closed walk in ${\bf G}_W$ that satisfies \cref{def:closed-billiards-walk}\eqref{item:closed-billiards-walk-solid-edge-use} with respect to the ordering $i_1, \ldots, i_n$, then the Coxeter element $c = s_{i_n} \cdots s_{i_1}$ is superfuturistic.
\end{lemma}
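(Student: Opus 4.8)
The plan is to prove the ``sorting'' analogue of \cref{lem:closed-billiards-walk-futuristic}, exploiting that the present hypothesis --- the absence of a closed walk satisfying \emph{just} \cref{def:closed-billiards-walk}\eqref{item:closed-billiards-walk-solid-edge-use} --- is stronger than the one there, and so should buy the stronger conclusion that $\LL$ is reached rather than merely that periodic trajectories lie in $\LL$. So fix a nonempty convex set $\LL\subseteq W$ and a starting element $u_0\in W$, and let $u_0,u_1,u_2,\ldots$ be the corresponding billiards trajectory; the goal is to show that $u_K\in\LL$ for some $K\geq 0$. By \cref{lem:sep-decreasing} and \cref{lem:sep-finite}, the descending chain $\Sep(u_0)\supseteq\Sep(u_1)\supseteq\cdots$ of finite sets stabilizes, say $\Sep(u_j)=R$ for all $j\geq j_0$. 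If $R=\varnothing$ then $u_{j_0}\in\LL$ and we are done, so assume $R\neq\varnothing$ and aim for a contradiction. The graph ${\bf G}_W$ does not depend on the ordering, and the relation ``betwixt'' --- hence property \cref{def:closed-billiards-walk}\eqref{item:closed-billiards-walk-solid-edge-use} --- is invariant under cyclically rotating the ordering $i_1,\ldots,i_n$; so after discarding $u_0,\ldots,u_{j_0-1}$ and rotating the ordering to match, we may assume the entire trajectory lies in the proper stratum $\Str(R)$. Fix a transmitting root $\beta$ of $\Str(R)$ (\cref{lem:transmitting-wall}) and set $\gamma_j=-u_j\beta$; by \cref{lem:transmitting-small} each $\gamma_j$ is small, and by \cref{lem:small-root-sequence} the sequence $(\gamma_j)$ obeys: at a \emph{non-move step} ($u_j=u_{j-1}$) we have $\gamma_j=\gamma_{j-1}$ and no solid edge of ${\bf G}_W$ labeled $i_j$ issues from $\gamma_{j-1}$; at a \emph{move step} ($u_j=s_{i_j}u_{j-1}$) there is an edge of ${\bf G}_W$ labeled $i_j$ from $\gamma_{j-1}$ to $\gamma_j=s_{i_j}\gamma_{j-1}$.

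I would then record two facts. First, every small root has a solid out-edge in ${\bf G}_W$: a simple root $\alpha_i$ emits the solid edge $\alpha_i\to\ominus$, and a non-simple small root $\gamma$, by \cref{prop:small-roots-description}, may be written $\gamma=s_i\gamma'$ with $\gamma'\in\Sigma\setminus\{\alpha_i\}$ and $-1<B(\gamma',\alpha_i)<0$, whence $B(\gamma,\alpha_i)=-B(\gamma',\alpha_i)>0$, so the edge $\gamma\to s_i\gamma$ points downward and is therefore solid. Second, the trajectory makes infinitely many move steps; otherwise it is eventually constant at some $w\in\Str(R)$, every subsequent step is a non-move step, and since their labels run through all of $I$, property \cref{lem:small-root-sequence}\eqref{item:small-root-stay} would force the small root $-w\beta$ to have no solid out-edge, contradicting the first fact. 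Let $m_1<m_2<\cdots$ enumerate the move steps, and let $f_k'$ be the edge of ${\bf G}_W$ labeled $i_{m_k}$ from $\gamma_{m_k-1}$ to $\gamma_{m_k}$ provided by \cref{lem:small-root-sequence}\eqref{item:small-root-move}. Since $\gamma$ is constant strictly between consecutive move steps, $\gamma_{m_k}=\gamma_{m_{k+1}-1}$, so $f_1',f_2',\ldots$ is an infinite walk in ${\bf G}_W$.

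To turn this into a closed walk satisfying \cref{def:closed-billiards-walk}\eqref{item:closed-billiards-walk-solid-edge-use}, I would invoke the pigeonhole principle on the finitely many states $(\gamma_{m_k},\,m_{k+1}\bmod n)\in\Sigma\times(\mathbb Z/n\mathbb Z)$, obtaining $k_0<k_1$ with $\gamma_{m_{k_0}}=\gamma_{m_{k_1}}$ and $m_{k_0+1}\equiv m_{k_1+1}\pmod n$. Then $\mathcal C=(f_{k_0+1}',\ldots,f_{k_1}')$ is a closed walk whose $j$-th vertex is $w_j=\gamma_{m_{k_0+j-1}}$ (it closes because $\gamma_{m_{k_1}}=\gamma_{m_{k_0}}$). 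To verify \cref{def:closed-billiards-walk}\eqref{item:closed-billiards-walk-solid-edge-use} at $w_j$, note that the labels of its incoming and outgoing edges are $i_{m_{k_0+j-1}}$ and $i_{m_{k_0+j}}$ when $j\geq2$, and $i_{m_{k_1}}$ and $i_{m_{k_0+1}}=i_{m_{k_1+1}}$ when $j=1$ (using the congruence). If a label $i'\in I$ is betwixt these two, then applying the definition of ``betwixt'' to the positions of the relevant pair of consecutive move steps --- $m_{k_0+j-1}<m_{k_0+j}$, or $m_{k_1}<m_{k_1+1}$ in the wrap-around case --- yields a step $p$ strictly between them with $i_p=i'$; such a $p$ is a non-move step with $\gamma_{p-1}=w_j$, so no solid edge labeled $i'=i_p$ issues from $w_j$. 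Thus $\mathcal C$ satisfies \cref{def:closed-billiards-walk}\eqref{item:closed-billiards-walk-solid-edge-use}, contradicting the hypothesis; hence $R=\varnothing$, the trajectory reaches $\LL$, and $c$ is superfuturistic.

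The main obstacle --- and the reason this does not reduce in one line to \cref{lem:closed-billiards-walk-futuristic} --- is that a general billiards trajectory need not be periodic even once it is trapped in a single stratum, so there is no ``period'' to read a closed walk off of; the pigeonhole on the enriched state $(\gamma_{m_k},m_{k+1}\bmod n)$ is the substitute. Recording the residue $m_{k+1}\bmod n$, rather than just $\gamma_{m_k}$, is exactly what makes the wrap-around vertex $w_1$ of $\mathcal C$ work: it forces the betwixt-interval between its incoming label $i_{m_{k_1}}$ and outgoing label $i_{m_{k_0+1}}$ to coincide with the one produced by the honest consecutive move steps $m_{k_1}<m_{k_1+1}$, so that \cref{lem:small-root-sequence}\eqref{item:small-root-stay} can still be applied there. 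A minor wrinkle is that \cref{lem:small-root-sequence} and \cref{lem:super-strong-acute-angle} are stated for trajectories confined to one stratum from the outset, which is why the argument first passes to the stabilized tail (and rotates the ordering accordingly).
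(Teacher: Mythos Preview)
Your proof is correct and follows the same overall strategy as the paper: pass to a tail of the trajectory lying in a single proper stratum, invoke \cref{lem:small-root-sequence} to obtain the small-root sequence $(\gamma_j)$, and extract from it a closed walk in ${\bf G}_W$ satisfying \cref{def:closed-billiards-walk}\eqref{item:closed-billiards-walk-solid-edge-use}.

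The difference is in how the closed walk is produced. The paper simply appeals to the eventual periodicity of $(\gamma_j)$ asserted in \cref{lem:small-root-sequence}, chooses $K$ with $Kn$ a multiple of the period, and reads off the closed walk from the move steps in $[Kn]$. Your argument instead (a) first shows explicitly that there are infinitely many move steps, via the pleasant observation that every small root has a solid out-edge in ${\bf G}_W$, and (b) applies pigeonhole to the enriched state $(\gamma_{m_k},\,m_{k+1}\bmod n)$ to close up the walk and make the wrap-around instance of condition \eqref{item:closed-billiards-walk-solid-edge-use} go through. This is a genuine tightening: the paper's one-line justification ``$\Sigma$ is finite'' does not by itself force $(\gamma_j)$ to be eventually periodic (the transition $\gamma_{j-1}\mapsto\gamma_j$ depends on $u_{j-1}$, not only on $\gamma_{j-1}$ and $j\bmod n$), nor does it guarantee a move step occurs in $[Kn]$. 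Your pigeonhole on the move-step indices sidesteps both issues cleanly. Otherwise the two arguments are the same.
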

\begin{proof}
    We prove the contrapositive. Assume that $c$ is not superfuturistic. Then there exist a nonempty convex subset $\LL \subseteq W$ and a billiards trajectory $u_0, u_1, u_2, \ldots$ that does not eventually reach $\LL$. By \cref{lem:sep-decreasing}, we have \[\Sep(u_0) \supseteq \Sep(u_1) \supseteq \Sep(u_2) \supseteq \cdots.\] By \cref{lem:sep-finite}, the sequence $\Sep(u_0), \Sep(u_1), \Sep(u_2), \ldots$ is eventually constant. We may assume that this sequence is constant and the billiards trajectory $u_0, u_1, u_2, \ldots$ is contained within a single stratum $\Str(R)$.

    Let $\gamma_0, \gamma_1, \gamma_2, \ldots$ be the sequence from \Cref{lem:small-root-sequence}, and let $K > 0$ be an integer such that $Kn$ is divisible by the period of the sequence $\gamma_0, \gamma_1, \gamma_2, \ldots$. As in the proof of \cref{lem:closed-billiards-walk-futuristic}, let $j_1, \ldots, j_\elll$ be the elements of the set $\{j \in [Kn] : u_j = s_{i_j} u_{j-1}\}$ in increasing order. For each $k \in [\elll]$, let $e_k$ be the edge of ${\bf G}_W$ labeled $j_k$ from $\gamma_{j_k - 1}$ to $\gamma_{j_k}$. By \Cref{lem:small-root-sequence}\eqref{item:small-root-stay}, the edges $e_1, \ldots, e_\elll$ form a closed walk $\mathcal{C}$ satisfying \Cref{def:closed-billiards-walk}\eqref{item:closed-billiards-walk-solid-edge-use}.
\end{proof}

We now prove some lemmas that will help us determine whether the edges of $\mathbf{G}_W$ are solid or dotted.
\begin{lemma}\label{lem:edge-dotted}
    Let $\gamma \in \Sigma$ and $i \in I$. Suppose that when $\gamma$ is written in the basis of the simple roots, the coefficient of $\alpha_i$ is zero. If there is an edge labeled $i$ from $\gamma$ to $s_i \gamma$ in $\mathbf{G}_W$, then it is dotted.
\end{lemma}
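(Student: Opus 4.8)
The plan is to prove the contrapositive-free version directly: show that \emph{no} positive root can witness the edge $\gamma \to s_i\gamma$ being solid, which by \cref{def:small-root-billiards-graph} means it is dotted. First I would unpack the hypothesis. Since the coefficient of $\alpha_i$ in $\gamma$ is zero, $\gamma$ lies in $V_J$ for $J = I \setminus \{i\}$, and because $\gamma \in \Phi^+$ and $\Phi_J = \Phi \cap V_J$, we get that $\gamma$ is a positive root of the parabolic subgroup $W_J$; in particular $\gamma \neq \alpha_i$. By \cref{cor:small-roots-of-parabolic-subgroup}, $\gamma \in \Sigma \cap V_J$ is a \emph{small} root of $W_J$, so $\gamma$ dominates no positive root of $W_J$ other than itself.

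The key geometric input I would establish is that $W_J$ sits entirely on the positive side of $\HH_{\alpha_i}$: for any root $\eta$ whose $\alpha_i$-coefficient $c$ is positive and any $v \in W_J$, one checks by induction on the word length of $v$ that $v\eta - c\alpha_i \in V_J$, so $v\eta$ still has positive $\alpha_i$-coefficient and hence $v\eta \in \Phi^+$. Applied to $\eta = \alpha_i$ this gives $W_J \subseteq H_{\alpha_i}^+$. Moreover the reflection $r_\gamma$ lies in $W_J$ (it is the reflection in a root of $\Phi_J$) and satisfies $r_\gamma\gamma = -\gamma \in \Phi^-$, so $r_\gamma$ is a concrete element of $H_\gamma^- \cap H_{\alpha_i}^+$.

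Now I would suppose for contradiction that the edge is solid, witnessed by a positive root $\gamma' \neq \gamma$ with $H_{\gamma'}^+ \cap H_\gamma^- \cap H_{\alpha_i}^+ = \varnothing$. Since $r_\gamma$ lies in the last two factors, it cannot lie in $H_{\gamma'}^+$, i.e. $r_\gamma\gamma' \in \Phi^-$. Then split on the $\alpha_i$-coefficient of $\gamma'$: if it is positive, the key input forces $r_\gamma\gamma' \in \Phi^+$, a contradiction; otherwise $\gamma' \in V_J$, so $\gamma'$ is a positive root of $W_J$, and applying the emptiness hypothesis to the elements of $W_J$ (all of which lie in $H_{\alpha_i}^+$) shows that every $v \in W_J$ with $v\gamma \in \Phi^-$ also has $v\gamma' \in \Phi^-$; since $\gamma, \gamma' \in \Phi_J$ this says exactly that $\gamma$ dominates $\gamma'$ inside $W_J$, contradicting the smallness of $\gamma$ in $W_J$ unless $\gamma' = \gamma$. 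Either way we reach a contradiction, so the edge is dotted. The main (though minor) obstacle is the parabolic bookkeeping in the last case: one must verify that for $v \in W_J$ and $\eta \in \Phi_J$ the sign of $v\eta$ as a root of $W$ agrees with its sign as a root of $W_J$, and that the implication "$v\gamma$ negative $\Rightarrow v\gamma'$ negative for all $v \in W_J$" is precisely dominance of $\gamma$ over $\gamma'$ in $W_J$ (and not the reverse direction); this, together with the induction establishing that the $\alpha_i$-coefficient stays positive under the $W_J$-action, is where I expect all the care to be needed.
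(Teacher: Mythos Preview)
Your proof is correct and follows essentially the same approach as the paper: both use $r_\gamma \in W_{I\setminus\{i\}} \cap H_\gamma^- \cap H_{\alpha_i}^+$ to force $\gamma' \in \Phi_{I\setminus\{i\}}$, then deduce that $\gamma$ dominates $\gamma'$ inside $W_{I\setminus\{i\}}$, contradicting \cref{cor:small-roots-of-parabolic-subgroup}. The only cosmetic difference is that the paper obtains $\gamma' \in \Phi_{I\setminus\{i\}}$ by noting that $r_{\gamma'}$ is a right inversion of $r_\gamma \in W_{I\setminus\{i\}}$, whereas you reach the same conclusion via your explicit case split on the $\alpha_i$-coefficient of $\gamma'$.
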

\begin{proof}
    Assume for the sake of contradiction that there is a solid edge from $\gamma$ to $s_i \gamma$. Then there is a root $\gamma' \in \Phi^+ \setminus \{\gamma\}$ such that $H_{\gamma'}^+ \cap H_\gamma^- \cap H_{\alpha_i}^+ = \varnothing$.

    Let $t = r_\gamma$ and $t' = r_{\gamma'}$ be the reflections corresponding to $\gamma$ and $\gamma'$, respectively. We have $t \in H_\gamma^-$. Also, $t$ lies in the parabolic subgroup $W_{I \setminus \{i\}}$.  In particular, $t$ does not have $s_i$ as a right inversion, so $t \in H_{\alpha_i}^+$. Since $H_{\gamma'}^+ \cap H_\gamma^- \cap H_{\alpha_i}^+ = \varnothing$, we have $t \not \in H_{\gamma'}^+$. Hence, $t$ must have $t'$ as a right inversion. It follows that $t'$ also lies in the parabolic subgroup $W_{I \setminus \{i\}}$, so $\gamma'$ is a root of $W_{I \setminus \{i\}}$.

    The fact that $H_{\gamma'}^+ \cap H_\gamma^- \cap H_{\alpha_i}^+ = \varnothing$ tells us that $H_{\gamma'}^+\cap H_{\alpha_i}^+\subseteq H_\gamma^+$. Because $W_{I \setminus \{i\}}\subseteq H_{\alpha_i}^+$, we have \[H_{\gamma'}^+ \cap W_{I \setminus \{i\}} \subseteq H_\gamma^+ \cap W_{I \setminus \{i\}}.\] But $H_{\gamma'}^+ \cap W_{I \setminus \{i\}}$ and $H_\gamma^+ \cap W_{I \setminus \{i\}}$ are the half-spaces of $W_{I \setminus \{i\}}$ corresponding to $\gamma'$ and $\gamma$, respectively. Hence, $\gamma$ is not a small root of $W_{I \setminus \{i\}}$. This contradicts \cref{cor:small-roots-of-parabolic-subgroup}.
\end{proof}

\begin{lemma}\label{lem:edge-solid}
    Let $\gamma \in \Sigma$ and $i \in I$. Suppose that there exist real numbers $a, a' > 0$ such that $a \gamma - a' \alpha_i \in \Phi$. If there is an edge from $\gamma$ to $s_i \gamma$ in $\mathbf{G}_W$, then it is solid.
\end{lemma}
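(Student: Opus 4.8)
The plan is to directly exhibit the positive root demanded by \cref{def:small-root-billiards-graph}: I would take $\gamma' = a\gamma - a'\alpha_i$, which is a root by hypothesis, and verify that (i) $\gamma' \in \Phi^+$, (ii) $\gamma' \neq \gamma$, and (iii) $H_{\gamma'}^+ \cap H_\gamma^- \cap H_{\alpha_i}^+ = \varnothing$. This is essentially the root-system construction used inside the proof of \cref{lem:no-root-in-positive-span}, now phrased for the graph $\mathbf{G}_W$ rather than for the billiards dynamics. Before anything else I would record that, since there is an edge from $\gamma$ to $s_i\gamma$ in $\mathbf{G}_W$, we have $s_i\gamma \in \Sigma \subseteq \Phi^+$; as $-\alpha_i \in \Phi^-$, this forces $s_i\gamma \neq -\alpha_i$ and hence $\gamma \neq \alpha_i$. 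Combined with $\mathbb{R}\gamma \cap \Phi = \{\pm\gamma\}$, this shows that $\gamma$ and $\alpha_i$ are linearly independent — the fact that keeps $\gamma'$ from being a negative root.

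For (i) and (ii), I would expand $\gamma = \sum_{k\in I} c_k\alpha_k$ with all $c_k \geq 0$; since $\gamma \neq \alpha_i$ and $\gamma$ is a root, some $c_{k_0}$ with $k_0 \neq i$ must be positive. Forming $\gamma'$ multiplies this coefficient by $a > 0$ without changing its sign and only alters the $\alpha_i$-coefficient, so $\gamma'$ has a positive simple-root coefficient; as every root is a nonnegative or a nonpositive combination of the simple roots, $\gamma' \in \Phi^+$. If $\gamma' = \gamma$, then $(a-1)\gamma = a'\alpha_i$ with $a' > 0$, contradicting linear independence, so $\gamma' \neq \gamma$.

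The substantive step is (iii). Suppose $w$ lies in the triple intersection. Then $w\gamma \in \Phi^-$ and $w\alpha_i \in \Phi^+$, so in the simple-root basis every coefficient of $w\gamma$ is $\leq 0$ and every coefficient of $w\alpha_i$ is $\geq 0$. Since $a, a' > 0$, every coefficient of the root $w\gamma' = a\,w\gamma - a'\,w\alpha_i$ is then $\leq 0$, so $w\gamma' \in \Phi^-$ and $w \notin H_{\gamma'}^+$, a contradiction. Hence the intersection is empty, and by \cref{def:small-root-billiards-graph} the edge from $\gamma$ to $s_i\gamma$ is solid. I do not anticipate a genuine obstacle here: the only point requiring care is dispatching the linearly dependent case ($\gamma = \alpha_i$) at the outset so that (i) goes through, after which the argument is a short sign computation in the root lattice.
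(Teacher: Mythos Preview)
Your proposal is correct and follows exactly the same approach as the paper: set $\gamma' = a\gamma - a'\alpha_i$ and check that $H_{\gamma'}^+ \cap H_\gamma^- \cap H_{\alpha_i}^+ = \varnothing$. The paper's proof is extremely terse (it simply names $\gamma'$ and asserts the emptiness), whereas you have carefully filled in the verifications that $\gamma' \in \Phi^+$ and $\gamma' \neq \gamma$, including the preliminary observation that the existence of the edge forces $\gamma \neq \alpha_i$.
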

\begin{proof}
    Let $\gamma' = a \gamma - a' \alpha_i$. Then $H_{\gamma'}^+ \cap H_\gamma^- \cap H_{\alpha_i}^+ = \varnothing$, so the edge from $\gamma$ to $s_i \gamma$ is solid.
\end{proof}

\subsection{Small Roots of Rank-2 Parabolic Subgroups}
The proofs of \Cref{thm:right-angled,thm:complete,thm:rank_3} will involve explicitly computing ${\bf G}_W$ for various Coxeter groups $W$ and applying \Cref{lem:small-root-sequence} or \Cref{lem:closed-billiards-walk-futuristic}. In this subsection, we will show that ${\bf G}_W$ always has a certain easily described induced subgraph.

Let $i, i' \in I$ be distinct indices such that $m(i, i') < \infty$, and let $m = m(i, i')$. 
It follows from a routine computation that the roots of the standard rank-$2$ parabolic subgroup $W_{\{i, i'\}}$ are the vectors 
\[\alpha_{i, i'}(k) = \frac{\sin((k+1) \pi/m)}{\sin(\pi/m)} \alpha_i + \frac{\sin(k\pi/m)}{\sin(\pi/m)} \alpha_{i'} \in V,\]
where $k$ ranges over the integers.  The vector $\alpha_{i, i'}(k)$ depends only on the residue class of $k$ modulo $2m$, and distinct residue classes yield distinct vectors $\alpha_{i, i'}(k)$, so the number of roots of $W_{\{i,i'\}}$ is $2m$. Geometrically, the restriction of $B$ to the $2$-dimensional space $V_{\{i,i'\}} = \operatorname{span}\{\alpha_i, \alpha_{i'}\} \subseteq V$ is a real inner product, and $\alpha_{i, i'}(0), \alpha_{i, i'}(1), \ldots, \alpha_{i, i'}(2m-1)$ are $2m$ equally spaced vectors (in that order) on the unit circle of $V_{\{i,i'\}}$. The simple roots of $W_{\{i,i'\}}$ are $\alpha_{i, i'}(0) = \alpha_{i}$ and $\alpha_{i, i'}(m-1) = \alpha_{i'}$. The action of $W_{\{i,i'\}}$ on the roots is given by $s_i \alpha_{i, i'}(k) = \alpha_{i, i'}(m - k)$ and $s_{i'} \alpha_{i, i'}(k) = \alpha_{i, i'}(m-2 - k)$.

Let $\Sigma_{\leq 2}$ denote the set 
\begin{equation}\label{eq:sigma-leq-2}\{\alpha_i : i \in I\} \cup \{\alpha_{i, i'}(k) : \text{$i, i' \in I$ are distinct},\,\, m(i, i') < \infty,\,\, 0 < k < m(i, i')-1\}.\end{equation}
Note that since $\alpha_{i, i'}(k) = \alpha_{i', i}(m(i, i') - 1 - k)$, we have listed each element of $\Sigma_{\leq 2} \setminus \{\alpha_i : i \in I\}$ twice in \eqref{eq:sigma-leq-2}. Let ${\bf G}_{W, \leq 2}$ denote the induced subgraph of ${\bf G}_W$ on the vertex set $\Sigma_{\leq 2} \cup \{\ominus\}$. 

\begin{lemma}\label{lem:rank-2-small-root-graph}
    The set $\Sigma_{\leq 2}$ consists of all small roots that can be written as linear combinations of at most two simple roots. Moreover, the edges of ${\bf G}_{W, \leq 2}$ are as follows. 
    \begin{enumerate}[(i)]
        \item For each pair $i, i' \in I$ of distinct indices with $m(i, i') < \infty$ and for each integer $k$ satisfying $0 < k < m(i, i') -1$, there is a solid edge labeled $i$ from $\alpha_{i, i'}(k)$ to $\alpha_{i, i'}(m(i, i')-k)$.
        \item For each pair $i, i' \in I$ of distinct indices with $m(i, i') < \infty$, there is a dotted edge labeled $i$ from $\alpha_{i'}$ to $\alpha_{i, i'}(1)$. (If $m(i, i') = 2$, then this edge is a loop.)
        \item For each triple $i, i',i'' \in I$ of distinct indices with $2 = m(i, i') = m(i, i'') < m(i', i'') < \infty$ and for each integer $k$ satisfying $0 < k < m(i', i'') -1$, there is a dotted edge labeled $i$ from $\alpha_{i', i''}(k)$ to itself.
        \item For each $i \in I$, there is a solid edge labeled $i$ from $\alpha_i$ to $\ominus$.
    \end{enumerate}
\end{lemma}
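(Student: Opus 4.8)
The plan is to reduce both assertions entirely to computations inside the standard rank-$2$ parabolic subgroups $W_{\{i,i'\}}$, moving between small roots of $W$ and small roots of $W_{\{i,i'\}}$ via \cref{cor:small-roots-of-parabolic-subgroup}, and deciding whether edges are solid or dotted via \cref{lem:edge-solid} and \cref{lem:edge-dotted}.

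For the first assertion, I would argue as follows. Every element of $\Sigma_{\leq 2}$ is a small root of $W$: the simple roots are small by \cref{prop:small-roots-description}(i), and for distinct $i, i'$ with $m(i,i') < \infty$ the vectors $\alpha_{i,i'}(0), \ldots, \alpha_{i,i'}(m(i,i')-1)$ are exactly the positive roots of the finite Coxeter group $W_{\{i,i'\}}$, all of which are small (the dominance order on the positive roots of a finite Coxeter group is trivial, since each $H_\beta^+$ contains exactly half of the group); hence $\alpha_{i,i'}(k) \in \Sigma \cap V_{\{i,i'\}} \subseteq \Sigma$ by \cref{cor:small-roots-of-parabolic-subgroup}. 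Conversely, suppose $\gamma \in \Sigma$ lies in $V_{\{i,i'\}}$ for some (possibly equal) $i, i' \in I$. By \cite[Theorem~3.3]{Dyer}, $\gamma \in \Phi \cap V_{\{i,i'\}} = \Phi_{\{i,i'\}}$, so $\gamma$ is a positive root of $W_{\{i,i'\}}$, and \cref{cor:small-roots-of-parabolic-subgroup} shows $\gamma$ is a small root of $W_{\{i,i'\}}$. If $m(i,i') < \infty$, then $\gamma \in \{\alpha_{i,i'}(0), \ldots, \alpha_{i,i'}(m(i,i')-1)\}$, which after discarding the two simple roots is part of the list \eqref{eq:sigma-leq-2}; if $m(i,i') = \infty$, then running \cref{prop:small-roots-description} inside the infinite dihedral group $W_{\{i,i'\}}$ produces nothing beyond $\{\alpha_i, \alpha_{i'}\}$, because $B(\alpha_i, \alpha_{i'}) = -\mu_{\{i,i'\}} \leq -1$ makes condition (ii) of \cref{prop:small-roots-description} inapplicable. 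Either way $\gamma \in \Sigma_{\leq 2}$.

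For the edges, I would enumerate all edges of $\mathbf{G}_W$ whose source and target both lie in $\Sigma_{\leq 2} \cup \{\ominus\}$. The vertex $\ominus$ has no outgoing edges. If the source is a simple root $\alpha_i$, the label-$i$ edge goes to $\ominus$ and is solid by \cref{def:small-root-billiards-graph}, giving (iv); for a label $j \neq i$ we have $s_j \alpha_i = \alpha_i - 2B(\alpha_i, \alpha_j)\alpha_j$, which equals $\alpha_i$ (a loop) if $m(i,j) = 2$, equals $\alpha_{j,i}(1)$ if $3 \leq m(i,j) < \infty$, and lies in $V_{\{i,j\}} \setminus \{\alpha_i, \alpha_j\}$ — hence is not small, by the first part — if $m(i,j) = \infty$; in the first two cases \cref{lem:edge-dotted} makes the edge dotted, which is (ii). If the source is $\gamma = \alpha_{i,i'}(k)$ with $0 < k < m(i,i')-1$ (so $m(i,i') < \infty$ and $\gamma$ has support exactly $\{i,i'\}$), then the label-$i$ and label-$i'$ edges go to $\alpha_{i,i'}(m(i,i')-k)$ and $\alpha_{i,i'}(m(i,i')-2-k)$, both in $\Sigma_{\leq 2}$, while a label-$j$ edge with $j \notin \{i,i'\}$ leads to $\gamma - 2B(\gamma,\alpha_j)\alpha_j$, which lies in $\Sigma_{\leq 2}$ only when $B(\gamma,\alpha_j) = 0$ — equivalently $m(i,j) = m(i',j) = 2$ — in which case it is a loop at $\gamma$. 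The two non-loop cases are instances of (i) applied to the ordered pairs $(i,i')$ and $(i',i)$ (using $\alpha_{i,i'}(k) = \alpha_{i',i}(m(i,i')-1-k)$), and the loop case is (iii), dotted by \cref{lem:edge-dotted}. To finish I would check that the (i)-edges are solid: for $0 < k < m(i,i')-1$ the root $\alpha_{i,i'}(k+1)$ lies strictly inside the planar cone positively spanned by $\alpha_{i,i'}(k)$ and $-\alpha_i = \alpha_{i,i'}(m(i,i'))$ (its angle $(k+1)\pi/m(i,i')$ lies strictly between $k\pi/m(i,i')$ and $\pi$), so $\alpha_{i,i'}(k+1) = a\,\alpha_{i,i'}(k) - a'\alpha_i$ with $a, a' > 0$, and \cref{lem:edge-solid} applies. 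Conversely, each edge listed in (i)--(iv) is visibly an edge of $\mathbf{G}_W$ with the stated source, target, and label, so the list is exactly right.

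The step I expect to be the main obstacle is the bookkeeping in the edge enumeration: ensuring that no edge of $\mathbf{G}_{W,\leq 2}$ is overlooked and that every solid/dotted designation is correct. This requires carefully tracking the (harmless) fact that each non-simple element of $\Sigma_{\leq 2}$ is recorded twice in \eqref{eq:sigma-leq-2}, so that edges labeled $i'$ out of such a vertex fall under (i) for the reversed pair, and it requires treating the degenerate loop cases — $m(i,i') = 2$ in (ii), $k = m(i,i')/2$ in (i), and all of (iii) — separately. The only non-mechanical ingredient is the cone observation used to invoke \cref{lem:edge-solid} for the (i)-edges; everything else follows from \cref{cor:small-roots-of-parabolic-subgroup}, \cref{lem:edge-dotted}, \cref{lem:edge-solid}, and the explicit rank-$2$ root data recalled just before the lemma.
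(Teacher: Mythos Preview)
Your proposal is correct and follows essentially the same approach as the paper: both reduce the first assertion to \cref{cor:small-roots-of-parabolic-subgroup}, both enumerate the possible pairs $(\gamma,i)$ with $\gamma,s_i\gamma\in\Sigma_{\leq 2}$ by casework on the support of $\gamma$, and both decide solid-versus-dotted via \cref{lem:edge-dotted} and \cref{lem:edge-solid}. Your write-up is simply more explicit than the paper's ``upon inspection,'' in particular your planar-cone argument producing the root $\alpha_{i,i'}(k+1)=a\,\alpha_{i,i'}(k)-a'\alpha_i$ that feeds into \cref{lem:edge-solid}.
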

\begin{proof}
    The first statement in the lemma follows from \cref{cor:small-roots-of-parabolic-subgroup}.

    Let $\gamma \in \Sigma_{\leq 2}$ and $i \in I$. Upon inspection, we find that $s_i \gamma \in \Sigma_{\leq 2}$ if and only if one of the following conditions holds:
    \begin{enumerate}[(i)]
        \item \label{item:solid-dihedral} We have $\gamma = \alpha_{i, i'}(k)$, where $i' \in I$ satisfies $m(i, i') < \infty$ and $k$ is an integer such that $0 < k < m(i, i') - 1$. In this case, $s_i \gamma = \alpha_{i, i'}(m(i, i') - k)$.
        \item \label{item:dotted-dihedral} We have $\gamma = \alpha_{i'}$, where $i' \in I$ satisfies $m(i, i') < \infty$. In this case, $s_i \gamma = \alpha_{i, i'}(1)$. 
        \item \label{item:additional-edges} We have $\gamma = \alpha_{i', i''}(k)$, where $i', i'' \in I$ satisfy $2 = m(i, i') = m(i, i'') < m(i', i'') < \infty$ and $k$ is an integer such that $0 < k < m(i', i'') - 1$. In this case, $s_i \gamma = \gamma$.
    \end{enumerate}
    Together with the edge from $\alpha_i$ to $\ominus$ for each $i$, this completes the description of the set of edges of ${\bf G}_{W, \leq 2}$. We use \cref{lem:edge-dotted,lem:edge-solid} to determine which edges are solid and which edges are dotted.
\end{proof}

Let us use \cref{lem:rank-2-small-root-graph} to briefly discuss the graph structure of ${\bf G}_{W, \leq 2}$, ignoring for now whether edges are solid or dotted. As mentioned above, $\alpha_{i, i'}(k) = \alpha_{i', i}(m(i, i') - 1 - k)$. Let $i, i' \in I$ be distinct indices.  If $m = m(i, i')$ is an odd integer, then ${\bf G}_{W, \leq 2}$ has a bidirectional path \[\alpha_i = \alpha_{i, i'}(0) \underset{i'}{\longleftrightarrow} \alpha_{i, i'}(m - 2) \underset{i}{\longleftrightarrow} \alpha_{i, i'}(2) \underset{i'}{\longleftrightarrow} \cdots \underset{i'}{\longleftrightarrow} \alpha_{i, i'}(1) \underset{i}{\longleftrightarrow} \alpha_{i, i'}(m - 1) = \alpha_{i'}\] with $m$ vertices and $m - 1$ edges in each direction. If instead $m = m(i, i')$ is an even integer, then we obtain the two disjoint bidirectional paths
\begin{align*}
    \alpha_i &= \alpha_{i, i'}(0) \underset{i'}{\longleftrightarrow} \alpha_{i, i'}(m - 2) \underset{i}{\longleftrightarrow} \alpha_{i, i'}(2) \underset{i'}{\longleftrightarrow} \cdots \longleftrightarrow \alpha_{i, i'}(2 \lfloor m / 4 \rfloor) \quad \text{and} \\
    \alpha_{i'} &= \alpha_{i, i'}(m - 1) \underset{i}{\longleftrightarrow} \alpha_{i, i'}(1) \underset{i'}{\longleftrightarrow} \alpha_{i, i'}(m-3) \underset{i}{\longleftrightarrow} \cdots \longleftrightarrow \alpha_{i, i'}(2 \lfloor (m - 2) / 4 \rfloor + 1),
\end{align*}
each of which has $m / 2$ vertices and $m / 2 - 1$ edges in each direction; there are also loops at the vertices $\alpha_{i, i'}(2 \lfloor m / 4 \rfloor)$ and $\alpha_{i, i'}(2 \lfloor (m - 2) / 4 \rfloor + 1)$. (The labels of these loops and of the rightmost edges in these bidirectional paths depend on the parity of $m / 2$.) Regardless of the parity of $m$, each vertex in one of these paths also has a loop labeled $i''$ for each index $i'' \in I$ such that ${m(i, i'') = m(i', i'') = 2}$.

\section{Right-Angled Coxeter Groups}\label{sec:right-angled} 
The Coxeter group $W$ is called \dfn{right-angled} if $m(i,i')\in\{2,\infty\}$ for all distinct $i,i'\in I$. We now prove \cref{thm:right-angled}, which states that every right-angled Coxeter group is superfuturistic. 

\begin{proof}[Proof of \cref{thm:right-angled}]
Let $W$ be a right-angled Coxeter group. By \Cref{prop:small-roots-description}, the set $\Sigma$ of small roots of $W$ is equal to the set $\{\alpha_i : i \in I\}$ of simple roots. In the small-root billiards graph ${\bf G}_W$, there is a dotted edge labeled $i$ from $\alpha_{i'}$ to itself for all $i, i' \in I$ with $m(i, i') = 2$. There is also a solid edge labeled $i$ from $\alpha_i$ to $\ominus$ for all $i \in I$. There are no other edges. See \Cref{fig:small-root-billiards-graph-right-angled} for an example.
\begin{figure}
\begin{center}
\includegraphics[height=4cm]{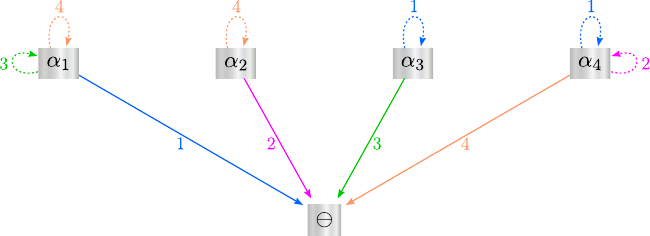}
\end{center}
\caption{The small-root billiards graph of the right-angled Coxeter group whose Coxeter graph is $\begin{array}{l}\includegraphics[height=0.639cm]{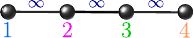}\end{array}$.\label{fig:small-root-billiards-graph-right-angled}}
\end{figure}
The fact that $W$ is superfuturistic now follows directly from \cref{lem:closed-billiards-walk-superfuturistic}.
\end{proof}

\section{Coxeter Groups with Complete Coxeter Graphs}\label{sec:complete} 
\Cref{thm:complete} states that if the Coxeter graph of a Coxeter group $W$ is complete, then $W$ is futuristic. To prove this, we will show that the small-root billiards graph ${\bf G}_{W}$ is actually equal to the graph ${\bf G}_{W, \leq 2}$ that we described in \Cref{lem:rank-2-small-root-graph} and the subsequent paragraphs. See \Cref{fig:complete-coxeter-graph-small-root-billiards-graph} for an example.
\begin{lemma}\label{lem:complete_small_root_graph}
    If the Coxeter graph of $W$ is complete, then $\Sigma_{\leq 2} = \Sigma$.
\end{lemma}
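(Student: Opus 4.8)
The plan is to verify that the set $\Sigma_{\leq 2}$ satisfies the two closure conditions (i) and (ii) of \cref{prop:small-roots-description}. Since that proposition characterizes $\Sigma$ as the \emph{smallest} set satisfying those conditions, this yields $\Sigma \subseteq \Sigma_{\leq 2}$; combined with the inclusion $\Sigma_{\leq 2} \subseteq \Sigma$ recorded in the first sentence of \cref{lem:rank-2-small-root-graph}, we get $\Sigma = \Sigma_{\leq 2}$. Condition (i) is immediate, since every simple root lies in $\Sigma_{\leq 2}$ by definition. All of the content is in condition (ii): given $i \in I$ and $\gamma \in \Sigma_{\leq 2} \setminus \{\alpha_i\}$ with $0 > B(\gamma, \alpha_i) > -1$, one must show that $s_i\gamma \in \Sigma_{\leq 2}$.

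I would argue this by cases on $\gamma$. First, if $\gamma = \alpha_{i'}$ is a simple root with $i' \neq i$, then the condition $B(\gamma, \alpha_i) > -1$ forces $m(i,i') < \infty$, and completeness of $\Gamma_W$ gives $m(i,i') \geq 3$, so $s_i\alpha_{i'} = \alpha_{i,i'}(1)$, which lies in $\Sigma_{\leq 2}$ because $0 < 1 < m(i,i')-1$. Second, suppose $\gamma = \alpha_{j,j'}(k)$ for distinct indices $j,j'$ with $m(j,j') < \infty$ and $0 < k < m(j,j')-1$, and that $i \in \{j,j'\}$. Then $s_i$ fixes the root system of $W_{\{j,j'\}}$ setwise, sending $\alpha_i$ to $-\alpha_i$ and permuting the remaining positive roots of $W_{\{j,j'\}}$; since $\gamma$ is a positive root of $W_{\{j,j'\}}$ different from $\alpha_i$, the root $s_i\gamma$ is again of the form $\alpha_{j,j'}(l)$ with $0 \leq l \leq m(j,j')-1$. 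Using the explicit formulas for the action of $s_j$ and $s_{j'}$ on these roots, one checks directly that $l$ avoids the endpoint corresponding to $\alpha_i$, so $s_i\gamma$ is either one of the simple roots $\alpha_j,\alpha_{j'}$ or else an interior root $\alpha_{j,j'}(l)$ with $0 < l < m(j,j')-1$; in every case $s_i\gamma \in \Sigma_{\leq 2}$.

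The only remaining case is $\gamma = \alpha_{j,j'}(k)$ with $i \notin \{j,j'\}$, and this is exactly where completeness of $\Gamma_W$ is used; I expect this estimate to be the crux of the argument, although it is short. Here I would show that the hypothesis $B(\gamma, \alpha_i) > -1$ is impossible, so that condition (ii) imposes no requirement on this $\gamma$. Writing $m = m(j,j')$ and expanding $\gamma$ in the simple roots,
\[B(\gamma,\alpha_i) = \frac{\sin((k+1)\pi/m)}{\sin(\pi/m)}\,B(\alpha_j,\alpha_i) + \frac{\sin(k\pi/m)}{\sin(\pi/m)}\,B(\alpha_{j'},\alpha_i).\]
Because $\Gamma_W$ is complete, the edges $\{i,j\}$ and $\{i,j'\}$ are present, so $m(i,j),m(i,j') \geq 3$ (possibly $\infty$); hence $B(\alpha_j,\alpha_i) \leq -\cos(\pi/3) = -\frac{1}{2}$ and $B(\alpha_{j'},\alpha_i) \leq -\frac{1}{2}$, where both pairings are in fact $\leq -1$ when the corresponding label is $\infty$. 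On the other hand, for $1 \leq k \leq m-2$ both $k\pi/m$ and $(k+1)\pi/m$ lie in the interval $[\pi/m,\,\pi-\pi/m]$, on which $\sin$ is at least $\sin(\pi/m) > 0$; hence both coefficients in the display above are $\geq 1$.

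Since multiplying a number that is $\geq 1$ by a number that is $\leq -\frac{1}{2}$ produces a number that is $\leq -\frac{1}{2}$, the two displayed summands are each $\leq -\frac{1}{2}$, so $B(\gamma,\alpha_i) \leq -1$, contradicting $B(\gamma,\alpha_i) > -1$. This exhausts the cases, completing the verification of condition (ii), and hence the lemma. The main obstacle is really just the sign-and-size bookkeeping in the last display; everything else is a routine unwinding of the rank-$2$ root data from \cref{lem:rank-2-small-root-graph} and its surrounding discussion.
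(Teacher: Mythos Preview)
Your proof is correct and follows essentially the same approach as the paper: both verify the closure condition of \cref{prop:small-roots-description} for $\Sigma_{\leq 2}$, and the heart of both arguments is the inequality $B(\alpha_{j,j'}(k),\alpha_i)\leq -1$ for $i\notin\{j,j'\}$, proved via the same sine-coefficient bound together with $m(i,j),m(i,j')\geq 3$. The only difference is that the paper leaves your Cases 1 and 2 implicit (since $s_i\gamma\in\Sigma_{\leq 2}$ there is already recorded in \cref{lem:rank-2-small-root-graph}) and singles out just the two inequalities that need checking, whereas you spell out all three cases.
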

\begin{proof}
    By \Cref{prop:small-roots-description}, it suffices to prove that for all $i \in I$ and all $\gamma \in \Sigma_{\leq 2} \setminus \{\alpha_i\}$, we have either $s_i \gamma \in \Sigma_{\leq 2}$ or $B(\gamma, \alpha_i) \leq -1$. For this, there are two inequalities to check. First, we must show that for all $i, i' \in I$ with $m(i, i') = \infty$, we have $B(\alpha_i, \alpha_{i'}) \leq -1$. Second, we must show that for all distinct $i, i', i'' \in I$ with $m(i, i') < \infty$ and for all $k$ with $0 < k < m(i, i') - 1$, we have $B(\alpha_{i, i'}(k), \alpha_{i''}) \leq -1$. The first inequality follows from the definition of the bilinear form $B$. For the second, we have
    \begin{align*}
    B(\alpha_{i, i'}(k), \alpha_{i''}) &= -\frac{\sin((k+1)\pi/m(i, i'))}{\sin(\pi/m(i, i'))} \cos(\pi / m(i, i'')) - \frac{\sin(k\pi/m(i, i'))}{\sin(\pi/m(i, i'))} \cos(\pi / m(i', i'')) \\
    &\leq -\cos(\pi / m(i, i'')) - \cos(\pi/m(i', i'')) \\
    &\leq -1,
    \end{align*}
    where we have used the inequalities $0 < k < m(i, i') - 1$ and $m(i, i''), m(i', i'') \geq 3$.
\end{proof}

\begin{figure}
\begin{center}
\includegraphics[height=12.195cm]{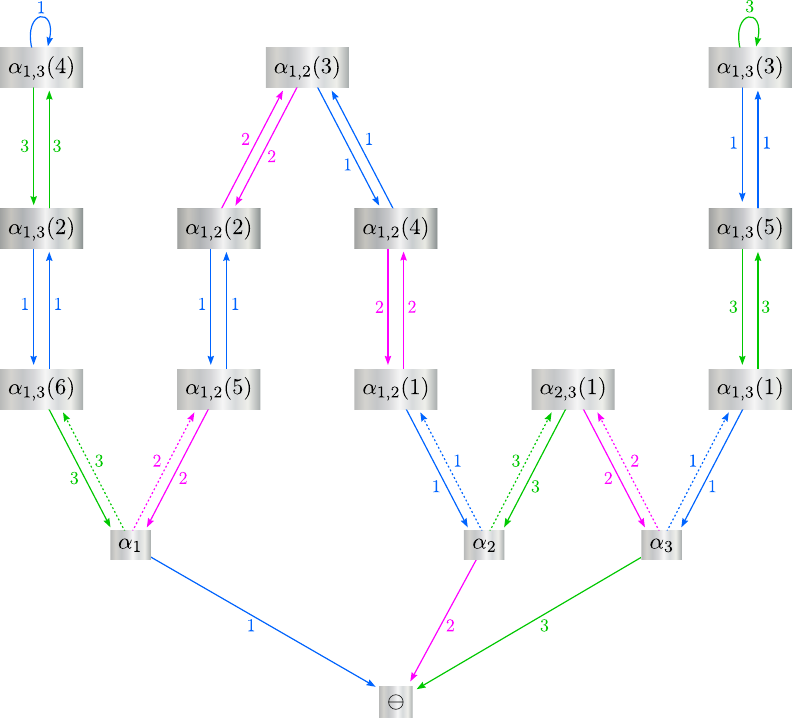}
\end{center}
\caption{The small-root billiards graph of the Coxeter group whose Coxeter graph is $\begin{array}{l}\includegraphics[height=1.177cm]{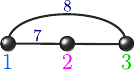}\end{array}$.\label{fig:complete-coxeter-graph-small-root-billiards-graph}}
\end{figure}

To prove \Cref{thm:complete}, all that remains is to carefully apply \Cref{lem:closed-billiards-walk-futuristic}.

\begin{proof}[Proof of \Cref{thm:complete}]
    Let $W$ be a Coxeter group with a complete Coxeter graph. Let $i_1, \ldots, i_n$ be an ordering of $I$, and assume for the sake of contradiction that $c = s_{i_n} \cdots s_{i_1}$ is not futuristic. By \Cref{lem:closed-billiards-walk-futuristic}, there is a billiards-plausible closed walk $\mathcal{C} = ((v_1, \ldots, v_\elll), (e_1, \ldots, e_\elll))$ in ${\bf G}_W$.
    
    By \Cref{def:closed-billiards-walk}\eqref{item:closed-billiards-walk-identity-product}, we have $s_{\lambda(e_1)} \cdots s_{\lambda(e_\elll)} = \mathbbm{1}$. Therefore, by Matsumoto's theorem (\cref{thm:matsumoto}), it is possible to apply a nil move or a braid move to the word $\lambda(e_1) \cdots \lambda(e_\elll)$.

    Suppose first that it is possible to apply a nil move to the word $\lambda(e_1) \cdots \lambda(e_\elll)$. Then there are two consecutive edges $e_{j-1}, e_j$ of the walk $\mathcal{C}$ such that $\lambda(e_{j-1}) = \lambda(e_j)$. Every element of $I \setminus \{\lambda(e_j)\}$ is betwixt $\lambda(e_{j-1})$ and $\lambda(e_j)$. Therefore, by \Cref{def:closed-billiards-walk}\eqref{item:closed-billiards-walk-solid-edge-use}, there are no solid edges in ${\bf G}_W$ with source $v_j$, except possibly $e_j$. 

    By \cref{lem:complete_small_root_graph}, either $v_j = \alpha_{i}$ for some $i \in I$, or $v_j = \alpha_{i, i'}(k)$, where $i, i' \in I$ satisfy $m(i, i') < \infty$ and $k$ is an integer such that $0 < k < m(i, i') - 1$. If $v_j = \alpha_i$ for some $i \in I$, then there is a solid edge from $v_j$ to $\ominus$. If instead $v_j = \alpha_{i, i'}(k)$, then by \cref{lem:rank-2-small-root-graph}, there are two solid edges with source $v_j$: one labeled $i$ and one labeled $i'$. In either case, we obtain a contradiction with the fact that $e_j$ is the only possible solid edge in ${\bf G}_W$ with source $v_j$. 

    Now suppose that it is possible to apply a braid move to the word $\lambda(e_1) \cdots \lambda(e_\elll)$. Then there exist indices $i, i' \in I$ such that the closed walk $\mathcal{C}$ has $m(i, i')$ consecutive edges whose labels alternate between $i$ and $i'$. However, by \cref{lem:rank-2-small-root-graph,lem:complete_small_root_graph}, any walk in ${\bf G}_W$ that does not use the vertex $\ominus$ and whose edge labels alternate between $i$ and $i'$ must have at most $m(i, i') - 1$ edges. Again, we obtain a contradiction. 
\end{proof}

\section{Coxeter Groups of Rank at Most 3}\label{sec:rank3}
We now have all the tools we need to prove \Cref{thm:rank_3}, which states that all Coxeter groups of rank at most 3 are futuristic.

\begin{proof}[Proof of \cref{thm:rank_3}]
    All Coxeter groups of rank at most 2 are futuristic by \Cref{thm:finite_futuristic,thm:right-angled}. Let $W$ be a Coxeter group of rank $3$. We wish to show that $W$ is futuristic. Without loss of generality, we may assume that the index set $I$ is $\{1, 2, 3\}$. Let $p = m(1, 3)$, $q = m(1, 2)$, and $r = m(2, 3)$, and assume without loss of generality that $p \leq q \leq r$. We now consider several cases depending on the values of $p$, $q$, and $r$.

\medskip

    \noindent {\bf Case 1.} ($p = q = 2$)
    
    In this case, $W$ is reducible, and its irreducible factors are futuristic because they each have rank at most $2$. Therefore, $W$ is futuristic (see \cref{rem:irreducible_futuristic}). 

\medskip

    \noindent {\bf Case 2.} ($p = 2$, $q = 3$, and $r \leq 5$)
    
    In this case, it follows from the classification of finite Coxeter groups \cite[Appendix~A1]{BjornerBrenti} that $W$ is finite, so it is futuristic by \Cref{thm:finite_futuristic}.

\medskip

    \noindent {\bf Case 3.} ($p = 2$, $q = 3$, and $r = 6$)

    In this case, $W$ is the affine Coxeter group $\widetilde G_2$. The small-root billiards graph ${\mathbf G}_W$ is depicted in \cref{fig:small-root-billiards-graph-example}. We will now show that $c = s_{i_3}s_{i_2}s_{i_1}$ is futuristic, where $i_1 = 1$, $i_2 = 2$, and $i_3 = 3$. By inspection, every closed walk in ${\bf G}_W$ that satisfies \cref{def:closed-billiards-walk}\eqref{item:closed-billiards-walk-solid-edge-use} uses the sequence of vertices
    \[\sqrt{3} \alpha_2 + \alpha_3,\,\sqrt{3} \alpha_1 + \sqrt{3} \alpha_2 + \alpha_3,\,\sqrt{3} \alpha_1 + \sqrt{3} \alpha_2 + \alpha_3,\,\sqrt{3} \alpha_1 + \sqrt{3} \alpha_2 + 2\alpha_3,\,\sqrt{3} \alpha_2 + 2 \alpha_3,\, \sqrt{3}\alpha_2 + 2 \alpha_3,\]
    cyclically shifted and repeated $K$ times for some $K > 0$. Such a walk cannot satisfy \cref{def:closed-billiards-walk}\eqref{item:closed-billiards-walk-identity-product}. Indeed, using Matsumoto's theorem (\cref{thm:matsumoto}), one can check that the word $(123123)^K$ is reduced, so \[(s_1 s_2 s_3 s_1 s_2 s_3)^K \neq \mathbbm{1}.\] 
    By \cref{lem:closed-billiards-walk-futuristic}, $s_3s_2s_1$ is futuristic. By \cref{cor:tree}, $W$ is futuristic.

    \medskip
    
    \noindent {\bf Case 4.} ($p = 2$, $q = 3$, and $7 \leq r < \infty$)
\begin{figure}
\begin{center}
\includegraphics[height=11.384cm]{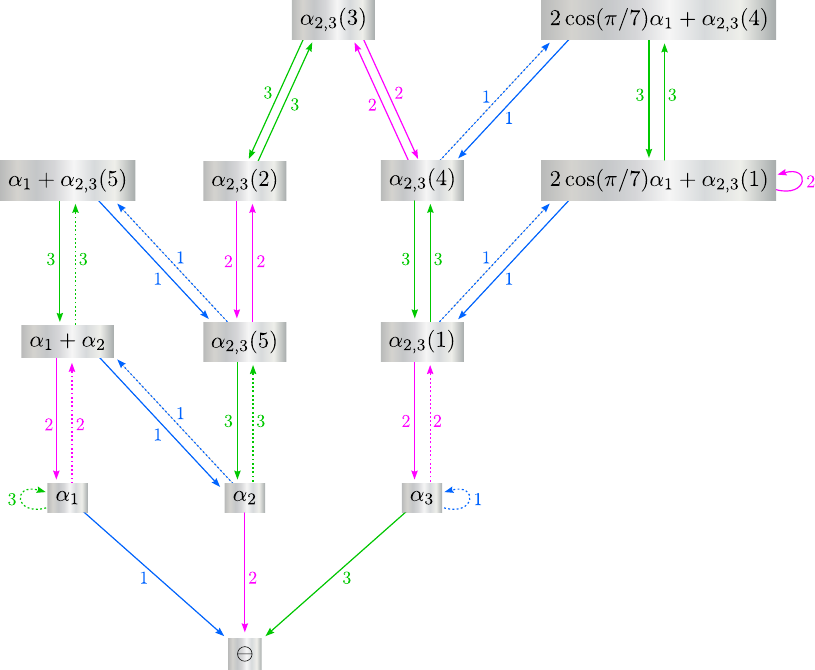}
\end{center}
\caption{The small-root billiards graph of the Coxeter group whose Coxeter graph is $\begin{array}{l} \includegraphics[height=0.668cm]{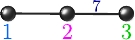}\end{array}$.}\label{fig:small-root-billiards-graph-237}
\end{figure}

\begin{figure}
\begin{center}
\includegraphics[height=11.384cm]{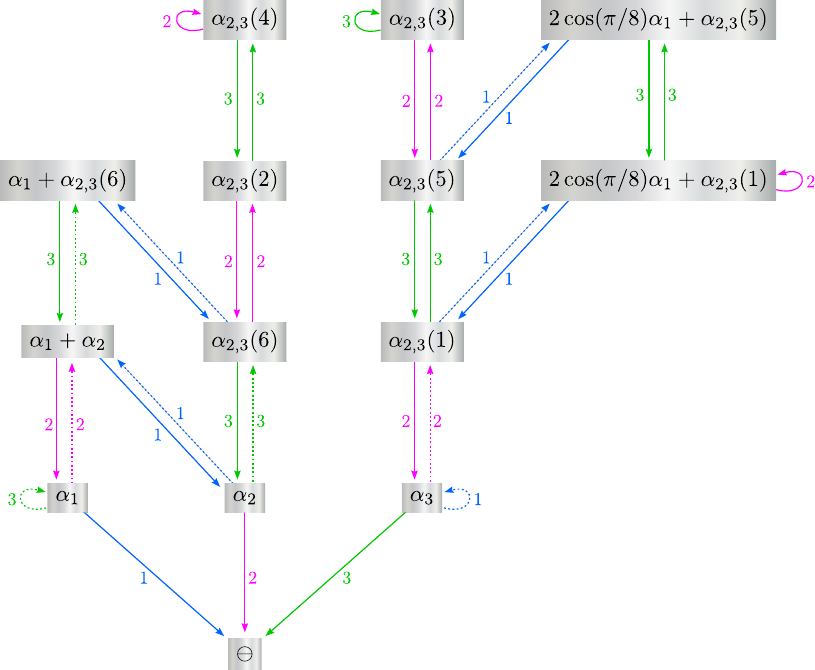}
\end{center}
\caption{The small-root billiards graph of the Coxeter group whose Coxeter graph is $\begin{array}{l} \includegraphics[height=0.668cm]{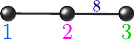}\end{array}$.}\label{fig:small-root-billiards-graph-238}
\end{figure}
    
We claim that
    \[
        \Sigma = \Sigma_{\leq 2} \cup \{\alpha_1 + \alpha_{2, 3}(r-2),\,\,2\cos(\pi/r) \alpha_1 + \alpha_{2, 3}(1),\,\,2\cos(\pi/r)\alpha_1 + \alpha_{2, 3}(r-3)\}.
    \]
    Moreover, we claim that ${\bf G}_W$ has 11 edges aside from the ones in ${\bf G}_{W, \leq 2}$:
    \begin{itemize}
        \item a solid edge labeled $3$ from $\alpha_1 + \alpha_{2, 3}(r-2)$ to $\alpha_1 + \alpha_2 = \alpha_{1, 2}(1)$ and a dotted edge labeled $3$ in the other direction;
        \item a solid edge labeled $1$ from $\alpha_1 + \alpha_{2, 3}(r-2)$ to $\alpha_{2, 3}(r-2)$ and a dotted edge labeled $1$ in the other direction;
        \item a solid edge labeled $1$ from $2\cos(\pi/r) \alpha_1 + \alpha_{2, 3}(1)$ to $\alpha_{2, 3}(1)$ and a dotted edge labeled $1$ in the other direction;
        \item a solid edge labeled $1$ from $2\cos(\pi/r) \alpha_1 + \alpha_{2, 3}(r-3)$ to $\alpha_{2, 3}(r-3)$ and a dotted edge labeled $1$ in the other direction;
        \item a solid edge labeled $3$ from $2\cos(\pi/r) \alpha_1 + \alpha_{2, 3}(r-3)$ to $2\cos(\pi/r) \alpha_1 + \alpha_{2, 3}(1)$ and another solid edge labeled $3$ in the other direction;
        \item a solid loop labeled $2$ from $2\cos(\pi/r) \alpha_1 + \alpha_{2, 3}(1)$ to itself.
    \end{itemize}
    See \cref{fig:small-root-billiards-graph-237,fig:small-root-billiards-graph-238} for examples. By the discussion after the proof of \cref{lem:rank-2-small-root-graph}, the graph will look substantially different depending on the parity of $r$.

    To verify our claim, we use \cref{prop:small-roots-description}. We have
    \begin{alignat*}{3}
        \alpha_1 + \alpha_{2, 3}(r-2) &= s_1 \alpha_{2, 3}(r-2) \qquad && \text{with $B(\alpha_{2, 3}(r-2), \alpha_1) = -\frac{1}{2} > -1$,}\\
        2 \cos(\pi/r) \alpha_1 + \alpha_{2, 3}(1) &= s_1 \alpha_{2, 3}(1) \qquad && \text{with $B(\alpha_{2, 3}(1), \alpha_1) = -\cos(\pi/r) > -1,$}\\
        2 \cos(\pi/r) \alpha_1 + \alpha_{2, 3}(r-3) &= s_1 \alpha_{2, 3}(r-3) \qquad && \text{with $B(\alpha_{2, 3}(r-3), \alpha_1) = -\cos(\pi/r) > -1$},
    \end{alignat*}
    so $\alpha_1 + \alpha_{2, 3}(r-2)$, $2\cos(\pi/r) \alpha_1 + \alpha_{2, 3}(1)$, and $2\cos(\pi/r)\alpha_1 + \alpha_{2, 3}(r-3)$ are small roots. To find the edges of ${\bf G}_W$ incident to these three vertices, we apply each of the simple reflections $s_1$, $s_2$, and $s_3$ to each of the roots $\alpha_1 + \alpha_{2, 3}(r-2)$, $2\cos(\pi/r) \alpha_1 + \alpha_{2, 3}(1)$, and $2\cos(\pi/r)\alpha_1 + \alpha_{2, 3}(r-3)$. We omit the details; the only nontrivial verification is that the edge labeled $2$ from $2\cos(\pi/r) \alpha_1 + \alpha_{2, 3}(1)$ to itself is solid, which follows from the fact that 
    \begin{align*}
    H_{\alpha_1}^+ \cap H_{2\cos(\pi/r) \alpha_1 + \alpha_{2, 3}(1)}^- \cap H^+_{\alpha_2} &= H_{s_1s_2s_1\alpha_2}^- \cap H_{s_1s_2s_1\alpha_3}^- \cap H^-_{s_1s_2s_1\alpha_1} \\
    &= (H_{\alpha_2}^- \cap H_{\alpha_3}^- \cap H_{\alpha_1}^-)s_1s_2s_1\\
    &= \varnothing.
    \end{align*}
    
    We must show that these are the only small roots. That is, we must show that $\Sigma = \Sigma'$, where \[\Sigma' = \Sigma_{\leq 2} \cup \{\alpha_1 + \alpha_{2, 3}(r-2),\,\,2\cos(\pi/r) \alpha_1 + \alpha_{2, 3}(1),\,\,2\cos(\pi/r)\alpha_1 + \alpha_{2, 3}(r-3)\}.\] It suffices to prove that for each $i \in I$ and $\gamma \in \Sigma' \setminus \{\alpha_i\}$, we have either $s_i \gamma \in \Sigma'$ or $B(\gamma, \alpha_i) \leq -1$. This reduces to proving three inequalities:
    \begin{align*}
        B(\alpha_1 + \alpha_{2, 3}(r-2), \alpha_2) &\leq -1; \\
        B(2\cos(\pi/r)\alpha_1 + \alpha_{2, 3}(r-3), \alpha_2) &\leq -1; \\
        B(\alpha_{2, 3}(k), \alpha_1) &\leq -1 \qquad \text{for $2 \leq k \leq r - 4$.}
    \end{align*}
    Each of these inequalities is easy to verify from the definition of the bilinear form $B$.
    
    We will now show that $c = s_{i_3}s_{i_2}s_{i_1}$ is futuristic, where $i_1 = 1$, $i_2 = 2$, and $i_3 = 3$. By \cref{lem:closed-billiards-walk-futuristic}, it suffices to show that ${\bf G}_W$ has no billiards-plausible closed walk.

    On one hand, we claim that if $r$ is odd, then no closed walk in ${\bf G}_W$ can even satisfy \cref{def:closed-billiards-walk}\eqref{item:closed-billiards-walk-solid-edge-use}. Indeed, if a walk satisfying \cref{def:closed-billiards-walk}\eqref{item:closed-billiards-walk-solid-edge-use} enters one end of the bidirectional path between $\alpha_{2,3}(r-2)$ and $\alpha_{2,3}(r-3)$, then it must traverse the entire path and come out the other end because of all the solid edges labeled $2$ and $3$. It follows that the length of this path does not affect whether or not there exists a closed walk in ${\bf G}_W$ satisfying \cref{def:closed-billiards-walk}\eqref{item:closed-billiards-walk-solid-edge-use}. Verifying the claim is now a matter of checking finitely many possibilities. It follows that if $r$ is odd, then $s_3s_2s_1$ is futuristic.    
    
    On the other hand, if $r$ is even, then a similar argument (to reduce to checking finitely many possibilities) implies that every closed walk in ${\bf G}_W$ satisfying \cref{def:closed-billiards-walk}\eqref{item:closed-billiards-walk-solid-edge-use} must use the sequence of vertices  
    \begin{multline*}
    \alpha_{2, 3}(1),\,\, 2\cos(\pi/r)\alpha_1 + \alpha_{2, 3}(1),\,\, 2\cos(\pi/r)\alpha_1 + \alpha_{2, 3}(1),\,\, 2\cos(\pi/r)\alpha_1 + \alpha_{2, 3}(r-3), \\
    \alpha_{2, 3}(r-3), \alpha_{2, 3}(3), \alpha_{2, 3}(r-5), \alpha_{2, 3}(5), \ldots, \alpha_{2, 3}(5), \alpha_{2, 3}(r-5), \alpha_{2, 3}(3), \alpha_{2, 3}(r-3),
    \end{multline*}
    cyclically shifted and repeated $K$ times for some $K \geq 1$. Such a walk cannot satisfy \cref{def:closed-billiards-walk}\eqref{item:closed-billiards-walk-identity-product}. Indeed, using Matsumoto's theorem (\cref{thm:matsumoto}), one can check that the word $(1231(23)^{r/2-2})^K$ is reduced, so \[(s_1 s_2 s_3 s_1 (s_2 s_3)^{r/2 - 2})^K \neq \mathbbm{1}.\] 
    It follows that if $r$ is even, then $s_3s_2s_1$ is futuristic. By \cref{cor:tree}, $W$ is futuristic.
    \medskip

    \noindent {\bf Case 5.} ($p = 2$ and $4 \leq q \leq r < \infty$) 

    \begin{figure}
\begin{center}
\includegraphics[height=8.606cm]{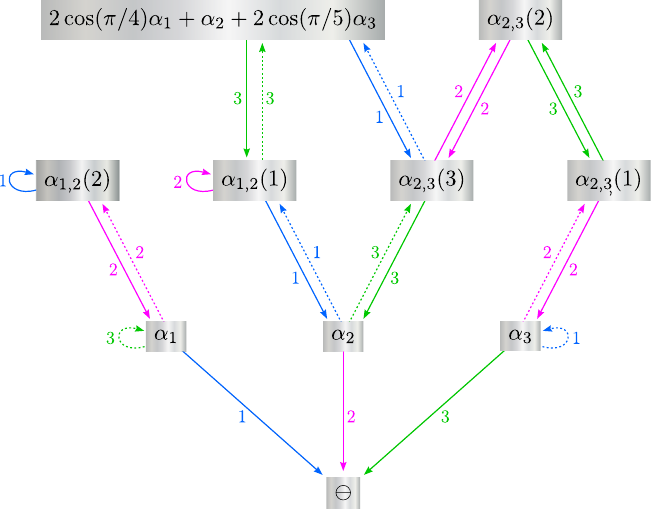}
\end{center}
\caption{The small-root billiards graph of the Coxeter group whose Coxeter graph is $\begin{array}{l} \includegraphics[height=0.668cm]{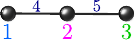}\end{array}$.}\label{fig:small-root-billiards-graph-245}
\end{figure}
    
    We claim that $\Sigma = \Sigma_{\leq 2} \cup \{2 \cos(\pi/q) \alpha_1 + \alpha_2 + 2 \cos(\pi/r) \alpha_3\}$. Moreover, we claim that $\mathbf{G}_W$ has 4 edges aside from the ones in $\mathbf{G}_{W, \leq 2}$:
    \begin{itemize}
        \item a solid edge labeled $1$ from $2 \cos(\pi/q) \alpha_1 + \alpha_2 + 2 \cos(\pi/r) \alpha_3$ to $\alpha_{2, 3}(r-2)$ and a dotted edge labeled $1$ in the other direction;
        \item a solid edge labeled $3$ from $2 \cos(\pi/q) \alpha_1 + \alpha_2 + 2 \cos(\pi/r) \alpha_3$ to $\alpha_{1, 2}(1)$ and a dotted edge labeled $3$ in the other direction.
    \end{itemize}

    See \cref{fig:small-root-billiards-graph-245} for an example. To verify our claim, we use \cref{prop:small-roots-description}. We have \[2 \cos(\pi/q) \alpha_1 + \alpha_2 + 2 \cos(\pi/r) \alpha_3 = s_3 \alpha_{1, 2}(1)\] with $B(\alpha_{1, 2}(1), \alpha_3) = -\cos(\pi/q) > -1$, so $2 \cos(\pi/q) \alpha_1 + \alpha_2 + 2 \cos(\pi/r) \alpha_3$ is a small root. To find the edges of $\mathbf{G}_W$ incident to this vertex, we apply each of the simple reflections $s_1$, $s_2$, and $s_3$ to $2 \cos(\pi/q) \alpha_1 + \alpha_2 + 2 \cos(\pi/r) \alpha_3$. We omit the details; it is possible to determine whether each edge is solid or dotted using \cref{lem:edge-dotted,lem:edge-solid}.

    We must show that these are the only small roots. That is, we must show that $\Sigma = \Sigma'$, where
    \[\Sigma' = \Sigma_{\leq 2} \cup \{2 \cos(\pi/q) \alpha_1 + \alpha_2 + 2 \cos(\pi/r) \alpha_3\}.\]
    It suffices to prove that for all $i \in I$ and all $\gamma \in \Sigma' \setminus \{\alpha_i\}$, we have either $s_i \gamma \in \Sigma'$ or $B(\gamma, \alpha_i) \leq -1$. This reduces to proving three inequalities:
    \begin{align*}
        B(2 \cos(\pi/q) \alpha_1 + \alpha_2 + 2 \cos(\pi/r) \alpha_3, \alpha_2) &\leq -1; \\
        B(\alpha_{1, 2}(k), \alpha_3) &\leq -1 \qquad \text{for $2 \leq k \leq q - 2$}; \\
        B(\alpha_{2, 3}(k), \alpha_1) &\leq -1 \qquad \text{for $1 \leq k \leq r - 3$.}
    \end{align*}
    Each of these inequalities is easy to verify from the definition of the bilinear form $B$.
    
    We will now show that $c = s_{i_3}s_{i_2}s_{i_1}$ is futuristic, where $i_1 = 1$, $i_2 = 2$, and $i_3 = 3$. If $q$ is odd or $r$ is odd, then (along the lines of the argument from Case~4) no closed walk in ${\bf G}_W$ can satisfy \cref{def:closed-billiards-walk}\eqref{item:closed-billiards-walk-solid-edge-use}. If $q$ and $r$ are both even, then every closed walk in ${\bf G}_W$ that satisfies \cref{def:closed-billiards-walk}\eqref{item:closed-billiards-walk-solid-edge-use} uses the sequence of vertices 
    \begin{align*}
    \alpha_2, \alpha_{1, 2}(1), \alpha_{1, 2}(q - 3), \alpha_{1, 2}(3)&, \ldots, \alpha_{1, 2}(3), \alpha_{1, 2}(q - 3), \alpha_{1, 2}(1), \\
    &2 \cos(\pi/q) \alpha_1 + \alpha_2 + 2 \cos(\pi/r) \alpha_3, \\
    &\alpha_{2, 3}(r-2), \alpha_{2, 3}(2), \alpha_{2, 3}(r-4), \ldots, \alpha_{2, 3}(r-4), \alpha_{2, 3}(2), \alpha_{2, 3}(r-2),
    \end{align*}
    cyclically shifted and repeated $K$ times for some $K \geq 1$. Such a walk cannot satisfy \cref{def:closed-billiards-walk}\eqref{item:closed-billiards-walk-identity-product}. Indeed, using Matsumoto's theorem (\cref{thm:matsumoto}), one can check that the word $((12)^{q/2 - 1} 31 (23)^{r/2 - 1})^K$ is reduced, so \[((s_1 s_2)^{q/2 - 1} s_3 s_1 (s_2 s_3)^{r/2 - 1})^K \neq \mathbbm{1}.\] 
    By \cref{lem:closed-billiards-walk-futuristic}, $s_3s_2s_1$ is futuristic. By \cref{cor:tree}, $W$ is futuristic.
    
    \medskip
    
    \noindent {\bf Case 6.} ($p = 2$ and $q < r = \infty$)
    
    We claim that $\Sigma = \Sigma_{\leq 2}$. For this, it suffices to prove that for each $i \in I$ and all $\gamma \in \Sigma_{\leq 2} \setminus \{\alpha_i\}$, we have either $s_i \gamma \in \Sigma_{\leq 2}$ or $B(\gamma, \alpha_i) \leq -1$. This reduces to proving that $B(\alpha_3, \alpha_2) \leq -1$ and that $B(\alpha_{1, 2}(k), \alpha_3) \leq -1$ for $1 \leq k \leq q-1$; both inequalities are are easy to verify from the definition of the bilinear form $B$. It is straightforward to check that no closed walk in ${\bf G}_W$ can satisfy \cref{def:closed-billiards-walk}\eqref{item:closed-billiards-walk-solid-edge-use} (for any ordering $i_1, i_2, i_3$ of the indices $1, 2, 3$), so $W$ is futuristic by \cref{lem:closed-billiards-walk-futuristic}.
    \medskip

    \noindent {\bf Case 7.} ($p = 2$ and $q=r = \infty$)
    
    In this case, $W$ is futuristic because it is right-angled (\cref{thm:right-angled}).

    \medskip
    
    \noindent {\bf Case 8.} ($p \geq 3$) 
    
    In this case, $W$ is futuristic because its Coxeter graph is complete (\Cref{thm:complete}).
\end{proof}

\section{Ancient Coxeter Groups}\label{sec:ancient}

We now shift gears and construct examples of ancient Coxeter groups. 

Let us outline the strategy that led us to the arguments presented below. The Coxeter graph of each Coxeter group $W$ considered in this section is a tree, so in order to prove that $W$ is ancient, it suffices (by \cref{cor:tree}) to prove that a single fixed Coxeter element of $W$ is not futuristic. To do so, we first construct a candidate periodic sequence $u_0, u_1, u_2, \ldots$. Next, we attempt to find a nonempty convex subset $\LL \subseteq W$, not containing $u_0$, such that $u_0, u_1, u_2, \ldots$ is a periodic billiards trajectory with respect to $\LL$. The machinery of \cref{sec:small} tells us that we should take $u_0, u_1, u_2, \ldots$ to be a lift of a billiards-plausible closed walk in the corresponding small-root billiards graph.

It will be helpful to keep in mind an alternative definition of noninvertible Bender--Knuth toggles that uses reflections rather than roots. Let $\LL\subseteq W$ be a nonempty convex set, and let $i\in I$ and $u\in W$. Then $\tau_i(u)=u$ if either every element of $\LL\cup\{u\}$ has $u^{-1}s_iu$ as a right inversion or no element of $\LL\cup\{u\}$ has $u^{-1}s_iu$ as a right inversion. Otherwise, $\tau_i(u)=s_iu$. For each reflection $t$, the hyperplane $\HH_{\beta_t}$ is a one-way mirror if and only if either every element of $\LL$ has $t$ as a right inversion or no element of $\LL$ has $t$ as a right inversion.

If $J\subseteq I$ is such that the standard parabolic subgroup $W_J$ is finite, then we write $\wo(J)$ for the long element of $W_J$.

\begin{proposition}\label{prop:tilde_Bn_ancient}
Fix $n\geq 3$, and let $b,b'\geq 3$ be integers. The Coxeter group $W$ with Coxeter graph
\[\begin{array}{l}
\includegraphics[height=1.56cm]{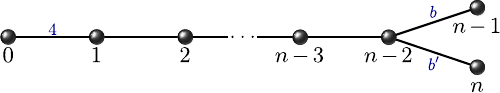}
\end{array}\]
is ancient. 
\end{proposition}

\begin{proof}
The Coxeter graph of $W$ is a tree, so by \cref{cor:tree}, it suffices to prove that the Coxeter element $c=s_0s_1\cdots s_{n-1}s_n$ is not futuristic. 

Let $J=\{1,\ldots,n-2\}$. We can view the standard parabolic subgroup $W_J$ of $W$ as the symmetric group $\mathfrak S_{n-1}$ (where $s_i$ is the transposition $(i\,\,i+1)$). For $0\leq j\leq n-3$, let \[z_j=(s_{j}s_{j+1})(s_{j-1}s_j)\cdots (s_2s_3)(s_1s_2) \in W_J.\] Note that $z_0=\id$. Using commutation moves, we can also write 
\begin{equation}\label{eq:z_j_alternative}
z_{j}=(s_j\cdots s_2s_1)(s_{j+1}\cdots s_3s_2).
\end{equation}
The one-line notation of the permutation $z_{n-3}s_1$ is $(n-1,n-2,1,2,3,\ldots,n-3)$. Consequently, the right inversions of $z_{n-3}s_1$ are $s_1$ and the transpositions of the form $(i\,\,j)$ with $1\leq i\leq 2<j\leq n-1$. 

Consider the reflection $t^*=s_0s_1s_0$. Let \[\mathscr{K}=\{s_{1},\,z_{n-3}s_1,\,\wo(\{n-2,n-1\})z_{n-3},\,\wo(\{n-2,n\})z_{n-3}\},\] and write $\mathscr Kt^*=\{vt^*:v\in\mathscr K\}$. Let us fix our convex set $\LL$ to be the convex hull of $\mathscr K\cup\mathscr Kt^*$. 
If we write $\LL t^*=\{vt^*:v\in\LL\}$, then $\LL=\LL t^*$. Note that if $t \neq s_1$ is a right inversion of some $w\in\mathscr K\cup\mathscr Kt^*$, then $\HH_{\beta_t}$ is a window because $w\in H_{\beta_t}^-$ and $s_{1}\in H_{\beta_t}^+$. For each $v \in \mathscr K$, one can check that $\ell(vt^*)=\ell(v)+\ell(t^*)$; this implies that 
\begin{equation}\label{eq:Invt*}
\Inv(vt^*)=\Inv(t^*)\cup\{t^*tt^*:t\in\Inv(v)\}=\{s_0,t^*,s_1s_0s_1\}\cup\{t^*tt^*:t\in\Inv(v)\}.
\end{equation} 
Recall the notation $[\xi\mid\xi']_d$ from \eqref{eq:mid_notation}. It will be helpful to keep in mind that we have the reduced expressions
\begin{align}
\wo(\{{n-2},{n-1}\})z_{n-3}&=[s_{n-1}\mid s_{n-2}]_{b-2}(s_{n-3}s_{n-4}\cdots s_1)(s_{n-1}s_{n-2}\cdots s_1) \label{eq:woz1}\\ &=[s_{n-2}\mid s_{n-1}]_{b-2}(s_{n-2}s_{n-3}\cdots s_1)(s_{n-1}s_{n-2}\cdots s_2) \label{eq:woz1'}
\end{align}
and 
\begin{align}
\wo(\{{n-2},{n}\})z_{n-3}&=[s_{n}\mid s_{n-2}]_{b'-2}(s_{n-3}s_{n-4}\cdots s_1)(s_{n}s_{n-2}\cdots s_1) \label{eq:woz2} \\ &=[s_{n-2}\mid s_{n}]_{b'-2}(s_{n-2}s_{n-3}\cdots s_1)(s_{n}s_{n-2}\cdots s_2) \label{eq:woz2'}
\end{align}
(with parentheses added for clarity). This can be seen by applying braid moves. For example, if $n=6$ and $b=5$, then \eqref{eq:woz1} follows from the computation \begin{align*}
\wo(\{{n-2},{n-1}\})z_{n-3}&=s_4s_5s_4s_5{\color{darkred}s_4s_3s_4}s_2s_3s_1s_2 \\ 
&=s_4s_5s_4s_5s_3s_4{\color{darkred}s_3s_2s_3}s_1s_2 \\ 
&=s_4s_5s_4s_5s_3s_4s_2s_3{\color{darkred}s_2s_1s_2} \\ 
&=s_4s_5s_4s_5s_3s_4s_2{\color{darkred}s_3s_1}s_2s_1 \\ 
&=s_4s_5s_4s_5s_3{\color{darkred}s_4s_2s_1}s_3s_2s_1 \\
&=s_4s_5s_4{\color{darkred}s_5s_3s_2s_1}s_4s_3s_2s_1 \\
&=s_4s_5s_4s_3s_2s_1s_5s_4s_3s_2s_1,
\end{align*}
while \eqref{eq:woz1'} follows from the computation 
\begin{align*}
\wo(\{{n-2},{n-1}\})z_{n-3}&=s_5s_4s_5s_4s_5s_3s_4s_2{\color{darkred}s_3s_1}s_2 \\
&=s_5s_4s_5s_4s_5s_3{\color{darkred}s_4s_2s_1}s_3s_2 \\
&=s_5s_4s_5s_4{\color{darkred}s_5s_3s_2s_1}s_4s_3s_2 \\
&=s_5s_4s_5s_4s_3s_2s_1s_5s_4s_3s_2.
\end{align*}

The reflection $s_1$ is a right inversion of both $s_1$ and $z_{n-3}s_1$. It follows from \eqref{eq:woz1} and \eqref{eq:woz2} that $s_1$ is also a right inversion of $\wo(\{{n-2},{n-1}\})z_{n-3}$ and of $\wo(\{{n-2},{n}\})z_{n-3}$. For each $v\in\mathscr K$, we have just shown that $s_1\in\Inv(v)$, so it follows from \eqref{eq:Invt*} that ${s_1=t^*s_1t^*\in\Inv(vt^*)}$. We deduce that $\mathscr K\cup\mathscr Kt^*\subseteq H_{\alpha_1}^-$ and hence $\LL\subseteq H_{\alpha_1}^-$. Note that $\id$ belongs to a proper stratum since $\id\in H_{\alpha_1}^+$. We will show that $\Pro_c^{2n-2}(\id)=\id$, which will prove that $c$ is not futuristic. Because $\LL=\LL t^*$, it follows from the definition of the noninvertible Bender--Knuth toggles (\cref{def:toggles}) that $\tau_i(u t^*) = \tau_i(u) t^*$ for all $u\in W$ and $i\in I$. This implies that $\Pro_c^k(u)u^{-1}=\Pro_c^k(ut^*)(ut^*)^{-1}$ for all $u\in W$ and $k\geq 0$. Setting $u=\id$ and $k=n-1$, we find that $\Pro_c^{n-1}(\id)t^*=\Pro_c^{n-1}(t^*)$. We will actually prove that 
\begin{align}
&\Pro_c(z_j)=z_{j+1} \quad\text{for all }0\leq j\leq n-4, \label{eq:BnStep1} \\ 
&\Pro_c(z_{n-3})=s_ns_{n-1}\cdots s_3s_2s_0, \label{eq:BnStep2} \\ 
&\Pro_c(s_ns_{n-1}\cdots s_3s_2s_0)=t^*; \label{eq:BnStep3}
\end{align}
since $\id=z_0$, this will immediately imply that $\Pro_c^{n-1}(\id)=t^*$, from which it will follow that \[\Pro_c^{2n-2}(\id)=\Pro_c^{n-1}(t^*)=\Pro_c^{n-1}(\id)t^*=(t^*)^2=\id,\] as desired. (The reader may find it helpful to consult \cref{exam:tilde_Bn_ancient} while reading the remainder of the proof.)

We begin by proving \eqref{eq:BnStep2} and \eqref{eq:BnStep3}.
Let \[\gamma_1=z_{n-3}^{-1}s_{n}s_{n-1}\alpha_{n-2}.\] Because $s_ns_{n-1}\alpha_{n-2}\in\Phi^+\setminus\Phi_J$ and $z_{n-3}^{-1}\in W_J$, we know that $\gamma_1\in\Phi^+$. Note that each element of $\mathscr K\cup\mathscr Kt^*$ belongs to either $W_{I\setminus\{n-1\}}$ or $W_{I\setminus\{n\}}$; since the reflection $r_{\gamma_1}=z_{n-3}^{-1}s_ns_{n-1}s_{n-2}s_{n-1}s_nz_{n-3}$ does not belong to either of these parabolic subgroups, it is not a right inversion of any element of $\mathscr K\cup\mathscr Kt^*$. This shows that $\mathscr K\cup\mathscr Kt^*\subseteq H_{\gamma_1}^+$, so 
\begin{equation}
\LL\subseteq H_{\gamma_1}^+.
\end{equation}

Using \eqref{eq:z_j_alternative}, we find that $z_{n-3}^{-1}\alpha_{n}=s_2s_3\cdots s_{n-2}\alpha_{n}$. This implies that when we apply $\tau_n$ to $z_{n-3}$, we cross through $\HH_{s_2s_3\cdots s_{n-2}\alpha_n}$, which is a window because it corresponds to a right inversion of the element $\wo(\{{n-2},n\})z_{n-3}$ (by \eqref{eq:woz2'}). Likewise, when we apply $\tau_{n-1}$ to $s_nz_{n-3}$, we cross through $\HH_{s_2s_3\cdots s_{n-2}\alpha_{n-1}}$, which is a window because it corresponds to a right inversion of the element $\wo(\{{n-2},{n-1}\})z_{n-3}$ (by \eqref{eq:woz1'}). When we apply $\tau_{n-2}$ to $s_{n-1}s_nz_{n-3}$, we hit the mirror $\HH_{\gamma_1}$. When we next apply $\tau_{n-3},\tau_{n-4},\ldots,\tau_1$ (this sequence is empty if $n=3$), we cross through $\HH_{s_1s_2\cdots s_{n-3}\alpha_{n-2}},\HH_{s_1s_2\cdots s_{n-4}\alpha_{n-3}},\ldots,\HH_{s_1\alpha_2}$, which are windows because they correspond to the right inversions $(1\,\,n-1),(1\,\,n-2),\ldots,(1\,\,3)$ of $z_{n-3}s_1$. Thus, \[\tau_1\tau_2\cdots\tau_n(z_{n-3})=s_1s_2\cdots s_{n-3}s_{n-1}s_{n}z_{n-3}=s_ns_{n-1}\cdots s_3s_2.\]  When we apply $\tau_0$ to $s_ns_{n-1}\cdots s_3s_2$, we cross through $\HH_{\alpha_0}$, which is a window because it corresponds to the right inversion $s_0$ of $s_1t^*$. This proves \eqref{eq:BnStep2}.   

When we apply $\tau_n$ to $s_ns_{n-1}\cdots s_3s_2s_0$, we cross through $\HH_{s_2s_3\cdots s_{n-2}\alpha_n}$, which is a window because it corresponds to a right inversion of $\wo(\{{n-2},n\})z_{n-3}$ (by \eqref{eq:woz2'}). When we then apply $\tau_{n-1}$ to $s_{n-1}\cdots s_3s_2s_0$, we cross through $\HH_{s_2s_3\cdots s_{n-2}\alpha_{n-1}}$, which is a window because it corresponds to a right inversion of $\wo(\{{n-2},{n-1}\})z_{n-3}$ (by \eqref{eq:woz1'}). When we next apply $\tau_{n-2},\tau_{n-3},\ldots,\tau_2$ (this sequence is empty if $n=3$), we cross through $\HH_{s_2s_3\cdots s_{n-3}\alpha_{n-2}},\HH_{s_2s_3\cdots s_{n-4}\alpha_{n-3}},\ldots,\HH_{s_2\alpha_3},\HH_{\alpha_2}$, which are windows because they correspond to the right inversions $(2\,\,n-1),(2\,\,n-2),\ldots,(2\,\,3)$ of $z_{n-3}s_1$. Thus, \[\tau_2\tau_3\cdots\tau_n(s_ns_{n-1}\cdots s_3s_2s_0)=s_0.\] When we apply $\tau_1$ to $s_0$, we cross through $\HH_{s_0\alpha_1}$, which is a window because it corresponds to the right inversion $s_0s_1s_0$ of $s_1t^*$. When we apply $\tau_0$ to $s_1s_0$, we cross through $\HH_{s_0s_1\alpha_0}$, which is a window because it corresponds to the right inversion $s_1s_0s_1$ of $s_1t^*$. This proves \eqref{eq:BnStep3}. 

It remains to prove \eqref{eq:BnStep1}. Because \eqref{eq:BnStep1} is vacuously true if $n=3$, we may assume in what follows that $n\geq 4$. Let $\gamma_{2}=s_{2}s_{1}\alpha_0\in\Phi^+$, and for $3\leq i\leq n$, let $\gamma_i=\alpha_i\in\Phi^+$. We will prove that 
\begin{equation}\label{eq:Kgamma}
\mathscr K\cup\mathscr Kt^*\subseteq H_{\gamma_2}^+\cap H_{\gamma_3}^+\cap\cdots\cap H_{\gamma_n}^+.
\end{equation}
Suppose $2\leq i\leq n$ and $v\in\mathscr K$; we wish to show that $r_{\gamma_i}\not\in\Inv(v)\cup\Inv(vt^*)$. One may readily check that $t^*r_{\gamma_i}t^*=r_{\gamma_i}$ and that $r_{\gamma_i}\not\in\Inv(t^*)$. Appealing to \eqref{eq:Invt*}, we find that $r_{\gamma_i}$ is in $\Inv(v)$ if and only if it is in $\Inv(vt^*)$. Hence, it suffices to show that $r_{\gamma_i}\not\in\Inv(v)$. We may verify this directly if $3\leq i\leq n$; if $i=2$, then we simply need to observe that $v\in W_{I\setminus\{0\}}$ and $r_{\gamma_2}\not\in W_{I\setminus\{0\}}$. This proves \eqref{eq:Kgamma}. Because $\LL$ is the convex hull of $\mathscr K\cup\mathscr Kt^*$, it follows that 
\[\LL\subseteq H_{\gamma_2}^+\cap H_{\gamma_3}^+\cap\cdots\cap H_{\gamma_n}^+.\]

Suppose $0\leq j\leq n-4$. Let us compute $\Pro_c(z_j)$ by starting at $z_j$ and applying $\tau_n,\tau_{n-1},\ldots,\tau_0$ in that order. When we apply $\tau_{n},\tau_{n-1},\ldots,\tau_{j+3}$, we hit the mirrors $\HH_{\gamma_n},\HH_{\gamma_{n-1}},\ldots,\HH_{\gamma_{j+3}}$, respectively. When we apply $\tau_{j+2}$ to $z_j$, we cross through the hyperplane $\HH_{z_j^{-1}\alpha_{j+2}}=\HH_{s_2s_3\cdots s_{j+1}\alpha_{j+2}}$, which is a window because it corresponds to the right inversion $(2\,\,\, j+3)$ of $z_{n-3}s_1$. When we next apply $\tau_{j+1}$ to $s_{j+2}z_j$, we cross through the hyperplane $\HH_{z_j^{-1}s_{j+2}\alpha_{j+1}}=\HH_{s_1s_2\cdots s_{j+1}\alpha_{j+2}}$, which is a window because it corresponds to the right inversion $(1\,\,\,j+3)$ of $z_{n-3}s_1$. This shows that \[\tau_{j+1}\tau_{j+2}\cdots\tau_n(z_j)=s_{j+1}s_{j+2}z_j=z_{j+1}.\] If we now apply $\tau_{j},\tau_{j-1},\ldots,\tau_0$ to $z_{j+1}$, we hit the mirrors $\HH_{\gamma_{j+2}},\HH_{\gamma_{j+1}},\ldots,\HH_{\gamma_2}$, respectively. This establishes \eqref{eq:BnStep1} and completes the proof that $W$ is ancient. 
\end{proof}

\begin{example}\label{exam:tilde_Bn_ancient}
    Let us illustrate the proof of \cref{prop:tilde_Bn_ancient} when $n=5$ (for arbitrary integers $b,b'\geq 3$). In this case, 
we have \[\mathscr{K}=\{s_1,\,{\color{Traj4}z_{2}s_1},\,{\color{Traj1}\wo(\{3,4\})z_2},\,{\color{Traj5}\wo(\{{3},{5}\})z_2}\}\] and \[\mathscr{K}t^*=\{{\color{red}s_1t^*},\,z_{2}s_1t^*,\,\wo(\{3,4\})z_2t^*,\,\wo(\{3,5\})z_2t^*\},\] where $z_2=s_3s_4s_2s_3$ and $t^*=s_0s_1s_0$. Then $\LL$ is the convex hull of $\mathscr K\cup\mathscr Kt^*$. Fix the ordering $5,4,3,2,1,0$ of $I$. The beginning of the billiards trajectory starting at $\id$ is  
\[\begin{array}{l}\includegraphics[height=2.443cm]{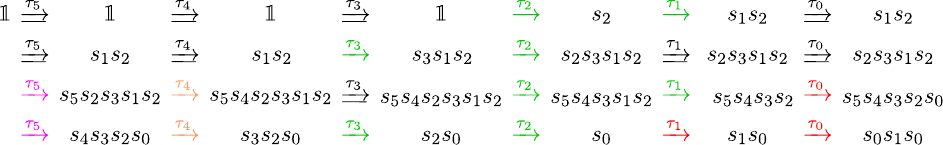}.
\end{array}\]
We have underlined each arrow associated to a toggle that hits a mirror (this is meant to resemble an equals sign). We have colored each arrow that passes through a window $\HH_{\beta_t}$ with the same color as one of the elements $w\in\mathscr K\cup\mathscr Kt^*$ such that $t\in\Inv(w)$. For example, the arrow corresponding to $\tau_3$ in the second row is colored {\color{Traj4}green} because it passes through the window $\HH_{\beta_{t}}$, where ${t=s_2s_3s_2\in\Inv({\color{Traj4}z_2s_1})}$. 
\end{example}  

\begin{proposition}\label{prop:tilde_Dn_ancient}
Fix $n\geq 3$, and let $a,a',b,b'\geq 3$ be integers. The Coxeter group $W$ with Coxeter graph
\[\begin{array}{l}
\includegraphics[height=1.564cm]{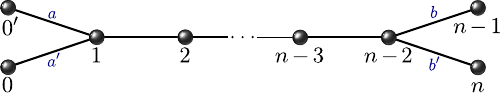}
\end{array}\]
is ancient. 
\end{proposition}

\begin{proof}
The proof is similar to that of \cref{prop:tilde_Bn_ancient}, so we will omit some details. The Coxeter graph of $W$ is a tree, so it suffices (by \cref{cor:tree}) to prove that the Coxeter element ${c=s_ns_{n-1}\cdots s_1s_0s_{0'}}$ is not futuristic. 

For $0\leq j\leq n-3$, let \[z_j=(s_js_{j+1})\cdots (s_2s_3)(s_1s_2).\] Let us fix our convex set $\LL$ to be the convex hull of the set \[\mathscr{K}=\{s_{1},\,z_{n-3}s_1,\,\wo(\{{n-2},{n-1}\})z_{n-3},\,\wo(\{{n-2},{n}\})z_{n-3},\,\wo(\{{0},1\}),\,\wo(\{{0'},1\})\}.\] 
If $t \neq s_1$ is a right inversion of some $w\in\mathscr K$, then $\HH_{\beta_t}$ is a window because $w\in H_{\beta_t}^-$ and $s_1\in H_{\beta_t}^+$. For example, $\HH_{\alpha_0}$ and $\HH_{\alpha_{0'}}$ are windows because $s_0$ and $s_{0'}$ are right inversions of $\wo(\{{0},1\})$ and $\wo(\{{0'},1\})$, respectively. 

We can readily check that $\LL\subseteq H_{\alpha_1}^-$. Because $\id\in H_{\alpha_1}^+$, we know that $\id$ belongs to a proper stratum. We will show that $\Pro_c^{n-1}(\id)=\id$, which will prove that $c$ is not futuristic. To do so, we will actually prove that
\begin{align}
&\Pro_c(z_j)=z_{j+1} \quad\text{for all }0\leq j\leq n-4, \label{eq:DnStep1} \\ 
&\Pro_c(z_{n-3})=s_ns_{n-1}\cdots s_3s_2s_0s_{0'}, \label{eq:DnStep2} \\ 
&\Pro_c(s_ns_{n-1}\cdots s_3s_2s_0s_{0'})=\id. \label{eq:DnStep3}
\end{align}

An argument very similar to the one used to prove \eqref{eq:BnStep2} in the proof of \cref{prop:tilde_Bn_ancient} yields the identity $\tau_0\tau_1\cdots\tau_{n-1}\tau_n(z_{n-3})=s_ns_{n-1}\cdots s_3s_2s_0$. Moreover, if we apply $\tau_{0'}$ to $s_ns_{n-1}\cdots s_3s_2s_0$, then we cross through the window $\HH_{\alpha_{0'}}$ to reach $s_ns_{n-1}\cdots s_3s_2s_0s_{0'}$. This proves \eqref{eq:DnStep2}. 

Let us now prove \eqref{eq:DnStep3}. Note that we still have the reduced expressions \eqref{eq:woz1'} and \eqref{eq:woz2'} for $\wo(\{{n-2},{n-1}\})z_{n-3}$ and $\wo(\{{n-2},{n}\})z_{n-3}$, respectively. When we apply $\tau_n$ to the element $s_ns_{n-1}\cdots s_3s_2s_0s_{0'}$, we cross through $\HH_{s_2s_3\cdots s_{n-2}\alpha_n}$, which is a window because it corresponds to a right inversion of $\wo(\{{n-2},n\})z_{n-3}$ (by \eqref{eq:woz2'}). When we then apply $\tau_{n-1}$ to $s_{n-1}\cdots s_3s_2s_0s_{0'}$, we cross through $\HH_{s_2s_3\cdots s_{n-2}\alpha_{n-1}}$, which is a window because it corresponds to a right inversion of $\wo(\{{n-2},{n-1}\})z_{n-3}$ (by \eqref{eq:woz1'}). When we next apply $\tau_{n-2},\tau_{n-3},\ldots,\tau_2$ (this sequence is empty if $n=3$), we cross through $\HH_{s_2s_3\cdots s_{n-3}\alpha_{n-2}},\HH_{s_2s_3\cdots s_{n-4}\alpha_{n-3}},\ldots,\HH_{s_2\alpha_3},\HH_{\alpha_2}$, which are windows because they correspond to the right inversions $(2\,\,n-1),(2\,\,n-2),\ldots,(2\,\,3)$ of $z_{n-3}s_1$ (we are again viewing $W_{\{1,\ldots,n-2\}}$ as $\mathfrak S_{n-1}$). Thus, \[\tau_2\tau_3\cdots\tau_n(s_ns_{n-1}\cdots s_3s_2s_0s_{0'})=s_0s_{0'}.\] 
Notice that $\LL \subseteq H_{s_{0'} s_0 \alpha_1}^+$ since each element of $\mathscr{K}$ is in either $W_{I\setminus\{0\}}$ or $W_{I\setminus\{0'\}}$. Moreover, $s_0s_{0'} \in H_{s_{0'} s_0 \alpha_1}^+$ since $s_{0'} s_0 s_1 s_0 s_{0'}$ is not a right inversion of $s_0 s_{0'}$.  Hence, when we apply $\tau_1$ to $s_0s_{0'}$, we hit $\HH_{s_{0'}s_0\alpha_1}$, which is a mirror. When we apply $\tau_0$ to $s_0s_{0'}$, we cross through the window $\HH_{\alpha_0}$ to reach $s_{0'}$. Finally, when we apply $\tau_{0'}$ to $s_{0'}$, we cross through the window $\HH_{\alpha_{0'}}$ to reach $\id$. This proves \eqref{eq:DnStep3}.

It remains to prove \eqref{eq:DnStep1}. Because \eqref{eq:DnStep1} is vacuously true if $n=3$, we may assume in what follows that $n\geq 4$. 
Consider the positive roots \[\gamma_{2}=s_{2}s_{1}\alpha_0\quad\text{and}\quad\gamma_2'=s_2s_1\alpha_{0'},\] and for $3\leq i\leq n$, let $\gamma_i=\alpha_i$. An argument very similar to the one used in the proof of \cref{prop:tilde_Bn_ancient} shows that \[\LL\subseteq H_{\gamma_2}^+\cap H_{\gamma_{2}'}^+\cap H_{\gamma_3}^+\cap\cdots\cap H_{\gamma_n}^+.\] 
By employing an argument very similar to the one used to establish \eqref{eq:BnStep1} in the proof of \cref{prop:tilde_Bn_ancient}, we find that $\tau_0\tau_1\cdots\tau_{n-1}\tau_n(z_j)=z_{j+1}$ for all $0\leq j\leq n-4$. Moreover, if we apply $\tau_{0'}$ to $z_{j+1}$, we hit the mirror $\HH_{\gamma_2'}$. This establishes \eqref{eq:DnStep1} and completes the proof that $W$ is ancient. 
\end{proof}

\begin{example}\label{exam:tilde_Dn_ancient}
Let us illustrate the proof of \cref{prop:tilde_Dn_ancient} when $n=5$ (for arbitrary integers $a,a',b,b'\geq 3$). In this case, $\LL$ is the convex hull of the set \[\mathscr{K}=\{s_1,\,{\color{Traj4}z_{2}s_1},\,{\color{Traj1}\wo(\{3,4\})z_2},\,{\color{Traj5}\wo(\{{3},{5}\})z_2},\,{\color{Traj2}\wo(\{0,1\})},\,{\color{Traj3}\wo(\{{0'},1\})}\},\] where $z_2=s_3s_4s_2s_3$. Fix the ordering $5,4,3,2,1,0,0'$ of $I$. The billiards trajectory starting at $\id$ begins with 
\[\begin{array}{l}\includegraphics[width=.98\linewidth]{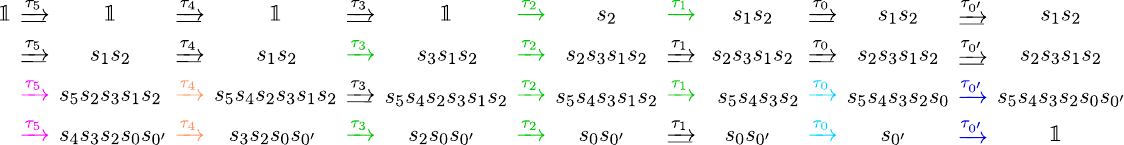}
\end{array}\]
and then continues to repeat periodically. We have underlined each arrow associated to a toggle that hits a mirror. We have colored each arrow that passes through a window $\HH_{\beta_t}$ with the same color as one of the elements $w\in\mathscr K$ such that $t\in\Inv(w)$. 
\end{example}  

\begin{proposition}
The Coxeter group with Coxeter graph 
\[\begin{array}{l}\includegraphics[height=1.56cm]{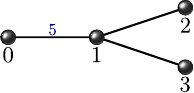}\end{array}\]
is ancient. 
\end{proposition}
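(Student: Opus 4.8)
The Coxeter graph of $W$ is a tree, so by \cref{cor:tree} it suffices to exhibit a single Coxeter element $c$ of $W$ that is not futuristic, and I will follow the template set by the proofs of \cref{prop:tilde_Bn_ancient,prop:tilde_Dn_ancient}. The plan is: first, fix a convenient ordering of $I$ (hence a Coxeter element $c$) and guess a candidate periodic orbit $\id = u_0, u_1, u_2, \ldots$ of $\Pro_c$; the machinery of \cref{sec:small} tells us this orbit should be a lift of a billiards-plausible closed walk in the small-root billiards graph $\mathbf{G}_W$, which narrows the search down to a short list of possibilities. Second, write down an explicit finite set $\mathscr{K} \subseteq W$ --- built from long elements $\wo(J)$ of small parabolic subgroups, ``snake'' products analogous to the elements $z_j$ of the previous two proofs, and possibly conjugates by a fixed reflection $t^*$ --- and let $\LL$ be the convex hull of $\mathscr{K}$. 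Third, verify $\id \notin \LL$. Fourth, check step by step that $\Pro_c^K(\id) = \id$ for the relevant (small, graph-dependent) exponent $K$; this shows that $\LL$ is not heavy with respect to $c$, so $c$ is not futuristic, so $W$ is ancient.

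The crux is the choice of $\mathscr{K}$ in the second step: it must be chosen so that the convex hull $\LL$ turns precisely the intended hyperplanes into one-way mirrors. Using the reflection-theoretic description of the toggles recalled at the start of \cref{sec:ancient}, a hyperplane $\HH_{\beta_t}$ is a window if and only if some element of $\mathscr{K} \cup \mathscr{K}t^*$ has $t$ as a right inversion while some fixed reference element (a single simple reflection $s_i$, or $\id$ itself) does not. So for each hyperplane the candidate trajectory should cross I need to name an element of $\mathscr{K}$ that has the corresponding reflection as an inversion, and for each hyperplane where a toggle should stall I need $\LL$ to lie entirely on the near side. If the tree admits a symmetry realized by conjugation by a reflection $t^*$ (as in \cref{prop:tilde_Bn_ancient}, where $t^* = s_0 s_1 s_0$), choosing $\mathscr{K}$ so that $\LL = \LL t^*$ lets me invoke the identity $\tau_i(u t^*) = \tau_i(u) t^*$ to recover the second half of the orbit from the first for free. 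The third step is routine: one exhibits a root $\beta$ --- usually a simple root $\alpha_i$ --- with $\LL \subseteq H_\beta^-$ but $\id \in H_\beta^+$, so that $\id$ lies in a proper stratum and any periodic orbit through $\id$ witnesses non-futuristicity.

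I expect the main obstacle to be verifying that the convex hull $\LL$ produces \emph{exactly} the intended one-way mirrors, with no spurious extra mirrors and no missing windows. This reduces to careful bookkeeping of the inversion sets of the elements of $\mathscr{K}$ (and of their $t^*$-conjugates, via the length-additivity $\ell(v t^*) = \ell(v) + \ell(t^*)$ and the resulting formula $\Inv(v t^*) = \Inv(t^*) \cup \{t^* t t^* : t \in \Inv(v)\}$), together with explicit reduced words for the long elements of the rank-$2$ parabolic subgroups sitting along the branches of the tree, obtained by sequences of braid moves exactly as in \eqref{eq:woz1}, \eqref{eq:woz1'}, \eqref{eq:woz2}, \eqref{eq:woz2'}. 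Once $\mathscr{K}$ is correct, the verification in the fourth step that each of the $K \cdot |I|$ toggles behaves as required is long but mechanical: each toggle either fixes the current element (hitting a mirror) or left-multiplies by a simple reflection (passing through a window), and one simply reads off the resulting walk and checks that it closes up at $\id$. The only genuinely new ingredient beyond \cref{prop:tilde_Bn_ancient,prop:tilde_Dn_ancient} should be the accounting required to make the ``snake'' argument run simultaneously along the several arms emanating from the branch vertex (or vertices) of the tree, so that the arms interact correctly at that vertex.
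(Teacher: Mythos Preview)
Your template is correct and is exactly the paper's approach, but you have overprepared for the wrong difficulty. This particular graph has only four vertices (the paper labels them $0,1,2,3$), so there is no long path to traverse and hence no ``snake'' products $z_j$ whatsoever; the anticipated ``genuinely new ingredient'' of coordinating snakes along several arms is a red herring here. The paper's data are simply
\[
c = s_0 s_1 s_2 s_3,\qquad t^* = s_0 s_1 s_0,\qquad \mathscr{K} = \{\,s_1,\ \wo(\{0,1,2\}),\ \wo(\{0,1,3\})\,\},
\]
with $\LL$ the convex hull of $\mathscr{K}\cup\mathscr{K}t^*$; one checks $\id\notin\LL$ and $\Pro_c^{4}(\id)=\id$. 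The paper does not even carry out your step~four in print --- it defers the toggle-by-toggle verification to a direct hand-or-computer check --- so the whole proof is three lines once the data are named.

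The genuine gap in your proposal is therefore not a wrong idea but the absence of any commitment: you never name $(c,t^*,\mathscr{K},K)$, and that choice \emph{is} the proof. Everything else you wrote (the $t^*$-symmetry trick $\tau_i(ut^*)=\tau_i(u)t^*$, the inversion-set bookkeeping, using a simple root to witness $\id\notin\LL$) is correct and relevant, but for a graph this small it collapses to a short computation rather than the extended inductive argument you are bracing for.
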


\begin{proof}
By \cref{cor:tree}, it suffices to show that the Coxeter element $c=s_0s_1s_2s_3$ is not futuristic. Consider the reflection $t^*=s_0s_1s_0$. Let \[\mathscr K=\{s_1,\,\wo(\{0,1,2\}),\,\wo(\{0,1,3\})\},\] and write $\mathscr Kt^*=\{vt^*:v\in\mathscr K\}$. Let us fix our convex set $\LL$ to be the convex hull of $\mathscr K\cup\mathscr Kt^*$. One can check directly (by hand or by computer) that $\id\not\in\LL$ and that $\Pro_c^4(\id)=\id$. Hence, $c$ is not futuristic. 
\end{proof}

In the following proposition, we consider the affine Coxeter groups $\widetilde E_6$, $\widetilde E_7$, $\widetilde E_8$, $\widetilde F_4$, whose Coxeter graphs are 
\[\includegraphics[height=1.273cm]{BKTogglesPIC24}\,\, ,\]

\[\includegraphics[height=1.273cm]{BKTogglesPIC23}\,\, ,\]

\[\includegraphics[height=1.273cm]{BKTogglesPIC45}\,\,,\]

\[\includegraphics[height=0.373cm]{BKTogglesPIC22}\,\, ,\]
respectively. 

\begin{proposition} 
The Coxeter groups $\widetilde E_6$, $\widetilde E_7$, $\widetilde E_8$, $\widetilde F_4$ are ancient. 
\end{proposition}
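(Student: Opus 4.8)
The plan is to combine the tree dichotomy of \cref{cor:tree} with the folding result of \cref{prop:folding}, thereby reducing everything to two explicit finite verifications. Since the Coxeter graphs of $\widetilde E_6$, $\widetilde E_7$, $\widetilde E_8$, and $\widetilde F_4$ are all trees, \cref{cor:tree} says that each of these groups is either futuristic or ancient; thus it suffices to show that none of them is futuristic. By \cref{exam:folding}, $\widetilde F_4$ is a folding of $\widetilde E_6$ and also a folding of $\widetilde E_7$, so \cref{prop:folding}, read contrapositively, shows that if $\widetilde F_4$ is not futuristic then neither $\widetilde E_6$ nor $\widetilde E_7$ is futuristic. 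The proposition therefore reduces to proving that $\widetilde F_4$ and $\widetilde E_8$ are not futuristic. (One cannot simplify further via \cref{prop:hereditary}: every proper standard parabolic subgroup of an irreducible affine Coxeter group is finite, hence futuristic by \cref{thm:finite_futuristic}.)

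For each of $W=\widetilde F_4$ and $W=\widetilde E_8$, the goal is to exhibit a Coxeter element $c$ that is not futuristic, i.e., a nonempty convex set $\LL\subseteq W$, an element $u_0\in W\setminus\LL$, and an integer $K\geq 1$ with $\Pro_c^K(u_0)=u_0$. Following the strategy described at the beginning of \cref{sec:ancient} and the model of \cref{prop:tilde_Bn_ancient,prop:tilde_Dn_ancient}, one first fixes an ordering of $I$ (hence a reduced word $\sfc$ and the element $c$) and searches for a billiards-plausible closed walk in the small-root billiards graph ${\bf G}_W$; lifting such a walk supplies the candidate periodic sequence $u_0,u_1,u_2,\ldots$ (one may take $u_0=\id$). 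One then defines $\LL$ to be the convex hull of a small explicit set $\mathscr K\subseteq W$, possibly closed up under right multiplication by a well-chosen reflection $t^*$ so that $\LL=\LL t^*$ (as in \cref{prop:tilde_Bn_ancient}); the set $\mathscr K$ is engineered so that every hyperplane crossed along the trajectory is a window and the hyperplanes that would be needed to leave the trajectory are one-way mirrors. Verifying that $\LL$ is convex, that $u_0\notin\LL$, and that the toggles act on $u_0,u_1,\ldots$ as claimed is then a finite computation, carried out by hand or by computer exactly as in the proof of the preceding proposition. Once $\widetilde F_4$ and $\widetilde E_8$ are shown to be not futuristic, \cref{cor:tree} upgrades this to ancientness for all four groups.

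The main obstacle is the bookkeeping in these two direct verifications, especially the $\widetilde E_8$ case, where the rank is $9$, so the defining set $\mathscr K$ and the period $K$ of the trajectory are larger and the reduced-word manipulations (in the spirit of the braid-move computations displayed in \cref{prop:tilde_Bn_ancient}) are more involved. The delicate points are: (i) choosing $\mathscr K$ so that its convex hull contains, for each window $\HH_{\beta_t}$ that the trajectory crosses, some element having $t$ as a right inversion and some element not having $t$ as a right inversion, while still ensuring $u_0\notin\LL$ (which amounts to producing a single root in $\RR(\LL)$ that $u_0$ sends to a negative root); and (ii) checking, step by step, that the separating hyperplane $\HH_{u_{j-1}^{-1}\alpha_{i_j}}$ is a one-way mirror precisely at the steps where $u_j=u_{j-1}$ and a window where $u_j=s_{i_j}u_{j-1}$. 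Both (i) and (ii) reduce to computing inversion sets of finitely many explicitly given elements of $W$, so they are routine in principle, but executing them correctly is where the real work lies.
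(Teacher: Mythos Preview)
Your reduction is exactly the paper's: invoke \cref{cor:tree} so that ``not futuristic'' suffices, then use \cref{exam:folding} and \cref{prop:folding} to reduce $\widetilde E_6$ and $\widetilde E_7$ to $\widetilde F_4$, leaving only $\widetilde F_4$ and $\widetilde E_8$ to handle directly. The one noteworthy difference is in how the convex set $\LL$ is produced for the two remaining cases. You propose to follow the template of \cref{prop:tilde_Bn_ancient,prop:tilde_Dn_ancient}, taking $\LL$ to be the convex hull of a carefully chosen finite set $\mathscr K$ (possibly symmetrized under a reflection $t^*$). The paper instead writes $\LL$ directly as a finite intersection of half-spaces, e.g.\ for $\widetilde F_4$ it takes $\LL=H_\beta^-\cap H_{\gamma_1}^+\cap H_{\gamma_2}^+\cap H_{\gamma_3}^+$ for four explicitly named roots and checks $\Pro_c^3(\id)=\id$; an analogous intersection of nine half-spaces works for $\widetilde E_8$, again with $\Pro_{c'}^3(\id)=\id$. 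This buys a cleaner verification: convexity is automatic, the condition $\id\notin\LL$ is read off from a single half-space $H_\beta^-$, and at each step of the trajectory one only has to test $u_{j-1}^{-1}\alpha_{i_j}$ against the handful of listed roots rather than against the inversion sets of the elements of $\mathscr K$. Your approach would also work, but the bookkeeping you flag as the main obstacle is largely avoided by the paper's formulation.
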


\begin{proof}
According to \cref{cor:tree}, we need only show that these Coxeter groups are not futuristic. We saw in \cref{exam:folding} that $\widetilde F_4$ is a folding of $\widetilde E_6$ and is also a folding of $\widetilde E_7$. Therefore, by \cref{prop:folding}, it suffices to show that $\widetilde E_8$ and $\widetilde F_4$ are not futuristic. 

Let us start with $\widetilde F_4$. Let $0,1,2,3,4$ be the vertices of the Coxeter graph of $\widetilde F_4$, listed from left to right (in the figure drawn above), and let $c=s_0s_1s_2s_3s_4$.
Consider the positive roots
\[ \beta = s_2\alpha_3,\quad \gamma_1 = s_0s_1\alpha_2,\quad \gamma_2 = s_1s_3\alpha_2,\quad \gamma_3 = s_4s_3\alpha_2, \]
and let 
\[ \LL = H_{\beta}^- \cap H_{\gamma_1}^+ \cap H_{\gamma_2}^+ \cap H_{\gamma_3}^+. \]
Evidently, $\id$ is not in $\LL$ since $\id$ is not in $H_{\beta}^-$. One can check (by hand or by computer) that 
$\Pro_c^{3}(\id)=\id$.
Hence, $c$ is not futuristic.   

We now prove that $\widetilde E_8$ is not futuristic. Let us identify the vertices of the Coxeter graph with $0,1,\ldots,8$ as follows: \[\begin{array}{l}\includegraphics[height=1.595cm]{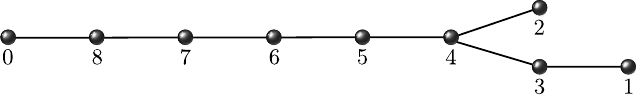}\end{array}.\] 
Let $c'=s_0s_1s_2s_3s_4s_5s_6s_7s_8$. Consider the positive roots
\[ \beta = s_7s_6s_4s_3s_2s_5\alpha_4,\quad \gamma_1=\alpha_0,\quad \gamma_2=\alpha_2,\quad \gamma_3=s_1s_3\alpha_4,\quad \gamma_4 = s_5s_6\alpha_7\]
\[  \gamma_5 = s_3s_4\alpha_5,\quad \gamma_6 = s_6s_7\alpha_8, \quad \gamma_7 = s_4s_5\alpha_6,\quad \gamma_8=s_7\alpha_8, \]
and let
\[ \LL = H_{\beta}^- \cap H_{\gamma_1}^+ \cap H_{\gamma_2}^+ \cap H_{\gamma_3}^+ \cap H_{\gamma_4}^+ \cap H_{\gamma_5}^+ \cap H_{\gamma_6}^+ \cap H_{\gamma_7}^+ \cap H_{\gamma_8}^+. \]
Evidently, $\id$ is not in $\LL$ since $\id$ is not in $H_{\beta}^-$. One can check (by hand or by computer) that 
$\Pro_{c'}^{3}(\id)=\id$.
Hence, $c'$ is not futuristic.  
\end{proof}

\section{Classical Billiards (Light Beams)}\label{sec:linear}
We now briefly discuss how our Bender--Knuth billiards systems relate to more classical billiards systems studied in dynamics (as in \cite{Hasselblatt, Kozlov, McMullen, Tabachnikov}).

As before, let $(W,S)$ be a Coxeter system whose simple reflections are indexed by a finite set $I$; we will assume that $|I|\geq 2$. Recall that $\BB W$ is the Tits cone of $W$. A \dfn{ray} is a continuous function $\rr \colon \mathbb{R}_{\geq 0} \to \BB W$ such that $\rr(t) \neq 0$ for all $t \in \mathbb{R}_{\geq 0}$, the positive projectivization $\overline{\rr} \colon \mathbb{R}_{\geq 0} \to \mathbb{P}(\BB W)$ is locally injective, and the image of $\rr$ is contained in a 2-dimensional subspace of $V^*$.

\begin{example}
    If $W$ is finite, then $\BB W = V^*$. In this case, for any two linearly independent vectors $\gamma, \gamma' \in V^*$, there is a ray $\rr \colon \mathbb{R}_{\geq 0} \to \BB W$ defined by \[\rr(t) = \cos(t) \gamma + \sin(t) \gamma'.\]
\end{example}
\begin{definition}
Let us say that a continuous, piecewise linear curve $\qq \colon \mathbb{R}_{\geq 0} \to \BB W$ is \dfn{cordial} if it satisfies the following conditions:
\begin{enumerate}[(i)]
\item There are infinitely many real numbers $t \in \mathbb{R}_{\geq 0}$ such that $\qq(t)\in\bigcup_{\HH\in\mathcal H_W}\HH$.
\item For any fixed real number $b$, there are only finitely many real numbers $t$ with $0 \leq t \leq b$ such that $\qq(t)\in\bigcup_{\HH\in\mathcal H_W}\HH$. 
\item \label{item:pass-through-only-orthogonal} For all $\beta, \beta' \in \Phi$ such that $B(\beta, \beta') \neq 0$, the image of $\qq$ does not intersect $\HH_\beta \cap \HH_{\beta'}$.
\end{enumerate}
\end{definition}

Let $\rr \colon \mathbb{R}_{\geq 0} \to \BB W$ be a cordial ray such that $\rr(0)\not\in\bigcup_{\HH\in\mathcal H_W}\HH$. Consider the open regions of the Coxeter arrangement through which the curve $\rr$ passes. We obtain a sequence $v_0, v_1, v_2, \ldots$ of elements of $W$ and a sequence $0 = t_0 < t_1 < t_2 < \cdots$ of real numbers with the property that for all nonnegative integers $k$ and all real numbers $t$ with $t_{k} < t < t_{k+1}$, we have $\qq(t) \in \BB^\circ v_k$, where $\BB^\circ$ denotes the interior of $\BB$. Although $\rr$ is a continuous curve, the regions $\BB v_{k-1}$ and $\BB v_{k}$ can be nonadjacent in the Coxeter arrangement since $\rr(t_{k})$ can lie on multiple hyperplanes of $\mathcal{H}_W$. Put differently, it is possible that $v_{k} v_{k-1}^{-1}$ is not a simple reflection. However, \eqref{item:pass-through-only-orthogonal} guarantees that for each integer $k \geq 1$, the hyperplanes in~$\mathcal{H}_W$ that contain $\rr(t_{k})$ are pairwise orthogonal. Since $\qq(t_{k})$ lies in both $\BB v_{k-1}$ and $\BB v_{k}$, we conclude that $v_{k} v_{k-1}^{-1}$ is a product of distinct simple reflections that commute pairwise. Let $J_k(\rr)$ denote the set of the indices of these simple reflections. Then $J_k(\rr)$ is an independent set (i.e., a set of pairwise nonadjacent vertices) of the Coxeter graph $\Gamma_W$.

\begin{definition}
    Let $i_1, \ldots, i_n$ be an ordering of the elements of $I$, and let $i_1, i_2, i_3, \ldots$ be the infinite sequence satisfying $i_{j+n} = i_j$ for every positive integer $j$. We say that the ordering $i_1, \ldots, i_n$ is \dfn{luminous} if there exists a cordial ray $\rr \colon \mathbb{R}_{\geq 0} \to \BB W$ such that the sequence $i_1, i_2, i_3, \ldots$ consists of the elements of $J_1(\rr)$ in some order, followed by the elements of $J_2(\rr)$ in some order, followed by the elements of $J_3(\rr)$ in some order, and so on. In this case, we say that $\rr$ \dfn{certifies} that $i_1, \ldots, i_n$ is luminous.
\end{definition}

Let $i_1, \ldots, i_n$ be a luminous ordering of the elements of $I$, and let $i_1, i_2, i_3, \ldots$ be the infinite sequence satisfying $i_{j+n} = i_j$ for every positive integer $j$. Let $\rr$ be a cordial ray that certifies that $i_1, \ldots, i_n$ is luminous, and let $J_k = J_k(\rr)$. That is, for every $k \geq 1$, we have \[J_k = \{i_{j+1}, \ldots, i_{j + |J_{k}|}\},\] where $j = |J_1| + \cdots + |J_{k-1}|$. Let $\LL \subseteq W$ be a convex set, and let $u_0, u_1, u_2, \ldots$ be a billiards trajectory defined with respect to the ordering $i_1, \ldots, i_n$. We will now describe how to ``fold'' the ray $\rr$ into a cordial curve $\pp \colon \mathbb{R}_{\geq 0} \to \BB W$, which we call a \dfn{light beam}. The light beam $\pp$ will reflect off of the one-way mirrors of the convex set $\LL$ as in classical billiards, and we can think of the Bender--Knuth billiards trajectory $u_0, u_1, u_2, \ldots$ as a discretization of $\pp$.

For each independent set $J = \{j_1, \ldots, j_d\} \subseteq I$ of the Coxeter graph $\Gamma_W$, define 
\[s_J = s_{j_1} \cdots s_{j_d}\qquad \text{and} \qquad \tau_J = \tau_{j_1} \cdots \tau_{j_d};\]
since $J$ is independent, the order of the multiplication used to define $s_J$ does not matter, nor does the order of the composition used to define $\tau_J$.

After multiplying $\rr$ on the right by a suitable element of $W$, we may assume that $\rr(0)$ lies in the region $\BB^\circ u_0$. Recall that there exist a sequence $v_0, v_1, v_2, \ldots$ of elements of $W$ and a sequence $0 = t_0 < t_1 < t_2 < \cdots$ of positive real numbers such that $\qq(t) \in \BB^\circ v_k$ for all nonnegative integers $k$ and all real numbers $t$ with $t_{k} < t < t_{k+1}$. We have \[v_k = s_{J_k} s_{J_{k-1}} \cdots s_{J_1} u_0.\] Therefore, the ray $\rr$ passes consecutively through the open regions \[\BB^\circ u_0,\,\BB^\circ s_{J_1} u_0,\,\BB^\circ s_{J_2} s_{J_1} u_0,\,\BB^\circ s_{J_3} s_{J_2} s_{J_1} u_0, \ldots\]
of the Coxeter arrangement. The light beam $\pp$ will be constructed so that it instead passes consecutively through the open regions \[\BB^\circ u_0,\,\BB^\circ \tau_{J_1}(u_0),\,\BB^\circ \tau_{J_2} \tau_{J_1}(u_0),\,\BB^\circ \tau_{J_3} \tau_{J_2} \tau_{J_1}(u_0), \ldots.\]
Note that the elements $u_0,  \tau_{J_1}(u_0), \tau_{J_2} \tau_{J_1}(u_0), \tau_{J_3} \tau_{J_2} \tau_{J_1}(u_0), \ldots$ form a subsequence of the billiards trajectory $u_0, u_1, u_2, \ldots$.

Define the function $\pp \colon \mathbb{R}_{\geq 0} \to \BB W$ via \[\pp(t) = \rr(t) (s_{J_k} \cdots s_{J_1} u_0)^{-1} (\tau_{J_k} \cdots \tau_{J_1}(u_0))\] for all integers $k \geq 0$ and all real numbers $t$ with $t_k \leq t < t_{k+1}$. It is not difficult to check that $\pp$ is a continuous, piecewise linear, and cordial curve. The positive projectivization $\overline{\pp} \colon \mathbb{R}_{\geq 0} \to \mathbb{P}(\BB W)$ is a curve that travels along a linear path unless it hits the reflective side of a one-way mirror of $\LL$, at which point it reflects off of that one-way mirror. It is possible to equip $\mathbb{P}(\BB W)$ with a metric \cite[Section~2]{McMullen} in such a way that when the light beam $\overline{\pp}$ reflects off of a one-way mirror, the angle of incidence equals the angle of reflection. With this metric, $\mathbb{P}(\BB W)$ is isometric to a sphere if $W$ is finite, to a Euclidean space if $W$ is affine, and to a hyperbolic space $\mathbb{H}^{n - 1}$ if $W$ is hyperbolic (i.e., if the bilinear form $B \colon V \times V \to \mathbb{R}$ has signature $(n-1, 1)$, where $n = |I|$ is the rank of $W$).  For examples of light beams reflecting in this manner, see the thin cyan curves in \cref{fig:affineS3,fig:hyperbolic}.\footnote{When the thin cyan curve in \cref{fig:spherical} reflects off the hyperplane $\HH_{e_1 - e_3}$, the angle of incidence does not appear to be equal to the angle of reflection. This is because \cref{fig:spherical} is drawn not-to-scale and  angles are distorted.}

Let us highlight two settings where it is known that an ordering is luminous. Both of these settings are discussed in \cite{McMullen}.

The first setting is that in which $\Gamma_W$ is bipartite. Let $I=Q\sqcup Q'$ be a bipartition of $\Gamma_W$; we may assume that $Q$ and $Q'$ are both nonempty. Let $i_1,\ldots,i_q$ be an ordering of $Q$, and let $i_{q+1},\ldots,i_n$ be an ordering of $Q'$. The base region $\BB$ is a cone whose facets correspond to the elements of $I$. For $i\in I$, let $F_i$ be the facet of $\BB$ that separates $\BB$ from $\BB s_i$. Let $\mathcal F_Q=\bigcap_{i\in Q}F_i$ and $\mathcal F_{Q'}=\bigcap_{i\in Q'}F_i$. We can choose $z\in\mathbb P(\mathcal F_Q)$ and $z'\in\mathbb P(\mathcal F_{Q'})$ so that the line $\mathfrak l$ passing through $z$ and $z'$ is orthogonal to both $\mathcal F_Q$ and $\mathcal F_{Q'}$ (here, orthogonality is defined with respect to the Hilbert metric on $\mathbb P(\BB W)$, as defined in \cite{McMullen}). Let $x_0$ be a point on the line $\mathfrak l$ that lies in the interior of $\mathbb P(\BB)$. Let $\rr\colon\mathbb R_{\geq 0}\to \BB W$ be a ray whose positive projectivization $\overline\rr$ satisfies $\overline\rr(0)=x_0$, $\overline\rr(1)=z$, and $\overline\rr(\epsilon)\in\mathbb P(\BB)$ for all $0\leq \epsilon\leq 1$. Then $\rr$ certifies that $i_1, \ldots, i_n$ is a luminous ordering of $I$.

\begin{figure}[ht]
 \begin{center}{\includegraphics[width=0.65\linewidth]{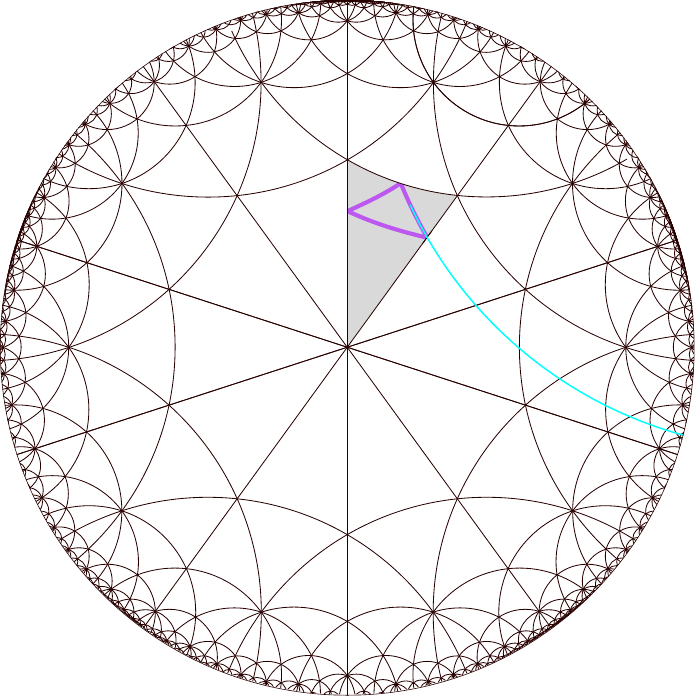}}
  \end{center}
\caption{
The Tits cone and Coxeter arrangement of the Coxeter group with Coxeter graph \!\!$\begin{array}{l} \includegraphics[height=0.659cm]{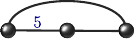}\end{array}$\!. We have passed to the positive projectivization $\mathbb{P}({\BB W})$, which is a hyperbolic plane, and then drawn the hyperbolic plane using the Poincar\'e disk model. We have shaded $\mathbb P(\BB)$ in light gray and drawn the pedal triangle of $\mathbb P(\BB)$ in purple. We have used the pedal triangle to construct a ray $\rr$; the image of $\overline \rr$ is drawn in cyan.  
}\label{fig:pedal}
\end{figure}

The second setting is that in which $W$ has rank $3$. In this case, we will show that each ordering of $I$ is a cyclic shift of a luminous ordering, which will justify why we were able to draw the thin cyan light beams in \cref{fig:affineS3,fig:spherical,fig:affineS3_infinite,fig:hyperbolic}. If $m(i,i')=2$ for some $i,i'\in I$, then every ordering of $I$ is a cyclic shift of an ordering coming from a bipartition of $\Gamma_W$, so the desired result follows from the preceding paragraph. Therefore, we may assume that $m(i,i')\geq 3$ for all distinct $i,i'\in I$. The positive projectivization $\mathbb P(\BB)$ of the base region $\BB$ is a hyperbolic triangle with (acute) angles $\pi/m(i_1,i_2), \pi/m(i_1,i_3), \pi/m(i_2,i_3)$. For each $i\in I$, let $F_i$ be the side of $\mathbb P(\BB)$ that separates $\mathbb P(\BB)$ from $\mathbb P(\BB s_i)$. The \dfn{pedal triangle} of $\mathbb P(\BB)$ is the triangle whose vertices are the feet of the altitudes of $\mathbb P(\BB)$. Let ${\bf v}_i$ be the vertex of the pedal triangle of $\mathbb P(\BB)$ that lies in $F_i$. It is known \cite{McMullen} that the pedal triangle is a closed billiards path (in the classical sense) inside of $\mathbb P(\BB)$. This means that we can extend one of the sides of the pedal triangle of $\mathbb P(\BB)$ into a ray that induces the sequence $i_1, i_2, i_3, \ldots$. To be more precise, let $\mathfrak l$ be the line passing through ${\bf v}_{i_3}$ and ${\bf v}_{i_1}$. Let $x_0$ be a point on $\mathfrak l$ that lies in the interior of $\mathbb P(\BB)$. Let $\rr\colon\mathbb R_{\geq 0}\to \BB W$ be a ray whose positive projectivization $\overline\rr$ satisfies $\overline \rr(0)=x_0$, $\overline \rr(1)={\bf v}_1$, and $\overline \rr(a)\in\mathbb P(\BB)$ for all $0\leq a\leq 1$. Then $\rr$ certifies that $i_1, i_2, i_3$ is a luminous ordering of $I$. See \cref{fig:pedal}.

\section{Futuristic Directions}\label{sec:future}  

\subsection{Characterizations}\label{subsec:e8tilde}
The most outstanding open problem arising from this paper is that of characterizing futuristic Coxeter groups and ancient Coxeter groups. Let us mention some smaller questions that might be more tractable. 


Say a Coxeter group $W$ is \dfn{contemporary} if it is neither futuristic nor ancient. In other words, $W$ is contemporary if there exist Coxeter elements $c$ and $c'$ of $W$ such that $c$ is futuristic while $c'$ is not. 

\begin{question}
Do contemporary Coxeter groups exist? 
\end{question}

Note that \cref{cor:tree} tells us that a Coxeter group whose Coxeter graph is a forest cannot be contemporary. 

Every minimally non-futuristic Coxeter group that we have found is also minimally ancient. Moreover, every minimally ancient Coxeter group that we have found has only finite edge labels in its Coxeter graph. This leads us naturally to the following questions. 

\begin{question}
Does there exist a minimally ancient Coxeter group $W$ such that at least one edge of $\Gamma_W$ is labeled $\infty$? 
\end{question}

\begin{question}
Does there exist a minimally non-futuristic Coxeter group $W$ such that at least one edge of $\Gamma_W$ is labeled $\infty$? 
\end{question}

\subsection{Periodic Points}

Let $c$ be a Coxeter element of $W$, and let $\LL \subseteq W$ be a convex set that is not heavy with respect to $c$. By definition, $\Pro_c$ has a periodic point outside of $\LL$. It would be interesting to gain a better understanding of these periodic points. 

\subsection{Billiards-Plausible Closed Walks}
In \cref{conj:closed_walks}, we asked whether the converse to \cref{lem:closed-billiards-walk-futuristic} holds. In other words, if there exists a closed walk in ${\bf G}_W$ that is billiards-plausible with respect to an ordering $i_1,\ldots,i_n$ of $I$, does it follow that the Coxeter element $c = s_{i_n} \cdots s_{i_1}$ is not futuristic? (The stronger statement that every billiards-plausible closed walk in ${\bf G}_W$ lifts to a billiards trajectory is false; there is a counterexample with $W=\widetilde E_8$.)

\subsection{Decidability}\label{subsec:decidability}

Is there an algorithm for deciding whether or not a Coxeter element is futuristic?  Such an algorithm could conceivably be based on the results of \Cref{sec:small}, which imply that a Coxeter element fails to be futuristic if and only if there is some billiards-plausible closed walk that lifts to a billiards trajectory. Given a billiards-plausible closed walk, one can algorithmically determine whether it lifts to a billiards trajectory. One can also iterate over the billiards-plausible closed walks of a bounded length. So one route to showing that futuristicity is decidable is to find an upper bound for the minimal length of a periodic billiards trajectory or, equivalently, for the minimal length of a billiards-plausible closed walk that lifts.

We remark that there is an algorithm for determining whether or not a given finite convex set $\LL$ is heavy with respect to a given Coxeter element $c$.  In order to describe this algorithm, we require a bit of terminology.
For $Q \subseteq W$, let $\NN(Q)$ denote the directed labeled graph with vertex set $Q \cup \{\tau_i(u) : u \in Q,\, i \in I\}$ that has a directed edge labeled $i$ from $u$ to $\tau_i(u)$ for all $u \in Q$ and all $i \in I$. We say that $Q, Q' \subseteq W$ are \dfn{$\tau$-equivalent} if $\NN(Q)$ and $\NN(Q')$ are isomorphic as labeled directed graphs (that is, there exists a graph isomorphism from $\NN(Q)$ to $\NN(Q')$ that preserves the direction and label of each edge).

Let $\LL$ be a finite convex set, and let $c$ be a Coxeter element.  Recall from \cref{lem:referee} that the sizes of the strata are bounded above by $2^{|\Phi \setminus (\RR(\LL) \cup (-\RR(\LL)))|}$.
It follows that there are only finitely many distinct proper strata up to $\tau$-equivalence.
It is possible to algorithmically find one representative from each $\tau$-equivalence class of proper strata. It then suffices to search for periodic billiards trajectories in these representatives. Since each stratum is finite, this search can be completed in a finite amount of time.

\subsection{Operators from Long Elements}

Let $\mathsf{w}_\circ$ be a reduced word for the long element of a finite Coxeter group $W$. Fix a nonempty convex subset $\LL$ of $W$ as before. We proved (\cref{thm:main1}) that $\tau_{\mathsf{w}_\circ}(W)=\LL$. It could be interesting to study the fibers of $\tau_{\mathsf{w}_\circ}$, even in the special case where $W=\mathfrak S_n$ and $\tau_{\mathsf{w}_\circ}$ is the extended evacuation operator $\Ev$.

One could also study the dynamics of the restriction of $\tau_{\mathsf{w}_\circ}$ to $\LL$ (which is a bijection). When $W=\mathfrak S_n$ and $\tau_{\mathsf{w}_\circ}=\Ev$, the restriction of $\tau_{\mathsf{w}_0}$ to $\LL$ is Sch\"utzenberger's evacuation map, which is well known to be an involution. In general, however, the restriction of $\tau_{\mathsf{w}_\circ}$ to $\LL$ need not be an involution. 

\subsection{Sorting Times}
Let $c$ be a Coxeter element of $W$, and let $\LL$ be a nonempty finite convex set that is heavy with respect to $c$. By multiplying $\LL$ on the right by an element of $W$ if necessary, we may assume without loss of generality that $\id\in\LL$. 
For $u \in W$, let $\TT(u)$ be the smallest nonnegative integer $K$ such that $\Pro_c^K(u) \in \LL$. We call $\TT(u)$ the \dfn{sorting time} of $u$. It is natural to ask about the asymptotic growth rate of $\TT(u)$ as $\ell(u)$ grows.  (Recall from \Cref{sec:preliminaries} that $\ell(u)$ denotes the length of $u$.)  

Since each noninvertible Bender--Knuth toggle can decrease the length of an element by at most $1$, it is clear that $\TT(u) \geq \ell(u)/|I|-O(1)$. The finiteness of $\LL$ ensures that all strata are finite and that there are only finitely many $\tau$-equivalence classes of strata (see \cref{subsec:decidability}).  Since $\LL$ is heavy with respect to $c$, there exists an integer $k\geq 1$ such that $\Sep(\Pro_c^k(v))\subsetneq\Sep(v)$ for all $v\in W\setminus\LL$. So, $\TT(u)\leq k|\Sep(u)|\leq k\ell(u)$. This shows that $\TT(u)$ grows linearly with $\ell(u)$. 

Even for specific choices of $W$, $c$, and $\LL$, it could be interesting to compute or estimate the quantity \[C_0(c,\LL):=\limsup_{\ell(u) \to \infty} \TT(u)/\ell(u).\] Is there an upper bound on $C_0(c, \LL)$ that depends only on $W$? One could also study the quantity \[\limsup\limits_{\ell(u) \to \infty} \TT(u)/|\Sep(u)|,\] which is closely related to $C_0(c,\LL)$.

\subsection{Luminous Orderings}
In \cref{sec:linear}, we discussed two settings where we know that an ordering $i_1,\ldots,i_n$ of $I$ is luminous.

\begin{question}\label{quest:luminous}
For which Coxeter groups is it the case that every ordering of $I$ is a cyclic shift of a luminous ordering? 
\end{question}

\subsection{Closed and Antisymmetric Sets of Roots}\label{subsec:closed_roots}

One can generalize the definition of the noninvertible Bender--Knuth toggles in \cref{def:toggles} by replacing the set $\RR(\LL)$ with an arbitrary closed and antisymmetric set $\RR\subseteq\Phi$. Analogues of \cref{lem:no-root-in-positive-span,lem:acute-angle,thm:completely_orthogonal_lemma,thm:power-completely-orthogonal} remain true in this more general setting, but \cref{lem:super-strong-acute-angle,lem:transmitting-small} become false. 

One special case of this more general setting is that in which $\RR=\Phi^-$. In this case, the resulting noninvertible Bender--Knuth toggles generate ${\bf H}_W(0)$, the $0$-Hecke monoid of $W$, and their action on $W$ defines the standard action of ${\bf H}_W(0)$ on $W$ (see \cite{Hivert, Kenney}). Said differently, $\tau_{\mathsf w}(\id)$ is the Demazure product of a word $\mathsf{w}$. 

\subsection{Other Toggle Sequences} 

Let $\LL$ be a nonempty convex subset of $W$. Given any infinite sequence ${\bf i}=(i_1,i_2,\ldots)$ of elements of $I$ and any starting point $u_0\in W$, we can construct the sequence $u_0,u_1,u_2,\ldots$ by the recurrence relation $u_j=\tau_{i_j}(u_{j-1})$ for all $j\geq 1$. In this article, we have been primarily concerned with the setting in which ${\bf i}$ arises by repeating some ordering of $I$. However, it could be fruitful to consider other sequences, such as sequences induced by light beams. We could, for instance, say that ${\bf i}$ is \dfn{futuristic} if for every nonempty finite convex set $\LL$ and every starting point $u_0$, the resulting sequence $u_0,u_1,u_2,\ldots$ eventually reaches $\LL$. 
If $\bf i$ is formed by repeating some ordering $i_1,\ldots,i_n$ of $I$, then ${\bf i}$ is futuristic if and only if the Coxeter element $c=s_{i_n}\cdots s_{i_1}$ is futuristic (by \cref{prop:finite-to-infinite}). 

It could also be interesting to consider the setting in which the noninvertible Bender--Knuth toggles are applied in a random order. This produces a Markov chain with state space $W$ in which each transition is given by applying $\tau_i$, where $i\in I$ is chosen randomly (according to some probability distribution on $I$). This random process was studied by Lam \cite{Lam} in the case where $W$ is an affine Weyl group and the noninvertible Bender--Knuth toggles are defined via the set $\RR=\Phi^-$ (as discussed in \cref{subsec:closed_roots}).

\section*{Acknowledgments}
Grant Barkley was supported in part by the National Science Foundation grant DMS-1854512. Colin Defant was supported by the National Science Foundation under Award No.\ 2201907 and by a Benjamin Peirce Fellowship at Harvard University. Eliot Hodges was supported by Jane
Street Capital, the National Security Agency, the National Science Foundation (grants 2140043
and 2052036), and the Harvard College Research Program. Noah Kravitz was supported in part by a National Science Foundation Graduate Research
Fellowship (grant DGE--2039656). This work initiated while Colin Defant, Eliot Hodges, Noah Kravitz, and Mitchell Lee were visiting and/or working at the University of Minnesota Duluth Mathematics REU in 2023 with support from Jane Street Capital; we thank Joe Gallian for providing this wonderful opportunity. We are grateful to Jon McCammond, Curtis McMullen, and Ilaria Seidel for several helpful conversations. We thank the anonymous referee for several helpful comments.

\end{document}